\DeclareMathOperator{\re}{Re}
\DeclareMathOperator{\im}{Im}
\newcommand{\C}{\mathbb{C}}       
\newcommand{\R}{\mathbb{R}}       
\newcommand{\N}{\mathbb{N}}       
\newcommand{\rme}{\mathrm{e}}
\newcommand{\rmi}{\mathrm{i}}
\newcommand{\tord}{\mathrm{tord}}
\newtheorem{theorem}{Theorem}[section]
\newtheorem{lemma}[theorem]{Lemma}
\newtheorem{corollary}[theorem]{Corollary}
\newtheorem{proposition}[theorem]{Proposition}
\newtheorem{definition}[theorem]{Definition}
\newtheorem{example}[theorem]{Example}
\newtheorem{remark}[theorem]{Remark}
\newtheorem{afirm}[theorem]{Claim}
\newtheorem{obs}[theorem]{Remark}
\newtheorem{corolario}[theorem]{Corollary}
\begin{document}
\title[LIPSCHITZ GEOMETRY OF MIXED POLYNOMIALS]{LIPSCHITZ GEOMETRY OF MIXED POLYNOMIALS}
\author[D. L. Medeiros]{Davi Lopes Medeiros$^{\dagger,\sharp}$}
\address{$\dagger$Departamento de Matem\'atica, Instituto de Ci\^encias Matem\'aticas e de Computa\c{c}\~ao (ICMC-USP). Avenida Trabalhador São-carlense, 400, Centro, 13566-590. São Carlos, SP, Brasil}
\email{davi\_lopes90@hotmail.com}

\author[J. E. Sampaio]{Jos\'e Edson Sampaio$^{\sharp}$}
\address{$\sharp$Departamento de Matem\'atica, Universidade Federal do Ceará (UFC), Campus do Pici, Bloco 914, Cep.~60455-760, Fortaleza-Ce, Brasil}
\email{edsonsampaio@mat.ufc.br}
\author[E. L. Sanchez Quiceno]{Eder Leandro Sanchez Quiceno$^{\star,\sharp}$}
\thanks{The first named author was supported by the Serrapilheira Institute (grant number Serra -- R-2110-39576) and by FAPESP (grant number FAPESP: 2024/13488-6). The second named author was partially supported by CNPq-Brazil grant 303375/2025-6 and supported by the Serrapilheira Institute (grant number Serra -- R-2110-39576). The last named author was supported by CNPq-Brazil grant 173313/2023-0 and by FAPESP (grant number FAPESP: 2023/11366-8).}
\address{$\star$Departamento de Matemática, Universidade Federal de S\~ao Carlos\\ Rodovia Washington Lu\'is, Km 235, CEP 13560-905, Caixa Postal 676 S\~ao Carlos- SP, Brasil}
\email{ederleansanchez@alumni.usp.br}
\begin{abstract}
We investigate the (ambient) bi-Lipschitz $V$-equivalence of two-variable mixed polynomials satisfying the Newton inner non-degeneracy condition. Concerning triviality, we show that ambient bi-Lipschitz $V$-triviality for families $\{f+\varepsilon\theta\}_{\varepsilon \in \mathbb{R}}$ is guaranteed when $f$ is semi-radially weighted homogeneous and the weighted radial degree of every monomial in $\theta$ is greater than the weighted radial degree associated with $f$. However, in the general case, we prove that it is not guaranteed, even though ambient topological $V$-triviality still holds.

For the classification problem, we define two simple metric links and prove that they suffice to determine bi-Lipschitz $V$-equivalence within the class of mixed polynomials that are $\Gamma_{\rm inn}$-nice. A key outcome is that neither the Newton boundary $\Gamma(f)$ nor the $C$-face diagram $\Gamma_{\mathrm{inn}}(f)$ constitutes an invariant of this equivalence for such mixed polynomials. To outcome this, we introduce new data extracted from the two face diagrams under consideration and prove that, under certain generic conditions, these data become fundamental invariants for the bi-Lipschitz equivalences. This provides a fundamental step toward a bi-Lipschitz classification of these mixed polynomials.
\end{abstract}
\subjclass{51F30, 14P10, 14J17, 14M25, 57K10}
\keywords{bi-Lipschitz equivalence, bi-Lipchitz invariant, inner non-degeneracy, link of the singularity,  mixed polynomial}

\maketitle
\vspace{-0.5cm}
\tableofcontents
\section{Introduction}
Mixed polynomials in $n$ variables are complex-valued functions defined as polynomials in $n$ complex variables $z = (z_1, \ldots, z_n)$ and their complex conjugates $\bar{z} = (\bar{z}_1, \ldots, \bar{z}_n)$. This class of functions is equivalent to the class of real polynomial maps $f: \mathbb{R}^{2n} \to \mathbb{R}^2$. The seminal work \cite{Oka2010} of Oka shows that this complex-variable approach has the advantage of bringing tools from the theory of holomorphic functions and complex algebraic geometry to investigate the topology of its zeros, such as Newton polyhedra, Newton and Khovanskii inner non-degeneracy, and resolution of singularities. The development of this methodology is today a very active research area, as shown by the recent works of Araújo dos Santos, Bode, Sanchez Quiceno, Espinel and Fukui in \cite{AraujoBodeSanchez,AraujoSanchez2024,Bode:part2,BodeSanchez2023,EspinelSanchez2025,Fukui2025}.

Specifically, the recent works \cite{AraujoBodeSanchez} and \cite{Bode:part2} develop a study of two-variable mixed polynomials under conditions imposed on their Newton boundary: the first one is Newton inner non-degeneracy, and the second condition is the so-called $\Gamma$-niceness. These conditions allow the authors to characterize the link $L_f:=\mathbb{S}^3_{\rho} \cap f^{-1}(0)$ (in the sense of Milnor in \cite{Milnor1968}) of a mixed polynomial $f$ in terms of the links associated with the regular part of the zeros of their face functions $f_{\Delta_i}$, where $\Delta_i$ is a one-dimensional face of the Newton boundary. They conclude that adding monomials whose support lies strictly above the Newton boundary does not change the isotopy type of $L_f$ or the topological type of  $V(f) := f^{-1}(0)$. This result is improved in \cite{EspinelSanchez2025}, where the authors show that the link of $f$ is determined solely by its restriction to a refined Newton $C$-diagram, $\Gamma_{\rm inn}(f)$. 

Such results on link triviality shows that the topology of the zeros of mixed polynomials and holomorphic functions has many similarities. Motivated by a theorem of Pham and Teissier (see \cite{Pham-Teissier}), which states that exists a meromorphic bi-Lipschitz map between two germs of complex analytic plane curves $X$ and $Y$ if only if $X$ and $Y$ are topologically equivalent (it was extended to any bi-Lipschitz map by Fernandes in \cite{Fernandes2003}, and to any ambient bi-Lipschitz map by Neumann and Pichon in \cite{neumann-pichon}), it is natural to ask if the same bi-Lipschitz equivalence holds for mixed polynomials. Unfortunately, the Lipschitz geometry of real surface germs obtained from a real analytic map from $\R^2$ to $\R^4$ can be very complicated, as it was shown by Birbrair, Mendes and Nuño-Ballesteros in \cite{a}. When we deal with subanalytic surface germs in $\R^4$, things can be very chaotic, and the extension of bi-Lipschitz maps between such germs to the ambient space is a very challenging problem. The work \cite{BBG} of Birbrair, Brandenbusky and Gabrielov leads to a surprisingly infinite classification of surface germs up to ambient bi-Lipschitz equivalence, even when they are topologically equivalent and outer bi-Lipschitz equivalent (Birbrair, Denkowski, Medeiros and Sampaio showed in \cite{microknot} that the same holds for germs whose outer and inner metrics are equivalent). Therefore, the relation between topology and Lipschitz geometry of mixed polynomials is not so obvious, and thus it deserves a deep investigation.

To achieve this goal, a good start is the special family of inner non-degenerate mixed polynomials introduced by Bode in \cite{Bode2019}. He constructed, for any given braid $B$, an inner non-degenerate and $z_1$-semiholomorphic polynomial (a mixed polynomial $f$ independent of $\bar{z}_1$) whose associated link is isotopic to the closure of the square of $B$ (closure of $B\cdot  B$). This result demonstrates that the class of inner non-degenerate mixed polynomials encompasses a significantly wider range of topological types. Further exploration of the diverse topological types realised by these mixed polynomials can be found in \cite{Bode:part2}. The family of mixed polynomials constructed by Bode is also radially weighted homogeneous mixed polynomials (radial mixed polynomials, for short) or, more generally, weighted homogeneous real polynomials. It follows from the paper \cite{Kerner-Mendes} by Kerner and Mendes that, outside of a horn neighbourhood of the obstruction locus $\Sigma_L(f)$ (\cite[Section 3.2]{Kerner-Mendes}) of a radial mixed polynomial $f$, the Lipschitz geometry of $V(f)$ coincides with that of their higher degree deformations. More precisely, they studied the ambient bi-Lipschitz $V$-triviality for families of real map germs $\mathcal{F}=\{f+\varepsilon \theta\}_{\varepsilon \in [0,1]}$, where  $$f=(f_1,\dots, f_m):(\R^n,0) \to (\R^m,0),\  n\geq m,$$  is weighted homogeneous of weight-type $(\boldsymbol{w};\boldsymbol{d})=(w_1, \dots, w_n
; d_1,\dots, d_m)$, and $\theta=(\theta_1,\dots, \theta_m)$ is any real map germ. They defined an obstruction locus $\Sigma_{L}(f)$, and proved that under the conditions $\Sigma_{L}(f)=\{0\}$ and $\operatorname{ord}_{\boldsymbol{w}}(\theta_i)> \operatorname{ord}_{\boldsymbol{w}}(f_i)=d_i, \ i=1,\dots ,m,$ the family $\mathcal{F}$ is ambient bi-Lipschitz $V$-trivial. In particular, this implies that the family is also bi-Lipschitz \(V\)-trivial. Another study concerning radial mixed polynomials appears in the work of Rabelo \cite{inacio2025}, which focuses on the Lipschitz classification problem of mixed Pham-Brieskorn polynomials.

We approach the bi-Lipschitz $V$-equivalence problem from the perspective of mixed polynomials in two variables that satisfy the inner non-degeneracy condition. We are motivated by the following two general questions: 
\textit{ 
\begin{itemize}
    \item[$\mathcal{Q}_1$.] Are families $\{f+\varepsilon \theta\}_{\varepsilon \in \R}$ of mixed polynomials (ambient) bi-Lipschitz $V$-trivial whenever $f$ is inner non-degenerate  and $\theta$ lies above or on the radial Newton boundary of $f$? 
    \item[$\mathcal{Q}_2$.] Is the radial Newton boundary of an inner non-degenerate mixed polynomial an invariant of (ambient) bi-Lipschitz $V$-equivalence?  
\end{itemize}}
The first question ($\mathcal{Q}_1$) has a negative answer in general. To reach this conclusion, we develop a study of contact order between the components of surfaces arising from inner non-degenerate mixed polynomials (see Proposition~\ref{nontangencycriteria}). The main idea for this negative answer arises from two conditions, \eqref{Eq:propertyA} and \eqref{Eq:propertyB}. We construct such a family of mixed polynomials in Example~\ref{contraexemploimportante}, which, although topologically $V$-trivial, is not bi-Lipschitz $V$-trivial. However, ($\mathcal{Q}_1$) is true if we assume that $f$ is semi-radially weighted homogeneous mixed polynomials (or semi-radial for short). These are those mixed polynomials that admit a decomposition into $f=f_P+\tilde{f}$, where $f_P$ satisfies $Sing(V(f_P))=\{0\}$ and it is radial with respect to a positive weight vector $P=(p_1,p_2)$ and a fixed radial-degree $d$ (referred to as radial-type $(P;d)$), and all monomials in the remainder $\tilde{f}$ have a radial-degree greater than $d$. For such a semi-radial mixed polynomial $f$, we define the quantity $k_f:=\frac{p_{1}}{p_{2}}$.	
\begin{theorem}\label{MainThm:1.1}
Let $\mathcal{F}=\{f+\varepsilon \theta\}_{\varepsilon\in \R}$ be a family of mixed polynomials satisfying: 
\begin{enumerate}
\item[(i)] $f$ is semi-radial of radial-type $(P;d)$, 
\item[(ii)] \(d(P;\theta)\geq d\).
\end{enumerate}
If the inequality in (ii) is strict,  then the family $\mathcal{F}$ is (ambient) bi-Lipschitz trivial. If we have equality in (ii), then the family is (ambient) bi-Lipschitz trivial along a small neighbourhood of the origin.
 \end{theorem}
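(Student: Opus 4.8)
The plan is to build an explicit ambient bi-Lipschitz vector field whose flow trivializes the family $\mathcal{F}$, modeled on the classical Kuo--King approach but adapted to the semi-radial setting. First I would fix coordinates adapted to the weight vector $P=(p_1,p_2)$ and introduce the weighted ``radial'' dilation $\rho_t(z_1,z_2)=(t^{p_1}z_1, t^{p_2}z_2)$ together with the associated control function, say $r(z)=(|z_1|^{2/p_1}+|z_2|^{2/p_2})^{1/2}$, which is weighted homogeneous of degree one; the quantity $k_f=p_1/p_2$ controls the comparison between the two coordinate scales and will enter the Lipschitz estimates. Writing $F(z,\varepsilon)=f(z)+\varepsilon\theta(z)$, the goal is to produce a vector field $V(z,\varepsilon)=\partial_\varepsilon + \sum_j a_j(z,\varepsilon)\partial_{z_j}$ (plus conjugates) with $V\!\cdot\!F = 0$ on a neighbourhood, so that its flow gives a family of ambient homeomorphisms $h_\varepsilon$ carrying $V(f)$ to $V(f+\varepsilon\theta)$, and with the coefficients $a_j$ satisfying $|a_j(z,\varepsilon)|\lesssim r(z)^{p_j}$ uniformly in $\varepsilon$ in a compact interval — this linear bound in the weighted sense is exactly what forces the flow to be (ambient) bi-Lipschitz.

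The key steps, in order. (1) Use the hypothesis $Sing(V(f_P))=\{0\}$ together with radial weighted homogeneity of $f_P$ to get a {\L}ojasiewicz-type lower bound $\|\nabla f_P(z)\|\gtrsim r(z)^{d-1}$ (in the appropriate weighted gradient norm) on a punctured neighbourhood; since $\tilde f$ has all monomials of radial-degree $>d$, the full gradient $\nabla f$ satisfies the same bound near $0$, and likewise for $\nabla F(\cdot,\varepsilon)$ uniformly in $\varepsilon$ because the $\varepsilon\theta$ contribution is of strictly higher weighted order by hypothesis (ii) (using $d(P;\theta)\ge d$, strict in the global case). (2) Solve the linear equation $V\!\cdot\!F=0$ by taking $a_j$ proportional to the (weighted) gradient components of $\bar F$ times $-\partial_\varepsilon F = -\theta$, divided by $\|\nabla F\|^2$ — the standard construction — and check using the {\L}ojasiewicz bound from (1) and the order estimate on $\theta$ that $|a_j(z,\varepsilon)|\lesssim r(z)^{p_j}\cdot \big(d(P;\theta)-d \text{ gain}\big)$, which is $O(r(z)^{p_j})$, and in the strict case even $o(r(z)^{p_j})$. (3) Integrate the ODE: the weighted-linear bound on the coefficients gives that the time-one flow $h_\varepsilon$ is bi-Lipschitz with respect to the weighted metric, hence (because the weighted metric and the Euclidean metric are bi-Lipschitz equivalent away from comparing coordinates of wildly different weights — here one uses that we work on a fixed small ball and $k_f$ is a fixed constant) bi-Lipschitz with respect to the Euclidean metric, uniformly for $\varepsilon$ in a compact set. (4) In the strict-inequality case one checks that the estimates hold on a fixed neighbourhood independent of how far one flows, giving global (in $\varepsilon\in\R$) triviality; in the equality case the gain is only an $O(1)$ constant rather than a decaying one, so the vector field is still bounded in the weighted-linear sense but one only controls the flow for $\varepsilon$ in a small interval (equivalently on a small neighbourhood of the origin), yielding the weaker conclusion stated.

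I expect the main obstacle to be Step~(2)–(3): verifying that the naive Kuo--King vector field genuinely has coefficients bounded by $r(z)^{p_j}$ and not merely by $r(z)^{p_j-\delta}$ for some loss $\delta>0$. The danger is that dividing by $\|\nabla F\|^2$ introduces a factor $r(z)^{-2(d-1)}$, and one must check that the numerator — which involves $\theta$ (of weighted order $\ge d$) paired against $\overline{\nabla F}$ (of weighted order $d-1$) — carries enough weighted order, namely $\ge d + (d-1) + (p_j - 1)$, so the quotient is $O(r^{p_j})$; this is a careful bookkeeping of weighted degrees where the hypothesis $d(P;\theta)\ge d$ is used sharply. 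A secondary subtlety is that $F$ is only $\R$-differentiable (mixed, not holomorphic), so ``$\nabla$'' must be interpreted as the real gradient in the four real variables and one must be careful that the {\L}ojasiewicz inequality for $f_P$ — a weighted-homogeneous real map $\R^4\to\R^2$ with isolated singularity — is available; this follows from standard results on weighted-homogeneous maps with $Sing=\{0\}$, and the perturbation $\tilde f$ of strictly higher weighted order does not destroy it, but it must be stated carefully. Once these estimates are in place, the triviality and the bi-Lipschitz property of the flow follow by routine Gr\"onwall-type arguments.
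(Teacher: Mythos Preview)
Your approach is the Kerner--Mendes vector-field method (recorded in the paper as Theorem~\ref{Lips-triviality}), and its conclusion is correct whenever the obstruction locus satisfies $\Sigma_L(f_P)=\{0\}$; by Lemma~\ref{radial-obst} this happens exactly for semi-radial $f$ of Type-I or Type-II (Definition~\ref{deftypes}), and Proposition~\ref{radial-trivial} is proved by that route. However, the justification you give in Step~(3) is not correct when $p_1\neq p_2$: the parenthetical claim that ``the weighted metric and the Euclidean metric are bi-Lipschitz equivalent on a fixed small ball since $k_f$ is fixed'' is false, since for instance $r(u,0)\sim|u|^{1/p_1}$ while $\|(u,0)\|=|u|$. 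What actually makes Type-I work (where $k_f\neq 1$) is that the axis along which the two scales disagree, say $\{v=0\}$ when $k_f>1$, is disjoint from $V(f_P)$ by $u$-convenience, so the Kuo--King vector field can be cut off there and is genuinely Euclidean-Lipschitz; this is precisely the content of $\Sigma_L(f_P)=\{0\}$.

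The genuine gap is Type-III: say $k_f>1$ and $f_P$ not $u$-convenient. Then $\{v=0\}\subset V(f_P)$ and $\Sigma_L(f_P)\supseteq\{v=0\}$, so the vector-field flow fails to be Euclidean-Lipschitz along that component of the zero set; this is the obstruction of \cite{Kerner-Mendes} and is not resolved by sharper weighted-degree bookkeeping in your Step~(2). The paper's proof (Proposition~\ref{Prop:biliptrivialitytypeIII}) bypasses it by a splitting-and-gluing argument. One observes that $\mathcal{P}_{\mathrm{inn}}(f)=\{P,(1,1)\}$ and that $V(f)$ decomposes into the piece $V(L_1;f)$ with tangent cone in $\{u=0\}$ and the piece $V(L_v;f)$ with tangent cone $\{v=0\}$, which have contact order $1$ at the origin (Proposition~\ref{nontangencycriteria}). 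Each piece is trivialised separately: one applies Kerner--Mendes outside a horn around the obstruction (Proposition~\ref{lemma:lipeomorphismlocal}) and then deforms the resulting bi-Lipschitz map to the identity outside a conical neighbourhood of that piece's tangent cone (Lemma~\ref{Lemma:Union}); for the $L_v$-piece an additional step (Lemma~\ref{lemma:overdeformation}) handles possible equal-degree perturbations of the face function at the auxiliary weight $(1,1)$. Since the two conical neighbourhoods can be taken disjoint, the two ambient maps agree with the identity on the overlap and glue via the Lipschitz Gluing Lemma (Lemma~\ref{Gluing Lipschitz Lemma}).
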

As a consequence of this theorem, in Corollary~\ref{cor:bilipVequivsemi-radialprincipalpart}, we prove that every semi-radial mixed polynomial of radial-type \((P; d)\) is ambient bi-Lipschitz \(V\)-equivalent to its principal part \(f_P\). A second consequence is that the condition $\Sigma_{L}=\{0\}$ is not necessary for ambient bi-Lipschitz $V$-triviality; this is demonstrated by the class of semi-radial mixed polynomials of Type-III (see Definition~\ref{deftypes} and Lemma~\ref{radial-obst}).
\vspace{0.2cm}

 The answer to the second question ($\mathcal{Q}_2$) is also negative in general. We demonstrate this by studying two specific simple metric links: the 1-braid closure (Definition~\ref{1-braidclosure}) and the non-tangent Hopf-link (Definition~\ref{Def:Lip-transverse-Hopf-link}). We prove that the bi-Lipschitz equivalence class for these metric links does not depend on either their Newton boundary $\Gamma(f)$ or the refined  $C$-face diagram $\Gamma_{\rm inn}(f)$.
 \begin{theorem}\label{1.4}
 Let \( f \) and \( g \) be inner non-degenerate mixed polynomials that are $\Gamma_{\mathrm{inn}}$-nice and such that their links \( L_f \) and \( L_g \) satisfy one of the following conditions:
\begin{enumerate}
		\item[(i)] Both \( L_f \) and \( L_g \) are empty.
	\item[(ii)] Both \( L_f \) and \( L_g \) are metric 1-braid closures.
	\item[(iii)] Both \( L_f \) and \( L_g \) are non-tangent Hopf-links.
    \end{enumerate}
Then, \( f \) and \( g \) are (ambient) bi-Lipschitz \( V \)-equivalent.
\end{theorem}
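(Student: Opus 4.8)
Case (i) is immediate: if $L_f$ and $L_g$ are empty, then by the local conic structure (Milnor, \cite{Milnor1968}) the germs $V(f)$ and $V(g)$ are both reduced to $\{0\}$, so the identity of $(\R^4,0)$ realises the equivalence. For (ii) and (iii) the plan is to produce, in each case, one explicit model germ ($X_0=V(z_2)$ for the $1$-braid closure, $X_0'=V(z_1z_2)$ for the non-tangent Hopf-link) and to show that every $f$ whose link is of the prescribed type is ambient bi-Lipschitz $V$-equivalent to the corresponding model; transitivity of ambient bi-Lipschitz $V$-equivalence then yields the statement.

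The first step, common to (ii) and (iii), is a reduction to a radial model. Using that $f$ is inner non-degenerate and $\Gamma_{\mathrm{inn}}$-nice, together with the description of $L_f$ in terms of the regular links of the face functions $f_{\Delta_i}$ over the one-dimensional faces of $\Gamma_{\mathrm{inn}}(f)$ (\cite{AraujoBodeSanchez,Bode:part2,EspinelSanchez2025} and the structural results of the earlier sections), I would first argue that a link as simple as a $1$-braid closure or a Hopf-link forces $\Gamma_{\mathrm{inn}}(f)$ to be, up to a monomial change of coordinates, essentially one-sloped, so that $f$ admits a semi-radial decomposition $f=f_P+\tilde f$ with $f_P$ radial of some radial-type $(P;d)$, $\sing(V(f_P))=\{0\}$, and every monomial of $\tilde f$ of radial-degree $>d$. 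Corollary~\ref{cor:bilipVequivsemi-radialprincipalpart} (itself a consequence of Theorem~\ref{MainThm:1.1}) then lets me replace $f$ by $f_P$, so that from now on $V(f)$ is a weighted-homogeneous surface germ with isolated singularity whose link is a smooth representative of $L_f$.

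For (ii), a $1$-braid closure is an unknot, and the metric $1$-braid structure of Definition~\ref{1-braidclosure} forces the single branch of $V(f_P)$ to have multiplicity one and to be Lipschitz-smooth of rate one; after the monomial coordinate change it is the graph over a coordinate $2$-plane $\{z_2=0\}$ of a weighted-homogeneous map $\varphi(z_1,\bar z_1)$. The diffeomorphism $(z_1,z_2)\mapsto(z_1,z_2-\varphi(z_1,\bar z_1))$ is ambient bi-Lipschitz near $0$ and carries $V(f_P)$ onto $X_0$, which proves (ii). For (iii), $L_f$ being a Hopf-link means $V(f_P)$ has exactly two branches $X_1,X_2$, each Lipschitz-smooth of rate one by the argument just used; the non-tangent condition of Definition~\ref{Def:Lip-transverse-Hopf-link}, detected on the Newton side by the contact-order analysis of Proposition~\ref{nontangencycriteria}, says exactly that the contact order of $X_1$ and $X_2$ is $1$, i.e.\ $T_0X_1\neq T_0X_2$. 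Two $2$-planes meeting only at $0$ in $\R^4$ are $GL_4(\R)$-equivalent, hence ambient bi-Lipschitz equivalent, to $V(z_1z_2)$, and the same simultaneous graph-straightening applied to the two transverse branches promotes this to an ambient bi-Lipschitz $V$-equivalence between $X_1\cup X_2$ and $X_0'$, proving (iii).

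The main obstacle is the reduction step: converting the purely combinatorial/topological ``simple link'' hypotheses into the metric statement that $f$ is semi-radial with isolated-singularity principal part, i.e.\ that a $1$-braid closure or a non-tangent Hopf-link cannot be produced by a genuinely multi-sloped $\Gamma_{\mathrm{inn}}$-diagram and that the higher-order part is metrically negligible so that Theorem~\ref{MainThm:1.1} applies. A secondary difficulty, specific to (iii), is checking that the metric definition of ``non-tangent Hopf-link'' is equivalent, under the standing hypotheses, to $T_0X_1\neq T_0X_2$ (this is the role of Proposition~\ref{nontangencycriteria}) and then upgrading the resulting transversality from an outer to an \emph{ambient} bi-Lipschitz equivalence, for which the graph structure furnished by inner non-degeneracy is essential.
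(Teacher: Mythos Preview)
Your core mechanism---straighten a graph-like branch onto its tangent plane by a shear $(z_1,z_2)\mapsto(z_1-\phi(z_2),z_2)$---is exactly what the paper does in Proposition~\ref{proposition:1-braid-LNE}, and your case (i) is the paper's case (i). The problem is the reduction you place in front of it.

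The reduction ``simple link $\Rightarrow$ $f$ is semi-radial with one-sloped $\Gamma_{\mathrm{inn}}$'' is the step you yourself flag as the main obstacle, and in fact it is neither true nor needed. It is not true: Example~\ref{1braidclosures_type_I_II} exhibits Type-I, Type-II and Type-III semi-radial polynomials, with pairwise different $\Gamma_{\mathrm{inn}}$-diagrams, all realising a metric $1$-braid closure; more generally nothing prevents an IND, $\Gamma_{\mathrm{inn}}$-nice $f$ with $N_f\ge 2$ from having only one nonempty $L_{P_i}$ (a $1$-braid) coming from an \emph{interior} face whose face function has non-isolated singular locus on $\{uv=0\}$, so that no weight vector $P$ gives $\mathrm{Sing}(V(f_P))=\{0\}$. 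It is not needed: the parametrisation of Remark~\ref{geralzao}, available for every IND, $\Gamma_{\mathrm{inn}}$-nice $f$, already presents the $1$-braid component of $V(f)$ as $(r^{k_i}u_r(t),\,r\rme^{\rmi t})$ with $u_r\to u$ bounded. That is precisely the ``graph over $\{u=0\}$'' structure your shear wants; apply it directly to $V(f)$, not to $V(f_P)$. The paper's Proposition~\ref{proposition:1-braid-LNE} checks, with an explicit constant, that the family $\Phi_{\varepsilon}(z_1,z_2)=(z_1-\varepsilon\phi(z_2),z_2)$ is ambient bi-Lipschitz, giving (ii).

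For (iii) your sketch has a second gap: the two branches are graphs over \emph{different} coordinate planes ($\{u=0\}$ and $\{v=0\}$), so ``simultaneous graph-straightening'' is not a single shear, and two independent shears may interfere with each other's branch. The paper handles this with Lemma~\ref{Lemma:Union}: each shear is deformed to the identity outside a small $1$-horn neighbourhood of its own tangent cone; since $\operatorname{Cont}(V_1,V_2)=1$, these horns are disjoint near $0$, and the Lipschitz Gluing Lemma (Lemma~\ref{Gluing Lipschitz Lemma}) assembles the two local shears into one ambient bi-Lipschitz map sending $V(f)$ onto $\{uv=0\}$. Your $GL_4(\R)$ remark about two transverse $2$-planes is correct but plays no role once the shears are localised this way.
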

The question ($\mathcal{Q}_2$) is shown to be negative in general through counterexamples involving 1-braid closures (Example~\ref{1braidclosures_type_I_II}) and non-tangent Hopf-links (Example~\ref{firstsemiradiaexamples1}). Counterexamples related to the $C$-face diagram $\Gamma_{\rm inn}(f)$ are also provided in Example~\ref{ex:typeiinotmetrics}. Despite this, we demonstrate that for semi-radial mixed polynomials (excluding those with simple metric links), certain data from the principal part is invariant. We define types (Type I-III), which are invariant under bi-Lipschitz $V$-equivalence, and show that the coefficient $k_f$ is also preserved (up to a multiplicative inverse).  
\begin{theorem}\label{MainThm:1.3}
Let \(f\) and \(g\) be semi-radial mixed polynomials, and  assume that each link $L_f$ and $L_g$ is neither empty nor a metric 1-braid closure. If $f$ is  bi-Lipschitz $V$-equivalent to $g$, then:
\begin{enumerate}
\item[(i)]  If $f$ is Type-I, then $g$ must be Type-I and \( k_f = k_g \text{ or } k_f = k_g^{-1}\).
\item[(ii)] If additionally their links are not non-tangent Hopf-links, then: 
\begin{itemize}
\item[(a)] If $f$ is Type-II, then $g$ must be Type-II. 
\item[(b)]  If $f$ is Type-III, then $g$ must be Type-III and \( k_f = k_g \text{ or } k_f = k_g^{-1}\).
\end{itemize}
\end{enumerate}
\end{theorem}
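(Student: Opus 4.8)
The plan is to prove Theorem~\ref{MainThm:1.3} by combining the triviality machinery of Theorem~\ref{MainThm:1.1} with the contact-order analysis underlying the classification of the simple metric links. The overarching strategy is: first reduce the problem from $f$ and $g$ to their principal parts $f_P$ and $g_Q$, using Corollary~\ref{cor:bilipVequivsemi-radialprincipalpart}; then show that the \emph{shape} of the link of a radial mixed polynomial $f_P$ — recorded by its Type and the ratio $k_{f_P}$ — is detected by metric data that is preserved by any (ambient) bi-Lipschitz $V$-equivalence.

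\medskip

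First, by Corollary~\ref{cor:bilipVequivsemi-radialprincipalpart}, $f$ is (ambient) bi-Lipschitz $V$-equivalent to $f_P$ and $g$ to $g_Q$, so it suffices to prove the statement for the radial polynomials $f_P$ and $g_Q$ themselves; from now on I write $f$ for $f_P$ and $g$ for $g_Q$. The key observation is that for a radial mixed polynomial of radial-type $(P;d)$ with $P=(p_1,p_2)$, the zero set $V(f)$ is invariant under the weighted $\R_{>0}$-action $t\cdot(z_1,z_2)=(t^{p_1}z_1,t^{p_2}z_2)$, so $V(f)\setminus\{0\}$ is a union of orbits, i.e.\ of curves approaching the origin with a well-defined tangency behaviour governed by $k_f=p_1/p_2$. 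I would first set up a \emph{normal form} for the germ $(V(f),0)$: describe each branch (or each connected component of the regular part) as a curve parametrized by the orbit, and compute its inner contact order with the coordinate axes $\{z_1=0\}$ and $\{z_2=0\}$ and with the other branches, in terms of $k_f$. This is exactly where Proposition~\ref{nontangencycriteria} (the contact-order criterion for components of inner non-degenerate surfaces) is used, and where the Type I–III dichotomy of Definition~\ref{deftypes} becomes visible: the Type should correspond to which of the axes, if any, lie in $V(f)$ (equivalently, to the combinatorics of the faces $\Delta_i$ touching the coordinate hyperplanes), and hence to whether $L_f$ contains components that are ``braid-axis-like'' versus genuinely helical.

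\medskip

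Second, with this normal form in hand, I would argue that a (ambient) bi-Lipschitz $V$-equivalence $\Phi$ carrying $(V(f),0)$ to $(V(g),0)$ must match up these pieces of metric data. The point is that the number of local components of $V(f)$, their pairwise inner contact orders, and the multiset of contact orders with any distinguished ``invariant curves'' (the axes contained in $V(f)$) are all bi-Lipschitz invariants — contact order between curves is preserved up to bounded ratio by any bi-Lipschitz homeomorphism, and preserved exactly in the sense of the relevant exponents. Since the Type records qualitative incidence data (which axes are in the zero set, how many helical families there are) and $k_f$ is recovered as the contact exponent of a helical branch with an axis, invariance of Type and of $\{k_f,k_f^{-1}\}$ follows — the inverse ambiguity coming precisely from the freedom to swap the roles of $z_1$ and $z_2$, which $\Phi$ may do. The hypotheses that $L_f$ is neither empty nor a metric $1$-braid closure (part (i)), and additionally not a non-tangent Hopf-link (part (ii)), are exactly the degenerate cases in which $V(f)$ has too few branches, or branches of equal/trivial contact, for these exponents to be read off — so they must be excluded, and the counterexamples of Examples~\ref{1braidclosures_type_I_II} and \ref{firstsemiradiaexamples1} show the exclusions are necessary. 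For the finer split in (ii)(a) versus (ii)(b): once non-tangent Hopf-links are excluded, Type-II and Type-III are distinguished by whether the helical branches have rational-versus-specified contact behaviour (one axis in $V(f)$ versus a different incidence pattern), which is again a bi-Lipschitz-invariant count, and in the Type-III case the same contact-exponent computation recovers $\{k_f,k_g^{-1}\}$.

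\medskip

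I expect the main obstacle to be the rigorous bookkeeping of contact orders when $V(f)$ has several components meeting the axes and each other with a mixture of tangency and transversality — in other words, verifying that the specific numerical invariant extracted from the radial weights is genuinely $k_f$ (and not some monotone but nonlinear function of it), and that the bi-Lipschitz map cannot secretly permute the helical families in a way that scrambles this number. Handling this cleanly will require the precise form of Proposition~\ref{nontangencycriteria} together with a careful case analysis organized by the Type, and the $k_f \leftrightarrow k_f^{-1}$ ambiguity will have to be tracked through every step as the unavoidable residual symmetry.
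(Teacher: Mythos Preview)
Your reduction to the radial principal parts via Corollary~\ref{cor:bilipVequivsemi-radialprincipalpart} is correct, and the overall strategy of reading off bi-Lipschitz invariants from contact data is the right one. But the two specific invariants you propose do not work. ``Which axes lie in $V(f)$'' is not a bi-Lipschitz invariant of the germ $(V(f),0)$, and it does not characterise the Types anyway: a Type-I polynomial with $k_f>1$ and $f_P$ $u$-convenient may or may not contain $\{u=0\}$, and a Type-II polynomial can contain either, both, or neither axis. The paper replaces this by the \emph{tangent cone} $C(V(f),0)$, which is a bi-Lipschitz invariant by Theorem~\ref{cone-sampas}. Lemmas~\ref{tangconetypeII}--\ref{tangconetypeIII} compute it: for Type-II it is the full cone over $L_{f_P}$, for Type-I it sits inside a single coordinate $2$-plane, and for Type-III it is the union of one coordinate plane with a subset of the other. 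Comparing these shapes is the first step of Proposition~\ref{proptypeI}; the tangent-cone argument alone does not finish (it only forces $L_f$ to be a knot in the problematic cases), and the proof is completed by a four-arc contradiction inside the target, which your sketch does not anticipate.

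Likewise, ``the contact exponent of a helical branch with an axis'' does not recover $k_f$: by Proposition~\ref{nontangencycriteria}(i), a component with $k_i>1$ has contact exactly $1$ with $\{v=0\}$ whenever the latter is present. The paper extracts $k_f$ from data internal to $V(f)$ in three separate ways (Proposition~\ref{kinvarianceI}): if some component has tangent cone a single line, it is a $k_f$-horn and the horn exponent is an inner invariant; if some component is not a metric $1$-braid closure but has larger tangent cone, one runs Fernandes' four-test-arcs argument \emph{inside that single component}, producing $\gamma_1,\dots,\gamma_4$ with $\operatorname{tord}(\gamma_2,\gamma_4)=k_f$ and all other pairs of order $1$, a pattern no bi-Lipschitz image can realise unless $k_g=k_f$; and only if every component is a metric $1$-braid closure does one fall back on the pairwise contact between distinct components, which is $k_f$ by Proposition~\ref{nontangencycriteria}(iii). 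Your sketch gestures at the last of these but not the first two, and those are exactly what is needed when $L_{f_P}$ has a single (or a single relevant) component and ``contact with the other branches'' is unavailable.
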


 We also show that, by restricting to certain wider classes of inner non-degenerate mixed polynomials, relevant data from $\Gamma_{\mathrm{inn}}(f)$ become invariant. We define $k_{i,f}:=\frac{p_{i,1}}{p_{i,2}}$, where $P_{i}=(p_{i,1},p_{i,2})$ belongs to the set of weight vectors $\mathcal{P}_{\mathrm{inn}}(f)=\{P_1,\dots, P_{N_{f}}\}$ associated with the 1-faces of $\Gamma_{\mathrm{inn}}(f)$. Using these values, we construct a contact data set $\mathcal{NC}(f) := \{N_f, \mathcal{C}(f)\}$, where the set  $\mathcal{C}(f)$ is detailed in Equation~\eqref{contactdata}. In general, we observe that $\mathcal{NC}(f)$ is not necessarily an invariant under bi-Lipschitz $V$-equivalence (see Example~\ref{1braidclosures_type_I_II}). To address this and other technical issues, we introduce two conditions, \eqref{Eq:propertyC} and \eqref{Eq:propertyD}, formulated as broadly as possible while ensuring the invariance of $\mathcal{C}(f)$. For the invariance of $\mathcal{NC}(f)$, we restrict our consideration to the class of $\Gamma_{\mathrm{inn}}$-true mixed polynomials. Building on the previous results of test arcs by Fernandes (see \cite{Fernandes2003}) and tangent cones by Sampaio (see \cite{Sampaio:2016}), we prove the following: 
\begin{theorem}
Let $f$ and $g$ be inner non-degenerate mixed polynomials that are $\Gamma_{\mathrm{inn}}$-nice and satisfy \eqref{Eq:propertyC} and \eqref{Eq:propertyD}. If $f$ is bi-Lipschitz $V$-equivalent to $g$, then $\mathcal{C}(f)=\mathcal{C}(g).$ Furthermore, if additionally $f$ and $g$ are $\Gamma_{\mathrm{inn}}$-true, then  $\mathcal{NC}(f)=\mathcal{NC}(g).$ 
\end{theorem}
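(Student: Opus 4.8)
The plan is to read off $\mathcal{C}(f)$ — and, under the extra hypothesis, the pair $\{N_f,\mathcal{C}(f)\}$ — from intrinsic metric data of the germ $(V(f),0)$, so that the existence of a bi-Lipschitz homeomorphism $h\colon (V(f),0)\to (V(g),0)$ forces these data to coincide with those of $g$.

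\smallskip
\noindent\textbf{Step 1 (metric description of $V(f)$).} First I would recall, from the description of $L_f$ in \cite{AraujoBodeSanchez,Bode:part2,EspinelSanchez2025} together with the structural results already established in this paper, that for an inner non-degenerate $\Gamma_{\mathrm{inn}}$-nice mixed polynomial $f$ the set $V(f)$ splits, away from the origin, into finitely many metric pieces, one for each relevant $1$-face $\Delta_i$ of $\Gamma_{\mathrm{inn}}(f)$, and that each such piece is outer bi-Lipschitz modelled on a standard H\"older/$\beta$-horn whose exponent is governed by $k_{i,f}=p_{i,1}/p_{i,2}$ (or its reciprocal, according to which coordinate direction is taken as reference). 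Conditions \eqref{Eq:propertyC} and \eqref{Eq:propertyD} are exactly what make these pieces metrically non-degenerate and pairwise distinguishable, so that the entries of $\mathcal{C}(f)$ in \eqref{contactdata} — the numbers $k_{i,f}$ and the mutual contact exponents of the pieces — are well defined, as an unordered datum invariant under the simultaneous replacement $k_{i,f}\leftrightarrow k_{i,f}^{-1}$.

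\smallskip
\noindent\textbf{Step 2 ($\mathcal{C}(f)=\mathcal{C}(g)$).} Now fix a bi-Lipschitz homeomorphism $h$ of $(V(f),0)$ onto $(V(g),0)$. By Sampaio's theorem \cite{Sampaio:2016} the tangent cones $C_0V(f)$ and $C_0V(g)$ are bi-Lipschitz homeomorphic; since each metric piece has a well-defined limiting tangent direction and distinct special directions are metrically distinguishable, this produces a bijection between the relevant $1$-faces of $\Gamma_{\mathrm{inn}}(f)$ and those of $\Gamma_{\mathrm{inn}}(g)$ compatible with which of the two distinguished directions a piece is tangent to. Then, for each piece, I would run a Fernandes-type test-arc argument \cite{Fernandes2003}: choosing real analytic arcs $\gamma,\gamma'$ tending to the origin inside a single piece (resp.\ inside two adjacent pieces), the order of vanishing of $t\mapsto\operatorname{dist}(\gamma(t),\gamma'(t))$ is precisely a contact exponent recorded in $\mathcal{C}(f)$, while the same order for $t\mapsto\operatorname{dist}(h\circ\gamma(t),h\circ\gamma'(t))$ is the corresponding entry of $\mathcal{C}(g)$; the bi-Lipschitz bounds on $h$ force the two orders to agree up to a bounded factor, hence to be equal. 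Comparing a single arc with its tangent axis recovers $k_{i,f}$ up to reciprocal, and comparing arcs in neighbouring pieces recovers their contact exponents; combined with the bijection from the tangent-cone step this yields $\mathcal{C}(f)=\mathcal{C}(g)$.

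\smallskip
\noindent\textbf{Step 3 ($\mathcal{NC}(f)=\mathcal{NC}(g)$) and main obstacle.} If in addition $f$ and $g$ are $\Gamma_{\mathrm{inn}}$-true, then each of the $N_f$ weight vectors of $\mathcal{P}_{\mathrm{inn}}(f)$ is realized by exactly one metric piece — no two $1$-faces collapse into a single metric component — so $N_f$ equals the number of metric pieces of $(V(f),0)$, a number plainly preserved by $h$; with Step 2 this gives $N_f=N_g$ and hence $\mathcal{NC}(f)=\mathcal{NC}(g)$. I expect the main obstacle to lie in Step 2, namely in turning the qualitative picture ``each piece carries the weight $P_i$'' into an honest extraction of the exponents: one must ensure that the chosen test arcs genuinely detect the contact \emph{between the intended pieces}, rather than being absorbed into one larger H\"older triangle or horn, which is exactly where \eqref{Eq:propertyC}–\eqref{Eq:propertyD} and the tangency/non-tangency analysis of Proposition~\ref{nontangencycriteria} are indispensable; a secondary difficulty is keeping the $k\leftrightarrow k^{-1}$ ambiguity coherent across all pieces simultaneously, so that $\mathcal{C}(f)$ matches $\mathcal{C}(g)$ as the unordered set of \eqref{contactdata} and not merely piece by piece.
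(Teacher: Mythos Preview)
Your overall plan---read $\mathcal{C}(f)$ off the outer metric geometry of $(V(f),0)$ via test arcs---matches the paper's, but Step~2 contains two genuine gaps.

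First, the tangent-cone step does not yield a bijection between individual 1-faces. By Proposition~\ref{aaaa}, \emph{every} piece $V(L_{P_i};f)$ with $k_i>1$ has tangent cone inside $\{u=0\}$, and every piece with $k_i<1$ has tangent cone inside $\{v=0\}$; Sampaio's theorem can therefore at best separate the three zones $I_f^{(k>1)}$, $I_f^{(k=1)}$, $I_f^{(k<1)}$, not distinguish two pieces with different $k_i>1$. This zone separation (either preserved or globally swapped $k>1\leftrightarrow k<1$) is the content of Lemma~\ref{lemma:1MainThm:1.3}, and even that already requires Proposition~\ref{nontangencycriteria} together with \eqref{Eq:propertyC}--\eqref{Eq:propertyD}, not tangent cones alone.

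Second, ``comparing a single arc with its tangent axis'' does not recover $k_i$ for a non-ambient bi-Lipschitz map: the axis $\{u=0\}$ is not contained in $V(f)$, so that tangency order is not an intrinsic invariant of $(V(f),0)$. Worse, if a piece has link a metric 1-braid closure, then by Proposition~\ref{proposition:1-braid-LNE} it is LNE and ambient bi-Lipschitz equivalent to a plane, so no pair of arcs \emph{inside it} has tangency order $>1$; the value of $k_i$ is invisible from that piece alone. This is precisely why \eqref{Eq:propertyC} and \eqref{Eq:propertyD} require the extremal links $L_{P_{i_{\mathrm m}(f)}}$, $L_{P_{i_{\mathrm M}(f)}}$ to be \emph{not} 1-braid closures: on such a piece Corollary~\ref{kinvarianceGammaniceIND} reads off $k_i$ intrinsically, and the spanning-projection clause in \eqref{Eq:propertyC}--\eqref{Eq:propertyD} guarantees, via Proposition~\ref{nontangencycriteria}, contact $>1$ with every other piece in the same zone, which lets one propagate and build the bijection $i\mapsto\mathrm{j}(i)$ with a single coherent sign $\chi$ (Lemma~\ref{Lemma:2MainThm:1.3}). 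Your Step~3 is then fine: $\Gamma_{\mathrm{inn}}$-true means $I_f=\{1,\dots,N_f\}$, so the bijection gives $N_f=N_g$.
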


This paper is organised into several sections. Section~\ref{Sect:IND} establishes the preliminaries needed for understanding the paper, making it self-contained. Subsection \ref{subsect2.1} introduces the concepts related to mixed polynomials and the Newton polygon, as well as the definitions of convenient polynomials and inner non-degeneracy, which are important to investigate the topology of the zeros of such polynomials. Subsection~\ref{Subsect:topVtriviality} presents the important notion of semi-radial mixed polynomials and shows some results on the topological \(V\)-triviality of families of mixed polynomials \(\{f + \varepsilon \theta\}_{\varepsilon \in I}\) obtained in \cite{AraujoBodeSanchez} and \cite{EspinelSanchez2025}, formulated in terms of inner vertices of \(\Gamma_{\mathrm{inn}}(f)\). In Subsection~\ref{nested}, we review the concept of nested links associated with the mixed polynomials discussed herein, their parametrizations, and how we can see them as braids. Subsection~\ref{Subsec:Lipschitz-geometry} provides the basic definitions in Lipschitz geometry and some of its main tools and properties, such as half-branches, contact order, H\"older triangles, and horns. Finally, Subsection~\ref{subsec:2.5} is devoted to showing the necessary theorems in ambient Lipschitz geometry and the extension of such maps, as well as their relation with tangent cones.

In Section~\ref{somepreparationresults}, we present auxiliary results that are essential for the main theorems and are also applicable in a more general context. Subsequently, Subsection~\ref{obstructionlocus} discusses the notion of singular locus $\Sigma_{L}(f)$ given by Kerner and Mendes in \cite{Kerner-Mendes}, and gives a criterion in mixed polynomials for when it must be trivial. Subsection~\ref{separatinghorns} aims to address the challenge of obtaining an ambient bi-Lipschitz triviality in the case $\Sigma_{L}(f)\ne \{0\}$. It is devoted to establishing several results in which we partition the ambient space into horn neighbourhoods in order to deal with each part separately later. In Subsection~\ref{Lipcontactcriterion}, we study the zeros of a broad class of mixed polynomials by calculating estimates of the contact orders of their components. We finish this section with Subsection~\ref{unionlema}, where we begin to investigate the tangent cone of several mixed polynomials and prove Lemma~\ref{Lemma:Union}, which allows us to extend a family of bi-Lipschitz maps to the ambient, by deforming them to the identity around some horn neighbourhoods. 

In Section \ref{Sect:Bi-LipVtriviality}, we focus on the ambient bi-Lipschitz geometry of semi-radial mixed polynomials. Subsection \ref{sub:ambientbilipchitztriviality} addresses the ambient bi-Lipschitz V-trivialization, defining the three types of semi-radial mixed polynomials, proving Propositions \ref{radial-trivial} and \ref{Prop:biliptrivialitytypeIII} and using them to establish Theorem~\ref{MainThm:1.1}. Subsection~\ref{Subsect:necessarybi_lipVequivsemi-radial} examines the relation between the zero set of semi-radial mixed polynomials and their tangent cones, culminating in their ambient bi-Lipschitz equivalence in most cases.

Section~\ref{Sect:1-braids} aims to obtain ambient bi-Lipschitz triviality beyond semi-radial mixed polynomials. To achieve this, we introduce the definitions of metric 1-braid closure and non-tangent Hopf-link, which are links whose topology and metric are simple. Then, we prove Proposition \ref{1.4}, which implies bi-Lipschitz V-triviality in families of mixed polynomials whose zeros have links with such properties.

Section~\ref{Sect:necessarybi_lipVequiv} explores necessary conditions for bi-Lipschitz \(V\)-equivalence when the link is neither a 1-braid closure nor a non-tangent Hopf-link. Subsection \ref{subsec6.1} shows that the type of such semi-radial mixed polynomials is a Lipschitz invariant, culminating in Theorem~\ref{MainThm:1.3}. This section concludes by complementing the preceding result by defining the sets \(\mathcal{C}(f)\) and \(\mathcal{NC}(f)\) in Subsection \ref{subsec6.2}, where we introduce conditions \eqref{Eq:propertyC} and \eqref{Eq:propertyD}, and prove Theorem~\ref{MainThm:1.4}. Finally, Proposition~\ref{suficientcondforCandD} highlights the significance of such a theorem. Section \ref{section7} presents two examples that explicitly show why the Lipschitz geometry of mixed polynomials is different from its topology, contrasting with the holomorphic case studied by Pham, Teissier, Fernandes, Neumann and Pichon in \cite{Pham-Teissier,Fernandes2003,neumann-pichon}.

\vspace{0.05cm}


\section{Preliminaries}\label{Sect:IND}

\subsection{Mixed polynomials and Newton boundaries}\label{subsect2.1}
In this section, we review some background on mixed singularities and mixed hypersurfaces. A more detailed account of the concepts can be found in \cite{AraujoBodeSanchez} and in a generalised way in \cite{Oka2010,EspinelSanchez2025}.

\begin{definition}
A \textbf{mixed polynomial in $\C^2$} is a complex valued map \(f:\C^2\to \C\) that can be represented as 
\[f(\bm{z})=\sum_{j=1}^{n} c_j \bm{z}^{\nu_j}\bar{\bm{z}}^{\mu_j}, \]
for \(\ c_{j} \in \C^*\), \(\nu_j , \mu_j \in \mathbb{Z}_{\ge0}^2\) with \(\nu_{j,i}+\mu_{j,i} \ge1\), for each $j=1,\dots,n$ and $i=1,2$. Here, \(\bm{z} = (u,v)\),  \(\bar{\bm{z}}= (\bar{u},\bar{v})\), \(\bm{z}^{\nu_j} = {u}^{\nu_{j,1}}v^{\nu_{j,2}}\) and \(\bar{\bm{z}}^{\mu_j}=\bar{u}^{\mu_{j,1}}{\bar{v}}^{\mu_{j,2}}\) (notice that $f(0)=0$).
\end{definition}

\begin{remark}\label{Remark:homogeneous-real}
    Any mixed polynomial $f$ can be seen as a real polynomial map $(\re(f),\im(f))$ in four variables $\re(u),\im(u),\re(v),\im(v)$ via the identifications $f(\bm{z}) =  \re (f(\bm{z})) +\rmi \im (f(\bm{z}))$ and $z=(\re(u)+\rmi \im(u),\re(v)+\rmi \im(v))$. Moreover, we can transform any real polynomial map 
\(F: (\R^4,0) \to (\R^2,0)\) into a mixed polynomial by defining it with $$f(u,v)=F_1(\boldsymbol{x})+\rmi F_2(\boldsymbol{x}), \ \boldsymbol{x}=\left(\frac{u+\bar{u}}{2},\frac{u-\bar{u}}{2\rmi},\frac{v+\bar{v}}{2},\frac{v-\bar{v}}{2\rmi}\right).$$
\end{remark}

\begin{definition}
We say that the mixed polynomial $f$ is  \textbf{radially weighted homogeneous} of radial-type \((P;d_r)\), where \(P=(p_1,p_2) \in {(\mathbb{Z}_{>0})}^2\) and  $d_r \in \mathbb{Z}_{\ge 0}$ (or radial mixed polynomial, for short), if, for all $j=1,\dots, n$, we have
\begin{equation}\label{radialhomog}
p_1(\nu_{j,1}+\mu_{j,1})+p_2(\nu_{j,2}+\mu_{j,2})=d_r.
\end{equation}  
The weighted sum in Equation~\eqref{radialhomog} is called the  \textbf{radial degree} with respect to $P$ of the monomial
\( c_j z^{\nu_j} \bar{z}^{\mu_j}\), and it is denoted by \(\mathrm{rdeg}_P (c_jz^{\nu_j} \bar{z}^{\mu_j})\). Equivalently,  the mixed polynomial \(f\) is radially weighted homogeneous of radial-type $(P;d_r)$, if \(f\) satisfies 
\begin{equation*}
 f(\lambda^{p_1} u, \lambda^{p_2} v)=\lambda^{d_r}f (u,v), \ \lambda \in \R _{>0}.
 \end{equation*}
\end{definition}

\begin{remark}
    The radial mixed polynomials constitute a special class of weighted homogeneous polynomial map \(F:\R^4 \to \R^2\). Here, $F$ is homogeneous of weight-type $(\boldsymbol{w};\boldsymbol{d})=(w_1,w_2,w_3,w_4;d_1,d_2)$ if, for every $\boldsymbol{x}=(x_1,x_2,x_3,x_4) \in \R^4$ and $\lambda \in \mathbb{R}_{>0}$.
\begin{equation*}\label{realhomog}
 F(\lambda^{w_1} x_1,\lambda^{w_2} x_2, \lambda^{w_3} x_3, \lambda^{w_4} x_4)=(\lambda^{d_1} F_1(\boldsymbol{x}), \lambda^{d_2} F_2(\boldsymbol{x})),
 \end{equation*} 
where \(\boldsymbol{x}=(x_1,x_2,x_3,x_4)\),  \(w_j\in \mathbb{Q}_{>0}, d_1,d_2 \in \N \).   
In this case, the real polynomial map $F$ is called of  \textbf{weighted homogeneous} of weight-type \((\boldsymbol{w};\boldsymbol{d})\). If $f$ is a radially weighted homogeneous of radial-type $(p_1,p_2;d_r)$, then $$F=(\re (f),\im (f)):(\R^4,0) \to (\R^2,0)$$ is  weighted homogeneous  of weight-type $(p_1,p_1,p_2,p_2; d_r,d_r)$.
\end{remark}

\begin{remark}
While we can transform any real polynomial map 
\(F: (\R^4,0) \to (\R^2,0)\) into a mixed polynomial (see Remark \ref{Remark:homogeneous-real}), it is important to notice that not all weighted homogeneous real polynomial map $F$ can be seen as a radial mixed polynomial. For example,
\(F(x_1,x_2,x_3,x_4)=(x_1,x_1^2+x_2^2+x_3^2+x_4^2)\) is weighted homogeneous of weight-type \( (1,1,1,1;1,2)\), but it does not yield a radial mixed polynomial, since $1=d_1 \ne d_2=2$.  
\end{remark}

The homogeneity property of radial mixed polynomials allows us to derive several properties of $f$ and $V(f)$. However, radial mixed polynomials represent only a limited class in the category of mixed polynomials. To explore a broader and well-behaved class of mixed polynomials, we utilise the radial Newton polygon of 
$f$ (introduced in \cite{Oka2010}) and the inner non-degeneracy condition (introduced in \cite{AraujoBodeSanchez} and amplified in \cite{EspinelSanchez2025}):

\begin{definition}
Given a set $S\subseteq (\R_{\ge0})^2$ of points with rational coordinates, we define the \textbf{rational diagram of $S$} (denoted by \(\Gamma_{+}(S)\)) as the convex hull of the set
\begin{equation*}
\bigcup_{(p,q)\in S} \{(p+t_1,q+t_2) \mid t_1, t_2 \in \R_{\ge 0}\}.
\end{equation*}

The \textbf{rational boundary of $S$}, denoted as $\Gamma(S)$, is defined as the union of the compact faces of $\Gamma_{+}(S)$. A \textbf{face \(\Delta\) 
of \(\Gamma(S)\)} is a compact face of $\Gamma_{+}(S)$. The rational diagram $\Gamma_+(S)$ is \textbf{convenient} if $\Gamma(S)$ intersects both the $x$-axis and the $y$-axis. A \textbf{$C$-face diagram} (or $C$-diagram, for short) is a convenient rational boundary.
\end{definition}

\begin{remark}
Notice that, in the two-variable case, the rational boundary is formed by 1-faces (edges) and 0-faces (vertices).
Moreover, every $C$-diagram contains exactly one 0-face in the $x$-axis and one 0-face in the $y$-axis.
\end{remark}

\begin{definition}
Given a $C$-diagram $\Gamma_+(S)$, we say that a face \(\Delta\) of $\Gamma_+(S)$ is an \textbf{inner face} if it is not 
contained in the $x$-axis or in the $y$-axis. Given a positive weight vector $P =(p_1, p_2)$, with $p_1, p_2 \in \mathbb{Z}_{>0}$, we define $\Delta (P; \Gamma)$ as the face of $\Gamma(S)$ where $l_P(\nu) = p_1 \nu_1+p_2 \nu_2, \ \nu=(\nu_1,\nu_2) \in \Gamma(S),$ attains its minimal value. We denote this minimal value of $l_P$ by $d(P; \Gamma)$.
\end{definition}

\begin{definition}
    Given a mixed polynomial $f(\bm{z})=\sum c_{\nu, \mu} z^{\nu}\bar{z}^{\mu}$, we define the \textbf{radial Newton polygon of \(f\)}, denoted by \(\Gamma_+(f)\), as the set $\Gamma_+(\operatorname{supp}(f))$, where $$\operatorname{supp}(f):=\{(\nu_1+\mu_1,\nu_2+\mu_2) \mid \nu=(\nu_1,\nu_2) \, ; \, \mu=(\mu_1,\mu_2) \, ; \, c_{\nu,\mu} \ne 0\}.$$
    The \textbf{Newton boundary of $f$}, denoted as $\Gamma(f)$, is defined as $\Gamma(\operatorname{supp}(f))$. A \textbf{face \(\Delta\) of \(\Gamma(f)\)} is a compact face of $\Gamma_{+}(f)$. A mixed polynomial $f$ is called \textbf{$u$-convenient} if $\Gamma(f)$ intersects the $x$-axis, and $f$ is called \textbf{$v$-convenient} if $\Gamma(f)$ intersects the $y$-axis. If $f$ is both $u$-convenient and $v$-convenient, then $f$ is called \textbf{convenient}.
    
    For a positive weight vector $P =(p_1, p_2)$, we define $\Delta (P; f)$ as the face of $\Gamma(f)$ where $l_P(\nu) = p_1 \nu_1+p_2 \nu_2, \ \nu=(\nu_1,\nu_2) \in \Gamma(f),$ attains its minimal value. We denote such minimal value by $d(P; f)$. Associated with the vector $P$ (and, alternatively, with the face $\Delta=\Delta (P; f)$), we define the  \textbf{face function} $f_P(\bm{z})$ by \begin{equation*}f_P(\bm{z})=f_{\Delta}(\bm{z})= f_{\Delta(P;f)}(\bm{z}):=\sum_{\nu+\mu \in \Delta(P;f)} c_{\nu,\mu}z^{\nu}\bar{z}^{\mu}.
    \end{equation*}
    Notice that the face function $f_P (\bm{z})$ is a radial mixed polynomial of radial-type $(P;d(P;f))$. Finally, associated with $\Gamma(f)$, we define the  \textbf{Newton principal part}  $f_{\Gamma}(\bm{z})$ as
    \begin{equation*}
    f_{\Gamma}(\bm{z}):= \sum_{\nu+\mu \in \Gamma(f)} c_{\nu,\mu}z^{\nu}\bar{z}^{\mu}.
    \end{equation*}
\end{definition}

 For \(I \subseteq \{1,2\} \) and  \(\mathbb{K}= \R \text{ or } \C\), consider
\[\mathbb{K}^I:=\{(x_1,x_2)\in \mathbb{K}^2 \mid x_i=0 \text{ if } i \not\in I\} \simeq \mathbb{K}^{|I|}\]
 \[(\mathbb{K}^*)^I:= \left\{x \in \mathbb{K}^2 \mid \prod _{i \in I} x_i \neq 0 \right \} \cap \mathbb{K}^I \simeq (\mathbb{K}^* )^{|I|}.\]

\begin{definition}[{\cite{EspinelSanchez2025}}]\label{caracterizationinner}
We say that a mixed polynomial $f$ is \textbf{inner Khovanskii non-degenerate} (or \textbf{inner non-degenerate} - IND for short\footnote{This name was also used in \cite{AraujoBodeSanchez} for a non-degeneracy condition that is equivalent to this definition \cite[Prop. 2.11]{EspinelSanchez2025}.}) if there is a $C$-diagram $\Gamma$ such that the following holds:
\begin{enumerate}
	\item[(i)] no point of \(\operatorname{supp}
	(f)\) lies below \(\Gamma\), i.e., \(\operatorname{supp}(f) \subseteq \Gamma+\R^n_{\geq 0}\); 
	\item[(ii)] for every inner face \(\Delta\) of \(\Gamma\) and for every nonempty subset \(I\) of \{1,2\}, 
	\[ \Delta \cap \R^I\neq  \emptyset \implies Sing (V(f_\Delta)) \cap (\C^*)^{I}= \emptyset.\]
\end{enumerate} 
\end{definition}
The $C$-diagram satisfying (i) and (ii) is not necessarily unique; see Example~\ref{exemplo-poligono-newton}. However, if the mixed polynomial $f$ is IND, we are interested in a specific diagram, denoted by $\Gamma_{\mathrm{inn}}(f)$, which captures the information of the link of the singularity and facilitates the presentation of our results. Henceforth, when we state that $f$ is IND, the diagram satisfying properties (i) and (ii) will be understood to be $\Gamma_{\mathrm{inn}}(f)$.

We define for a mixed polynomial $f$ and a $C$-diagram $D$ the set 
 $$I_{\mathrm{nc}}(f_D):=\{i \in \{1,2\}: \mathrm{supp}(f_D) \cap \R^{\{i\}}= \emptyset\}.$$
\begin{definition}[\cite{EspinelSanchez2025}]\label{def:gammainn}
    Given an IND mixed polynomial $f$, let $\Gamma_{\mathrm{inn}}(f)$ be the $C$-diagram satisfying Definition~\ref{caracterizationinner} and 
    \begin{itemize}
        \item[(i)]  for $i \in I_{\rm{nc}}(f_{\Gamma_{\mathrm{inn}}(f)})$ and any inner face $\Delta(P;\Gamma_{\mathrm{inn}}(f)), \ P \in \mathbb{Q}^n_{>0}$, of $\Gamma_{\mathrm{inn}}(f)$ satisfying  \(\Delta(P;\Gamma_{\mathrm{inn}}(f)) \cap \R^{\{i\}}\neq \emptyset,\) we get 
$    p_{i} \leq p_{j},\text{ for all } j.$
        \item[(ii)]  if $D$ is another $C$-diagram satisfying (i) and  Definition \ref{caracterizationinner}  then  $$D+(\R_{\geq 0})^{2} \subseteq \Gamma_{\mathrm{inn}}(f)+(\R_{\geq 0})^{2}.$$
    \end{itemize}
    
 Define $\mathcal{P}_{\mathrm{inn}}(f)$ as a set of weight vectors $P_i=(p_{i,1},p_{i,2})$, with  $\gcd(p_{i,1},p_{i,2})=1$, associated with the 1-faces of \(\Gamma_{\mathrm{\mathrm{inn}}}(f)\): 
\begin{equation*}\label{1weightvectors}
\mathcal{P}_{\mathrm{inn}}(f):=\{ P=(p_{1},p_{2}) \mid \gcd(p_1,p_2)=1 \ \& \ \Delta(P;\Gamma_{\mathrm{inn}}(f)) \text{ is a 1-face of } \Gamma_{\mathrm{inn}}(f)\}.
\end{equation*}
Let $\mathcal{P}_{\mathrm{inn}}(f)=\{P_1,P_2,\dots,P_{N_f}\}$ and, for $i=1,2,\dots,N_f$, define \(k_{i,f}:=\frac{p_{i,1}}{p_{i,2}}\) (this number is the slope of the line passing by $\Delta(P_i;\Gamma_{\mathrm{inn}}(f))$). We always consider the elements of $\mathcal{P}_{\mathrm{inn}}(f)$ ordered in such a way that $k_{1,f}>k_{2,f}>\ldots>k_{N_f,f}$.
\end{definition}

\begin{example}\label{exemplo-poligono-newton}
Consider the mixed polynomial
\[
f(\bm{z}) = u^8 + v^3u^2 + \bar{v}^6u^2 + \bar{v}^5u + v^4\bar{v}^4.
\]
The support of \( f \) is
\[
\mathrm{supp}(f) = \{(8,0), (2,3), (2,6), (1,5), (0,8)\}.
\]

The Newton polygon of \( f \) is shown in Figure~\ref{newtonboundaryholomorphicq}. We observe that \( \Gamma(f) \) forms a $C$-diagram with five inner faces (two of them being 0-faces). To prove that \( f \) is IND with respect to \( \Gamma(f) \), that means $f$ satisfies Definition~\ref{caracterizationinner} for $\Gamma(f)$, it suffices to verify condition (ii) of Definition~\ref{caracterizationinner} for each inner face. The inner faces that are vertices trivially satisfy condition (ii). It remains to check this condition for the 1-faces associated with $P_1=(3,1)$, $P_2=(2,1)$, and $P_3=(1,2)$.  
 The face functions associated with these faces are:
\[
\begin{aligned}
f_{\Delta(P_1;f)}(\bm{z}) &= \bar{v}^5u + v^4\bar{v}^4, \\
f_{\Delta(P_2;f)}(\bm{z}) &= v^3u^2 + \bar{v}^5u, \\
f_{\Delta(P_3;f)}(\bm{z}) &= u^8 + v^3u^2.
\end{aligned}
\]
We illustrate the argument for \( \Delta(P_2;f) \); the other two cases are analogous. Since \( f_{\Delta(P_2;f)} \) is $u$-semiholomorphic (i.e. independent of $\bar{u}$), according to \cite{AraujoSanchez2024}, the singular locus \( Sing(V(f)) \) is the solution of
\[
f = f_u = f_v\overline{f_v} - f_{\bar{v}}\overline{f_{\bar{v}}} = 0.
\]

For \( f_{\Delta(P_2;f)} \), this yields 
\[
v^3u^2 + \bar{v}^5u = 2v^3u + \bar{v}^5=9(v\bar{v})^2(u\bar{u})^2-25(v\bar{v})^4(u\bar{u}) = 0.
\]

Hence, from $v^3u^2 + \bar{v}^5u = 2v^3u + \bar{v}^5=0$, we get 
\begin{equation}\label{eq:singvp2}
Sing(V(f_{\Delta(P_2;f)})) = \{(u,v) \in \C^2 : v = 0\}.
\end{equation}

Since \( \Delta(P_2;f) \cap \R^{\{1,2\}} \neq \emptyset \) and from Equation \eqref{eq:singvp2}, we clearly have
\[
Sing(V(f_{\Delta(P_2;f)})) \cap (\C^*)^{\{1,2\}} = \emptyset,
\]
we conclude that \( f \) is IND and convenient.

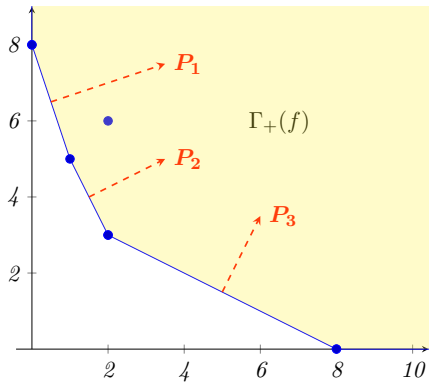
\begin{figure}[h!]
 \centering
 \begin{tikzpicture}[scale=0.8]
\begin{axis}[axis lines=middle,axis equal,yticklabels={0,,2,4,6,8,10}, xticklabels={0,,2,4,6,8,10},domain=-10:10,     xmin=0, xmax=10,
                    ymin=0, ymax=9,
                    samples=1000,
                    axis y line=center,
                    axis x line=center]
\addplot coordinates{(11,0)(8,0) (2,3) (1,5) (0,8) (0,10)};
\draw[line width=2pt,red,-stealth,thick,dashed](50,15)--(60,35)node[anchor=west]{$\boldsymbol{P_3}$};
\draw[line width=2pt,red,-stealth,thick,dashed](5,65)--(35,75)node[anchor=west]{$\boldsymbol{P_1}$};
\draw[line width=2pt,red,-stealth,thick,dashed](15,40)--(35,50)node[anchor=west]{$\boldsymbol{P_2}$};
 \filldraw[black] (50,110) node[anchor=north ] {$\Gamma_+(f)$};
  \filldraw[black] (65,65) node[anchor=north ] {$\Gamma_+(f)$};
\filldraw[blue] (20,30) circle (2pt) node[anchor=south west] {};
\filldraw[blue] (20,60) circle (2pt) node[anchor=south west] {};
\filldraw[blue] (0,80) circle (2pt) node[anchor=south west] {};
\filldraw[blue] (80,0) circle (2pt) node[anchor=south west] {};
\fill[yellow!90,nearly transparent] (0,80) --(10,50)--(20,30) -- (80,0) -- (260,0) -- (260,300) -- (0,300) --cycle;
\end{axis}
\end{tikzpicture}
\caption{The Newton polygon $\Gamma_+(f)$ in Example \ref{exemplo-poligono-newton}.
\label{newtonboundaryholomorphicq}}
\end{figure}

\vspace{0.2cm}

Consider the $C$-diagram \( \Gamma \) constructed from the set \( S = \{(8,0), (2,3), (0,7)\} \), as illustrated in Figure~\ref{diagramf}. We can verify that \( f \) is IND with respect to \( \Gamma \) as well.

The $C$-diagram \( \Gamma \) has three inner faces. From the previous case, we already know that \( f \) satisfies condition (ii) of Definition~\ref{caracterizationinner} for the inner face that is a vertex, as well as for \( \Delta(Q_2; \Gamma) = \Delta(P_3; f) \), where $Q_2=P_3$.

It remains to check the inner face \( \Delta(Q_1; \Gamma) \), where $Q_1=P_2$, for which the following holds:
\[
\Delta(Q_1; \Gamma) \cap \mathbb{R}^{\{1,2\}} \neq \emptyset \quad \text{and} \quad \Delta(Q_1; \Gamma) \cap \mathbb{R}^{\{1\}} \neq \emptyset.
\]

Since 
\[
f_{\Delta(Q_1; \Gamma)}(\bm{z}) = f_{\Delta(P_2; f)}(\bm{z}),
\]
 it follows from Equation~\eqref{eq:singvp2} that
\[
Sing(V(f_{\Delta(Q_1; \Gamma)})) \cap (\mathbb{C}^*)^{\{1,2\}}=Sing(V(f_{\Delta(Q_1; \Gamma)})) \cap (\mathbb{C}^*)^{\{1\}} = \emptyset.
\]
Therefore, \( f \) is IND with respect to  \( \Gamma \). We also observe that
\[
\Gamma_{\mathrm{inn}}(f) = \Gamma \quad \text{and} \quad \mathcal{P}_{\mathrm{inn}}(f) = \{Q_1, Q_2\}.
\]

    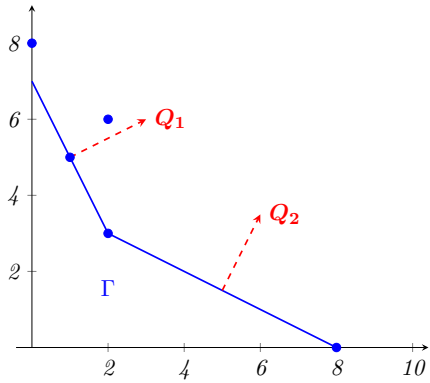
\begin{figure}[h!]
 \centering
 \begin{tikzpicture}[scale=0.8]
\begin{axis}[axis lines=middle,axis equal,yticklabels={0,,2,4,6,8,10}, xticklabels={0,,2,4,6,8,10},domain=-10:10,     xmin=0, xmax=10,
                    ymin=0, ymax=9,
                    samples=1000,
                    axis y line=center,
                    axis x line=center]
                    \addplot[mark=none, thick, blue] coordinates{(8,0) (2,3) (0,7)};
\addplot[only marks, mark=*, blue, mark size=2pt] coordinates {(8,0)};
\draw[line width=2pt,red,-stealth,thick,dashed](50,15)--(60,35)node[anchor=west]{$\boldsymbol{Q_2}$};
\draw[line width=2pt,red,-stealth,thick,dashed](10,50)--(30,60)node[anchor=west]{$\boldsymbol{Q_1}$};
  \filldraw[blue] (20,20) node[anchor=north ] {$\Gamma$};
\filldraw[blue] (20,30) circle (2pt) node[anchor=south west] {};
\filldraw[blue] (10,50) circle (2pt) node[anchor=south west] {};
\filldraw[blue] (0,80) circle (2pt) node[anchor=south west] {};
\filldraw[blue] (20,60) circle (2pt) node[anchor=south west] {};
\end{axis}
\end{tikzpicture}
\caption{The $C$-diagram $\Gamma$ in Example \ref{exemplo-poligono-newton}.
\label{diagramf}}
\end{figure}
\end{example}

\subsection{Link constancy in families of mixed polynomials}\label{Subsect:topVtriviality}
It was proved in \cite{AraujoBodeSanchez} that IND mixed polynomials  define  
germs of isolated surface singularities. Therefore, by the local conical structure theorem (see \cite{Milnor1968}), associated with $f$ (or with the surface $V(f)$) we have a well-defined smooth link $L_f:= \mathbb{S}_{\rho}^3 \cap V(f)$, called the  \textbf{link of the singularity of $f$}. It does not depend on $\rho>0$ if it is small enough. We will see that, for certain smooth families $f_{\varepsilon} : \C^2 \to \C$, $\varepsilon \in [0,1]$ (that is, there exist a neighbourhood $U$ of $0$ in $\C^2$ and a smooth map $F: U \times [0,1] \to \C$ such that $F(0, \varepsilon) = 0$ and $f_\varepsilon(\bm{z}) = F(\bm{z}, \varepsilon)$, for all  $
\varepsilon \in [0,1]$, and for all $\bm{z} \in U$), the link type of each member remains the same. 

  \begin{definition}[Link-constancy]
Let $K \subseteq \R$ be a connected set containing the origin. Suppose that for all $\varepsilon \in K$ we have that $f_\varepsilon$ has a well-defined link (in the sense of Milnor \cite{Milnor1968}). We say that the family \(\{f_\varepsilon\}_{\varepsilon \in K}\)  is \textbf{link-constant along $K$} if for all $\varepsilon \in K$, $L_{F_\varepsilon}$ is ambient isotopic to $L_{F_0}$, that is, there exists a continuous map	$
		H : \mathbb{S}_\epsilon \times [0,1] \to \mathbb{S}_\epsilon$, where $L_{F_0,\epsilon}$ and  $L_{F_\varepsilon,\epsilon}$ are well-defined for sufficiently small $\epsilon>0$, 
		such that for each $t \in [0,1]$, the map $H_t := H(\cdot, t)$ is a homeomorphism,
		$H_0 = \text{id}$, and
		$
		H_1(L_{F_0}) = L_{F_\varepsilon}.
		$    If $K = \R$, we simply say that $F$ is link-constant.       
 
\end{definition}     
  A  simple class of IND mixed polynomials is the semi-radial mixed polynomials.     
\begin{definition}\label{definitionsemi-radial}
\noindent We say that a mixed polynomial is \textbf{semi-radially weighted homogeneous} (for short, semi-radial) if there exists a positive  weight vector \( P=(p_1,p_2) \) such that 
\[ f(\bm{z}) = f_{P}(\bm{z}) + \tilde{f}(\bm{z}), \]
where \( d(P;\tilde{f}) > d(P;f) \) and  Sing(V(\( f_P \)))=\{0\}. In this case, \( f \) is called \textbf{semi-radial} of radial-type \( (P; d(P; f)) \) and we associate with each such $f$ a number \( k_f := \frac{p_1}{p_2} \).
\end{definition}

In \cite{EspinelSanchez2025}, Espinel and Sanchez Quiceno proved that certain deformations of semi-radial mixed polynomials are link-constant.

\begin{theorem}\label{thm:link-trivialitysemi}
Let $\mathcal{F}=\{f+\varepsilon \theta\}_{\varepsilon\in \R}$ be a family of mixed polynomials satisfying: 
\begin{enumerate}
\item[(i)] $f$ is semi-radial of radial-type $(P;d)$, 
\item[(ii)] \(d(P;\theta)\geq d\).
\end{enumerate}
Then, the family $\mathcal{F}$ is link-constant along some neighbourhood $K \subseteq \R$ of the origin. If the inequalities in (ii) are strict,  then the deformation is link-constant.
 \end{theorem}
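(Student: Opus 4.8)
The plan is to reduce link-constancy to a controlled-vector-field argument on the sphere, exploiting the near-homogeneity of $f$. First I would fix $P=(p_1,p_2)$ and introduce the weighted $\mathbb R_{>0}$-action $\lambda\cdot(u,v)=(\lambda^{p_1}u,\lambda^{p_2}v)$, together with the associated weighted Euler vector field $E=p_1u\partial_u+p_2v\partial_v$ (written in real coordinates on $\mathbb R^4$). Since $f_P$ is radially weighted homogeneous of radial-type $(P;d)$, we have $E(f_P)=d\,f_P$ pointwise, while for $\tilde f$ and for $\theta$ (restricted to the monomials of radial-degree $\geq d$) every monomial contributes with weight $\geq d$, so $E(\tilde f)$ and $E(\theta)$ vanish ``to higher weighted order'' than $f_P$ near $0$. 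The key input is the hypothesis $\operatorname{Sing}(V(f_P))=\{0\}$: this says that on the weighted unit sphere $\mathbb S_P:=\{\,(u,v):|u|^{2/p_1}+|v|^{2/p_2}=1\,\}$ (or any smooth weighted sphere), the map $(f_P,E)$ has no common zero except where $f_P$ itself is a submersion, so that one can, by a compactness/transversality argument, build a smooth vector field $W$ on a neighbourhood of $\mathbb S_P$ with $W(f_P)=0$ and $W$ transverse to all the weighted spheres — i.e. $W$ moves points radially (in the weighted sense) while preserving $V(f_P)$.

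Next I would set up the one-parameter family. For $\varepsilon$ in a small interval $K\ni 0$, consider $f_\varepsilon=f+\varepsilon\theta$ and look for an ambient isotopy $H_t$ on a small sphere $\mathbb S_\rho$ carrying $L_{f_0}$ to $L_{f_\varepsilon}$. The standard device is to integrate a vector field $\Xi$ on $\mathbb S_\rho\times K$ (or on a shell in $\mathbb C^2$ and then project) satisfying $\Xi(f)+\partial_\varepsilon(\varepsilon\theta)=\Xi(f)+\theta\equiv 0$ along $V(f_\varepsilon)$, which is the infinitesimal version of $f_\varepsilon\circ H_t = f_0$ up to reparametrization; tangency to the sphere is the second requirement. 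The obstruction to solving $\Xi(f)=-\theta$ with $\Xi$ of controlled size is exactly the size of the gradient of $f$ transverse to $V(f)$ compared to $\theta$, and here is where semi-radiality is used twice: (a) $\nabla f = \nabla f_P + \nabla\tilde f$, and on the weighted sphere of radius $t$ the leading term $\nabla f_P$ has weighted size $\sim t^{d-1}$ and, by $\operatorname{Sing}(V(f_P))=\{0\}$, is bounded below away from $V(f_P)$ after weighted rescaling; (b) $\theta$ has all monomials of radial-degree $\geq d$, so $\|\theta\|$ is $O(t^{d})$ or smaller, hence $\varepsilon\theta$ is a genuinely lower-order perturbation once $|\varepsilon|$ (in the equality case) or $t$ (in the strict-inequality case) is small. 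Rescaling via $\lambda\cdot(-)$ to the fixed weighted sphere $\mathbb S_P$ turns the problem into a perturbation of the nondegenerate problem for $f_P$, which is solvable by the implicit function theorem / partition-of-unity patching of local solutions, giving a vector field $\Xi$ that is (weighted-)Lipschitz and tangent to the spheres. Integrating $\Xi$ yields the desired ambient isotopy and thus link-constancy along $K$.

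For the ``strict inequality'' refinement one can drop the restriction to a neighbourhood: when $d(P;\theta)>d$ strictly, the perturbation $\varepsilon\theta$ is of strictly higher weighted order than $f_P$ uniformly in $\varepsilon$ on compact sets, so after weighted rescaling to $\mathbb S_P$ the perturbing term is $O(t^{\delta})$ with $\delta>0$ and tends to $0$ as $t\to 0$ regardless of $\varepsilon\in\mathbb R$; hence the vector field can be constructed on the whole shell for all $\varepsilon$ at once, and the family is link-constant (not merely along a neighbourhood of $0$). Alternatively, and perhaps more cleanly, this case follows by absorbing $\varepsilon\theta$ into the higher-order remainder $\tilde f$ — i.e.\ $f+\varepsilon\theta$ is again semi-radial of radial-type $(P;d)$ with the same principal part $f_P$ — and then invoking the previous paragraph's construction whose smallness constant depends only on $f_P$, not on the remainder.

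The main obstacle I anticipate is the construction of the transverse, $V(f)$-preserving vector field with quantitative control near the singular point, i.e.\ making precise the ``$\nabla f_P$ is bounded below on $\mathbb S_P\setminus V(f_P)$ and the singular locus of $V(f_P)$ being $\{0\}$ lets us integrate past the bad directions'' heuristic. Concretely one must handle the locus where $f_P$ fails to be a submersion but still $f_P\neq 0$: there the curve-selection/${\L}$ojasiewicz inequality for the real analytic map $f_P$ must be invoked to get $\|\nabla f_P\|\gtrsim |f_P|^{\alpha}$ on the weighted sphere, and then a careful partition of unity near $V(f_P)$ versus away from it is needed so that the patched vector field is simultaneously tangent to the spheres, annihilates $f$ on $V(f_\varepsilon)$ up to higher order, and has at most linearly-growing (weighted-Lipschitz) norm so that its flow is defined on $[0,1]$ and extends continuously to the origin. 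Everything else is bookkeeping with the weighted action and the radial-degree inequality~(ii).
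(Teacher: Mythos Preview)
The paper does not actually prove Theorem~\ref{thm:link-trivialitysemi}; it is stated in the preliminaries (Subsection~\ref{Subsect:topVtriviality}) as a result of Espinel and Sanchez Quiceno, cited from \cite{EspinelSanchez2025}, with no argument given. So there is no in-paper proof to compare against.

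That said, your outline is the standard controlled-vector-field / Thom--Milnor--L\^e style trivialisation, and it is the right shape for this statement. Two remarks. First, $f$ is a map $\mathbb{R}^4\to\mathbb{R}^2$, not a scalar function, so ``$\nabla f_P$ bounded below'' should be read as the real Jacobian of $f_P$ having rank $2$ along $V(f_P)\setminus\{0\}$; the condition $\operatorname{Sing}(V(f_P))=\{0\}$ gives exactly this, and the quantitative lower bound you need on the weighted sphere follows by compactness there rather than via a {\L}ojasiewicz inequality (you only need transversality of $V(f_\varepsilon)$ to the spheres near $0$, not control of $df$ away from $V(f)$). Second, your ``alternative'' for the strict case---absorbing $\varepsilon\theta$ into $\tilde f$ so that $f+\varepsilon\theta$ is again semi-radial with the same principal part $f_P$---is the cleanest route and is exactly how the paper itself later argues in the bi-Lipschitz setting (see the proof of Proposition~\ref{radial-trivial} and Corollary~\ref{cor:bilipVequivsemi-radialprincipalpart}): one reduces to showing that any semi-radial $f$ has the same link type as its principal part $f_P$, and then the strict case is immediate since the principal part is unchanged. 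The equality case genuinely requires $|\varepsilon|$ small because $\theta_P$ may alter $f_P$, and one needs $\operatorname{Sing}(V(f_P+\varepsilon\theta_P))=\{0\}$ to persist, which is an open condition.
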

 
To improve Theorem~\ref{thm:link-trivialitysemi} for a wider class of mixed polynomials, the authors in \cite{EspinelSanchez2025} studied the link-constancy of  families $\{f+\varepsilon \theta\}$ where $f$ is IND.
\begin{theorem}\label{thm:link-triviality}
Let $\mathcal{F}=\{f+\varepsilon \theta\}_{\varepsilon \in \R}$ be a family of mixed polynomials satisfying: 
\begin{enumerate}
\item[(i)] $f$ is IND, 
\item[(ii)] \(d(P_i;\theta) \geq d(P_i;f) \text{ for any  } P_i \in \mathcal{P}_{\mathrm{inn}}(f).\) 
\end{enumerate} 
Then, the family $\mathcal{F}$ is link-constant along some neighbourhood $K \subseteq \R$ of the origin. If the inequalities in (ii) are strict,  then the deformation is link-constant.
 \end{theorem}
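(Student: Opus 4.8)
The plan is to reduce the general inner non-degenerate case to the semi-radial case, for which Theorem~\ref{thm:link-trivialitysemi} already applies, by cutting the Milnor sphere into pieces governed by the weight vectors of $\Gamma_{\mathrm{inn}}(f)$ and treating each piece separately. Fix $\rho>0$ small enough that $L_f$ is well defined, order $\mathcal{P}_{\mathrm{inn}}(f)=\{P_1,\dots,P_{N_f}\}$ so that $k_{1,f}>\dots>k_{N_f,f}$, and put $\Delta_i=\Delta(P_i;\Gamma_{\mathrm{inn}}(f))$. Using the toric-type description of the link of an IND mixed polynomial from \cite{AraujoBodeSanchez} and \cite{EspinelSanchez2025}, I would first fix a decomposition of $\mathbb{S}_\rho^3$ into \emph{face regions} $T_1,\dots,T_{N_f}$ and \emph{vertex regions} (the blocks around the vertices of $\Gamma_{\mathrm{inn}}(f)$, including the two extremal ones meeting the coordinate axes): on $T_i$ every monomial of $f$ outside $f_{\Delta_i}$ has strictly larger $P_i$-order, so that $V(f)\cap T_i$ coincides up to isotopy with $V(f_{\Delta_i})\cap T_i$, where $f_{\Delta_i}$ is radial of radial-type $(P_i;d(P_i;f))$; on a vertex region $f$ is dominated, up to a nowhere-vanishing factor, by the single monomial sitting at that vertex, and $V(f)$ there is a fixed union of pieces of $\{u=0\}$ and $\{v=0\}$.

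The next step is to observe that hypothesis (ii) places $\mathrm{supp}(\theta)$ above $\Gamma_{\mathrm{inn}}(f)$: since $l_{P_i}(q)\ge d(P_i;\theta)\ge d(P_i;f)$ for every $P_i$ and every $q\in\mathrm{supp}(\theta)$, each such $q$ lies in $\Gamma_{\mathrm{inn}}(f)+\R_{\ge 0}^2$. Consequently, for $f_\varepsilon:=f+\varepsilon\theta$, the $\Delta_i$-face function of $f_\varepsilon$ is $f_{\Delta_i}$ when $d(P_i;\theta)>d(P_i;f)$ and $f_{\Delta_i}+\varepsilon\,\theta_{\Delta_i}$ when $d(P_i;\theta)=d(P_i;f)$ --- in either case radial of radial-type $(P_i;d(P_i;f))$ --- while on each vertex region the dominating monomial of $f$ still dominates $f_\varepsilon$ for $|\varepsilon|$ small. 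The point that needs care is that, for $|\varepsilon|$ small, $f_\varepsilon$ is again IND with $\Gamma_{\mathrm{inn}}(f_\varepsilon)=\Gamma_{\mathrm{inn}}(f)$: the only nontrivial condition is (ii) of Definition~\ref{caracterizationinner} at the faces where $d(P_i;\theta)=d(P_i;f)$, and this is an open condition because, after quotienting out the radial $\R_{>0}$-action, the relevant stratum $\mathbb{S}_\rho^3\cap(\C^*)^I$ is compact and $Sing(V(f_{\Delta_i}+\varepsilon\theta_{\Delta_i}))\cap(\C^*)^I=\emptyset$ persists for small $\varepsilon$.

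With this in hand, on each face region $T_i$ I would apply the construction underlying Theorem~\ref{thm:link-trivialitysemi} to the radial polynomial $f_{\Delta_i}$ with the deformation $\varepsilon\theta_{\Delta_i}$ (the trivial deformation in the strict case), obtaining an ambient isotopy $H^i_t$ of $T_i$ with $H^i_0=\mathrm{id}$ and $H^i_1(L_f\cap T_i)=L_{f_\varepsilon}\cap T_i$; the construction should be arranged so that $H^i_t$ is the identity on a collar of $\partial T_i$, which is possible because near $\partial T_i$ one has already passed into a vertex region where the deformation is negligible and $V(f_\varepsilon)$ coincides with $V(f)$ up to a fixed isotopy. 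Taking $H_t=\mathrm{id}$ on the vertex regions and gluing, one gets a global ambient isotopy of $\mathbb{S}_\rho^3$ carrying $L_f$ onto $L_{f_\varepsilon}$ for every $\varepsilon$ in a neighbourhood $K$ of the origin, which is the first assertion. If all the inequalities in (ii) are strict, then $\mathrm{supp}(\theta)$ lies strictly above $\Gamma_{\mathrm{inn}}(f)$, so $(f_\varepsilon)_{\Delta_i}=f_{\Delta_i}$ and each vertex-dominating monomial is unchanged for \emph{all} $\varepsilon\in\R$; hence $f_\varepsilon$ stays IND with $\Gamma_{\mathrm{inn}}(f_\varepsilon)=\Gamma_{\mathrm{inn}}(f)$ and the same decomposition and isotopies work for every $\varepsilon$, giving link-constancy of the deformation. (Alternatively, in this strict case one may simply note that $f_\varepsilon$ and $f$ have the same restriction to $\Gamma_{\mathrm{inn}}$ and invoke the structure theorem of \cite{AraujoBodeSanchez,EspinelSanchez2025} directly.) I expect the main obstacle to be the gluing step: building the $H^i_t$ with enough control over their supports near the transition zones around the vertices, which in turn requires uniform lower bounds for $|f_\varepsilon|$ and for the relevant components of its gradient along $L_{f_\varepsilon}$ --- exactly the estimates that inner non-degeneracy provides.
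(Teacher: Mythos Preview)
The paper does not prove Theorem~\ref{thm:link-triviality}: it is quoted in the Preliminaries as a result of Espinel and Sanchez Quiceno \cite{EspinelSanchez2025} (with the $\Gamma_{\mathrm{inn}}$-nice case going back to \cite{AraujoBodeSanchez}), so there is no paper-internal proof to compare your sketch against.

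That said, your outline is faithful to the strategy of those references: the toric decomposition of $\mathbb{S}_\rho^3$ into face and vertex regions indexed by $\Gamma_{\mathrm{inn}}(f)$, the observation that hypothesis~(ii) forces $\mathrm{supp}(\theta)\subset\Gamma_{\mathrm{inn}}(f)+\R_{\ge 0}^2$, the openness argument for IND at faces where $d(P_i;\theta)=d(P_i;f)$, and the gluing of local isotopies with controlled support. One technical point deserves a flag. You invoke ``the construction underlying Theorem~\ref{thm:link-trivialitysemi}'' on each face region, but for $2\le i\le N_f-1$ the face function $f_{\Delta_i}$ is not semi-radial in the sense of Definition~\ref{definitionsemi-radial}: inner non-degeneracy gives only $Sing(V(f_{\Delta_i}))\cap(\C^*)^2=\emptyset$, not $Sing(V(f_{\Delta_i}))=\{0\}$ (cf.~Lemma~\ref{radial-obst-Newton}). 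Your parenthetical ``underlying'' suggests you are aware of this, and indeed the face region $T_i$ lies in $(\C^*)^2$, so the obstruction sits outside $T_i$ and the trivialisation goes through there; but this is precisely where the work lies, and it is not literally a reduction to Theorem~\ref{thm:link-trivialitysemi}. The gluing concern you raise at the end --- uniform estimates near the vertex transitions --- is exactly the substantive content of the proofs in \cite{AraujoBodeSanchez,EspinelSanchez2025}.
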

 \begin{obs}\label{rem-topological}
 \begin{enumerate}

 \item[(a)]  Condition (ii) in Theorem~\ref{thm:link-trivialitysemi} and \ref{thm:link-triviality} improve the condition  \(\Gamma_+(\theta) \subseteq \Gamma_+(f),\) which is commonly used by other authors.    

 \item[(b)] From Theorem~\ref{thm:link-triviality}, we conclude that the link $L_{f}$ is isotopic to $L_{f_{\Gamma_{\mathrm{inn}}}}$.
 \end{enumerate}
 \end{obs}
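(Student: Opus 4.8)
The plan is to prove the two parts of Remark~\ref{rem-topological} separately: (a) is a comparison of hypotheses, and (b) is essentially the special case of Theorem~\ref{thm:link-triviality} obtained by deforming $f$ onto its restriction to $\Gamma_{\mathrm{inn}}(f)$.

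For part~(a), I would first record the elementary monotonicity fact that, for a positive weight vector $P$, the linear form $l_P$ is strictly increasing in each coordinate, so that its minimum over a finite set $S\subseteq(\R_{\ge 0})^2$, over the up-set $\bigcup_{\nu\in S}(\nu+\R^2_{\ge 0})$, and over the convex hull of that up-set all agree. Applying this to $\operatorname{supp}(\theta)$ and to $\operatorname{supp}(f)$ gives $d(P;\theta)=\min_{\nu\in\operatorname{supp}(\theta)}l_P(\nu)$ and $d(P;f)=\min_{\nu\in\operatorname{supp}(f)}l_P(\nu)=\min_{\nu\in\Gamma_+(f)}l_P(\nu)$. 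Hence if $\operatorname{supp}(\theta)\subseteq\Gamma_+(\theta)\subseteq\Gamma_+(f)$, then $d(P;\theta)\ge d(P;f)$ for every positive $P$, in particular for the single weight vector of a semi-radial $f$ and for every $P_i\in\mathcal{P}_{\mathrm{inn}}(f)$; this is exactly condition~(ii) of Theorems~\ref{thm:link-trivialitysemi} and~\ref{thm:link-triviality}. To see that the implication is strict, so that condition~(ii) is a genuine improvement, I would exhibit a $\theta$ satisfying~(ii) whose support leaves $\Gamma_+(f)$: for the polynomial $f$ of Example~\ref{exemplo-poligono-newton} the monomial $\theta=v^7$ has $(0,7)\notin\Gamma_+(f)$ but $d(Q_1;\theta)=7=d(Q_1;f)$ and $d(Q_2;\theta)=14>8=d(Q_2;f)$, and for the semi-radial polynomial $f=u^5+u^2v+v^5$ of radial-type $((2,1);5)$ the monomial $\theta=u^4$ has $(4,0)\notin\Gamma_+(f)$ but $d((2,1);\theta)=8>5=d((2,1);f)$.

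For part~(b), I would set $\theta:=f-f_{\Gamma_{\mathrm{inn}}(f)}$ and apply Theorem~\ref{thm:link-triviality} to the family $\mathcal{F}=\{f_{\Gamma_{\mathrm{inn}}(f)}+\varepsilon\theta\}_{\varepsilon\in\R}$, with $f_{\Gamma_{\mathrm{inn}}(f)}$ as the base polynomial. One needs: (1) $f_{\Gamma_{\mathrm{inn}}(f)}$ is IND with respect to $\Gamma_{\mathrm{inn}}(f)$, since its support lies on $\Gamma_{\mathrm{inn}}(f)$ and for each inner face $\Delta$ one has $(f_{\Gamma_{\mathrm{inn}}(f)})_\Delta=f_\Delta$ (discarding the monomials of $f$ lying strictly above $\Gamma_{\mathrm{inn}}(f)$ does not affect the monomials sitting on a face of $\Gamma_{\mathrm{inn}}(f)$), so the singular-locus condition in Definition~\ref{caracterizationinner}(ii) is inherited from $f$; (2) $\Gamma_{\mathrm{inn}}(f_{\Gamma_{\mathrm{inn}}(f)})=\Gamma_{\mathrm{inn}}(f)$, whence $\mathcal{P}_{\mathrm{inn}}(f_{\Gamma_{\mathrm{inn}}(f)})=\mathcal{P}_{\mathrm{inn}}(f)$ and $d(P_i;f_{\Gamma_{\mathrm{inn}}(f)})=d(P_i;\Gamma_{\mathrm{inn}}(f))=d(P_i;f)$ for all $P_i\in\mathcal{P}_{\mathrm{inn}}(f)$, obtained by unwinding the minimality clause in Definition~\ref{def:gammainn}; and (3) condition~(ii) holds strictly, because every $\nu\in\operatorname{supp}(\theta)=\operatorname{supp}(f)\setminus\Gamma_{\mathrm{inn}}(f)$ lies in $\Gamma_{\mathrm{inn}}(f)+\R^2_{\ge 0}$ but on no $1$-face $\Delta(P_i;\Gamma_{\mathrm{inn}}(f))$, so $l_{P_i}(\nu)>d(P_i;\Gamma_{\mathrm{inn}}(f))$ and therefore $d(P_i;\theta)>d(P_i;f_{\Gamma_{\mathrm{inn}}(f)})$. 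Theorem~\ref{thm:link-triviality} then gives that $\mathcal{F}$ is link-constant on all of $\R$; evaluating at $\varepsilon=0$ and $\varepsilon=1$ shows that $L_{f_{\Gamma_{\mathrm{inn}}(f)}}$ is ambient isotopic to $L_f$.

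The main obstacle I anticipate is step~(2) above: verifying that replacing $f$ by $f_{\Gamma_{\mathrm{inn}}(f)}$ leaves the inner diagram — and hence the weight vectors $P_i$ and the values $d(P_i;\cdot)$ — unchanged, which is precisely what permits Theorem~\ref{thm:link-triviality} to be invoked in exactly the same setting. This should hold because $\Gamma_{\mathrm{inn}}(f)$ is pinned down, through the minimality clause of Definition~\ref{def:gammainn}, by the part of $\operatorname{supp}(f)$ lying on it, which is exactly $\operatorname{supp}(f_{\Gamma_{\mathrm{inn}}(f)})$; everything else reduces to the monotonicity of $l_P$ and a direct appeal to Theorem~\ref{thm:link-triviality}.
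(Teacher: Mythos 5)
Part (a) of your proposal is fine: the reduction of $d(P;\cdot)$ to minima over supports via monotonicity of $l_P$ is exactly what is needed, and both of your examples check out ($\theta=v^7$ against the polynomial of Example~\ref{exemplo-poligono-newton}, where $d(Q_1;\theta)=7=d(Q_1;f)$ and $d(Q_2;\theta)=14>8$, and $\theta=u^4$ against $u^5+u^2v+v^5$), so condition (ii) is implied by, and strictly weaker than, $\Gamma_+(\theta)\subseteq\Gamma_+(f)$.

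For part (b) you have put the weight on the wrong foot. By taking $f_{\Gamma_{\mathrm{inn}}}$ as the base of the family, condition (ii) of Theorem~\ref{thm:link-triviality} must be checked against $\mathcal{P}_{\mathrm{inn}}(f_{\Gamma_{\mathrm{inn}}})$ and $d(P_i;f_{\Gamma_{\mathrm{inn}}})$, which forces you to prove $\Gamma_{\mathrm{inn}}(f_{\Gamma_{\mathrm{inn}}})=\Gamma_{\mathrm{inn}}(f)$ — your step (2) — and the one-sentence appeal to the maximality clause of Definition~\ref{def:gammainn} does not deliver it: the $C$-diagrams admissible for $f_{\Gamma_{\mathrm{inn}}}$ form a different class from those admissible for $f$ (the support constraint is weaker and the face functions change off $\Gamma_{\mathrm{inn}}(f)$), so maximality for $f_{\Gamma_{\mathrm{inn}}}$ is taken over a different family; one genuinely has to argue both inclusions of polyhedra, using that no monomial of $f-f_{\Gamma_{\mathrm{inn}}}$ can sit on a compact face of any admissible diagram whose polyhedron contains that of $\Gamma_{\mathrm{inn}}(f)$ (interior points cannot reach the boundary of a larger polyhedron, and the remaining support points lie on the coordinate rays above the axis vertices), so that face functions and conditions (i)–(ii) transfer between $f$ and $f_{\Gamma_{\mathrm{inn}}}$. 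The identification is true, but as written this is a gap — and an avoidable one. Run the deformation the other way, which is what the wording ``From Theorem~\ref{thm:link-triviality}\dots'' points to: take the family $\{f+\varepsilon\theta\}_{\varepsilon\in\R}$ with base $f$ and $\theta:=f_{\Gamma_{\mathrm{inn}}}-f$. Then (i) is the standing hypothesis that $f$ is IND, and (ii) is checked against the known data $\mathcal{P}_{\mathrm{inn}}(f)$, $d(P_i;f)$: every monomial of $\theta$ lies in $\Gamma_{\mathrm{inn}}(f)+\R^2_{\ge 0}$ but off $\Delta(P_i;\Gamma_{\mathrm{inn}}(f))$, where $l_{P_i}$ attains its minimum over the polyhedron, and since each $1$-face of $\Gamma_{\mathrm{inn}}(f)$ carries at least one monomial of $f$ (otherwise its face function vanishes identically and Definition~\ref{caracterizationinner}(ii) fails for $I=\{1,2\}$), one gets $d(P_i;f)=d(P_i;\Gamma_{\mathrm{inn}}(f))$ and hence $d(P_i;\theta)>d(P_i;f)$ strictly. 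Theorem~\ref{thm:link-triviality} then gives link-constancy on all of $\R$, and evaluating at $\varepsilon=1$ yields $L_f\simeq L_{f_{\Gamma_{\mathrm{inn}}}}$, with none of the bookkeeping your step (2) requires. (Note also that the equalities $d(P_i;f_{\Gamma_{\mathrm{inn}}})=d(P_i;\Gamma_{\mathrm{inn}}(f))=d(P_i;f)$ you invoke rest on this same ``each $1$-face carries support points'' observation, not on the minimality clause itself.)
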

\subsection{Nested links}\label{nested}
In \cite{AraujoBodeSanchez}, Araújo, Bode and Sanchez Quiceno studied links of singularities of IND mixed polynomials, under an extra condition called $\Gamma$-niceness. Following \cite{AraujoBodeSanchez} and the reformulation made in \cite{EspinelSanchez2025}, we describe the characterization of the links of IND mixed polynomials that are $\Gamma_{\mathrm{inn}}$-nice. In this subsection we will review the preliminaries involving the proof of Theorem~\ref{thm:link-triviality} under the assumption of $\Gamma_{\mathrm{inn}}$-niceness, using the set $\mathcal{P}_{\mathrm{inn}}(f)$ that was introduced in \cite{EspinelSanchez2025}, which will be useful in this paper.   

\begin{definition}
We say that a mixed polynomial $f$ is \textbf{$\Gamma_{\mathrm{inn}}$-nice} if, for every inner face $\Delta \in \Gamma_{\mathrm{inn}}(f)$ that is a vertex, we have  $V({f_{\Delta}})\cap (\C^*)^{2}=\emptyset$.
\end{definition}

Let $f$ be an IND mixed polynomial that is $\Gamma_{\mathrm{inn}}$-nice and let $\mathcal{P}_{\mathrm{inn}}(f)=\{P_1,\dots ,P_{N}\}$, where $N\geq 2$. Consider the polar coordinates $u=R \rme^{\rmi \varphi}$ and $v= r \rme^{\rmi t}$. The functions \(f_i:\C \times \R_{\geq 0} \times \mathbb{S}^1 \to \C \) and \( f_{\underline{i}}: \R_{\geq 0} \times \mathbb{S}^1 \times \C \to \C \)  are defined by 

\begin{equation*}\label{rdecomp}
 f_i(u,r,\rme^{\rmi t}):=r^{\frac{-d(P_i;f)}{p_{i,2}}}f(r^{k_i}u,r \rme^{\rmi t})
 \end{equation*}
 and 
 \begin{equation*}\label{Rdecomp}
    f_{\underline{i}}(R, \rme^{\rmi \varphi}, v):=R^{\frac{-d(P_i;f)}{p_{i,1}}}f(R\rme^{\rmi \varphi}, R^{\frac{1}{k_i}}v).
    \end{equation*}
Notice that for the face function \(f_{P_i}\) we have

\begin{equation*}\label{rPidecomp}
 (f_{P_i})_i(u,r,\rme^{\rmi t})=\lim_{r\to 0^+ }f_i(u,r,\rme^{\rmi t}),
\end{equation*}
and 
\begin{equation*}\label{RPidecomp}
(f_{P_i})_{\underline{i}}(R, \rme^{\rmi \varphi}, v)=\lim_{R \to 0^+}f_{\underline{i}}(R, \rme^{\rmi \varphi}, v).
\end{equation*}  
 Thus, the function  \((f_{P_i})_i \) does not depend on \(r\), and \((f_{P_i})_{\underline{i}} \) does not depend on \(R\). 

 By the radial homogeneity of $f_{P_i}$, we have 
 \begin{align*}
  V(f_{P_i})\cap (\C^*)^2=& \{ (r^{k_i}u_*,r \rme^{\rmi t_*}) \mid (u_*,0, \rme^{\rmi t_*}) \in V((f_{P_i})_i) \cap (\C^* \times \{0\} \times \mathbb{S}^1),\ r>0 \} \\
  =&  \{(R \rme^{\rmi \varphi_*},R^{\frac{1}{k_i}}v_*) \mid (0,\rme^{\rmi \varphi_*},v_*) \in V((f_{P_i})_{\underline{i}}) \cap (\{0\}\times \mathbb{S}^1\times \C^*),\ R>0 \} \\
=& \{(\lambda^{p_{i,1}} s_1,  \lambda^{p_{i,2}} s_2) \mid (s_1,s_2)\in V(f_{P_i})\cap (\C^*)^2\cap \mathbb{S}^3,\ \lambda>0 \}.
  \end{align*}
The three characterisations of $V(f_{P_i})\cap (\mathbb{C}^*)^2$ yield three associated sets $L_i$, $L_{\underline{i}}$\footnote{The underlined index notation is based on $f_{\underline{i}}$, which was originally established in \cite{BodeSanchez2023}. Note that in \cite{AraujoBodeSanchez}, $L_{N}$ denotes what we refer to as $L_{\underline{N}}$; our set $L_N$ is not used there.} and $L_{P_i}$.       
If $i\neq 1, N$, the properties of $\Gamma_{\mathrm{inn}}$-niceness and inner non-degeneracy imply that  
\begin{align*}
 &L_i:=V((f_{P_i})_{i}) \cap (\C^* \times \{0\} \times \mathbb{S}^1) \subset \C \times \{0\} \times \mathbb{S}^1 \cong \C \times \mathbb{S}^1,   \\
&L_{\underline{i}}:=V((f_{P_i})_{\underline{i}}) \cap (\{0\} \times \mathbb{S}^1 \times \C^*)  \subset \{0\} \times \mathbb{S}^1 \times \C \cong \mathbb{S}^1 \times \C,     \\
&L_{P_i}:= V_{P_i} \cap \mathbb{S}^3 \subset \mathbb{S}^3, \text{ with } V_{P_i}:=V(f_{P_i}) \cap (\C^*)^2, 
\end{align*}
are smooth links on the respective ambients. Moreover, \(L_i\) 
and  \(L_{\underline{i}}\)  can be embedded on \(\mathbb{S}^3\) such that they  are isotopic to \(L_{P_i}\).  For \(i=1,N\), we cannot always define \(L_i\) and $L_{\underline{i}}$ in the same way as \(L_j\) and $L_{\underline{j}}$, \(j=2,\dots,N-1.\) This is because \(V((f_{P_1})_{1}) \cap (\C^* \times \{0\} \times \mathbb{S}^1)\) and \(V((f_{P_N})_{\underline{N}}) \cap (\{0\} \times \mathbb{S}^1 \times \C^*)\) may not be compact on \(\C^* \times \{0\} \times \mathbb{S}^1\) and  \(\{0\} \times \mathbb{S}^1 \times \C^*\), respectively. To fix that, we define
\begin{align*}
 &L_1:=V((f_{P_1})_{1}) \cap (\C \times \{0\} \times \mathbb{S}^1), \\
 &L_{P_1}:= V_{P_1} \cap \mathbb{S}^3 \subset \mathbb{S}^3, \text{ with } V_{P_1}:=V(f_{P_1}) \cap (\C\times \C^*),  
\end{align*}
and  
\begin{align*}
&L_{\underline{N}}:=V((f_{P_N})_{\underline{N}}) \cap (\{0\} \times \mathbb{S}^1 \times \C),\\
 &L_{P_N}:= V_{P_N} \cap \mathbb{S}^3 \subset \mathbb{S}^3, \text{ with } V_{P_N}:= V(f_{P_N}) \cap (\C^*\times \C).
\end{align*} 
We also have that the links \(L_1\) and \(L_{\underline{N}}\)  can be embedded on \(\mathbb{S}^3\) such that they are isotopic to the links \(L_{P_1}\) and \(L_{P_N}\), respectively.  
\vspace{0.2cm}

Up to a homeomorphism, we consider $L_i$ and $L_{\underline{i}}$ to be subsets of $\mathbb{C} \times \mathbb{S}^1$ and $\mathbb{S}^1 \times \mathbb{C}$, respectively. Since that $f$ is IND and $\Gamma_{\mathrm{inn}}$-nice \cite{AraujoBodeSanchez}: the link $L_i,\  i=1,\dots, N-1$ is empty or parametrized by
\begin{equation*}\label{uparametrization}
\bigcup_{j=1}^{M_i}\{(u_{i,j}(\tau),\rme^{\rmi t_{i,j}(\tau)}) \mid \,\tau\in[0,2\pi]\}\subset\mathbb{C}\times \mathbb{S}^1,
\end{equation*}
where $j=1,\ldots,M_i$ is indexing the $M_i$ components of $L_i$, $u_{i,j}:[0,2\pi]\to\mathbb{C}$ and $t_{i,j}:[0,2\pi]\to\mathbb{R}/2\pi$ are appropriate functions with $u_{i,j}(\tau)\neq 0$ for all $i\neq 1$, $j$ and $\tau$.  The link $L_{\underline{i}},\  i=2,\dots,N$ is empty or parametrized by
\begin{equation*}\label{vparametrization}
\bigcup_{j=1}^{M_i}\{(\rme^{\rmi \varphi_{i,j}(\tau)},v_{i,j}(\tau))\mid \,\tau\in[0,2\pi]\}\subset \mathbb{S}^1\times \mathbb{C},
\end{equation*}
where $j=1,\ldots,M_i$ is indexing the $M_i$ components of $L_i$, $v_{i,j}:[0,2\pi]\to\mathbb{C}$ and $\varphi_{i,j}:[0,2\pi]\to\mathbb{R}/2\pi$ are appropriate functions with $v_{i,j}(\tau)\neq 0$ for all $i\neq N$, $j$ and $\tau$. 
 
\begin{definition}\label{DEF: decompostion links}
Let \((L_1,L_2, \dots, L_{\ell}, \dots, L_N)\) be a sequence of possible empty links associated with \(f\), then we define $\ell$ as the minimal $i$ such that $k_i<1$ and the links $[L_1,L_2,\ldots,L_{\ell-1}]$ on $\C \times \mathbb{S}^1$ and $[L_N,L_{N-1},\ldots,L_{\ell}]'$ on $\mathbb{S}^1 \times \C$ are as follows:
\begin{enumerate}
\item[(i)]  the link $[L_1,L_2,\ldots,L_{\ell-1}]$ is defined by
\begin{equation*}
\bigcup_{i=1}^{N-1}\bigcup_{j=1}^{M_i}\{(\varepsilon_u^{k_i} u_{i,j}(\tau),\rme^{\rmi t_{i,j}(\tau)})\mid  \,\tau\in[0,2\pi]\}\subset\mathbb{C}\times \mathbb{S}^1,
\end{equation*}
for some sufficiently small $\varepsilon_u>0$, 
\item[(ii)] the link $[L_{\underline{N}},L_{\underline{N-1}},\ldots,L_{\underline{\ell}}]'$ is defined by
\begin{equation*}
\bigcup_{i=1}^{N-1}\bigcup_{j=1}^{M_i}\{(\rme^{\rmi \varphi_{i,j}(\tau)},\varepsilon_v^{\frac{1}{k_i}} v_{i,j}(\tau)) \mid \tau\in[0,2\pi]\}\subset\mathbb{S}^1\times \mathbb{C},
\end{equation*}
for some sufficiently small $\varepsilon_v>0$.
\end{enumerate}
\end{definition}  
We introduce the notation $\text{[*]}'$, which is necessary for the objectives of the present work. We also use the notation $\text{[*]}$ from \cite{AraujoBodeSanchez}. However, $\text{[*]}'$ creates the additional possibilities that our study demands, which is unnecessary within the scope of \cite{AraujoBodeSanchez}.
\begin{figure}[h]
         \centering
\includegraphics[width=0.7\linewidth]{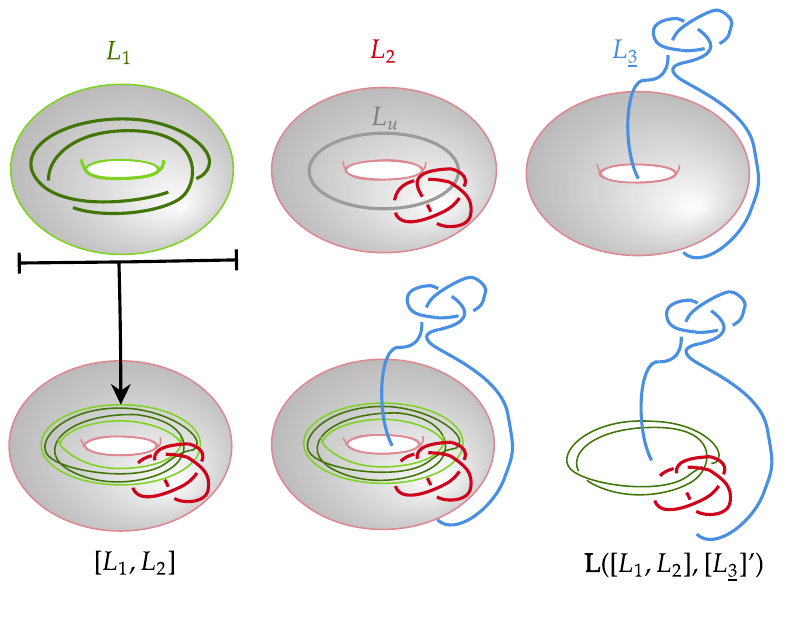}
         \caption{Nested link. Notice that $[*]$ represents the components in $\C\times \mathbb{S}^1$ and $[*]'$ represents the components in $ \mathbb{S}^1\times \C$}
         \label{fig:Nestedlink}
     \end{figure}
From links $K_1 \subset \C \times \mathbb{S}^1$ and $K_2 \subset \mathbb{S}^1 \times \C$, in \cite{AraujoBodeSanchez} is defined a union that yields a link $\mathbf{L}(L_1,L_2)$ on the 3-sphere, which is called a  \textbf{nested link} (For example, Figure~\ref{fig:Nestedlink}). 
\begin{definition}[\cite{AraujoBodeSanchez}]\label{nestedlink}
Let $K_1$ be a link in $\C\times \mathbb{S}^1$ that is either empty or parametrized by
\begin{equation}\label{linkparametrization}
\bigcup_{j=1}^M\{(u_{j}(\tau),\rme^{\rmi t_{j}(\tau)})\mid\tau\in[0,2\pi]\}
\end{equation}
for appropriate functions $u_j$, $t_j$,
and $K_2$ be a link in $\mathbb{S}^1\times\C$ that is either empty or parametrized by
\begin{equation*}
\bigcup_{j=1}^{M'}\{(\rme^{\rmi \varphi_{j}(\tau)},v_{j}(\tau))\mid \tau\in[0,2\pi]\}
\end{equation*}
for appropriate functions $v_j$, $\varphi_j$.
We define the link $\widetilde{K_i}$, $i=1,2$, in $\mathbb{S}^3$, which is empty if $K_i$ is empty and otherwise parametrized by 
\begin{equation*}
\bigcup_{j=1}^M\left\{\left(\varepsilon u_{j}(\tau),\sqrt{1-\varepsilon^2|u_j(\tau)|^2}\rme^{\rmi t_{j}(\tau)}\right)\mid \tau\in[0,2\pi]\right\}
\end{equation*}
and
\begin{equation*}
\bigcup_{j=1}^{M'}\left\{\left(\sqrt{1-\varepsilon^2|v_j(\tau)|^2}\rme^{\rmi \varphi_{j}(\tau)},\varepsilon v_{j}(\tau)\right)\mid \tau\in[0,2\pi]\right\}
\end{equation*}
for some small $\varepsilon>0$, respectively.
Then we write $\mathbf{L}(K_1,K_2)$ for the link in $\mathbb{S}^3$ given by $\widetilde{K_1}\cup\widetilde{K_2}$. If $K_1$ (resp. $K_2$) is empty, we denote  $\mathbf{L}(K_1,K_2)$ by $\mathbf{L}(K_2)$ (resp.  $\mathbf{L}(K_1)$).
\end{definition}

From \cite{AraujoBodeSanchez}, we deduce the characterisation of IND mixed polynomials that are $\Gamma_{\mathrm{inn}}$-nice (see also \cite{EspinelSanchez2025}). 
\begin{theorem}\label{thm:link-triviality1}
Let $f$ be an IND mixed polynomial that is $\Gamma_{\mathrm{inn}}$-nice, and take $L_i  ,\ i=1,\dots, N-1,$ and $L_{\underline{N}}$ as the links associated with the weight vectors of $\mathcal{P}_{\mathrm{inn}}(f)=\{P_1,\dots, P_N\}$. Then, the link of the singularity of $f$ is ambient isotopic to:
\begin{itemize}
\item[(i)] the link $L_{f_{P_1}}$, if $N=1$,
\item[(ii)] the nested link $\mathbf{L}([L_1,\ldots,L_{N-1}],[L_{\underline{N}}]')$, if $N\geq 2$.
\end{itemize}
\end{theorem}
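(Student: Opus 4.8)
\textbf{Proof proposal for Theorem~\ref{thm:link-triviality1}.}

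The plan is to reduce the statement to the combinatorial decomposition of $V(f)$ near $0$ coming from the weight vectors in $\mathcal{P}_{\mathrm{inn}}(f)$, and then glue the pieces using the nested-link construction of Definition~\ref{nestedlink}. First I would invoke Remark~\ref{rem-topological}(b): since $f$ is IND, the family $\{f_{\Gamma_{\mathrm{inn}}} + \varepsilon(f - f_{\Gamma_{\mathrm{inn}}})\}$ satisfies the hypotheses of Theorem~\ref{thm:link-triviality} (every monomial of $f - f_{\Gamma_{\mathrm{inn}}}$ lies strictly above $\Gamma_{\mathrm{inn}}(f)$, so $d(P_i;\theta) > d(P_i;f)$ for all $P_i \in \mathcal{P}_{\mathrm{inn}}(f)$), hence $L_f$ is ambient isotopic to $L_{f_{\Gamma_{\mathrm{inn}}}}$. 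This lets me assume $f = f_{\Gamma_{\mathrm{inn}}}$, i.e.\ that the support of $f$ lies exactly on $\Gamma_{\mathrm{inn}}(f)$, so that only the finitely many face functions $f_{P_i}$ and the vertex functions matter.

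Next, the case $N=1$: here $\Gamma_{\mathrm{inn}}(f)$ has a single $1$-face, so $f = f_{P_1}$ up to the reduction above, and the claim $L_f \simeq L_{f_{P_1}}$ is immediate (in fact tautological after the reduction; one only needs that $f_{P_1}$ has $\mathrm{Sing}(V(f_{P_1}))=\{0\}$, which is part of the IND/$\Gamma_{\mathrm{inn}}$-nice hypotheses for a single $1$-face, so the link is well-defined). For $N \geq 2$ I would run the following argument. Intersect $V(f)$ with a small sphere $\mathbb{S}^3_\rho$ and stratify $\mathbb{S}^3_\rho$ by the size of the two coordinates: for a point $(u,v)$ with $|u|\sim \rho^{k}$, $|v|\sim \rho$, the dominant part of $f$ is governed by which face $\Delta(P;\Gamma_{\mathrm{inn}}(f))$ minimizes $l_P$. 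Using the rescaling functions $f_i$ and $f_{\underline i}$ introduced before Definition~\ref{DEF: decompostion links} — whose $r\to 0^+$ (resp.\ $R\to 0^+$) limits are $(f_{P_i})_i$ and $(f_{P_i})_{\underline i}$ — one shows that, on the annular region of $\mathbb{S}^3_\rho$ corresponding to the weight $P_i$, the zero set $V(f)$ is isotopic to the zero set of the face function $f_{P_i}$, which by radial homogeneity and by the three characterizations of $V(f_{P_i})\cap(\mathbb{C}^*)^2$ displayed in the excerpt is exactly the link $L_i$ (or $L_{\underline i}$) sitting in $\mathbb{C}\times\mathbb{S}^1$ (or $\mathbb{S}^1\times\mathbb{C}$). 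The $\Gamma_{\mathrm{inn}}$-niceness hypothesis is what guarantees that no component of $V(f)$ escapes into the transitional zones between consecutive faces (the vertex functions have no zeros in $(\mathbb{C}^*)^2$), so the pieces patch together with no extra strands; the choice of $\ell$ (minimal $i$ with $k_i<1$) is precisely the bookkeeping device that decides which pieces are recorded in the $u$-chart (indices $1,\dots,\ell-1$, small $|u|$) and which in the $v$-chart (indices $\ell,\dots,N$, small $|v|$), and the boundary cases $i=1$ and $i=N$ are handled by the enlarged links $L_1 = V((f_{P_1})_1)\cap(\mathbb{C}\times\{0\}\times\mathbb{S}^1)$ and $L_{\underline N}$ (allowing $u=0$, resp.\ $v=0$) exactly because the face functions there need not avoid the coordinate axes. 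Assembling these local models over $\mathbb{S}^3_\rho$ reproduces the defining parametrization of $\mathbf{L}([L_1,\dots,L_{N-1}],[L_{\underline N}]')$ from Definitions~\ref{DEF: decompostion links} and~\ref{nestedlink}, which yields (ii).

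I expect the main obstacle to be the patching step: showing rigorously that on each annular region the ambient isotopy type of $V(f)\cap\mathbb{S}^3_\rho$ agrees with that of $V(f_{P_i})$ (a Thom--Mather-type isotopy / curve-selection argument controlled by the IND condition on the face $\Delta(P_i;\Gamma_{\mathrm{inn}}(f))$), and then that these isotopies can be chosen to agree on the overlaps, so they glue to a global ambient isotopy of $\mathbb{S}^3_\rho$. This is exactly the content that is established in \cite{AraujoBodeSanchez} (with the $\mathcal{P}_{\mathrm{inn}}$ reformulation carried out in \cite{EspinelSanchez2025}), so in the write-up I would state the reduction to $f = f_{\Gamma_{\mathrm{inn}}}$ explicitly and then cite \cite{AraujoBodeSanchez,EspinelSanchez2025} for the face-by-face isotopy and the gluing, rather than reproving it here.
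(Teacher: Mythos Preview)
Your proposal is correct and matches the paper's approach: the paper does not give a proof of Theorem~\ref{thm:link-triviality1} but simply records it as a result deduced from \cite{AraujoBodeSanchez} (with the $\mathcal{P}_{\mathrm{inn}}$ reformulation from \cite{EspinelSanchez2025}), exactly as you propose to do in your final paragraph. Your sketch of the underlying argument (reduction to $f_{\Gamma_{\mathrm{inn}}}$, face-by-face rescaling, $\Gamma_{\mathrm{inn}}$-niceness to exclude strands in the vertex zones, and gluing into the nested-link model) is an accurate summary of what those references establish, so the write-up you describe is appropriate.
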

\begin{remark}\label{geralzao}
Consider a mixed polynomial $f$ that is IND and $\Gamma_{\mathrm{inn}}$-nice and let $\mathcal{P}_{\mathrm{inn}}(f)=\{P_1,\dots,P
	_N\}$ with $N\geq 2$. By a modification of the nested representation on the link in  Theorem~\ref{thm:link-triviality1}, we obtain that $L_f$ is isotopic to the link $\mathbf{L}([L_1,L_2,\dots,L_{\ell-1}],[L_{\underline{N}},L_{\underline{N-1}}, \dots, L_{\underline{\ell}}]'])$, where $\ell$ is the minimal index in $\{1,\dots,N\}$ such that $k_{\ell}<1$. By construction, any sublink $J$ of $L_i$, $L_{\underline{i}}$, or $L_{P_i}$ lies in a component of $V(f_{P_i})$, denoted by $V(J;f_{P_i})$, and it is isotopic (provided by Theorem~\ref{thm:link-triviality1}) to a link lying in a component of $V(f)$, denoted by $V(J;f)$.

If $i=1,\dots ,\ell-1$, and $J$ is a sublink of $L_i$, the zeros  $V(J;f)$ and $V(J;f_{P_i})$ can be parametrized as $$(r^{k_i}u_{i,J}(r,\tau),r\rme^{\rmi t_{i,J}(r,\tau)}) \text{ and } (r^{k_i}u_{i,J}(0,\tau),r\rme^{\rmi t_{i,J}(0,\tau)}),$$
respectively, where $u_{i,J}$ and $t_{i,J}$ are subanalytic functions such that (see \cite[Equation~(59)]{AraujoBodeSanchez}) $$(u_{i,J}(r,\tau),\rme^{\rmi t_{i,J}(r,\tau)}) \to (u_{i,J}(0,\tau),\rme^{\rmi t_{i,J}(0,\tau)}) \in J \subseteq L_i \text{ when } r\to 0^+.$$ 

If $i=\ell, \dots, N$, and $J$ is a sublink of $L_{\underline{i}}$, the zeros $V(J;f)$ and $V(J;f_{P_i})$ can be parametrized as $$(R \rme^{\rmi \varphi_{i,J}(R,\tau)},R^{1/k_i}v_{i,J}(R,\tau)) \text{ and } (R \rme^{\rmi \varphi_{i,J}(0,\tau)},R^{1/k_i}v_{i,J}(0,\tau)),$$ respectively, where $v_{i,J}$ and $\varphi_{i,J}$ are subanalytic functions such that (see \cite[Equation~(60)]{AraujoBodeSanchez}) 

$$(\rme^{\rmi \varphi_{i,J}(R,\tau)},v_{i,J}(R,\tau)) \to (\rme^{\rmi \varphi_{i,J}(0,\tau)},v_{i,J}(0,\tau)) \in J \subseteq  L_{\underline{i}} \text{ when } R\to 0^+.$$

Note that if $L_i$, $L_{\underline{i}}$, and $L_{P_i}$ are compact in their respective spaces, then $V(L_i;f_{P_i})=V(L_{\underline{i}};f_{P_i})=V(L_{P_i};f_{P_i})$ and $V(L_i;f)=V(L_{\underline{i}};f)=V(L_{P_i};f)$.
\end{remark}

\begin{obs} \label{equivalenceoflinks}
	The representation of the link $L_f$ as a nested link can be changed,  depending on data from $\Gamma_{\mathrm{inn}}(f)$. For instance:
	\begin{enumerate} 
	\item[(i)]  If $k_i>1$ for all $P_i \in \mathcal{P}_{\mathrm{inn}}(f)$, then \(f_{\Gamma_{\mathrm{inn}}}\) is \(u\)-convenient,  $L_N:=V((f_{P_N})_{N}) \cap (\C^* \times\{0\}\times  \mathbb{S}^1) \subset \C \times \{0\} \times \mathbb{S}^1$ is compact and 
\[L_f \simeq \mathbf{L}([L_1,\dots,L_N]).\]
\item[(ii)]  If $k_i<1$ for all $P_i \in \mathcal{P}_{\mathrm{inn}}(f)$, then \(f_{\Gamma_{\mathrm{inn}}}\) is $v$-convenient,  $L_{\underline{1}}:=V((f_{P_1})_{\underline{1}}) \cap (\{0\} \times \mathbb{S}^1 \times \C^*) \subset \{0\} \times \mathbb{S}^1 \times \C$ is compact and 
\[L_f \simeq 	\mathbf{L}([L_{\underline{N}},\dots, L_{\underline{1}}]').\]
\item[(iii)] If $k_i\leq 1$ for all $P_i \in \mathcal{P}_{\mathrm{inn}}(f)$ and  \(f_{\Gamma_{\mathrm{inn}}}\) is not \(u\)-convenient (respectively, $k_i\geq 1$ for all $P_i \in \mathcal{P}_{\mathrm{inn}}(f)$ and \(f_{\Gamma_{\mathrm{inn}}}\) is not \(v\)-convenient), then \(L_{P_N} \simeq \mathbf{L}([L_N]')\) is isotopic to the trivial knot 
$L_{v}:=\mathbb{S}^3 \cap \{v=0\}$ (respectively, \(L_{P_1}\simeq \mathbf{L}([L_1])\) is isotopic to  $L_{u}:=\mathbb{S}^3 \cap \{u=0\}$). 
\end{enumerate}	
     \end{obs}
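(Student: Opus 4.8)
The plan is to deduce the three cases from Theorem~\ref{thm:link-triviality1} and its refinement Remark~\ref{geralzao}, feeding in one extra combinatorial remark about $\Gamma_{\mathrm{inn}}(f)$. That remark is: being a $C$-diagram, $\Gamma_{\mathrm{inn}}(f)$ has exactly one $0$-face on the $x$-axis and one on the $y$-axis, and by convexity of $\Gamma_{+}(f)$ the $1$-face of smallest slope $\Delta(P_N;\Gamma_{\mathrm{inn}}(f))$ is the edge adjacent to the $0$-face on the $x$-axis, while the $1$-face of largest slope $\Delta(P_1;\Gamma_{\mathrm{inn}}(f))$ is the edge adjacent to the $0$-face on the $y$-axis; in particular $\Delta(P_N;\Gamma_{\mathrm{inn}}(f))\cap\R^{\{1\}}\neq\emptyset$ and $\Delta(P_1;\Gamma_{\mathrm{inn}}(f))\cap\R^{\{2\}}\neq\emptyset$. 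Feeding this into Definition~\ref{def:gammainn}(i): if $f_{\Gamma_{\mathrm{inn}}}$ is not $u$-convenient then $1\in I_{\mathrm{nc}}(f_{\Gamma_{\mathrm{inn}}})$, so applying Definition~\ref{def:gammainn}(i) to $\Delta(P_N;\Gamma_{\mathrm{inn}}(f))$ gives $p_{N,1}\leq p_{N,2}$, that is, $k_{N,f}\leq 1$; symmetrically, if $f_{\Gamma_{\mathrm{inn}}}$ is not $v$-convenient then $k_{1,f}\geq 1$. The whole argument then runs off the contrapositives of these two implications.

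For (i), the assumption $k_{i,f}>1$ for all $i$ forces $f_{\Gamma_{\mathrm{inn}}}$ to be $u$-convenient, hence the $0$-face of $\Gamma_{\mathrm{inn}}(f)$ on the $x$-axis belongs to $\operatorname{supp}(f)$ and $f_{P_N}$ contains a monomial that is a pure power in $u$ and $\bar u$. Exactly as in the treatment of $L_1$ and $L_{\underline{N}}$ in \cite{AraujoBodeSanchez}, the dominance of this monomial as $|u|\to\infty$ makes $L_N=V((f_{P_N})_N)\cap(\C^{*}\times\{0\}\times\mathbb{S}^1)$ compact and isotopic in $\mathbb{S}^3$ to $L_{P_N}$, so $L_N$ may play the role of an interior link $L_j$, $j\neq 1,N$, in the nested construction. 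Since no weight vector satisfies $k_{i,f}<1$, the index $\ell$ of Remark~\ref{geralzao} equals $N+1$, so the second slot $[L_{\underline{N}},\dots,L_{\underline{\ell}}]'$ of the nested representation is empty and $L_f\simeq\mathbf{L}([L_1,\dots,L_N])$ (for $N=1$ this is Theorem~\ref{thm:link-triviality1}(i) via $\mathbf{L}([L_1])\simeq L_{f_{P_1}}$). Part (ii) is the mirror statement: $k_{i,f}<1$ for all $i$ gives $\ell=1$, annihilates the first slot $[L_1,\dots,L_{\ell-1}]$, forces $v$-convenience and compactness of $L_{\underline{1}}$, and yields $L_f\simeq\mathbf{L}([L_{\underline{N}},\dots,L_{\underline{1}}]')$; formally it follows from (i) by the involution exchanging the two coordinate axes, which swaps $u\leftrightarrow v$, $P_1\leftrightarrow P_N$, $L_i\leftrightarrow L_{\underline{i}}$ and sends $k_{i,f}$ to $k_{N+1-i,f}^{-1}$.

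For (iii), suppose $k_{i,f}\leq 1$ for all $i$ and $f_{\Gamma_{\mathrm{inn}}}$ is not $u$-convenient. Then the $0$-face $(a,0)$ of $\Gamma_{\mathrm{inn}}(f)$ on the $x$-axis lies on $\Delta(P_N;\Gamma_{\mathrm{inn}}(f))$ but not in $\operatorname{supp}(f)$, so every monomial of $f_{P_N}$ has positive total $(v,\bar v)$-degree; hence $f_{P_N}(u,0)\equiv 0$ and $\{v=0\}\subseteq V(f_{P_N})$. Writing $f_{P_N}=v^{e_1}\bar v^{e_2}g$ with $g$ divisible by neither $v$ nor $\bar v$, the plan is to prove $V(g)\cap(\C^{*})^{2}=\emptyset$; this forces $V_{P_N}=V(f_{P_N})\cap(\C^{*}\times\C)=\{v=0\}\cap(\C^{*}\times\C)$, hence $L_{P_N}=\mathbb{S}^3\cap\{v=0\}=L_v$, the trivial knot, and combined with the isotopy $L_{\underline{N}}\simeq L_{P_N}$ recalled above this gives $\mathbf{L}([L_N]')\simeq L_{P_N}\simeq L_v$. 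The parenthetical statement $\mathbf{L}([L_1])\simeq L_{P_1}\simeq L_u$ is obtained the same way under the axis-exchange involution.

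The hard part is exactly the step $V(g)\cap(\C^{*})^{2}=\emptyset$ in (iii): this is where one has to exploit inner non-degeneracy of $f$ at the inner $1$-face $\Delta(P_N;\Gamma_{\mathrm{inn}}(f))$ and at the adjacent inner $0$-face (through $\Gamma_{\mathrm{inn}}$-niceness), together with the bound $k_{i,f}\leq 1$, to exclude components of $V(f_{P_N})$ lying in the open torus $(\C^{*})^{2}$; without controlling those, the reduced factor $g$ could contribute circles linked with $L_v$. By contrast, (i) and (ii) are mostly bookkeeping once the convexity remark and Remark~\ref{geralzao} are in place, the only genuine technical point being the compactness and isotopy type of $L_N$ (resp.\ $L_{\underline{1}}$), which is parallel to the analysis of $L_1$ and $L_{\underline{N}}$ in \cite{AraujoBodeSanchez}.
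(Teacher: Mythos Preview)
The paper states this as an observation without proof, so there is no argument to compare against directly. Your treatment of (i) and (ii) is correct and is the natural one: the contrapositive of Definition~\ref{def:gammainn}(i) forces $u$-convenience (resp.\ $v$-convenience), the resulting pure monomial in $u,\bar u$ (resp.\ $v,\bar v$) gives compactness of $L_N$ (resp.\ $L_{\underline{1}}$) exactly as for $L_1$ and $L_{\underline{N}}$, and the value of $\ell$ from Remark~\ref{geralzao} collapses the nested representation to a single bracket.

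For (iii) there is a genuine gap: you correctly isolate $V(g)\cap(\C^{*})^{2}=\emptyset$ as the crux, but then only list the ingredients ``one has to exploit'' without carrying anything out. More seriously, that step does not follow from the stated hypotheses. Take the holomorphic $f=v^{5}+u^{2}v^{3}+u^{6}v$. One checks that $\Gamma_{\mathrm{inn}}(f)$ has vertices $(0,5),(2,3),(8,0)$, so $N=2$, $k_{1}=1$, $k_{2}=\tfrac12$, and $f_{\Gamma_{\mathrm{inn}}}$ is not $u$-convenient; $f$ is IND and $\Gamma_{\mathrm{inn}}$-nice (the sole inner vertex is $(2,3)$, with $f_{(2,3)}=u^{2}v^{3}$). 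Yet $f_{P_{2}}=u^{2}v(v^{2}+u^{4})$ has $V(f_{P_{2}})\cap(\C^{*})^{2}=\{v=\pm iu^{2}\}\neq\emptyset$, and $L_{P_{2}}$ consists of three circles, not the single knot $L_{v}$. So (iii) as written needs an extra hypothesis. In the paper it is only invoked through Remark~\ref{linkofTypeIII}, i.e.\ in the Type-III setting where $\operatorname{supp}(f)\cap\Delta(P_{N};\Gamma_{\mathrm{inn}}(f))$ reduces to the single inner vertex; there $\Gamma_{\mathrm{inn}}$-niceness at that vertex gives $f_{P_{N}}=f_{\text{vertex}}$ and hence $V(f_{P_{N}})\cap(\C^{*})^{2}=\emptyset$ immediately, and your outline becomes a complete proof. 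Without that restriction, the argument you sketch cannot close.
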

     
     \begin{obs}
        Sometimes it is useful to distinguish what function $f$ the links $L_i$, $L_{\underline{i}}$ and $L_{P_i}$ are associated to. In these cases, we write $L_{i,f}:=L_i$, $L_{\underline{i},f}:=L_{\underline{i}}$ and $L_{P_i,f}:=L_{P_i}$. For example, by Theorem~\ref{thm:link-triviality}, any IND mixed polynomial $f$ that is $\Gamma_{\rm{inn}}$-nice can be deformed to an IND mixed polynomial $\hat{f}$ such that $L_{f}$ is isotopic to $\mathbf{L}([L_{1,\hat{f}},\dots,L_{N,\hat{f}}])$ and $\mathbf{L}([L_{\underline{N},\hat{f}},\dots,L_{\underline{1},\hat{f}}]')$. As we will see through the paper, this does not necessarily imply that $f$ and $\hat{f}$ are bi-Lipschitz $V$-equivalent (see Definition \ref{V}), which shows the importance of such a distinction in some cases.
     \end{obs}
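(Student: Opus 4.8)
The content of this remark that requires argument is the \emph{existence} of the deformation $\hat{f}$ realising the two displayed nested presentations; the closing sentence, about the failure of bi-Lipschitz $V$-equivalence, is not proved here but is a forward reference to the examples constructed later in the paper (Example~\ref{1braidclosures_type_I_II} and Example~\ref{ex:typeiinotmetrics}), which I would simply cite. For the existence part the plan is: use Theorem~\ref{thm:link-triviality} to deform $f$ to a mixed polynomial $\hat{f}$ with $L_{\hat{f}}$ ambient isotopic to $L_f$ and whose Newton principal part $\hat{f}_{\Gamma_{\mathrm{inn}}(\hat{f})}$ is convenient (i.e. carries monomials at both coordinate-axis vertices of $\Gamma_{\mathrm{inn}}(\hat{f})$), and then read the two nested presentations for $\hat{f}$ off Theorem~\ref{thm:link-triviality1} together with Remark~\ref{equivalenceoflinks}.

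First I would build $\theta$. Since $\Gamma_{\mathrm{inn}}(f)$ is a $C$-diagram it has a vertex on each axis. If $\hat{f}_{\Gamma_{\mathrm{inn}}}$ is already $u$-convenient, do nothing on the $u$-side; otherwise adjoin a monomial $c_u u^{a}$, where $(a,0)$ is a lattice point on $\Gamma_{\mathrm{inn}}(f)$ on the $x$-axis when the $x$-vertex of $\Gamma_{\mathrm{inn}}(f)$ is integral, and otherwise a suitably chosen lattice point of $\Gamma_{\mathrm{inn}}(f)+(\R_{\geq 0})^2$ near that vertex producing a diagram $\Gamma_{\mathrm{inn}}(\hat{f})$ with an integral $x$-vertex carrying this monomial; proceed symmetrically on the $v$-side, set $\theta$ equal to the sum of the adjoined monomials and $\hat{f}:=f+\theta$. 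As the adjoined points lie in $\Gamma_{\mathrm{inn}}(f)+(\R_{\geq 0})^2$, one has $d(P_i;\theta)\ge d(P_i;f)$ for every $P_i\in\mathcal{P}_{\mathrm{inn}}(f)$, so hypothesis (ii) of Theorem~\ref{thm:link-triviality} holds; moreover only the face functions of the two axis-adjacent $1$-faces of $\Gamma_{\mathrm{inn}}(\hat{f})$ change, and no \emph{inner} vertex is created or altered, so $\Gamma_{\mathrm{inn}}$-niceness is inherited from $f$ and it remains to see $\hat{f}$ is IND. On $\{v=0\}$ the modified $u$-side face function reduces to $c_u u^{a}$, hence has no zero, and in particular no singular point, in $(\C^*)^{\{1\}}$; and since its zero set in $(\C^*)^2$ is a fibre of the real-analytic function $-(f_{\Delta})/u^{a}$ on $(\C^*)^2$, where $f_{\Delta}$ is the unperturbed face function, Sard's theorem shows that a generic choice of $c_u$ makes that fibre free of singular points, and likewise on the $v$-side. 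Thus $\hat{f}$ is IND, $\Gamma_{\mathrm{inn}}$-nice, and $\hat{f}_{\Gamma_{\mathrm{inn}}(\hat{f})}$ is convenient.

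To conclude, apply Theorem~\ref{thm:link-triviality} to the family $\{f+\varepsilon\theta\}_{\varepsilon\in\R}$: directly at $\varepsilon=1$ when the inequalities $d(P_i;\theta)\ge d(P_i;f)$ are strict, and otherwise at some $\varepsilon_0\ne 0$ in the neighbourhood $K$ furnished by the theorem, after which $f+\varepsilon_0\theta$ is renamed $\hat{f}$ (rescaling the coefficients of $\theta$ affects none of the properties verified above). In either case $L_f$ is ambient isotopic to $L_{\hat{f}}$. Because $\hat{f}_{\Gamma_{\mathrm{inn}}(\hat{f})}$ is convenient, all of the links $L_{i,\hat{f}}$ and $L_{\underline{i},\hat{f}}$ are compact in their respective ambients, so the nested-link construction behind Theorem~\ref{thm:link-triviality1} and Remark~\ref{equivalenceoflinks} can be carried out from either end, yielding $L_{\hat{f}}\simeq\mathbf{L}([L_{1,\hat{f}},\dots,L_{N,\hat{f}}])$ and $L_{\hat{f}}\simeq\mathbf{L}([L_{\underline{N},\hat{f}},\dots,L_{\underline{1},\hat{f}}]')$. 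Chaining the isotopies proves the remark.

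The step needing the most care is the combinatorial control of the Newton diagram of $\hat{f}$: making precise that the new axis monomials can be placed so that $\Gamma_{\mathrm{inn}}(\hat{f})$ has integral axis vertices which actually carry them — the non-integral case being dealt with by a short induction on the $1$-faces adjacent to the axis — together with the verification, via the Sard argument on the pencils described above, that inner non-degeneracy and $\Gamma_{\mathrm{inn}}$-niceness are preserved. Everything else is Newton-polygon bookkeeping and a direct appeal to Theorems~\ref{thm:link-triviality} and~\ref{thm:link-triviality1} and Remark~\ref{equivalenceoflinks}.
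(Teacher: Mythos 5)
Your overall strategy (deform $f$ by monomials supported on or above $\Gamma_{\mathrm{inn}}(f)$ at the two axes, invoke link-constancy from Theorem~\ref{thm:link-triviality}, then read off the two one-sided nested presentations for the convenient representative) is the argument the remark evidently intends, and the reduction of the bi-Lipschitz statement to the later examples is appropriate. However, two steps do not hold as written. First, your claim that ``only the face functions of the two axis-adjacent $1$-faces change, and no inner vertex is created or altered, so $\Gamma_{\mathrm{inn}}$-niceness is inherited'' fails precisely in the case you defer to a ``short induction'': when the axis vertex of $\Gamma_{\mathrm{inn}}(f)$ is not an integral support point, adding the monomial $c_u u^{a}$ with $(a,0)$ strictly above the old diagram forces $\Gamma_{\mathrm{inn}}(\hat f)$ (by the maximality in Definition~\ref{def:gammainn}) to acquire a new short $1$-face near the axis, and the lattice point of $\operatorname{supp}(f)$ at which it meets the old face becomes a \emph{new inner vertex}. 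Its face function is a bundle of monomials of $f$ at a single lattice point, completely untouched by $c_u$, and $\Gamma_{\mathrm{inn}}$-niceness demands that this bundle have no zeros in $(\C^{*})^{2}$ --- a condition that can genuinely fail (e.g.\ a bundle such as $uv+u\bar v+\bar u v+\bar u\bar v$ vanishes on a nonempty subset of $(\C^{*})^{2}$, robustly under small changes of coefficients). Neither genericity of $c_u$ nor your Sard argument (which only controls the singular locus of the modified $1$-face function) repairs this, and Theorem~\ref{thm:link-triviality} in the equality case only permits small coefficients, so one cannot simply dominate a bad bundle. So the preservation of niceness, which Theorem~\ref{thm:link-triviality1} requires for $\hat f$, is a real gap in your construction, not bookkeeping.

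Second, the concluding inference ``$\hat f_{\Gamma_{\mathrm{inn}}(\hat f)}$ convenient $\Rightarrow$ both $\mathbf{L}([L_{1,\hat f},\dots,L_{N,\hat f}])$ and $\mathbf{L}([L_{\underline N,\hat f},\dots,L_{\underline 1,\hat f}]')$ are isotopic to $L_{\hat f}$'' is not covered by the results you cite: Remark~\ref{equivalenceoflinks}(i)--(ii) apply only when \emph{all} $k_i>1$, respectively all $k_i<1$, and your deformation does not change the slopes $k_i$, so in general neither hypothesis holds for $\hat f$. What is actually needed is the stronger statement that compactness of the extremal links $L_{N,\hat f}$ and $L_{\underline 1,\hat f}$ allows each $[L_{\underline i}]'$-piece of the nested presentation of Theorem~\ref{thm:link-triviality1} to be replaced by the corresponding $[L_i]$-piece (and vice versa), i.e.\ the components sitting near $L_v$ can be slid, in the complement of the remaining components, to the prescribed annular levels around $L_u$. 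This is the real content of the remark; it is available in \cite{AraujoBodeSanchez} (and is used in \cite{EspinelSanchez2025}), but it must be cited or proved (for instance via the radial-homogeneity sliding isotopy of $V_{P_i}\cap\mathbb{S}^3$ between the two solid tori, which avoids $L_u$ and $L_v$ exactly because convenience makes $V(\hat f_{P_1})\cap\{u=0\}$ and $V(\hat f_{P_N})\cap\{v=0\}$ reduce to the origin), rather than attributed to Remark~\ref{equivalenceoflinks}.
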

     \begin{example}
      Consider the polynomial  $f(u,v) = u^8 + v^3u^2 + \bar{v}^6u^2 + \bar{v}^5u + v^4\bar{v}^4$ from Example~\ref{exemplo-poligono-newton}. 
We observe that $\mathcal{P}_{\mathrm{inn}}(f)=\{Q_1=(2,1),Q_2=(1,2)\}$ (see Figure~\ref{diagramf}). By Theorem~\ref{thm:link-triviality1}, the link of the singularity of $f$ is isotopic to the nested link $\mathbf{L}([L_1],[L_{\underline{2}}]')$, where $L_1$ and $L_{\underline{2}}$ are the links associated with the weight vectors $Q_1$ and $Q_2$, respectively. The functions that define explicitly the links are:
\begin{equation*}
 (f_{Q_1})_1(u,r,\rme^{\rmi t}):=\lim_{r \to 0^+}r^{-7}f_{Q_1}(r^{2}u,r \rme^{\rmi t})=\rme^{3\rmi t}u^2+\rme^{-5\rmi t}u
 \end{equation*}
 and 
 \begin{equation*}
    (f_{Q_2})_{\underline{2}}(R, \rme^{\rmi \varphi}, v):\lim_{R\to 0^+}R^{-8}f(R\rme^{\rmi \varphi}, R^{2}v)=\rme^{8 \rmi \varphi}+v^3\rme^{2\rmi \varphi}.
    \end{equation*}
The link $L_1$ is formed (up to a homomorphism) by the union of two knots: $\{(0,\rme^{\rmi \tau})\mid \tau \in [0,2\pi]\}$ and $\{(\rme^{-8\rmi \tau},\rme^{\rmi \tau})\mid \tau \in [0,2\pi]\}$. The link $L_{\underline{2}}$ is formed (up to a homomorphism) by the union of the three knots: $\{(\rme^{\rmi \tau},\mathrm{e}^{\mathrm{i} \left( \frac{\pi}{3} + 2 \tau \right)})\mid \tau \in [0,2\pi]\}$,  $\{(\rme^{\rmi \tau},\mathrm{e}^{\mathrm{i} \left( \pi + 2 \tau \right)})\mid \tau \in [0,2\pi]\}$ and  $\{(\rme^{\rmi \tau},\mathrm{e}^{\mathrm{i} \left( \frac{5\pi}{3} + 2 \tau \right)})\mid \tau \in [0,2\pi]\}$.

Since $f$ is $u$-convenient and $u$-semiholomorphic (meaning $f_{\bar{u}}\equiv 0$), we can represent the link $L_f$ as the nested link $\mathbf{L}([L_1,L_2])$. The links $L_1$, $L_2$ and $[L_1,L_2]$ are illustrated in Figure~\ref{fig1:main}. We calculate the component $L_2$ from the associated function: 
\begin{equation*}
    (f_{Q_2})_{2}(u,r, \rme^{\rmi t}):=r^{-4}f(r^{\frac{1}{2}}u,r\rme^{\rmi t})=u^8+\rme^{3 \rmi t}u^2.
    \end{equation*}
    \begin{figure}[h]
    \centering
    \begin{subfigure}[b]{0.34\textwidth}
        \centering
        \includegraphics[width=\textwidth]{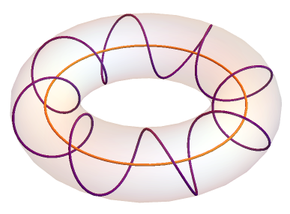}
        \caption{$L_1 \subset \C \times \mathbb{S}^1$.}
        \label{fig1:sub-a}
    \end{subfigure}
    \hfill 
    \begin{subfigure}[b]{0.34\textwidth}
        \centering
        \includegraphics[width=\textwidth]{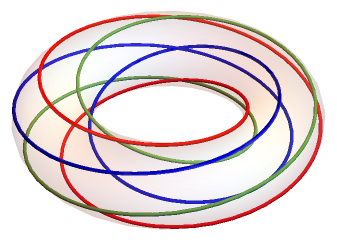}
        \caption{$L_{2} \subset  \C  \times \mathbb{S}^1$.}
        \label{fig1:sub-b}
    \end{subfigure}
     \begin{subfigure}[c]{0.34\textwidth}
        \centering
        \includegraphics[width=\textwidth]{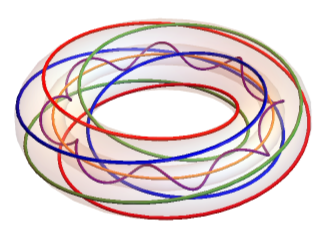}
        \caption{ $[L_1,L_{{2}}] \subset  \C \times \mathbb{S}^1$.}
        \label{fig1:sub-c}
    \end{subfigure}
    \caption{}
    \label{fig1:main}
\end{figure}
    \begin{obs}\label{braidclosure}
In \cite{AraujoBodeSanchez}, Araújo, Bode and Sanchez Quiceno also investigated the link of IND semiholomophic polynomials. In this context, these polynomials are automatically $\Gamma_{\mathrm{inn}}$-nice, and their links can be represented as the closure of a braid. Specifically, a braid $B$ in $\C \times [0,2\pi]$ is carried to a link in $\C \times \mathbb{S}^1$ by identifying $\C \times \{0\}$ and $\C \times \{2\pi\}$; this process yields the \textbf{closure of $B$ in $\C \times \mathbb{S}^1$}. The sphere $\mathbb{S}^3$ can be decomposed as the union $\mathbb{T}_1=\{(u,v) \in \mathbb{S}^3 \mid |v|\geq |u|\}$ and $\mathbb{T}_2=\{(u,v) \in \mathbb{S}^3 \mid |u|\geq |v|\}$, and $\C \times \mathbb{S}^1$ and $\mathbb{S}^1 \times \C$ are homeomorphic (by unknotted embeddings) to $\operatorname{Int}(\mathbb{T}_1)$ and $\operatorname{Int}(\mathbb{T}_2)$  respectively. 
The \textbf{closure of $B$ in $\mathbb{S}^3$} is the image of the closure of $B$ in $\C \times \mathbb{S}^1$ by the unknotted embedding in $\operatorname{Int}(\mathbb{T}_1)$. This closure of $B$ has $L_v$ as braid axis. 
Analogously, we construct for a braid $B$ in $[0,2\pi] \times \C$ the \textbf{closure of $B$ in $\mathbb{S}^1 \times \C$} and the  \textbf{closure of $B$ in $\mathbb{S}^3$}.  This closure of $B$ lies in $\operatorname{Int}(\mathbb{T}_2)$ and has $L_u$ as braid axis. 

For instance, the braid in Figure~\ref{fig2:sub-a} closes to the link $L_1$ in Figure~\ref{fig1:sub-a}, the braid in Figure~\ref{fig2:sub-b} closes to the link $L_2$ in Figure~\ref{fig1:sub-b}, and the braid in Figure~\ref{fig2:sub-c} closes to the link $[L_1,L_2]$ in Figure~\ref{fig1:sub-c}. The braid in Figure~\ref{fig2:sub-c} is referred to as a \textbf{nested braid} and is denoted by $\mathbf{B}(B_1,B_2)$, where $B_1$ and $B_2$ are the braids  shown in Figures \ref{fig2:sub-a} and \ref{fig2:sub-b}, respectively.
 \end{obs}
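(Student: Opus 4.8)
Since this remark mainly recalls the braid–closure formalism of \cite{AraujoBodeSanchez}, the plan is to re-derive each of its assertions from the concrete geometry of the genus-one Heegaard splitting of $\mathbb{S}^3$ together with the structure of the links $L_i$ in the $u$-semiholomorphic case. \emph{Step 1 (solid-torus decomposition).} With $\mathbb{S}^3=\{(u,v)\in\C^2 : |u|^2+|v|^2=1\}$ we have $\mathbb{T}_1=\{|v|\ge|u|\}=\{|u|^2\le 1/2\}$ and $\mathbb{T}_2=\{|u|\ge|v|\}$, so $\mathbb{T}_1\cap\mathbb{T}_2$ is the Clifford torus $\{|u|=|v|=1/\sqrt{2}\}$. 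The map $(u,e^{it})\mapsto\bigl(u,\ e^{it}\sqrt{1-|u|^2}\bigr)$, defined on $\{|u|\le 1/\sqrt{2}\}\times\mathbb{S}^1$, identifies $\mathbb{T}_1$ with a closed solid torus; hence $\operatorname{Int}(\mathbb{T}_1)\cong\C\times\mathbb{S}^1$, and its core $\{u=0\}$ equals $L_u=\mathbb{S}^3\cap\{u=0\}$, which bounds the disk $\{u=0,\ |v|\le1\}$ in $\mathbb{S}^3$ — this is the meaning of an unknotted embedding. Swapping $u$ and $v$ gives $\operatorname{Int}(\mathbb{T}_2)\cong\mathbb{S}^1\times\C$ with core $L_v$. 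This is the first half of the remark.

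\emph{Step 2 (braid axes).} A braid $B$ in $\C\times[0,2\pi]$, after gluing $\C\times\{0\}$ to $\C\times\{2\pi\}$, is a link in $\C\times\mathbb{S}^1$ transverse to every fibre $\C\times\{pt\}$, i.e. a closed braid in $\operatorname{Int}(\mathbb{T}_1)$ via Step 1. By the standard description of closed braids in an unknottedly embedded solid torus in $\mathbb{S}^3$ (the meridian disks $\{(u,e^{it_0}\sqrt{1-|u|^2})\}$ play the role of pages), the braid axis is isotopic to the core of the complementary solid torus $\mathbb{T}_2$, namely $L_v$ by Step 1; symmetrically, a braid in $[0,2\pi]\times\C$ closes inside $\operatorname{Int}(\mathbb{T}_2)$ with braid axis the core $L_u$ of $\mathbb{T}_1$.

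\emph{Step 3 (links of semiholomorphic polynomials).} For $u$-semiholomorphic $f$ that is IND and $\Gamma_{\mathrm{inn}}$-nice (the niceness being automatic here by \cite{AraujoBodeSanchez}), substituting $u=r^{k_i}u'$, $v=re^{it}$ in $f_{P_i}$ and normalising by the radial degree yields $(f_{P_i})_i=g_i(u',e^{it})$, independent of $r$ and — since there is no $\bar u$ — polynomial in $u'$ with coefficients that are trigonometric polynomials in $t$. Inner non-degeneracy together with $\Gamma_{\mathrm{inn}}$-niceness force the leading coefficient of $g_i(\cdot,e^{it})$ in $u'$ (and, for $i\neq1$, also the trailing one) to be non-vanishing and the roots to stay simple for every $t$, so the projection $L_i\to\mathbb{S}^1$, $(u',e^{it})\mapsto e^{it}$, is a covering of constant degree $M_i$; hence $L_i$ is the closure of an $M_i$-strand braid in $\C\times\mathbb{S}^1$, and the mirror computation on the $v$-side handles $L_{\underline i}$. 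The three displayed examples follow by reading the strand functions off the parametrisations computed in Example~\ref{exemplo-poligono-newton} and matching them with the braids drawn against $L_1$ (Figure~\ref{fig1:sub-a}), $L_2$ (Figure~\ref{fig1:sub-b}) and $[L_1,L_2]$ (Figure~\ref{fig1:sub-c}); the nested braid $\mathbf{B}(B_1,B_2)$ is just $B_1$ inserted into a thin tube about the axis of $B_2$, which under Step 1 corresponds exactly to the nesting of $L_1$ inside $L_2$.

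\emph{Main obstacle.} The genuinely non-formal point is the verification in Step 3 that, as $t$ runs over the circle, the leading and trailing coefficients of $g_i(\cdot,e^{it})$ never vanish and no two roots collide; this must be drawn precisely from $\Gamma_{\mathrm{inn}}$-niceness and inner non-degeneracy (not from any stronger running hypotheses of \cite{AraujoBodeSanchez}), and one must also check that the two solid-torus charts of Step 1 agree along the Clifford torus so that nested braids glue to honest closed braids representing $[L_1,L_2]$.
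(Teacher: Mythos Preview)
The statement is a \emph{remark} (an \texttt{obs} environment) that simply recalls terminology and facts from \cite{AraujoBodeSanchez}; the paper gives no proof of it at all. Your write-up is a correct and careful unpacking of those facts --- the Heegaard splitting, the identification of braid axes with $L_u$ and $L_v$, and the closed-braid structure of the $L_i$ in the $u$-semiholomorphic case --- but it is not comparable to any argument in the paper, because none is offered there. One small imprecision: the explicit parametrisations of $L_1$, $L_2$, $[L_1,L_2]$ that you point to are actually computed in the (unlabelled) example that \emph{continues} Example~\ref{exemplo-poligono-newton} after Theorem~\ref{thm:link-triviality1}, not in Example~\ref{exemplo-poligono-newton} itself.
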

\begin{figure}[h]
    \centering
    \begin{subfigure}[b]{0.35\textwidth}
        \centering
        \includegraphics[width=\textwidth]{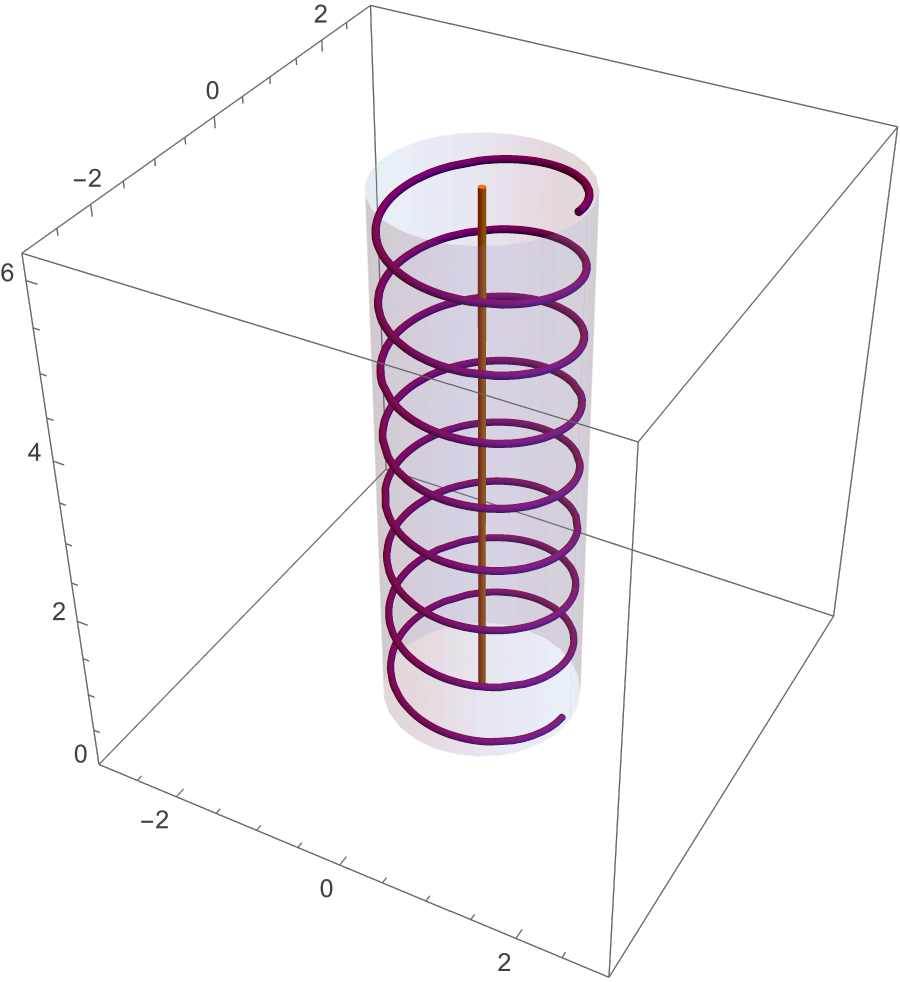}
        \caption{Braid $B_1$}
        \label{fig2:sub-a}
    \end{subfigure}
    \hfill 
    \begin{subfigure}[b]{0.35\textwidth}
        \centering
        \includegraphics[width=\textwidth]{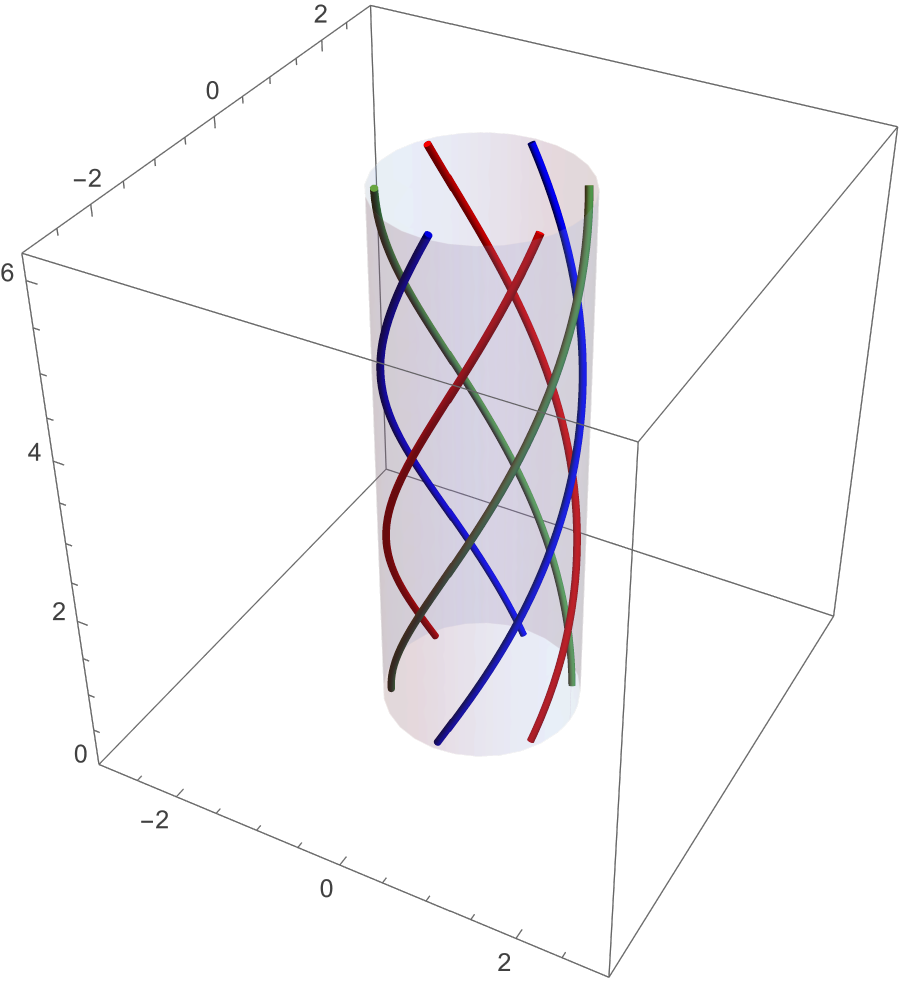}
        \caption{Braid $B_2$.}
        \label{fig2:sub-b}
    \end{subfigure}
     \begin{subfigure}[c]{0.35\textwidth}
        \centering
        \includegraphics[width=\textwidth]{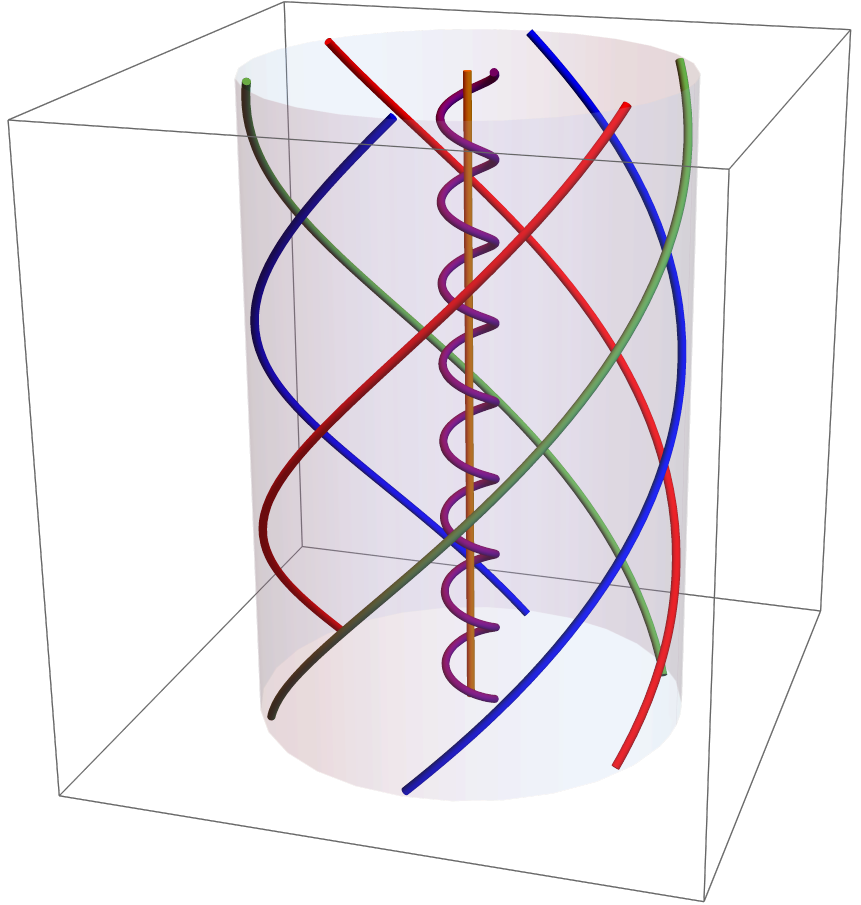}
        \caption{Braid $\mathbf{B}(B_1,B_2)$.}
        \label{fig2:sub-c}
    \end{subfigure}
    \caption{}
    \label{fig2:main}
\end{figure} 
     \end{example}

To finish this subsection, we define the following class of mixed polynomials, which is inspired by the link associations presented in \cite{Oka2010}, and which will be useful to improve necessary conditions for bi-Lipschitz equivalence in some cases.

\begin{definition}\label{Def:truepolynomials}
	Let \( f \) be an IND mixed polynomial that is $\Gamma_{\mathrm{inn}}$-nice, and let \( P_i \in \mathcal{P}_{\mathrm{inn}}(f) \). We say that \( f \) is \textbf{\(\Gamma_{\mathrm{inn}}\)-true} for \( P_i \) if:
	\begin{itemize}
		\item[(i)] \(i=1\), the link \( L_{P_1;f} \) is non-empty when \( N_f > 1 \), or the link \( L_{f_{P_1}} \) is non-empty when \( N_f = 1 \).
		\item[(ii)] \( i \neq 1 \), the link \( L_{P_i;f} \) is non-empty.
	\end{itemize}
	We say that \( f \) is \textbf{\(\Gamma_{\mathrm{inn}}\)-true} if \( f \) is \textbf{\(\Gamma_{\mathrm{inn}}\)-true} for all \( P_i \in \mathcal{P}_{\mathrm{inn}}(f) \).
\end{definition}
Let $f$ be an IND mixed polynomial that is $\Gamma_{\mathrm{inn}}$-nice. Let $N_f$ be the number of compact 1-faces of $\Gamma_{\mathrm{inn}}(f)$ and let $k_{i,f}=\frac{p_{i,1}}{p_{i,2}}$ be the slopes of these compact 1-faces associated with $P_i=(p_{i,1},p_{i,2})\in \mathcal{P}_{\mathrm{inn}}(f)$.  
Define 
\begin{equation}\label{Def:If}
	I_f:=\{i \in \{1,\dots ,N_f \} \mid f \text{ is } \Gamma_{\mathrm{inn}}\text{-true for } P_i\}.
\end{equation}

In general, $I_f \subsetneq \{1,\dots, N_f\}$. But, if $f$ is $\Gamma_{\mathrm{inn}}$-true (for instance, semiholomorphic), then we have $I_f=\{1,\dots, N_f\}$. We also define 
\begin{equation}\label{Def:Ifk}
		I_{f}^{(k>1)}:=\{i \in I_f \mid  k_i>1\} \text{  and  }  I_{f}^{(k\geq 1)}:=\{i \in I_f \mid   k_i\geq 1\}.
\end{equation}
Analogously, we define $ I_{f}^{(k< 1)}$,  $I_{f}^{(k\leq 1)}$, $I_{f}^{(k=1)}$, and $I_{f}^{(k\neq 1)}$. 

\subsection{Excerpts of Lipschitz geometry}\label{Subsec:Lipschitz-geometry}
Most of the preliminaries in Lipschitz geometry used in this paper are included with more detail in the survey paper \cite{livroLip}. We included the necessary ones to make the paper self-contained.

\begin{definition}\label{Lip}
	Given two metric spaces $(X_1, d_1)$ and $(X_2, d_2)$, we say that a homeomorphism $\phi: X_1 \to X_2$ is \textbf{bi-Lipschitz for the metrics $d_1$ and $d_2$} (or bi-Lipschitz, for short) if there is $K \ge 1$ such that
	\begin{equation*}
		\frac{1}{K} \cdot d_1(p, q) \leq d_2(\phi(p), \phi(q)) \leq K \cdot d_1(p, q), \quad \forall p, q \in X_1.
	\end{equation*}
	For each $K \ge 1$ that satisfies such a condition, we say that $\phi$ is $K$-bi-Lipschitz. Furthermore, we say that the metric spaces $(X_1, d_1)$ and $(X_2, d_2)$ are \textbf{bi-Lipschitz equivalent} if there is a bi-Lipschitz map $\phi: X_1 \to X_2$.
\end{definition}

\begin{definition}\label{Outer-metric}
	Given $X \subset \mathbb{R}^{n}$, we define the following two metrics in $X$:
	\begin{itemize}
		\item \textbf{outer metric of $X$}: we define the outer metric of $X$ as the distance $d: X \times X \to \mathbb{R}_{+}$, given by $d(x, y) = \|x - y\|$, for all $x, y \in X$;
		\item \textbf{inner metric of $X$}: if $X$ is path-connected, we define the inner metric of $X$ as the distance $d_{X}: X \times X \to \mathbb{R}_{+}$, given by $d_{\rm inn}(x, y) = \inf{\{ l(\alpha) \}}$, for all $x, y \in X$. The infimum is taken over all rectifiable paths $\alpha \subset X$ from $x$ to $y$, and $l(\alpha)$ is the length of $\alpha$ (if such a rectifiable path $\alpha$ does not exist, we define $d_{\rm inn}(x, y) = \infty$).
	\end{itemize}
\end{definition}

\begin{definition}\label{Inner-metric}
	Given two sets $X_1 \subset \mathbb{R}^{m}$ and $X_2 \subset \mathbb{R}^{n}$, we define the following bi-Lipschitz maps from $X_1$ to $X_2$:
	\begin{itemize}
		\item \textbf{outer bi-Lipschitz map}: an outer bi-Lipschitz map $\phi: X_1 \to X_2$ is a map that is bi-Lipschitz for the outer metrics of $X_1$ and $X_2$;
		\item \textbf{inner bi-Lipschitz map}: an inner bi-Lipschitz map $\phi: X_1 \to X_2$ (assuming $X_1$ and $X_2$ are path-connected) is a map that is bi-Lipschitz for the inner metrics of $X_1$ and $X_2$.
		\item \textbf{ambient bi-Lipschitz map:} an ambient bi-Lipschitz map between $X_1$ and $X_2$ is a bi-Lipschitz map $\phi : \mathbb{R}^{m} \to \mathbb{R}^{n}$ with respect to the outer metric, such that $\phi (X_1) = X_2$. Notice that if such an ambient bi-Lipschitz map exists, then $m=n$.
	\end{itemize}
\end{definition}

\begin{definition}\label{outer-lip}
	Given two sets $X_1 \subset \mathbb{R}^m$, $X_2 \subset \mathbb{R}^n$, we say that $X_1$ and $X_2$ are \textbf{outer bi-Lipschitz equivalent} (resp. inner, ambient) if there is an outer bi-Lipschitz (resp. inner, ambient) map between $X_1$ and $X_2$. Given $p\in X_1$, $q\in X_2$, we say that two germs $(X_1,p)$, $(X_2,q)$ are \textbf{outer bi-Lipschitz equivalent} (resp. inner, ambient) if there are neighbourhoods $U$ of $p$ and $V$ of $q$ and an outer bi-Lipschitz (resp. inner, ambient) map between $X_1\cap U$ and $X_2 \cap V$.
\end{definition}

\begin{remark}\label{amb-out-inn}
	If $X_1$ and $X_2$ are ambient bi-Lipschitz equivalent, then $X_1$ and $X_2$ are outer bi-Lipschitz equivalent, and if $X_1$ and $X_2$ are outer bi-Lipschitz equivalent, then $X_1$ and $X_2$ are inner bi-Lipschitz equivalent. However, the converses are generally false (see \cite{BBG} for counterexamples).
\end{remark}

\begin{remark}
    Throughout this paper, when we refer to a bi-Lipschitz map $\phi: X \to Y$ (or a bi-Lipschitz map $\phi: (X,0)  \to (Y,0)$), we mean that $\phi$ is an outer bi-Lipschitz map. Moreover, when we say that $X$ and $Y$ (or $(X,0)$ and $(Y,0)$) are bi-Lipschitz equivalent, this means that they are outer bi-Lipschitz equivalent.
\end{remark}

\begin{definition}\label{LNE}
	We say that a path-connected set $X \subset \mathbb{R}^{n}$ is \textbf{Lipschitz normally embedded} (or LNE, for short) if the outer metric and inner metric are bi-Lipschitz equivalent, i.e., there is a constant $K \ge 1$ such that
	\begin{equation*}
		d_{\rm inn}(x,y) \leq K \cdot d(x,y), \quad \forall \; x,y \in X.
	\end{equation*}
	In this case, we say that $X$ is $K$-LNE. Given $p\in X$, we say that $X$ is \textbf{Lipschitz normally embedded at $p$} (or LNE at $p$) if there exists a neighbourhood $U$ of $p$ such that $X \cap U$ is LNE, or equivalently, the germ $(X,p)$ is LNE. If $X\cap U$ is $K$-LNE for some $K\ge 1$, we say that $(X,p)$ is $K$-LNE.
\end{definition}

The following result, proved by Mendes and Sampaio in \cite{LLNE-LNE}, is a very useful criterion for detecting LNE set germs in terms of their links.

\begin{theorem}\label{Edson-Rodrigo}
Let $X \subset \mathbb{R}^{n}$ be a closed subanalytic set, such that $0 \in X$ and the link of $X$ is connected. Then, $X$ is LNE at $0$ if and only if there is a constant $K\ge 1$ such that $X_t$ is $K$-LNE for all $t>0$ small enough.
\end{theorem}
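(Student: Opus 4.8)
The plan is to prove both directions by relating the inner and outer metrics of a subanalytic germ at the origin to the corresponding metrics on the family of its links $X_t = X \cap \mathbb{S}^{n-1}_t$, using the local conical structure of subanalytic sets. I would start by fixing a subanalytic bi-Lipschitz homeomorphism $h\colon X \cap B_\varepsilon \to \mathrm{Cone}(X_\varepsilon)$ provided by the local conical structure theorem; more precisely, I want a "spherical coordinates" description in which every point $x \in X \setminus \{0\}$ close to $0$ is written as $x = (\|x\|, \xi)$ with $\xi \in X_{\|x\|}$, and the radial and spherical directions are controlled. The key metric estimate I would establish first is the standard comparison: for $x, y \in X$ with $\|x\| = \|y\| = t$, one has $d_{\mathrm{inn},X}(x,y) \asymp t \cdot d_{\mathrm{inn},X_t}(x/t, y/t)$ up to a subanalytic-uniform constant (here $X_t$ is rescaled to the unit sphere), and a similar comparison for the outer metrics $\|x - y\| \asymp t \cdot \|x/t - y/t\|$, which is exact. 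For points at different radii $\|x\| = s < t = \|y\|$, one compares by moving radially from $x$ to the sphere of radius $t$ and then along $X_t$, picking up an additive term of size $\asymp t - s \le \|x-y\|$ in the outer metric and $\asymp t - s$ in the inner metric as well (using that radial paths in the cone have controlled length).

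For the direction "$(X,0)$ LNE $\Rightarrow$ $X_t$ uniformly LNE", the argument is essentially the reverse of the comparison above: given $x, y \in X_t$ (on the unit sphere after rescaling), their outer distance in $X_t$ controls their outer distance in $X$ at radius $t$, an $X$-inner path between the two rescaled points of length $\le K \|x - y\|$ can be projected radially onto $X_t$ with bounded length distortion (the projection onto the sphere is Lipschitz on a conical neighbourhood, with constant depending only on the opening of the cone, hence subanalytically uniform in $t$), and this yields $d_{\mathrm{inn},X_t}(x,y) \le K' \|x-y\|$ with $K'$ independent of $t$. The connectedness of the link is used here to ensure $X_t$ is path-connected so that its inner metric is finite.

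For the converse "$X_t$ uniformly $K$-LNE $\Rightarrow (X,0)$ LNE", take $x, y \in X$ near $0$; I would split into the case $\|x\| = \|y\| = t$, where uniform LNE of $X_t$ together with the comparison estimates directly gives $d_{\mathrm{inn},X}(x,y) \le C \cdot t \cdot d_{\mathrm{inn},X_t}(\hat x, \hat y) \le CK \cdot t\|\hat x - \hat y\| \asymp CK \|x-y\|$; and the case $\|x\| = s < t = \|y\|$, where I would route: go radially inward along $X$ from $y$ to a point $y'$ with $\|y'\| = s$ (radial path of length $\asymp t - s \lesssim \|x - y\|$, valid since the conical structure provides radial arcs inside $X$), then apply the equal-radius case to $x$ and $y'$ inside $X_s$. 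The subtlety is that moving $y$ radially inward must be done inside $X$ and with length comparable to $t-s$; this is exactly what the conical structure (or a subanalytic triangulation compatible with the distance function) supplies, and the resulting constant is uniform because the uniform LNE constant $K$ and the conical-structure constants do not depend on the radius.

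The main obstacle I anticipate is making the "uniformity in $t$" rigorous: one must ensure that all the implied constants in the radial/spherical decomposition, in the Lipschitz bound for the radial retraction onto $X_t$, and in the inner-metric comparison do not degenerate as $t \to 0^+$. The clean way to handle this is to invoke subanalyticity from the outset — work with a subanalytic bi-Lipschitz trivialization of $(X,0)$ over the link, whose existence gives all constants uniformly — and to note that connectedness of the link guarantees that a single path-component argument suffices, avoiding pathologies where $X_t$ might split. Once uniformity is secured, both implications follow from routing paths and applying the triangle inequality.
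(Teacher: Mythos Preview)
This theorem is not proved in the present paper; it is quoted as a background result from Mendes--Sampaio \cite{LLNE-LNE}, so there is no in-paper argument to compare your sketch against.

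That said, your outline has a genuine gap, and it sits exactly where you anticipate (``uniformity in $t$''), but it is more serious than a matter of tracking constants. Both directions rest on ``radial'' moves that are not available for a general closed subanalytic germ. In the forward implication you take a short $X$-inner path between two points of $X_t$ and ``project it radially onto $X_t$''; however the map $z\mapsto t\,z/\|z\|$ lands in the ambient sphere $S^{n-1}_t$, not in $X_t=X\cap S^{n-1}_t$, so this does not produce a path in $X_t$ at all (the projection is Lipschitz onto the sphere, as you say, but that is beside the point). In the converse implication you move $y$ ``radially inward along $X$'' with length $\asymp t-s$; the local conical structure theorem furnishes only a \emph{homeomorphism} with a cone, not a bi-Lipschitz one, so its radial trajectories may have length far exceeding $t-s$. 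Your proposed remedy at the end --- invoke ``a subanalytic bi-Lipschitz trivialization of $(X,0)$ over the link'' --- is circular for the hard direction: such a trivialization would itself force $(X,0)$ to be LNE, since the straight cone over a fixed compact subanalytic $L\subset S^{n-1}$ is LNE if and only if $L$ is. The argument in \cite{LLNE-LNE} avoids naive radial projection and instead relies on tools specific to subanalytic Lipschitz geometry (in particular the arc criterion for normal embedding \cite{birbrair2018arc}); a Lipschitz retraction of $X$ onto $X_t$ with constant uniform in $t$ is essentially the \emph{conclusion} one is after, not a preliminary step you may assume.
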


\begin{definition}\label{Def: arc}
		An  \textbf{arc} in $\mathbb{R}^{n}$ (or \textbf{real half-branch}) is a germ at the origin of a map $\gamma \colon [0,\epsilon) \longrightarrow \mathbb{R}^{n}$ such that $\gamma(0) = 0$. Unless otherwise specified, we suppose that arcs are parametrized by the distance to the origin, i.e., $||\gamma(t)||=t$. For a germ (at 0) of a set $X$, we define the set of all arcs $\gamma \subset X$ as the  \textbf{Valette link} of $X$ (see \cite{Valette}).
\end{definition}

\begin{definition}\label{DEF: order of tangency}
	 The \textbf{tangency order of two arcs} $\gamma_{1}$ and $\gamma_{2}$ in the Valette link of $X$ is the exponent $q$ where $||\gamma_{1}(t) - \gamma_{2}(t)|| = ct^{q} + o(t^{q})$ with $c> 0$. We denote the tangency order as $\tord(\gamma_{1},\gamma_{2})$. By convention, for every arc $\gamma$, we set $\tord(\gamma,\gamma)=\infty$).
     
     For an arc $\gamma$ and a set of arcs $Z$ in the Valette link of $X$, \textbf{the tangency order} of $\gamma$ and $Z$ is the supremum of $\tord(\gamma, \lambda)$ over all arcs $\lambda$ in the Valette link of $Z$. We denote this tangency order by either $\tord(\gamma, Z)$ or $\tord(Z,\gamma)$. 
     
     Finally, the \textbf{contact order of two set germs} $Z$ and $Z'$ at $0$ is the supremum of $\tord(\gamma, Z')$ over all arcs $\gamma$ in the Valette link of $Z$. We denote the contact order as either $\tord(Z,Z')$ or $\operatorname{Cont}(Z,Z')$.
\end{definition}

\begin{remark}
    It was shown in \cite{B.F.METRIC-THEORY} that, for any outer bi-Lipschitz map $\phi:(X,0)\to (Y,0)$ and for any arcs $\gamma$ in the Valette link of $X$, $\lambda$ in the Valette link of $Y$, we have ${\rm tord}(\gamma,\lambda)={\rm tord}(\phi(\gamma),\phi(\lambda))$. As a consequence, notice also that if $\phi: (X \cup Y,0) \to (X'\cup Y',0)$ is an outer bi-Lipschitz map such that $\phi(X)=X'$ and $\phi(Y)=Y'$, then $\operatorname{Cont}(X,Y)=\operatorname{Cont}(X',Y')$. In particular, if $\operatorname{Cont}(X,Y)=1$, then $\tord(\gamma,\lambda)=1$, for all $\gamma$ in the Valette link of $X$ and $\lambda$ in the Valette link of $Y$.
\end{remark}

\begin{remark}\label{Rem: non-archimedean property}
	 An interesting fact about the tangency order of arcs in $\R^n$ is the so called ``non-archimedean property'' (it first appeared in \cite{B.F.METRIC-THEORY} as ``isosceles property''): given arcs $\gamma_1,\gamma_2,\gamma_3$ in $\R^n$, we have $\tord(\gamma_2,\gamma_3) \ge \min(\tord(\gamma_1,\gamma_2),\tord(\gamma_1,\gamma_3))$. Moreover, if $\tord(\gamma_1,\gamma_2)\ne \tord(\gamma_1,\gamma_3)$ then $$\tord(\gamma_2,\gamma_3) = \min(\tord(\gamma_1,\gamma_2),\tord(\gamma_1,\gamma_3)).$$
\end{remark}

\begin{definition}\label{DEF: standard Holder triangle}
	  Let $\beta \in \mathbb{Q}_{\ge 1}$. We define the \textbf{standard $\beta$-H\"older triangle} as the germ at 0 of 
      \begin{equation*}
		T_\beta = \{(x,y)\in \R^2 \mid x\ge 0, \; 0\le y \le x^\beta\}.
	\end{equation*}
    The germs at 0 of $\gamma_0=\{(x,0) \in \R^2 \mid x\ge0\}$ and $\gamma_1=\{(x,x^{\beta}) \in \R^2 \mid x\ge0\}$ are defined as the \textbf{boundary arcs of $T_\beta$}. A germ of a set $T$ that is inner bi-Lipschitz equivalent to the standard $\beta$-H\"older triangle is called a \textbf{$\beta$-H\"older triangle}. If $\phi: T_\beta \to T$ is an inner bi-Lipschitz map, then $(\phi(\gamma_0),0)$ and $(\phi(\gamma_1),0)$ are defined as the \textbf{boundary arcs of $T$}. The \textbf{exponent of $T$} is $\beta$.
    \end{definition}

    \begin{definition}
      The \textbf{standard $\beta$-horn} is defined as the germ at 0 of the set \begin{equation*}\label{Formula:Standard Holder triangle}
		H_\beta = \{(x,y,z)\in \R^3 \mid z\ge 0, \; x^2+y^2=z^{2\beta}\}.
	\end{equation*}
    A germ at 0 of a set $H \subset \mathbb{R}^{n}$ that is inner bi-Lipschitz equivalent to the standard $\beta$-horn $H_\beta$ is called a \textbf{$\beta$-horn}. The number $\beta \in \mathbb{F}$ is called the \textbf{exponent of $H$}. For a set $X\subset\R^n$, the {\bf $\beta$-horn neighbourhood of amplitude $\eta$ of $X$} is the germ, at 0, of the following set: 
	$$
	\mathcal{H}_{\beta,\eta}(X):=\bigcup_{x\in X}\{z\in \R^n\mid \|x-z\|<\eta \|x\|^\beta\}.
	$$
\end{definition}

\begin{remark}\label{Rem: NE HT condition}
	It was proved in \cite{birbrair1999local} that the exponent $\beta$ of either an H\"older triangle $T$ or a horn $H$ is an inner bi-Lipschitz invariant. Moreover, if $\phi: \R^n\to\R^n$ is an outer bi-Lipschitz map, then the image of a $\beta$-horn neighbourhood of $X\subset \R^n$ is contained in a $\beta$-horn neighbourhood of $\phi(X)$. 
\end{remark}

\subsection{Ambient Lipschitz equivalence and tangent cones}\label{subsec:2.5}

In this subsection, we establish the necessary preliminaries for the study of Lipschitz geometry of zeros of mixed polynomials.

\begin{definition}\label{V}
	Let $f$ and $g$ be mixed polynomials. We say that $f$ is \textbf{bi-Lipschitz (topologically) $V$-equivalent to} $g$ if there exists an outer bi-Lipschitz map (homeomorphism) germ $\phi:(V(f),0) \to (V(g),0)$. If $\phi$ extends to an ambient bi-Lipschitz map (ambient homeomorphism)  $\tilde{\phi}:(\C^2,0) \to (\C^2,0)$, we say that $f$ is \textbf{ambient bi-Lipschitz (ambient topologically) $V$-equivalent to} $g$. 
\end{definition}

\begin{definition} Let $K \subseteq \R$ be a connected set containing the origin. Let $f_{\varepsilon} : \C^2 \to \C$, $\varepsilon \in I$ be a smooth family of mixed polynomials. We say that the family \(\{f_\varepsilon\}_{\varepsilon \in \R}\) is \textbf{bi-Lipschitz (topologically) $V$-trivial} (or \textbf{ambient bi-Lipschitz (ambient topologically) $V$-trivial}) along $I$ if for any $\varepsilon \in I$,   $f_{\varepsilon}$ is bi-Lipschitz (topologically) $V$-equivalent (or ambient bi-Lipschitz (ambient topologically) $V$-equivalent) to $f_{0}$. When the set $I$ is understood, we say that the family is \textbf{bi-Lipschitz (topologically) $V$-trivial} (or \textbf{ambient bi-Lipschitz (ambient topologically) $V$-trivial}).
\end{definition}

One of the main objects in ambient Lipschitz geometry of set germs is its tangent cone. One of its many interesting properties is that an ambient bi-Lipschitz map between germs induces an ambient bi-Lipschitz map between their tangent cones.

\begin{definition}
Let $A\subset \R^n$ be a subanalytic set and $x_0 \in \overline{A}$, then the  \textbf{tangent cone} of $A$ at $x_0$ is 
\begin{equation*} \label{tangcone}
\begin{aligned}
C(A, x_0) := \{v; &\ \exists \alpha:[0, \epsilon) \to \R^n \mid \alpha(0) = x_0, \alpha((0, \epsilon)) \subset A, \ \&
\  \alpha(t) - x_0 = tv + o(t)\}.
\end{aligned}
\end{equation*}
\end{definition}

\begin{theorem}\label{cone-sampas}
    Let $X,Y \subseteq \R^n$ and $x_0 \in \overline{X}$, $y_0 \in \overline{Y}$. Suppose that the germs $(X,x_0)$ and $(Y,y_0)$ are (ambient) bi-Lipschitz equivalent. Then, the tangent cones $(C(X,x_0),x_0)$ and $(C(Y,y_0),y_0)$ are also (ambient) bi-Lipschitz equivalent.
\end{theorem}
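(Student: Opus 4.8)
The plan is to realize the bi-Lipschitz equivalence of the tangent cones as a limit of rescalings of the given map. After translating both germs I may assume $x_0=y_0=0$ and write $C(X):=C(X,0)$, $C(Y):=C(Y,0)$, which are cones with vertex $0$. I would treat the \emph{ambient} case first, as it is the cleanest. Let $\phi:(\R^n,0)\to(\R^n,0)$ be a $K$-bi-Lipschitz homeomorphism with $\phi(X\cap U)=Y\cap V$ for small balls $U,V$ around $0$. For $t>0$ small, set $\phi_t:\R^n\to\R^n$, $\phi_t(v):=\tfrac1t\phi(tv)$; each $\phi_t$ is $K$-bi-Lipschitz, fixes $0$, and satisfies $\phi_t\bigl(\tfrac1t(X\cap U)\bigr)=\tfrac1t(Y\cap V)$. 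Since the $\phi_t$ are $K$-Lipschitz with $\phi_t(0)=0$, they are equicontinuous and locally uniformly bounded, so by Arzel\`a--Ascoli some sequence $t_j\downarrow 0$ gives $\phi_{t_j}\to\phi_0$ uniformly on compact sets, and (after passing to a further subsequence) $\phi_{t_j}^{-1}=(\phi^{-1})_{t_j}\to\psi_0$ uniformly on compact sets. Both bi-Lipschitz inequalities pass to the limit, so $\phi_0,\psi_0$ are $K$-Lipschitz; and because these convergences are uniform on compact sets and the maps are uniformly Lipschitz, compositions converge, so letting $j\to\infty$ in $\phi_{t_j}^{-1}\circ\phi_{t_j}=\mathrm{id}=\phi_{t_j}\circ\phi_{t_j}^{-1}$ yields $\psi_0\circ\phi_0=\mathrm{id}=\phi_0\circ\psi_0$. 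Hence $\phi_0:\R^n\to\R^n$ is a $K$-bi-Lipschitz homeomorphism.

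Next I would check $\phi_0(C(X))=C(Y)$. Take $v\in C(X)$ and an arc $\alpha:[0,\epsilon)\to\R^n$ with $\alpha(0)=0$, $\alpha((0,\epsilon))\subset X$ and $\alpha(t)=tv+o(t)$, so $\tfrac1t\alpha(t)\to v$. Then
\[
\Bigl\|\tfrac1{t_j}\phi(\alpha(t_j))-\phi_0(v)\Bigr\|=\bigl\|\phi_{t_j}\bigl(\tfrac1{t_j}\alpha(t_j)\bigr)-\phi_0(v)\bigr\|\le K\bigl\|\tfrac1{t_j}\alpha(t_j)-v\bigr\|+\bigl\|\phi_{t_j}(v)-\phi_0(v)\bigr\|\longrightarrow 0,
\]
so $\tfrac1{t_j}\phi(\alpha(t_j))\to\phi_0(v)$ with $\phi(\alpha(t_j))\in Y$, $\phi(\alpha(t_j))\to0$ and $\tfrac1{t_j}\to+\infty$; hence $\phi_0(v)\in C(Y)$ (using the sequential description of the tangent cone, which agrees with the arc description for subanalytic sets). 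Thus $\phi_0(C(X))\subseteq C(Y)$, and applying the same estimate to $\psi_0$ and $Y$ gives $\psi_0(C(Y))\subseteq C(X)$, i.e.\ $C(Y)\subseteq\phi_0(C(X))$. Therefore $\phi_0$ restricts to an ambient bi-Lipschitz equivalence $(C(X),0)\to(C(Y),0)$; this also settles the non-ambient conclusion in this case, since an ambient bi-Lipschitz map restricts to an outer one.

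For the purely \emph{outer} case ($\phi:(X,0)\to(Y,0)$ bi-Lipschitz), the same rescaling works, but now the maps $\phi_t:\tfrac1tX\to\tfrac1tY$ have domains varying with $t$. The extra ingredient is the standard subanalytic fact that $\tfrac1tX$ converges, in the Hausdorff sense on compact sets, to $C(X)$ as $t\to0^+$ --- equivalently $C(X)=\liminf_{t\to0}\tfrac1tX=\limsup_{t\to0}\tfrac1tX$, the nontrivial inclusion $C(X)\subseteq\liminf\tfrac1tX$ coming from the curve selection lemma applied to $X$ --- and likewise for $Y$. Granting this, the Arzel\`a--Ascoli argument for uniformly bi-Lipschitz maps between Hausdorff-converging proper metric spaces produces, along a subsequence $t_j\downarrow0$, a $K$-bi-Lipschitz map $\phi_0:C(X)\to C(Y)$ with $K$-bi-Lipschitz inverse $\psi_0=\lim\phi_{t_j}^{-1}$, and the computation above shows it carries $C(X)$ onto $C(Y)$.

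I expect the main obstacle to be exactly this varying-domain step in the outer case: establishing $\tfrac1tX\to C(X)$ as sets and extracting a limit map that is genuinely \emph{surjective} onto $C(Y)$. In the ambient case these difficulties disappear --- all $\phi_t$ are honest self-maps of $\R^n$, and surjectivity of $\phi_0$ is obtained for free by also passing to the limit with the inverses $\phi_{t_j}^{-1}$, so one need not even invoke invariance of domain --- which is why I would present the ambient case first; for the subanalytic convergence lemma and the remaining details of the outer case I would refer to \cite{Sampaio:2016}.
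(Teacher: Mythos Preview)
Your argument is correct and is essentially the proof from \cite{Sampaio:2016}; the paper itself does not give an independent proof but simply cites that reference. In other words, you have reproduced the rescaling/Arzel\`a--Ascoli argument that the paper defers to Sampaio, so there is no divergence in approach to discuss.
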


\begin{proof}
    See \cite[Theorem~3.2]{Sampaio:2016}.
\end{proof}

The next lemma allows us to extend radially a bi-Lipschitz map defined on the sphere.

\begin{lemma}\label{lemma-extension}
    Let $\phi: \mathbb{S}^n \to \mathbb{S}^n$ be a bi-Lipschitz map. Consider the map  $\Phi: \R^{n+1} \to \R^{n+1}$ given as 
    $$\Phi(\bm{z})=\begin{cases} \|z\| \phi \left(\dfrac{z}{\|z\|}\right),\ z \neq 0, \\ 0 ,\ z=0.\end{cases}. $$
    Then, $\Phi$ is a bi-Lipschitz map.
\end{lemma}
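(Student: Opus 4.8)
\textbf{Proof plan for Lemma \ref{lemma-extension}.}

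The plan is to verify the two Lipschitz inequalities for $\Phi$ directly, using the corresponding inequalities for $\phi$ on $\mathbb{S}^n$. Write $K\ge 1$ for a bi-Lipschitz constant of $\phi$, so that $\frac1K\|p-q\|\le\|\phi(p)-\phi(q)\|\le K\|p-q\|$ for all $p,q\in\mathbb{S}^n$; note in particular that $\Phi$ is clearly a homeomorphism (its inverse has the same form, built from $\phi^{-1}$), so it suffices to bound $\|\Phi(\bm z)-\Phi(\bm w)\|$ from above by a constant times $\|\bm z-\bm w\|$, and then the lower bound follows by applying the upper bound to $\Phi^{-1}$. The case where one of the points is $0$ is immediate, since $\|\Phi(\bm z)-\Phi(0)\|=\|\bm z\|\,\bigl\|\phi(\bm z/\|\bm z\|)\bigr\|=\|\bm z\|$ because $\phi$ maps into $\mathbb{S}^n$; so assume $\bm z,\bm w\neq 0$.

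First I would set $r=\|\bm z\|$, $s=\|\bm w\|$, $u=\bm z/r$, $v=\bm w/s\in\mathbb{S}^n$, and write
\begin{equation*}
\Phi(\bm z)-\Phi(\bm w)=r\phi(u)-s\phi(v)=r\bigl(\phi(u)-\phi(v)\bigr)+(r-s)\phi(v).
\end{equation*}
The triangle inequality together with $\|\phi(v)\|=1$ gives
\begin{equation*}
\|\Phi(\bm z)-\Phi(\bm w)\|\le r\,\|\phi(u)-\phi(v)\|+|r-s|\le Kr\,\|u-v\|+|r-s|.
\end{equation*}
Now I need the elementary estimate $r\|u-v\|\le 2\|\bm z-\bm w\|$ for unit vectors $u,v$ with $r=\|\bm z\|$, $s=\|\bm w\|$: indeed $r\|u-v\|=\|\bm z-(r/s)\bm w\|\le\|\bm z-\bm w\|+\|\bm w-(r/s)\bm w\|=\|\bm z-\bm w\|+|s-r|\le 2\|\bm z-\bm w\|$, using $|r-s|=\bigl|\,\|\bm z\|-\|\bm w\|\,\bigr|\le\|\bm z-\bm w\|$. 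Combining, $\|\Phi(\bm z)-\Phi(\bm w)\|\le (2K+1)\|\bm z-\bm w\|$.

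For the reverse inequality, the cleanest route is to observe that $\Phi^{-1}$ is of exactly the same type, built from the bi-Lipschitz map $\phi^{-1}:\mathbb{S}^n\to\mathbb{S}^n$ (which is $K$-bi-Lipschitz as well), so the computation just carried out applies verbatim to $\Phi^{-1}$ and yields $\|\Phi^{-1}(\bm a)-\Phi^{-1}(\bm b)\|\le(2K+1)\|\bm a-\bm b\|$; substituting $\bm a=\Phi(\bm z)$, $\bm b=\Phi(\bm w)$ gives $\|\bm z-\bm w\|\le(2K+1)\|\Phi(\bm z)-\Phi(\bm w)\|$. Hence $\Phi$ is $(2K+1)$-bi-Lipschitz. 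The only mildly delicate point—and the one I would be careful to state precisely—is the auxiliary inequality $r\|u-v\|\le 2\|\bm z-\bm w\|$ bounding the ``angular'' part of the displacement by the full Euclidean displacement; everything else is routine triangle-inequality bookkeeping, and continuity of $\Phi$ at $0$ is handled by the $\|\Phi(\bm z)\|=\|\bm z\|$ identity.
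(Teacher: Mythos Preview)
Your proof is correct and follows the same overall strategy as the paper---show $\Phi$ is Lipschitz by a radial/angular splitting and then apply the same bound to $\Phi^{-1}$---but the decomposition you use is different. You write $r\phi(u)-s\phi(v)=r(\phi(u)-\phi(v))+(r-s)\phi(v)$ and control the angular term via the elementary inequality $r\|u-v\|\le 2\|\bm z-\bm w\|$, obtaining the constant $2K+1$. The paper instead introduces the intermediate point $\tilde p=\tfrac{\|q\|}{\|p\|}\,p$ on the ray of $p$ with norm $\|q\|$, splits $\|\Phi(p)-\Phi(q)\|\le\|\Phi(p)-\Phi(\tilde p)\|+\|\Phi(\tilde p)-\Phi(q)\|\le\|p-\tilde p\|+C\|\tilde p-q\|$, and then uses the geometric fact that $\angle p\tilde pq\ge 90^\circ$ to conclude $\max\{\|p-\tilde p\|,\|\tilde p-q\|\}\le\|p-q\|$, yielding the sharper constant $C+1$. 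Your argument is more elementary (pure triangle-inequality bookkeeping, no angle observation needed) at the cost of a slightly worse constant; the paper's is a touch more geometric and gives a tighter bound, but both are entirely valid.
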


\begin{proof}
    Notice first that $\Phi$ is a well-defined, continuous map whose inverse $\Phi^{-1}: \R^{n+1} \to \R^{n+1}$ is also well-defined, continuous and is given as
    $$\Phi^{-1}(\bm{z})=\begin{cases} \|z\| \phi^{-1} \left(\dfrac{z}{\|z\|}\right),\ z \neq 0, \\ 0 ,\ z=0.\end{cases}. $$
    Therefore, $\Phi$ is a homeomorphism. We will show that if $\phi$ is $C$-bi-Lipschitz, then $\Phi$ is $(C+1)$-bi-Lipschitz. Let $p,q \in \R^{n+1}$ and suppose, without loss of generality, that $\|p\| \ge \|q\|$. If $q=0$, then $\|\Phi(p)-\Phi(q)\|=\|p\| \le(C+1)\|p-q\|$. If $q \ne 0$ and $\|p\|=\|q\|$, then $$\|\Phi(p)-\Phi(q)\|=\|p\|.\left\|\phi\left(\frac{p}{\|p\|}\right)-\phi\left(\frac{q}{\|q\|}\right)\right\| \le C\|p\|.\left\|\frac{p}{\|p\|}-\frac{q}{\|q\|}\right\|=C.\|p-q\|\le (C+1)\|p-q\|.$$
    If $\|p\|>\|q\|>0$, then let $\tilde p=\frac{\|q\|}{\|p\|}.p$. Notice that $\|\tilde p\|=\|q\|$ and $\angle p \tilde pq \ge 90^{\circ}$. Then, by the triangular inequality and by the previous case, we have
    $$\|\Phi(p)-\Phi(q)\|\le\|\Phi(p)-\Phi(\tilde p)\|+\|\Phi(\tilde p)-\Phi(q)\|\le \|p-\tilde p\|+C\|\tilde p-q\|\le(C+1).\max\{\|p-\tilde p\|,\|\tilde p-q\|\}.$$
    On the other hand, since $\angle p\tilde pq \ge 90^{\circ}$, we have $\max\{\|p-p'\|,\|p'-q\|\} \le \|p-q\|$, and hence $$\|\Phi(p)-\Phi(q)\|\le(C+1).\max\{\|p-\tilde p\|,\|\tilde p-q\|\} \le (C+1)\|p-q\|.$$ Analogously, we have $\|\Phi^{-1}(p)-\Phi^{-1}(q)\| \le (C+1)\|p-q\|$, and the lemma follows.
\end{proof}

\begin{remark}
    Lemma \ref{lemma-extension} also holds for any Hilbert space, since $\angle p\tilde pq \ge 90^{\circ}$ is equivalent to $\langle p-\tilde p,q-\tilde p \rangle \le 0$.
\end{remark}

In order to construct suitable ambient bi-Lipschitz maps between zeros of mixed polynomials in families, we need the concept of ambient bi-Lipschitz isotopy (see \cite[Definition 6.1]{medeiros2023amb}).

\begin{definition}\label{amb-isotopy-equiv}
Let $K\ge1$ and let $(X,0), (X_0,0), (X_1,0) \subseteq (\mathbb{R}^n,0)$ such that $(X_1,0), (X_2,0) \subseteq (X,0)$. We say that $(X_1,0), (X_2,0)$ are \textbf{ambient ($K$)-bi-Lipschitz isotopic in $(X,0)$} if there is a continuous map $\Phi : (X,0)\times [ 0,1] \to (X,0)$ such that, if we denote $\Phi_{\tau}(p) = \Phi(p,\tau)$, then:
\begin{enumerate}
\item $\Phi_{\tau}: (X,0) \to (X,0)$ is an $K$-bi-Lipschitz map, for all $0 \le \tau \le 1$;
\item $\Phi_0 = id_{(X,0)}$;
\item $\Phi_1((X_1,0))=(X_2,0)$.

\end{enumerate}

The map $\Phi$ is an \textbf{ambient ($K$)-bi-Lipschitz isotopy in $(X,0)$, taking $(X_1,0)$ into $(X_2,0)$}. We also say that the isotopy $\Phi$ is \textbf{invariant on $(Y,0) \subset (X,0)$} if $\Phi_{\tau}|_{(Y,0)}=id_{(Y,0)}$, for all $0 \le \tau \le 1$.
\end{definition}

\begin{remark}
    For each $t_0,t\in [0,1]$, the transition map $\psi_{t_0\to t}:=\Phi_{t}\circ \Phi_{t_0}^{-1}$ is a $K^2$-bi-Lipschitz map.
\end{remark}

The next result is useful to unite two inner bi-Lipschitz maps into a single inner bi-Lipschitz map. For LNE sets, such a map is also outer bi-Lipschitz.

\begin{lemma}[Lipschitz Gluing Lemma] \label{Gluing Lipschitz Lemma}
	Let $X_1,X_2 \subseteq \mathbb{R}^n$, $Y_1,Y_2 \subseteq \mathbb{R}^m$ be closed arcwise connected sets such that $X_1\cap X_2, Y_1 \cap Y_2 \ne \emptyset$. Suppose $\phi_1 : X_1 \to Y_1$ and $\phi_2 : X_2 \to Y_2$ are inner bi-Lipschitz maps satisfying $\phi_1(p)=\phi_2(p)$, for every $p\in X_1 \cap X_2$, and $\phi_1^{-1}(q)=\phi_2^{-1}(q)$, for every $q\in Y_1 \cap Y_2$. Then, if $X=X_1\cup X_2$ and $Y=Y_1 \cup Y_2$, the map $\phi: X \to Y$ given by $\phi(p)=\phi_i(p)$, if $p\in X_i$ ($i=1,2$), is an inner bi-Lipschitz map.
\end{lemma}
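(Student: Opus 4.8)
The plan is to check, in this order, that $\phi$ is (1) well-defined, (2) a bijection whose inverse is the analogously glued map $\psi\colon Y\to X$ with $\psi|_{Y_i}=\phi_i^{-1}$, (3) a homeomorphism, and (4) inner bi-Lipschitz with constant $K:=\max\{K_1,K_2\}$, where $\phi_i$ is $K_i$-bi-Lipschitz. Well-definedness of $\phi$ is exactly the hypothesis $\phi_1(p)=\phi_2(p)$ on $X_1\cap X_2$, and of $\psi$ the hypothesis $\phi_1^{-1}=\phi_2^{-1}$ on $Y_1\cap Y_2$ (recall $\phi_i$ is onto $Y_i$, being a bi-Lipschitz homeomorphism). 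A piecewise check using $\phi_i(X_i)=Y_i$ and the two agreement conditions shows $\psi\circ\phi=\mathrm{id}_X$ and $\phi\circ\psi=\mathrm{id}_Y$, so $\phi$ is a bijection. Since $X_1,X_2$ are closed in $X$ and $\phi|_{X_i}=\phi_i$ is continuous, the pasting lemma gives that $\phi$ is continuous, and likewise $\psi$; hence $\phi$ is a homeomorphism.

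For (4) it suffices to prove that every rectifiable arc $\alpha$ in $X$ satisfies $l(\phi\circ\alpha)\le K\,l(\alpha)$. Granting this, $\phi\circ\alpha$ is a rectifiable arc in $Y$ joining $\phi(\alpha(0))$ and $\phi(\alpha(1))$, so passing to the infimum over arcs yields $d^Y_{\mathrm{inn}}(\phi(p),\phi(q))\le K\,d^X_{\mathrm{inn}}(p,q)$ for all $p,q\in X$; the reverse inequality follows by applying the same statement to $\psi$, which satisfies the identical hypotheses. To prove $l(\phi\circ\alpha)\le K\,l(\alpha)$, parametrise $\alpha\colon[0,L]\to X$ by arc length and write $h:=\phi\circ\alpha$. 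The sets $A_i:=\alpha^{-1}(X_i)$ are closed and cover $[0,L]$, and whenever $\alpha([s,t])\subseteq X_i$ one has $d^{X_i}_{\mathrm{inn}}(\alpha(s),\alpha(t))\le l(\alpha|_{[s,t]})=t-s$, hence $\|h(s)-h(t)\|=\|\phi_i(\alpha(s))-\phi_i(\alpha(t))\|\le K_i(t-s)$; thus $h$ is $K_i$-Lipschitz on each such interval. Let $G:=\alpha^{-1}(X_1\cap X_2)$ and let $\{(a_m,b_m)\}_m$ be the components of $[0,L]\setminus G$; on the closure of each component $\alpha$ remains in a single $X_i$, so $h$ is $K$-Lipschitz there, and their total contribution to the variation of $h$ is at most $K\sum_m(b_m-a_m)=K(L-|G|)$. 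It remains to control the variation of $h$ concentrated on $G$, where $h=\phi_1\circ\alpha=\phi_2\circ\alpha$.

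I expect this last point to be the main obstacle: $G$ may be topologically large — for instance a Cantor set of positive measure — and a naive subinterval estimate fails, since a path in $X$ joining two points of $X_1\cap X_2$ need not admit a comparably short path inside a single $X_i$. I would handle it by working with the non-decreasing function $v(t):=V_0^t(h)$ (the total variation of $h$ on $[0,t]$, continuous wherever finite) and proving $v(L)\le KL$ by a connectedness argument: the set $\{t\in[0,L]\,:\,v(t)\le Kt\}$ is closed and contains $0$; if its supremum $t_0$ were $<L$, one examines $\alpha(t_0)$. If $\alpha(t_0)\notin X_1\cap X_2$, then $\alpha$ stays in one $X_i$ near $t_0$ and $v$ grows there at rate $\le K_i$; if $\alpha(t_0)\in X_1\cap X_2$, then on $[t_0,t]$ one separates the part of $\alpha$ lying over $X_1\cap X_2$ from the excursions into $X_i\setminus X_{3-i}$, bounding the variation on the excursions by $K$ times their length and using on the remaining part that the two expressions for $h$ agree — in either case obtaining growth rate $\le K$ and contradicting the choice of $t_0$.

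In the subanalytic setting relevant to the applications in this paper, this step is immediate: $d^X_{\mathrm{inn}}$ is realised by subanalytic arcs, which meet $X_1\cap X_2$ in only finitely many points, so such an $\alpha$ decomposes into finitely many sub-arcs each lying in a single $X_i$, and $l(\phi\circ\alpha)\le K\,l(\alpha)$ follows by summing the $K_i$-Lipschitz estimates. Finally, when $X$ and $Y$ are LNE the resulting inner bi-Lipschitz map $\phi$ is automatically outer bi-Lipschitz, which gives the last assertion.
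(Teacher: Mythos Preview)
The paper itself does not prove this lemma --- it simply cites \cite[Proposition~2.19]{medeiros2023amb} --- so there is no in-paper argument to compare against; what follows is an assessment of your attempt on its own merits.

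Steps (1)--(3) and the reduction of (4) to $l(\phi\circ\alpha)\le K\,l(\alpha)$ are correct, and you correctly isolate the real obstacle: when $G=\alpha^{-1}(X_1\cap X_2)$ is Cantor-like, no finite partition of $[0,L]$ has each subinterval inside a single $A_i$. Your sketched resolution via the variation function $v(t)$, however, cannot close the gap as written. In the case $\alpha(t_0)\in X_1\cap X_2$ you propose to ``use on the remaining part that the two expressions for $h$ agree'', but agreement of $\phi_1\circ\alpha$ and $\phi_2\circ\alpha$ on $G$ gives no bound on the portion of the total variation of $h$ concentrated on $G$. In fact the abstract intermediate claim underlying your approach --- that a continuous $h\colon[0,L]\to\mathbb{R}^m$ which is $K$-Lipschitz on every subinterval contained in one of two closed sets $A_1,A_2$ covering $[0,L]$ must itself be $K$-Lipschitz --- is false: take $A_1\cap A_2$ to be the middle-thirds Cantor set $C$, assign the complementary open intervals to $A_1\setminus A_2$ or $A_2\setminus A_1$ according to the parity of their construction level, and set $h(t)=Kt+c(t)$ with $c$ the Cantor function. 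Every interval contained in a single $A_i$ lies in the closure of one removed interval (where $c$ is constant), so $h$ is $K$-Lipschitz there, yet $h(1)-h(0)=K+1$. Whether such an $h$ can actually be realised as $\phi\circ\alpha$ under the lemma's hypotheses is a separate question, but your argument uses only this interval property and therefore does not go through in the stated generality.

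Your fallback to the subanalytic setting is the right move and suffices for every application in this paper. One correction: a subanalytic arc need not meet $X_1\cap X_2$ in only finitely many \emph{points} --- it could lie inside $X_1\cap X_2$ on a whole subinterval. The correct statement is that each $A_i=\alpha^{-1}(X_i)$ is a subanalytic subset of $[0,L]$, hence a finite union of points and intervals; thus $[0,L]$ decomposes into finitely many subintervals each contained in one $A_i$, and telescoping gives $l(\phi\circ\alpha)\le K\,l(\alpha)$ directly.
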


\begin{proof}
    See \cite[Proposition 2.19]{medeiros2023amb}.
\end{proof}

By \cite[Theorem 10.6]{medeiros2023amb}, we know that every two horn germs in $(\R^3,0)$ with the same exponent are ambient bi-Lipschitz equivalent. By \cite[Proposition 8.5 and 4.7]{medeiros2023amb}, we can decompose the ambient space of a family of such horns into a family of suitable regions delimited by synchronised triangles (see \cite[Definitions 4.1 and 4.7]{medeiros2023amb}). Then, applying \cite[Theorem 6.3]{medeiros2023amb} to such a family of regions, we obtain the following result.
\begin{theorem}\label{teo-gordurinha}
    Let $\beta \in \mathbb{Q}_{\ge 1}$ and let $\{(H_{\tau}(\varepsilon),0)\}_{\tau, \varepsilon \in [0,1]}$ be a family of LNE $\beta$-horns in $(\R^3,0)$ that vary continuously on $s$ and $\varepsilon$, with bounded derivatives with respect to $s$ and $\varepsilon$. Then, there is a family of ambient bi-Lipschitz isotopies $\{\Phi_{s}\}_{s \in [0,1]}$, with $\Phi_{\tau}: (\R^3,0)\times[0,1]\to (\R^3,0)$ being an ambient bi-Lipschitz isotopy taking $H_{\tau}(0)$ into $H_{\tau}(1)$ an such that $\|\Phi_s(p,\varepsilon)\|=\|p\|$, for all $s, \varepsilon \in [0,1]$ and $p\in (\R^3,0)$. Moreover, if we denote $\Phi_{\tau,\varepsilon}(p)$ as $\Phi_{\tau}(p,\varepsilon)$, then the maps $\Phi_{\tau,\varepsilon}$ vary continuously on $s$ and $\varepsilon$, with bounded derivatives with respect to $s$ and $\varepsilon$.
\end{theorem}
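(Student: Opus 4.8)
The plan is to obtain this statement by assembling, over the whole parameter family, the structural and isotopy results of \cite{medeiros2023amb} recalled just above. Since we want $\|\Phi_{\tau,\varepsilon}(p)\|=\|p\|$, the natural strategy is to build everything compatibly with the radial foliation of $(\R^3,0)$: on each sphere $\mathbb{S}^2_\rho$ of small radius $\rho$, the slice $H_\tau(\varepsilon)\cap\mathbb{S}^2_\rho$ is, because the horn is LNE of exponent $\beta$, a closed curve of diameter comparable to $\rho^\beta$, varying continuously with bounded derivatives in $(\tau,\varepsilon,\rho)$ after rescaling; an ambient bi-Lipschitz isotopy of $\mathbb{S}^2_\rho$ carrying the $\varepsilon=0$ slice onto the $\varepsilon$ slice, with a bi-Lipschitz constant uniform in $\rho$, extends radially by Lemma~\ref{lemma-extension} (applied sphere by sphere) to a norm-preserving ambient bi-Lipschitz isotopy of $(\R^3,0)$. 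So the problem reduces to producing such sphere-level isotopies uniformly, and that is exactly what the synchronised-triangle machinery delivers.

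First I would invoke \cite[Propositions 8.5 and 4.7]{medeiros2023amb} to decompose a neighbourhood of $0$ in $\R^3$ into finitely many regions delimited by \emph{synchronised triangles} (in the sense of \cite[Definitions 4.1 and 4.7]{medeiros2023amb}) adapted to the entire family $\{H_\tau(\varepsilon)\}_{\tau,\varepsilon}$ simultaneously; the continuity and bounded-derivative hypotheses on the family transfer to this decomposition and to its bounding triangles, and each $H_\tau(\varepsilon)$ sits inside the corresponding regions in a controlled way. Next I would apply \cite[Theorem 6.3]{medeiros2023amb} region by region: for each region in the family it yields an ambient bi-Lipschitz isotopy carrying the $\varepsilon=0$ slice of that region onto its $\varepsilon$ slice, fixing the bounding triangles (so adjacent regions match on their common boundary) and preserving norms, with uniformly bounded bi-Lipschitz constants. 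Gluing these region-wise isotopies along the synchronised triangles via the Lipschitz Gluing Lemma (Lemma~\ref{Gluing Lipschitz Lemma}) — legitimate because the regions and the horns are LNE, so inner and outer bi-Lipschitz agree up to a uniform factor — produces the desired global isotopy $\Phi_{\tau,\varepsilon}$, with $\Phi_{\tau,0}=\mathrm{id}$, $\Phi_{\tau,1}(H_\tau(0))=H_\tau(1)$, $\|\Phi_{\tau,\varepsilon}(p)\|=\|p\|$, and a single bi-Lipschitz constant valid for all $(\tau,\varepsilon)\in[0,1]^2$. That $\Phi_{\tau,\varepsilon}$ depends continuously on $(\tau,\varepsilon)$ with bounded derivatives then follows because every ingredient — the region decomposition, the triangle-wise isotopies, and the gluing data — does, and the constants stay uniformly bounded.

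The main obstacle, and essentially the only content beyond quoting \cite{medeiros2023amb}, is the bookkeeping needed to make those quotations legitimate: one must check that the given parametrized family meets the hypotheses of \cite[Propositions 8.5 and 4.7]{medeiros2023amb} and \cite[Theorem 6.3]{medeiros2023amb} with \emph{uniform} data — uniform LNE constants for the regions, uniform bounds on the derivatives of the bounding triangles, and uniform bi-Lipschitz constants for the region-wise isotopies — and that the synchronised triangles can be chosen coherently enough that the region-wise isotopies agree on all overlaps and the glued map is genuinely bi-Lipschitz with one constant, rather than a constant that degenerates as $\rho\to 0^+$ or as $(\tau,\varepsilon)$ varies. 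Once this uniformity is secured, everything else is a routine application of the cited machinery together with Lemmas~\ref{lemma-extension} and~\ref{Gluing Lipschitz Lemma}.
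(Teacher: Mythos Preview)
Your proposal is correct and follows essentially the same approach as the paper: the paper's proof is the short paragraph immediately preceding the theorem, which cites \cite[Theorem 10.6]{medeiros2023amb}, decomposes the ambient into regions bounded by synchronised triangles via \cite[Propositions 8.5 and 4.7]{medeiros2023amb}, and then applies \cite[Theorem 6.3]{medeiros2023amb} to the family of regions. Your write-up expands on this outline with the gluing step and the uniformity bookkeeping, but the architecture is the same.
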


\section{Some preparation results}
\label{somepreparationresults}
\subsection{Obstruction locus in mixed polynomials}
\label{obstructionlocus}
Let \(f, \theta:\C^2 \to \C\)  be mixed polynomials, with $f$ being a radial mixed polynomial of radial-type \((P;d)=(p_1,p_2;d)\), \(k_f:=\frac{p_1}{p_2}\) and $d(P;\theta)\ge d$. Kerner and Mendes proved in \cite{Kerner-Mendes} that, under the condition \(\Sigma_{L}(f)=\{0\}\), the family \(\{f+\varepsilon \theta\}_{\varepsilon \in [0,1]}\) is ambient bi-Lipschitz $V$-trivial. Here, \(\Sigma_{L}(f)\) denotes the obstruction locus of \(f\) (see \cite[Section 3.2, Equation (6)]{Kerner-Mendes}) and for the case of radial mixed polynomial, or equivalently, weighted homogeneous real map $f$ of weight-type $(p_1,p_1,p_2,p_2;d,d)$, it follows directly from the formula that
 \begin{equation}\label{obst-locus}
    \Sigma_{L}(f)=
\begin{cases} 
 Sing(V(f)) \cup Sing(V(f) \cap \{u=0\}),  &\text{ if } k_f<1, \\ 
Sing(V(f)) \cup Sing(V(f) \cap \{v=0\}),  &\text{ if } k_f >1,  \\
Sing(V(f)),  &\text{ if } k_f =1. 
   \end{cases}
\end{equation} 

We have the following result as an immediate consequence of \cite[Corollary 3.7]{Kerner-Mendes}, applied in the case of radial mixed polynomials in two variables.  
\begin{theorem} \label{Lips-triviality}  Let \(\{f+\varepsilon \theta\}_{\varepsilon \in I}\) be a family of mixed polynomials such that $f$ is radial mixed polynomial of radial-type $(P;d)$, \(d(P;\theta) \geq  d\), and $\Sigma_{L}(f)=\{0\}$. If the inequality is strict, then the family is ambient bi-Lipschitz $V$-trivial for $I=\R$. If we have equality, then the family is ambient bi-Lipschitz $V$-trivial for $I$ being a sufficiently small neighbourhood of $0$.
\end{theorem}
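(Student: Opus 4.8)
The plan is to deduce the statement directly from \cite[Corollary~3.7]{Kerner-Mendes} by passing from the mixed‑polynomial language to the real‑polynomial‑map language. Under the $\R$‑linear isometry $\C^2\cong\R^4$ of Remark~\ref{Remark:homogeneous-real}, set $F=(\re f,\im f)$ and $\Theta=(\re\theta,\im\theta)$, so that $f+\varepsilon\theta$ corresponds to $F+\varepsilon\Theta\colon(\R^4,0)\to(\R^2,0)$ and $V(f+\varepsilon\theta)$ corresponds to $V(F+\varepsilon\Theta)$. Since this identification is an isometry, a germ $\tilde\phi$ is an ambient bi-Lipschitz map of $(\C^2,0)$ carrying $V(f+\varepsilon\theta)$ onto $V(f)$ exactly when the corresponding germ is an ambient bi-Lipschitz map of $(\R^4,0)$ carrying $V(F+\varepsilon\Theta)$ onto $V(F)$; hence it suffices to establish ambient bi-Lipschitz $V$-triviality for the family $\{F+\varepsilon\Theta\}_{\varepsilon\in I}$.

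Next I would check that the three hypotheses match those of \cite[Corollary~3.7]{Kerner-Mendes}. Because $f$ is radial of radial-type $(P;d)=(p_1,p_2;d)$, the map $F$ is weighted homogeneous of weight-type $(p_1,p_1,p_2,p_2;d,d)$, as recorded in Subsection~\ref{subsect2.1}. The $P$-radial degree of a monomial $c_jz^{\nu_j}\bar z^{\mu_j}$ equals the $\boldsymbol{w}$-order of the associated real monomial for $\boldsymbol{w}=(p_1,p_1,p_2,p_2)$, so $d(P;\theta)\ge d$ translates into $\operatorname{ord}_{\boldsymbol{w}}(\Theta_i)\ge d=\operatorname{ord}_{\boldsymbol{w}}(F_i)$ for $i=1,2$, with strict inequality precisely when $d(P;\theta)>d$. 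Finally, the obstruction locus $\Sigma_L(F)$ of \cite[Section~3.2]{Kerner-Mendes} specialises, for $F$ weighted homogeneous of weight-type $(p_1,p_1,p_2,p_2;d,d)$, to the set displayed in Equation~\eqref{obst-locus} — this is the one point I would unwind carefully, splitting into the cases $k_f<1$, $k_f=1$, $k_f>1$ — so that $\Sigma_L(f)=\{0\}$ is exactly the hypothesis $\Sigma_L(F)=\{0\}$ required there.

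With this dictionary in place, the strict case $d(P;\theta)>d$ is literally the conclusion of \cite[Corollary~3.7]{Kerner-Mendes} for $I=\R$, and transporting the trivialization back through the isometry $\C^2\cong\R^4$ closes that half. For the equality case $d(P;\theta)=d$, I would invoke the corresponding local statement of \cite{Kerner-Mendes}: writing $\theta=\theta_P+\tilde\theta$ with $\theta_P$ radial of radial-type $(P;d)$ and $d(P;\tilde\theta)>d$, for $\varepsilon$ near $0$ the base term $f+\varepsilon\theta_P$ is still radial of radial-type $(P;d)$ and satisfies $\Sigma_L(f+\varepsilon\theta_P)=\{0\}$ (an open condition, since these singular loci are cones and the condition is detected on the unit sphere), so the strict case applies with an $\varepsilon$-dependent base, and a uniformity argument over a small compact parameter interval yields triviality over a small $I$. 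I expect the only genuinely delicate step to be the verification that the general obstruction locus of \cite{Kerner-Mendes} reduces to \eqref{obst-locus} in the radial situation; the rest is the isometric translation and the standard openness/uniformity bookkeeping for the equality case.
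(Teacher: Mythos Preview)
Your proposal is correct and follows exactly the approach the paper takes: the paper states this result as ``an immediate consequence of \cite[Corollary 3.7]{Kerner-Mendes}, applied in the case of radial mixed polynomials in two variables,'' without spelling out the dictionary or the equality case, so your write-up simply makes explicit the translation (via Remark~\ref{Remark:homogeneous-real} and Equation~\eqref{obst-locus}) that the paper leaves to the reader.
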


\begin{remark}
    Throughout this paper, we will consider the following notation: let \(f\) be an IND and $\Gamma_{\mathrm{inn}}$-nice mixed polynomial, and let \(\theta\) be a mixed polynomial such that \(d(P_i; \theta) \geq d(P_i;f)\) holds for each $P_i \in \mathcal{P}_{\mathrm{inn}}(f)$. The parameter interval $I$ in the family $\{f+\varepsilon\theta\}_{\varepsilon \in I}$ is always a sufficiently small neighbourhood of $0$, if \(d(P_i; \theta) = d(P_i;f)\), for some $P_i \in \mathcal{P}_{\mathrm{inn}}(f)$, and the interval $I$ is $\R$, if \(d(P_i; \theta) = d(P_i;f)\), for every $P_i \in \mathcal{P}_{\mathrm{inn}}(f)$.
\end{remark}

 We will see that $\Sigma_{L}(f)$ is closely related to $k_f$ and the inner non-degeneracy condition.

Bode and Sanchez Quiceno proved in \cite{BodeSanchez2023} that a radial mixed polynomial satisfies \(Sing(V(f))=\{0\}\) if and only if $f$ is IND. Thus, this non-degeneracy condition is necessary for a radial mixed polynomial to satisfy $\Sigma_{L}(f)=\{0\}$. On the other side, we characterise the condition $\Sigma_{L}(f)=\{0\}$ from the point of view of the Newton boundary by the following lemma.
 \begin{lemma}\label{radial-obst}
 Let $f$ be a radial IND mixed polynomial. Then, $\Sigma_{L}(f)=\{0\}$ if, and only if, $f$ satisfies one of the following conditions
 \begin{enumerate}
 \item[(i)] \(k_f=1\),
 \item[(ii)]  \(k_f>1\) and \(f\) is \(u\)-convenient,
 \item[(iii)]  \(k_f<1\) and \(f\) is \(v\)-convenient.
 \end{enumerate}
 \end{lemma}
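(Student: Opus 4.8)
The plan is to recall the formula \eqref{obst-locus} for $\Sigma_L(f)$ in the radial case, and then translate the condition $\Sigma_L(f)=\{0\}$ into statements about the Newton boundary by analysing the three possible shapes of the zero set $V(f)$ near the coordinate axes. Since $f$ is radial and IND, the result of Bode and Sanchez Quiceno gives $Sing(V(f))=\{0\}$, so the first term in \eqref{obst-locus} is always trivial and the whole question reduces to controlling the singularities of the slices $V(f)\cap\{u=0\}$ (when $k_f<1$) or $V(f)\cap\{v=0\}$ (when $k_f>1$); when $k_f=1$ there is nothing to control and (i) holds automatically.

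First I would dispose of the case $k_f=1$: here \eqref{obst-locus} reads $\Sigma_L(f)=Sing(V(f))=\{0\}$, so the equivalence is immediate — $(i)$ forces $\Sigma_L(f)=\{0\}$ with no convenience hypothesis needed. For $k_f>1$, by the homogeneity $f(\lambda^{p_1}u,\lambda^{p_2}v)=\lambda^d f(u,v)$ with $p_1>p_2$, the slice $V(f)\cap\{v=0\}$ is the zero set of $g(u):=f(u,0)$, a radial mixed polynomial in the single variable $u$; I would argue that $\Sigma_L(f)=\{0\}$ is equivalent to this slice being either empty or smooth away from $0$, and then that this, in turn, is equivalent to $f$ being $u$-convenient. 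The point is that $f(u,0)\not\equiv 0$ exactly when $\Gamma(f)$ meets the $x$-axis, i.e. when $f$ is $u$-convenient; and when $f(u,0)$ is a nonzero radial mixed polynomial of one variable, its zero set is automatically $\{0\}$ or LNE-smooth away from the origin because radiality in one variable forces $f(u,0)=c\,u^a\bar u^b$ up to a unit, whose only zero (for $a+b\ge 1$) is $u=0$ — a smooth point of a point set. Conversely, if $f$ is not $u$-convenient, then $\{v=0\}\subseteq V(f)$, and along that line the slice $V(f)\cap\{v=0\}=\{v=0\}$ could be fine by itself, but then one must look more carefully: the obstruction actually comes from $Sing(V(f)\cap\{v=0\})$ as a subset of $V(f)$, and I would show that the non-convenience produces, via the IND structure on the inner faces adjacent to the $x$-axis, a genuine singular stratum forcing $\Sigma_L(f)\ne\{0\}$. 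The case $k_f<1$ is entirely symmetric under swapping $u\leftrightarrow v$ and $p_1\leftrightarrow p_2$, giving $(iii)$.

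The main obstacle I anticipate is the converse direction in cases (ii) and (iii): showing that failure of the appropriate convenience condition actually forces $\Sigma_L(f)\neq\{0\}$, rather than merely failing to guarantee triviality. This requires understanding precisely why $Sing(V(f)\cap\{v=0\})$ (resp. $\{u=0\}$) is nontrivial when $\Gamma(f)$ misses the $x$-axis (resp. $y$-axis) — one has to exploit that $f$ IND radial with $k_f>1$ and $\Gamma(f)\cap\{x\text{-axis}\}=\emptyset$ means every monomial of $f$ is divisible by $v$ or $\bar v$, so $\{v=0\}\subseteq V(f)$, and then the relative position of $\{v=0\}$ inside $V(f)$ is non-generic in a way that the Kerner--Mendes obstruction locus detects. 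I would look for a point of $V(f)\cap\{v=0\}$ other than the origin at which $V(f)$ is not locally a manifold, or equivalently at which the slice is a singular point of $V(f)$; the homogeneity reduces this to a single point on the link $\mathbb{S}^3\cap\{v=0\}$. Modulo this step, the argument is a direct unwinding of definitions combined with the one-variable radial classification and the Bode--Sanchez Quiceno characterisation.
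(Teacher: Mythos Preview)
Your proposal has two genuine gaps.

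\medskip

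\textbf{Forward direction for (ii).} Your claim that ``radiality in one variable forces $f(u,0)=c\,u^a\bar u^b$ up to a unit'' is false. A radial mixed polynomial $f$ of radial-type $(p_1,p_2;d)$, when $u$-convenient, satisfies $f(u,0)=f_{\Delta_u}(u)=\sum_{a+b=s}c_{ab}u^a\bar u^b$ with $s=d/p_1$; this is a \emph{homogeneous} mixed polynomial in one variable, not a monomial. For example $f_{\Delta_u}(u)=u^2+\bar u^2$ vanishes along two real lines through the origin, not just at $0$. What actually forces $V(f_{\Delta_u})=\{0\}$ is the IND hypothesis, not radiality. The paper argues as follows: since $k_f>1$, no point $(n,1)$ with $n<s$ can lie on $\Gamma(f)$ (this would give $k_f=1/(s-n)\le 1$), so $|v|^2$ divides $f-f_{\Delta_u}$; hence $f_v(u,0)=f_{\bar v}(u,0)=0$, and regularity of $(u,0)\in V(f)$ is governed entirely by $f_{\Delta_u}$. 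Now IND gives $Sing(V(f))=\{0\}$, which forces the system $f_{\Delta_u}(u)=0$, $|(f_{\Delta_u})_u|=|(f_{\Delta_u})_{\bar u}|$ to have no solution $u\ne 0$; this is the vertex non-degeneracy of $f_{\Delta_u}$, and by \cite{AraujoBodeSanchez} it is equivalent to $V(f_{\Delta_u})=\{u=0\}$. You bypass this entire chain and need to restore it.

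\medskip

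\textbf{Converse direction.} You misread $Sing(V(f)\cap\{v=0\})$ as $Sing(V(f))\cap\{v=0\}$ (or as the singular locus of the set $\{v=0\}$ as an abstract variety). In the Kerner--Mendes formula \eqref{obst-locus}, $Sing(V(f)\cap\{v=0\})$ is the singular set of the zero locus of the \emph{restricted map} $f|_{\{v=0\}}:\C_u\to\C$. When $f$ is not $u$-convenient, this restricted map is identically zero, so its differential has rank $0$ everywhere and every point of $\{v=0\}$ is a singular zero. Thus $Sing(V(f)\cap\{v=0\})=\{v=0\}$ immediately, and $\Sigma_L(f)=\{v=0\}\ne\{0\}$. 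There is no need to search for points where $V(f)$ fails to be a manifold (indeed there are none, since IND gives $Sing(V(f))=\{0\}$); your anticipated ``main obstacle'' is a phantom created by the misreading, and the step you propose to fill it would fail.
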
 
 \begin{proof}
($\mathbf{\Leftarrow}$) If \(k_f=1\), then  by Equation~\eqref{obst-locus} follows that \(\Sigma_{L}(f)=Sing(V(f))\). Thus, as $f$ is IND \(\{0\}= Sing(V(f))=  \Sigma_{L}(f)\).  

If $k_f>1$ and $f$ is $u$-convenient, then the Newton boundary of $f$ contains a lattice point $(s,0)$ such that $s p_{1}= d$. Moreover, the Newton boundary does not contain a lattice point $(n,1)\in \mathbb{Z}_{\geq 0}^2$ with $n<s$. Indeed, if $np_{1}+p_{2}=d$, it would imply $np_{1}+p_{2}=sp_{1}$, leading to $k_f=\frac{1}{s-n}\leq1$, which contradicts our hypothesis that $k_f>1$. Thus, the mixed polynomial $f$ can be expressed as $f(u,v)=f_{\Delta_u}(u)+h(u,v)$, where $|v|^2$ divides $h$, and $\Delta_u=(s,0)$ is the $u$-extreme vertex of $f$. 

At a point $(u,0)$, we have: 
\begin{align*}
f(u,0)&=f_{\Delta_u}(u),\\
f_{u}(u,0)&=(f_{\Delta_u})_{u}(u),\\
f_{\bar{u}}(u,0)&=(f_{\Delta_u})_{\bar{u}}(u),\\
f_{v}(u,0)&=0,\\
f_{\bar{v}}(u,0)&=0.
\end{align*}

If $(u,0)$ is a regular point  of $V(f)$ with $u\neq 0$, then  the system
$$\begin{cases}
f_{\Delta_u}(u)=0\\
|(f_{\Delta_u})_u(u)|^2-|(f_{\Delta_u})_{\bar{u}}(u)|^2=0
\end{cases}$$
has no solution of the form $(u, 0)$ with $u\neq 0$. This is equivalent to $f_{\Delta_u}$ being non-degenerate.  In \cite{AraujoBodeSanchez}, it was shown that the non-degeneracy of  \(f_{\Delta_u}\) is equivalent to \(V(f_{\Delta_u})= \{u=0\}\).  Therefore, 
\begin{equation}\label{nondeguvertex}
V(f)\cap \{v=0\} = V(f_{\Delta_u})\cap  \{v=0\} = \{0\}.
\end{equation}
 From Equations \eqref{obst-locus} and \eqref{nondeguvertex}, as \(k_f>1\), it follows that \(\Sigma_{L}(f)= Sing(V(f)).\) Since $f$ is IND, we conclude that \(\Sigma_{L}(f)=\{0\}\). 

If $k_f<1$ and $f$ is $v$-convenient, then we follow the same arguments as above and prove that
\begin{equation}\label{nondegvvertex}
V(f)\cap \{u=0\} = V(f_{\Delta_v})\cap  \{u=0\} = \{0\}.
\end{equation}
 From Equations 
 \eqref{obst-locus} and \eqref{nondegvvertex}, as \(k_f<1\) and $f$ is IND, it follows that \(\Sigma_{L}(f)=\{0\}\).

($\mathbf{\Rightarrow}$)
Let $f$ be a radial mixed polynomial satisfying \(\Sigma_{L}(f)=\{0\}\). Suppose that $f$ does not satisfy one of the conditions (i)-(iii) in this Lemma. Thus, $f$ satisfies one of the following cases: 
\begin{itemize}
\item[(a)] $k_f>1$ and  $f$ is not $u$-convenient (here, $f$ could be $v$-convenient or not); 
\item[(b)] $k_f<1$ and  $f$  is not $v$-convenient (here, $f$ could be $u$-convenient or not). 
\end{itemize} 
Suppose $k_f>1$ and $f$ is not $u$-convenient, the other case is studied analogously.  Since $k_f>1$ and $f$ is IND, Equation \eqref{obst-locus} implies    \[\Sigma_{L}(f)=Sing(V(f)) \cup Sing(V(f) \cap \{v=0\})=\{0\}\cup Sing(V(f) \cap \{v=0\})=Sing(V(f) \cap \{v=0\}).\]  Since $f$ is not $u$-convenient, it follows that $\{v=0\} \subset V(f)$. Thus,
$$\Sigma_{L}(f)=Sing(V(f) \cap \{v=0\})=\{v=0\}\neq \{0\},$$ 
which is a contradiction.   
 \end{proof}

\subsection{Separating horn neighbourhoods}
\label{separatinghorns}
To deal with the case $\Sigma_{L}(f)\ne \{0\}$, we need first to ``prepare the ambient'' around the links of the zeros of their respective face functions, by locating their obstruction locus (Lemma \ref{radial-obst-Newton}), then finding suitable horn neighbourhoods isolating such singularities (Lemma \ref{generalhorn}) and constructing ambient bi-Lipschitz maps for each component (Lemmas \ref{lemma:lipeomorphismlocal} and \ref{lemma:overdeformation}). Finally, we ``unify'' such ambient bi-Lipschitz maps using Lemma \ref{Lemma:Union} and prove triviality even when the obstruction locus of $f$ is non-trivial.
\begin{lemma}\label{radial-obst-Newton}
	Let $f$ be an IND mixed polynomial that is $\Gamma_{\mathrm{inn}}$-nice. Take $\mathcal{P}_{\mathrm{inn}}(f)=\{P_1,\dots,P
	_N\}$. Then, 
	\begin{equation*}
		\begin{aligned}
			\text{For } i = 1: \quad & \Sigma_{L}(f_{P_1}) \subseteq \{ v = 0 \}, \\
			\text{For } 2 \leq i \leq N - 1: \quad & \Sigma_{L}(f_{P_i}) \subseteq \{ uv = 0 \}, \\
			\text{For } i = N: \quad & \Sigma_{L}(f_{P_N}) \subseteq \{ u = 0 \}.
		\end{aligned}
	\end{equation*}
\end{lemma}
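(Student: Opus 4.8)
The plan is to treat each index $i$ by combining the explicit description~\eqref{obst-locus} of $\Sigma_{L}(f_{P_i})$ — applicable because $f_{P_i}$ is a radial mixed polynomial of radial-type $(P_i; d(P_i;f))$ with quotient $k_{i,f}=p_{i,1}/p_{i,2}$ — with the inner non-degeneracy of $f$. Write $\Delta_i := \Delta(P_i;\Gamma_{\mathrm{inn}}(f))$, so $f_{P_i}=f_{\Delta_i}$. First I would record two facts. (A) $Sing(V(f_{P_i}))\subseteq\{uv=0\}$ for every $i$: since $\Delta_i$ is an inner face, Definition~\ref{caracterizationinner}(ii) with $I=\{1,2\}$ gives $Sing(V(f_{P_i}))\cap(\C^*)^2=\emptyset$. (B) For each coordinate axis $A\in\{\{u=0\},\{v=0\}\}$, $Sing(V(f_{P_i})\cap A)\subseteq\{0\}$: the restriction $f_{P_i}|_A$ is a homogeneous mixed polynomial in a single complex variable, so $V(f_{P_i})\cap A$ is, as a germ at $0$, either all of $A$ or a finite union of real lines through the origin, hence a smooth manifold off the origin in either case.

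Next I would split into three ranges. For a middle face $2\le i\le N-1$: by~\eqref{obst-locus}, $\Sigma_{L}(f_{P_i})$ equals $Sing(V(f_{P_i}))$ together with at most one of $Sing(V(f_{P_i})\cap\{u=0\})$ and $Sing(V(f_{P_i})\cap\{v=0\})$, so (A) and (B) give $\Sigma_{L}(f_{P_i})\subseteq\{uv=0\}$ at once. For $i=1$: because $\mathcal{P}_{\mathrm{inn}}(f)$ is ordered with $k_{1,f}$ maximal and $\Gamma_{\mathrm{inn}}(f)$ is a convex $C$-diagram, $\Delta_1$ is the $1$-face incident to the $0$-face of $\Gamma_{\mathrm{inn}}(f)$ on the $y$-axis, hence $\Delta_1\cap\R^{\{2\}}\neq\emptyset$; Definition~\ref{caracterizationinner}(ii) with $I=\{2\}$ then gives $Sing(V(f_{P_1}))\cap(\C^*)^{\{2\}}=\emptyset$, which with (A) yields $Sing(V(f_{P_1}))\subseteq\{v=0\}$. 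Now~\eqref{obst-locus} gives $\Sigma_{L}(f_{P_1})\subseteq Sing(V(f_{P_1}))\cup Sing(V(f_{P_1})\cap\{v=0\})\subseteq\{v=0\}$ when $k_{1,f}\ge1$, and $\Sigma_{L}(f_{P_1})=Sing(V(f_{P_1}))\cup Sing(V(f_{P_1})\cap\{u=0\})\subseteq\{v=0\}\cup\{0\}$ when $k_{1,f}<1$, using (B) in the latter case. The case $i=N$ is symmetric, with $\Delta_N$ the $1$-face incident to the $0$-face on the $x$-axis, so $\Delta_N\cap\R^{\{1\}}\neq\emptyset$, $Sing(V(f_{P_N}))\subseteq\{u=0\}$, and then~\eqref{obst-locus} and (B) force $\Sigma_{L}(f_{P_N})\subseteq\{u=0\}$. (If $N=1$ the unique $1$-face is incident to both axis $0$-faces, so $Sing(V(f_{P_1}))=\Sigma_{L}(f_{P_1})=\{0\}$, recovering Lemma~\ref{radial-obst} there.)

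The step I expect to require the most care is fact (B), precisely in the subcases where~\eqref{obst-locus} attaches the singular locus of the restriction of $f_{P_i}$ to the ``opposite'' axis — for instance $i=1$ with $k_{1,f}<1$, where a one-dimensional contribution inside $\{u=0\}$ is a priori allowed. There I plan to use that a nonzero homogeneous mixed polynomial in one variable $w=\rho\,\rme^{\rmi\psi}$ has the form $\rho^{m}h(\rme^{\rmi\psi})$ for a Laurent polynomial $h$ whose roots on the unit circle occur in antipodal pairs, so its zero locus is a finite union of real lines through $0$; and I must confirm that the notion of $Sing$ entering~\eqref{obst-locus} treats such a real-algebraic cone as smooth away from the origin. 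A secondary point to make fully rigorous is the incidence assertion that $\Delta_1$ meets the $y$-axis and $\Delta_N$ meets the $x$-axis, which I would deduce from convexity of $\Gamma_{\mathrm{inn}}(f)$ and the monotonicity of the slopes $k_{i,f}$, keeping track of the sign convention relating a face to its outer normal.
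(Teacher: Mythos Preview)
Your overall approach---casework on $i$, combining Definition~\ref{caracterizationinner}(ii) with~\eqref{obst-locus}---is the same as the paper's, and fact~(A) and the incidence claims ($\Delta_1$ touches the $y$-axis, $\Delta_N$ the $x$-axis) are used identically. The real divergence is exactly where you flag it: fact~(B) in the ``opposite axis'' subcases $i=1$ with $k_{1,f}<1$ and $i=N$ with $k_{N,f}>1$.

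The confirmation you hope for does not hold. In the paper's own use of~\eqref{obst-locus} (see the converse direction in the proof of Lemma~\ref{radial-obst}, where $\{v=0\}\subset V(f)$ produces $Sing(V(f)\cap\{v=0\})=\{v=0\}$), the term $Sing(V(f_{P_i})\cap A)$ is \emph{not} the set-theoretic singular locus of $V(f_{P_i})\cap A$. Indeed, for a one-variable homogeneous mixed polynomial $g(w)$ of degree $s$ the Euler relation $sg=wg_w+\bar w g_{\bar w}$ forces $|g_w|=|g_{\bar w}|$ at every nonzero root of $g$, so under the critical-locus reading (the one consistent with the paper's usage and with the Kerner--Mendes obstruction locus) one has $Sing(V(g))=V(g)$, which is your full union of real lines, not $\{0\}$. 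Thus your smoothness-of-the-zero-set argument does not bound the contribution that~\eqref{obst-locus} actually attaches. The paper closes the gap differently: property~(i) in Definition~\ref{def:gammainn} shows that $k_{1,f}<1$ forces $f_{\Gamma_{\mathrm{inn}}}$ (hence $f_{P_1}$) to be $v$-convenient, and then~\eqref{nondegvvertex}---the IND condition at $\Delta_1$ with $I=\{2\}$, together with the fact that $k_{1,f}<1$ rules out a lattice point with first coordinate~$1$ on $\Delta_1$---gives $V(f_{P_1})\cap\{u=0\}=\{0\}$ outright. With the intersection reduced to the origin the second term of~\eqref{obst-locus} is trivially contained in $\{0\}$, regardless of how one reads $Sing$. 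For the middle faces your (B) is unnecessary anyway, since the axis term already sits inside $\{uv=0\}$.
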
 
\begin{proof}
	Let \(i=1\). It follows that 
		\begin{align}\label{fp1general}
		Sing(V(f_{P_1})) \subseteq \{v=0\} \text{ and } V(f_{P_1}) \cap \{v=0\} \subseteq \{v=0\}. 
	\end{align} 
	Then, substituting Equation \eqref{fp1general} in Equation \eqref{obst-locus} for $f_{P_1}$ and \(k_1 \geq 1\), we get \[\Sigma_{L}(f_{P_1}) \subseteq \{v=0\}.\] If $k_1 <1$, then \(f_{\Gamma_{\mathrm{inn}}}\) is $v$-convenient and by Equation \eqref{nondegvvertex} 
	\begin{equation}\label{fp1generalu0}
		V(f_{P_1}) \cap \{u=0\}= \{0\}. 
	\end{equation}
	Thus, substituting Equation \eqref{fp1generalu0} in  Equation \eqref{obst-locus} for  $f_{P_1}$ and \(k_1 <1\), we get \[\Sigma_{L}(f_{P_1})\subseteq \{v=0\}.\]
	
	\noindent Let \(2 \leq i \leq N-1\). It follows from the non-degeneracy condition that \[Sing(V(f_{P_i})) \subseteq \{uv=0\}.\] Thus, by Equation \eqref{obst-locus} for $f_{P_i}$ and \(k_i>0\), we obtain \(\Sigma_{L}(f_{P_i})\subseteq \{uv=0\}\). 
	\vspace{0.2cm}
	
	\noindent The case \(i=N\) follows analogously to the case \(i=1\), making appropriate changes, especially considering Equations \eqref{obst-locus} and \eqref{nondeguvertex}.
\end{proof}
\begin{lemma}\label{generalhorn}
 Let $f$ be an IND mixed polynomial that is $\Gamma_{\mathrm{inn}}$-nice. Then there exist a small enough $\rho>0$ and horn neighbourhoods $\mathcal{H}_{i}$ of $V_{P_i},\ i=1,\dots, N$, with $P_i \in \mathcal{P}_{\mathrm{inn}}(f)$, satisfying 
 $$ \mathcal{H}_{i} \cap \mathcal{H}_{j} \cap B_{\rho}^4(0) =\{0\}, \text{ if } i \neq j.$$
 Here, $B_{\rho}^4(0)$ denotes the closed ball in $\R^4$ with centre $0$ and radius $\rho$.
\end{lemma}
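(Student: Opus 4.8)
The plan is to separate the surfaces $V_{P_i}=V(f_{P_i})\cap(\C^*)^2$ (suitably extended at $i=1,N$) by quantifying how their links $L_{P_i}$ sit inside $\mathbb S^3$ and then thickening. The decisive structural fact is that each $V_{P_i}$ is a \emph{radially weighted homogeneous} surface of radial-type $(P_i;d(P_i;f))$, so $V_{P_i}$ is a cone over its link $L_{P_i}\subset\mathbb S^3_\rho$ under the anisotropic scaling $(u,v)\mapsto(\lambda^{p_{i,1}}u,\lambda^{p_{i,2}}v)$. Since $k_{1,f}>k_{2,f}>\cdots>k_{N_f,f}$ are pairwise distinct, the faces $\Delta(P_i;\Gamma_{\mathrm{inn}}(f))$ have pairwise distinct slopes; I will use this to show that for $i\neq j$ the links $L_{P_i}$ and $L_{P_j}$ are disjoint on $\mathbb S^3_\rho$ (for $\rho$ small), except that both may contain a boundary knot on $\{u=0\}$ or $\{v=0\}$. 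Concretely, $\operatorname{Cont}(V_{P_i},V_{P_j})$ is governed by $|t_i-t_j|$ where $t_i$ is the relevant weight exponent; the point is that the tangency order of any arc in $V_{P_i}$ with any arc in $V_{P_j}$ is strictly less than the self-contact order of $V_{P_i}$, so the surfaces can be separated by a horn whose exponent $\beta$ is chosen strictly between these contact orders.

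First I would make the set-up precise. For $i=1$ take $V_{P_1}=V(f_{P_1})\cap(\C\times\C^*)$ and for $i=N$ take $V_{P_N}=V(f_{P_N})\cap(\C^*\times\C)$, as in Subsection~\ref{nested}; for $2\le i\le N-1$ use $V_{P_i}=V(f_{P_i})\cap(\C^*)^2$; these are exactly the sets whose links $L_{P_i}$ appear in Theorem~\ref{thm:link-triviality1}, and by the discussion there each $L_{P_i}$ is a smooth compact link in $\mathbb S^3$. Inner non-degeneracy plus $\Gamma_{\mathrm{inn}}$-niceness guarantees $L_{P_i}$ is smooth, and Lemma~\ref{radial-obst-Newton} controls where each obstruction locus $\Sigma_L(f_{P_i})$ sits (namely in $\{v=0\}$, $\{uv=0\}$, or $\{u=0\}$). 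Then I would parametrize: by radial homogeneity, a point of $V_{P_i}\cap\mathbb S^3_\rho$ away from the coordinate knots has the form $(\rho^{?}R_i e^{\rmi\varphi},\rho^{?}r_i e^{\rmi t})$ determined by a point of $L_{P_i}$, and two such points on $V_{P_i}$ and $V_{P_j}$ respectively differ, as $\rho\to0$, by a quantity of order $\rho^{q_{ij}}$ with $q_{ij}=1$ unless they both escape to a common coordinate knot.

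The key step is the contact-order estimate. I would argue that for $i\neq j$ and for arcs $\gamma\subset V_{P_i}$, $\gamma'\subset V_{P_j}$ one has $\tord(\gamma,\gamma')<\max\{k_i,k_j,k_i^{-1},k_j^{-1}\}$ (or the appropriate self-contact exponent $\beta_i$ of $V_{P_i}$), while $V_{P_i}$ itself is a $\beta_i$-horn in the relevant coordinates. This is because the difference of the $|u|$-components (or $|v|$-components) of $\gamma$ and $\gamma'$, forced by the distinct slopes $k_i\neq k_j$ of the two faces, is of the same order as $\|\gamma(t)\|$ itself away from the coordinate axes, and is of a strictly smaller order than $\|\gamma(t)\|^{\beta_i}$ near them; the precise bookkeeping is the anisotropic-scaling computation in Subsection~\ref{nested}. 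Having such a gap, pick rational $\beta$ with $q_{ij}<\beta<\beta_i$ for all $i,j$, and set $\mathcal H_i:=\mathcal H_{\beta,\eta}(V_{P_i})$ with $\eta>0$ small. Then any point in $\mathcal H_i\cap\mathcal H_j\cap B^4_\rho(0)\setminus\{0\}$ would produce arcs in $V_{P_i}$ and $V_{P_j}$ with tangency order $\ge\beta>q_{ij}$, a contradiction with the estimate; hence the intersection is $\{0\}$, which is the claim. The same $\beta$ works simultaneously for all pairs since there are finitely many.

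The main obstacle I anticipate is handling the \emph{shared coordinate knots}: when $k_i,k_j\ge1$ (or the dual situation), both $L_{P_i}$ and $L_{P_j}$ can contain the knot $\{u=0\}\cap\mathbb S^3$, so near $\{u=0\}$ the surfaces $V_{P_i}$ and $V_{P_j}$ genuinely meet, and no single-exponent horn around each can be disjoint there. The fix is to remember that the lemma only asks for the horn \emph{neighbourhoods of $V_{P_i}$} to be pairwise disjoint away from $0$; one must therefore replace each $V_{P_i}$ by a slightly shrunk version, or equivalently, choose the amplitude $\eta$ and the exponent $\beta$ so that the horn neighbourhoods are taken only around the part of $V_{P_i}$ that stays a definite anisotropic distance away from $\{uv=0\}$, and then observe (using Lemma~\ref{radial-obst-Newton} and the structure of $f_{P_i}$ near the axes, i.e. that $V(f_{P_i})$ coincides with a coordinate plane there) that the portions near the axes can be absorbed into the $i=1$ or $i=N$ horn, or into a further thin horn around the coordinate plane, without creating new intersections. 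Making this localization rigorous — i.e. choosing the amplitudes in a nested, compatible way so that the near-axis regions are assigned to a single horn — is the technical heart; everything else is the anisotropic-scaling contact-order computation, which is routine given Subsection~\ref{nested} and Proposition-level estimates already available in the paper.
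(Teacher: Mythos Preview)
Your approach via contact-order estimates is quite different from the paper's, and it carries an unnecessary complication that obscures the issue.

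The paper's proof is a direct construction. Using the parametrizations of the links $L_i$ (and $L_{\underline{i}}$) from Subsection~\ref{nested}, it sets $m_i=\min_\tau|u_*(\tau)|$, $M_i=\max_\tau|u_*(\tau)|$ and defines the horn neighbourhood $\mathcal H_i$ explicitly as the weighted-homogeneous ``annular'' region
\[
\{(u,v):\exists\,\lambda\ge0,\ (m_i-\epsilon_i)\lambda^{k_i}\le|u|\le(M_i+\epsilon_i)\lambda^{k_i},\ (1-\epsilon_i)\lambda\le|v|\le(1+\epsilon_i)\lambda\}
\]
(using the dual form with $L_{\underline{i}}$ when $k_i<1$). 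Disjointness is then a two-line algebraic computation: a point in $\mathcal H_i\cap\mathcal H_j$ forces $|u|\sim\lambda^{k_i}$ and $|u|\sim\lambda^{k_j}$ simultaneously, and since $k_i\neq k_j$ this pins $\lambda$ away from $0$. The only subtlety is that $m_1$ can vanish, but then $m_2>0$ and $k_1>k_2$ still separate the ranges.

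Your main anticipated obstacle, ``shared coordinate knots'', is a phantom. By the very definition of $V_{P_i}$ in Subsection~\ref{nested}, for $2\le i\le N-1$ one has $V_{P_i}\subset(\C^*)^2$, so $L_{P_i}$ never meets $\{u=0\}\cap\mathbb S^3$; only $L_{P_1}$ can, and only $L_{P_N}$ can meet $\{v=0\}\cap\mathbb S^3$. This is exactly what the paper uses when it records $u_{i,j}(\tau)\neq0$ for $i\neq1$. Thus no two $V_{P_i}$, $V_{P_j}$ with $i\neq j$ share a coordinate knot, the cross-contact $\operatorname{Cont}(V_{P_i},V_{P_j})$ is always finite, and your elaborate ``absorption into the $i=1$ or $i=N$ horn'' is never needed.

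A second issue: your constraint $q_{ij}<\beta<\beta_i$ is both unnecessary and potentially unsatisfiable. The inclusion $V_{P_i}\subset\mathcal H_{\beta,\eta}(V_{P_i})$ holds for every $\beta\ge1$, so no upper bound on $\beta$ is required; you only need $\beta$ at least the maximal cross-contact (and $\eta$ small). Moreover, by Proposition~\ref{nontangencycriteria} the self-contact of components of $V_{P_j}$ equals $k_j$ while the cross-contact with $V_{P_i}$ ($i<j$, both $k>1$) is $\le k_j$, so strict inequality $q_{ij}<\beta_j$ can fail. Dropping the spurious upper bound, your contact-order route does go through, but the paper's explicit construction is shorter and avoids all of this bookkeeping.
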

\begin{proof}
For \(i=1,2,\dots, N-1\) we define  
\begin{equation*}
 \begin{aligned}
 m_i:=\min \{ |u_*| \mid  (u_*,\rme^{\rmi t_*}) \in L_i \}, \,
 M_i:=\max \{ |u_*| \mid  (u_*,\rme^{\rmi t_*}) \in L_i \},
 \end{aligned} 
 \end{equation*}
 where $L_i$ is defined as in Subsection~\ref{nested}. Notice that since $L_i$ is compact, $m_i$ and $M_i$ are well defined for each of these $i$. By the radial homogeneity of \(f_{P_i}\), we have, for each $i=1,\dots, N-1$ and $(u_*,\rme^{\rmi t_*}) \in L_i$, $(r^{k_i}u_*,r\rme^{\rmi t_*}) \subset V({f_{P_i}})$. For \(i=1,2,\dots, N-1\), $\epsilon_i >0$ small enough, define   
\begin{multline*}
A_{i}(\epsilon_i):= \{(u,v) \in \C^2 \mid \exists \lambda \in \R_{\geq 0}, (m_i-\epsilon_i)\lambda^{k_i}\leq  |u|\leq  (M_i + \epsilon_i)\lambda^{k_i} \, \& \, \ (1-\epsilon_i)\lambda \leq |v|\leq  (1+\epsilon_i)\lambda\},
\end{multline*}
\begin{equation*}
C_{i}(\epsilon_i)= \{(\lambda^{p_{i,1}} s_1, \lambda^{p_{i,2}} s_2 ) \mid  (s_1,s_2) \in A_{i}(\epsilon_i) \cap \mathbb{S}^3  \ \& \  \lambda \in [0,1] \}.
\end{equation*}
For \(i=2,3,\dots, N\), we define, similarly to what was done previously,
\begin{equation*}  
 \begin{aligned}
 m_{\underline{i}}:=\min \{ |v_*| \mid (\rme^{\rmi \varphi}, v_*) \in L_{\underline{i}} \} \, , \, 
 M_{\underline{i}}:=\max \{ |v_*| \mid (\rme^{\rmi \varphi}, v_*) \in L_{\underline{i}} \}.
 \end{aligned} 
 \end{equation*}
and for $\epsilon_{\underline{i}}>0$ small enough, we also define
\begin{multline*}\label{Aunder}
A_{\underline{i}}(\epsilon_{\underline{i}}):= \{(u,v) \in \C^2 \mid  \exists \lambda \in \R_{\geq 0}, (1-\epsilon_{\underline{i}})\lambda \leq  |u|\leq  (1+\epsilon_{\underline{i}})\lambda \, \& \, (m_{\underline{i}}-\epsilon_{\underline{i}})\lambda^{\frac{1}{k_i}}\leq  |v|\leq  (M_{\underline{i}} + \epsilon_{\underline{i}})\lambda^{\frac{1}{k_i}} \},
\end{multline*}
\begin{equation*}\label{Cunder}
C_{\underline{i}}(\epsilon_{\underline{i}})= \{(\lambda^{p_{i,1}} s_1, \lambda^{p_{i,2}} s_2 ) \mid  (s_1,s_2) \in A_{\underline{i}} \cap \mathbb{S}^3 \ \& \  \lambda \in [0,1]\}.
\end{equation*}

\begin{afirm}
    \(A_{i} \cap B_1^4(0) = C_{i}\) and  \(V_{P_i}\cap B_1^4(0) \subset  C_{i}\), for each \(i=1,\dots, N-1\). Moreover,  \(A_{\underline{N}} \cap B_1^4(0) = C_{\underline{i}}\) and \(V_{P_i}\cap B_1^4(0) \subset  C_{\underline{i}}\) for \( i=2,\dots, N\).
\end{afirm}

\begin{proof}
    Let $(s_1,s_2) \in A_{i} \cap \mathbb{S}^3$. Then there exists $\lambda_0\in [0,1]$ such that  
\begin{align*}
(m_i-\epsilon_i)\lambda_0^{k_i}\leq |s_1|\leq  (M_i + \epsilon_i)\lambda_0^{k_i},\\ \ (1-\epsilon_i)\lambda_0 \leq |s_2|\leq  (1+\epsilon_i)\lambda_0.
\end{align*}
Let $\alpha \in (0,1)$. Multiplying by $\alpha^{p_{i,1}}$ and $\alpha^{p_{i,2}}$ the first and second inequalities, respectively, we get 
\begin{align*}
(m_i-\epsilon_i)\lambda_0^{k_i}\alpha^{p_{i,1}}\leq  |\alpha^{p_{i,1}}s_1|\leq  (M_i + \epsilon_i)\lambda_0^{k_i}\alpha^{p_{i,1}},\\ \ (1-\epsilon_i)\lambda_0\alpha^{p_{i,2}} \leq |\alpha^{p_{i,2}}s_2|\leq (1+\epsilon_i)\lambda_0\alpha^{p_{i,2}}.
\end{align*}
Since \((\lambda_0\alpha^{p_{i,2}})^{k_i}=\lambda_0^{k_i}\alpha^{p_{i,1}}\) and \(\lambda_0\alpha^{p_{i,2}}<1\), for all $\alpha<1$, then $C_{i} \subset A_{i} \cap B_1^4(0) $. By performing the reciprocal calculations, we obtain $A_{i} \cap B_1^4(0) \subset C_{i}$, and hence $C_{i} = A_{i} \cap B_1^4(0)$.
To prove that $V_{P_i}\cap B_1^4(0) \subset  C_{i}$ notice that 
$(\lambda^{k_i} u_*, \lambda \rme^{\rmi t_*}) \in V_{P_i}, \lambda >0$ where $(u_*, \rme^{\rmi t_*}) \in L_i$, then   
\begin{align*}
(m_i-\epsilon_i)\lambda^{k_i}\leq |\lambda^{k_i} u_*|\leq (M_i + \epsilon_i)\lambda^{k_i},\\ \ (1-\epsilon_i)\lambda \leq |\lambda \rme^{\rmi t_*}|\leq (1+\epsilon_i)\lambda.
\end{align*}
Thus, $V_{P_i} \subset A_{i}$, which implies 
$V_{P_i} \cap B_1^4(0) \subset C_{i}$. Analogously, the claim follows for the cases involving the sets $A_{\underline{i}}$ and $C_{\underline{i}}$. 

Take $\ell$ as the minimal $i$ such that $k_i<1$. If $k_i>1,$ for all $i$, then $\ell=N+1$ (see Definition \ref{DEF: decompostion links}). Define:
\begin{equation*}\label{hornneig1}
\begin{aligned}
&\mathcal{H}_1:= C_{1}(\epsilon_1), \\
      & \ \ \ \  \vdots \\
  & \mathcal{H}_{\ell-1}= C_{\ell-1}(\epsilon_{\ell-1}),\\
   &  \mathcal{H}_{\ell}= C_{\underline{\ell}}(\epsilon_{\underline{\ell}}),\\
   & \ \ \ \ \  \vdots  \\
&\mathcal{H}_N:= C_{\underline{N}}(\epsilon_{\underline{N}}).\\
\end{aligned}
 \end{equation*}
Here, $\epsilon_i$ and $\epsilon_{\underline{i}}$ are chosen sufficiently small.  
\end{proof}
   
  We claim that there is $\rho>0$ such that $ \mathcal{H}_{i} \cap \mathcal{H}_{j} \cap B^4_{\rho}(0) =\{0\}, \text{ if } i \neq j.$ Since \(\mathcal{H}_i\) and \(\mathcal{H}_j\) are clearly horn neighbourhoods, it is sufficient to prove that  in a neighbourhood of the origin, any two arcs $\gamma_{s_i,i}\subset \mathcal{H}_i$ and  $\gamma_{s_j,j}\subset \mathcal{H}_j$ only intersect at the origin. Suppose $i<j$. We have that $\gamma_{s_i,i}(\lambda)=\gamma_{s_j,j}(\tilde{\lambda})$ if and only if $(s_{i,1}\lambda^{k_i},s_{i,2}\lambda)=(s_{j,1}\tilde{\lambda}^{k_{j}},s_{j,2}\tilde{\lambda})$.
Since $f$ is nice and IND with respect to  $\Gamma_{\mathrm{inn}}(f)$, it follows that at least one of the numbers $s_{i,1}$ or $s_{j,1}$ is non-zero, and likewise, at least one of $s_{i,2}$ or $s_{j,2}$ is non-zero. Consequently, the system 
\begin{equation*}
\begin{aligned}
 s_{i,2}\lambda&= s_{j,2}\tilde{\lambda},  \\
s_{i,1}\lambda^{k_i}&=s_{j,1}\tilde{\lambda}^{k_{j}}, 
\end{aligned}
\end{equation*}
has an isolated solution at the origin in the case $k_i\neq k_j$, and the lemma follows.
\end{proof}
\begin{obs}\label{linkofTypeIII}
	 The link of the zero set of a semi-radial mixed polynomial $f$ of radial-type $(P;d)$, such that $k_f>1$ and $f$ is not $u$-convenient (or $k_f<1$ and $f$ is not $v$-convenient) can be seen more simple in terms of $L_u$ and $L_v$ (see Remark \ref{equivalenceoflinks}). Let us see how this is done in the case $k_f>1$ and $f_{P}$ is not $u$-convenient, as the other case is analogous.
     
     The set \(\mathcal{P}_{\mathrm{inn}}(f)\) coincides with \(\mathcal{P}_{\mathrm{inn}}(f_{P_1})= \{P_1,P_2\}\), where $P_1:=P$ and \(P_2:=(1,1)\). Notice that  \(\Gamma_{\mathrm{inn}}(f_{P_1})=\Gamma_{\mathrm{inn}}(f)\). Applying Theorem~\ref{thm:link-triviality1} separately for \(f_{P_1}\) and \(f\), we get isotopic links. However, their nested representations are different, as the face function $(f_{P_1})_{P_2}$ is not necessarily equal to $f_{P_2}$. Thus, $L_{f_{P_1}} \simeq \mathbf{L}([L_1],L_v)$ and $L_{f} \simeq \mathbf{L}([L_1],[L_{\underline{2}}]')$, where $L_{\underline{2}}$ is associated with \(f_{P_2}\). The variety $V(f_{P_1})$ is ambient homeomorphic to $V(f)$ by a germ of homeomorphisms \(\phi: (\C^2,0) \to (\C^2,0)\). The set \(V(L_1;f_{P_1})\) denotes the component of $V(f_{P_1})$ whose intersection with $\C \times \mathbb{S}^1$ is the link $L_1$, and 
		\(V(L_1;f):=\phi(V(L_1;f_{P_1}))
		\). Analogously, we define \(V(L_v;f_{P_1})\)  and \(V(L_v;f)\).
\end{obs}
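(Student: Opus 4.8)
The plan is to verify in turn the three assertions packaged in Remark~\ref{linkofTypeIII}: the identification of $\Gamma_{\mathrm{inn}}(f)$ together with $\mathcal{P}_{\mathrm{inn}}(f)=\{P_1,P_2\}$, the two nested-link descriptions, and the component-respecting germ of homeomorphism $\phi$; throughout we keep the standing assumption that $f$ is $\Gamma_{\mathrm{inn}}$-nice, and the case $k_f<1$ with $f$ not $v$-convenient follows by exchanging the two coordinates. First I would pin down the diagram. Since $f=f_P+\tilde f$ is semi-radial with $\operatorname{Sing}(V(f_P))=\{0\}$, the face function $f_P$ is a radial IND mixed polynomial by \cite{BodeSanchez2023}, so $\Gamma(f_P)$ is a single segment on the line $l_P=d$; as $f$, and hence $f_P$, is not $u$-convenient, this segment does not meet the $x$-axis, and its $x$-extreme endpoint $(a,b)$ satisfies $b\ge 1$. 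To complete $\Gamma(f_P)$ to a $C$-diagram one must add a $1$-face meeting $\R^{\{1\}}$: Definition~\ref{def:gammainn}(i) forces its weight vector to have slope at most $1$, while minimality (Definition~\ref{def:gammainn}(ii)) forces the shortest admissible completion, namely the segment from $(a,b)$ to $(a+b,0)$ of weight $P_2=(1,1)$ (on the $y$-axis side nothing is added, using $k_f>1$). Thus $\mathcal{P}_{\mathrm{inn}}(f_{P_1})=\{P_1,P_2\}$ with $P_1=P/\gcd(p_1,p_2)$ and $P_2=(1,1)$; and since $d(P;\tilde f)>d$, a short convexity check shows $\operatorname{supp}(\tilde f)$ lies above $\Gamma_{\mathrm{inn}}(f_P)$, so, $f$ being IND, $\Gamma_{\mathrm{inn}}(f)=\Gamma_{\mathrm{inn}}(f_{P_1})$ and $\mathcal{P}_{\mathrm{inn}}(f)=\{P_1,P_2\}$ as well.

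Next I would read the links off Theorem~\ref{thm:link-triviality1}(ii), which applies since $N=2$. For $f_{P_1}=f_P$, the face $\Delta(P_2;f_P)$ degenerates to the single vertex $(a,b)$, so $(f_{P_1})_{P_2}$ is supported at $(a,b)$ and, since $b\ge 1$, is divisible by $v$ or $\bar v$; $\Gamma_{\mathrm{inn}}$-niceness at the inner vertex $(a,b)$ gives $V((f_{P_1})_{P_2})\cap(\C^*)^2=\emptyset$, so $L_{\underline 2,f_{P_1}}$ is exactly the circle $\{v=0\}$ inside the solid torus, i.e. $L_v$, and hence $L_{f_{P_1}}\simeq\mathbf{L}([L_{1,f_{P_1}}],[L_{\underline 2,f_{P_1}}]')\simeq\mathbf{L}([L_1],L_v)$. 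For $f$ itself the $P_1$-face function is still $f_P$ (every monomial of $\tilde f$ lies strictly above the line $l_P=d$), so $L_{1,f}=L_{1,f_{P_1}}=:L_1$; over $P_2$, however, $f_{P_2}=(f_P)_{P_2}+(\tilde f)_{P_2}$ may genuinely differ from $(f_{P_1})_{P_2}$, because a monomial of $\tilde f$ with $l_P>d$ can still satisfy $l_{(1,1)}=a+b$. Therefore $L_{\underline 2,f}=L_{\underline 2}$ is the link of $f_{P_2}$ and $L_f\simeq\mathbf{L}([L_1],[L_{\underline 2}]')$; this is also the reason why Theorem~\ref{thm:link-triviality1} applied to $f_{P_1}$ and to $f$ produces isotopic links with distinct nested representations.

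For the homeomorphism, I would apply the link-constancy statement Theorem~\ref{thm:link-trivialitysemi} to the family $\{f_{P_1}+\varepsilon\tilde f\}_{\varepsilon\in\R}$ (admissible because $d(P;\tilde f)>d$), obtaining that $L_{f_{P_1}}$ and $L_f$ are ambient isotopic in $\mathbb{S}^3$; extending this isotopy radially through the local conical structure then yields a germ of homeomorphism $\phi\colon(\C^2,0)\to(\C^2,0)$ with $\phi(V(f_{P_1}))=V(f)$. Since the isotopies of \cite{AraujoBodeSanchez,EspinelSanchez2025} are built respecting the nested decomposition, $\phi$ carries the $L_1$-component of $V(f_{P_1})$ to a distinguished component of $V(f)$, which one names $V(L_1;f):=\phi(V(L_1;f_{P_1}))$, and likewise $V(L_v;f):=\phi(V(L_v;f_{P_1}))$.

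The main obstacle I foresee is exactly the bookkeeping of the passage $f_P\rightsquigarrow f$: proving $\Gamma_{\mathrm{inn}}(f)=\Gamma_{\mathrm{inn}}(f_{P_1})$ rigorously (the minimality argument forcing the weight $(1,1)$, and the convexity argument that $\operatorname{supp}(\tilde f)$ does not descend below $\Gamma_{\mathrm{inn}}(f_P)$), while at the same time keeping track of the fact that, although $f_{P_1}$ and $f$ share the diagram $\Gamma_{\mathrm{inn}}$, the link $L_1$, and the ambient topological type of their zero sets, their $P_2$-face functions — and thus $L_{\underline 2}$ — must in general be treated as different objects, which is precisely the distinction this remark is meant to isolate.
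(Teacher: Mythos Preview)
The paper states this as a remark without a separate proof, and your elaboration follows exactly the intended route: pin down $\Gamma_{\mathrm{inn}}$ via Definition~\ref{def:gammainn}, read the two nested-link descriptions off Theorem~\ref{thm:link-triviality1}, and produce $\phi$ from the link-constancy of Theorem~\ref{thm:link-trivialitysemi} plus the local conical structure. Your identification of $P_2=(1,1)$ via conditions (i)--(ii) of Definition~\ref{def:gammainn}, and your analysis of why $(f_{P_1})_{P_2}$ and $f_{P_2}$ may differ, are correct.

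The one genuine gap is exactly the step you flag as the obstacle: the ``short convexity check'' that $\operatorname{supp}(\tilde f)\subset\Gamma_{\mathrm{inn}}(f_P)+\R_{\ge0}^2$ is \emph{not} a pure convexity statement and fails from $b\ge1$ alone. For instance, with $(a,b)=(1,3)$ and $P=(3,1)$ the lattice point $(2,1)$ satisfies $l_P=7>6=d$ but $l_{(1,1)}=3<4=a+b$, so it lies below the $(1,1)$-face. What rescues the claim is that the semi-radial hypothesis forces $b=1$: if $b\ge2$ then every monomial of $f_P$ carries $|v|$-degree $\ge2$, so $f_P$ and all its first partials in $u,\bar u,v,\bar v$ vanish along $\{v=0\}$, giving $\{v=0\}\subset\operatorname{Sing}(V(f_P))$, contrary to $\operatorname{Sing}(V(f_P))=\{0\}$. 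Once $b=1$, the open wedge between the $P$-line and the $(1,1)$-face meets the lattice only in the strip $a+1/k_f<x<a+1$, $y=0$, which contains no integer; hence $\operatorname{supp}(\tilde f)$ lies on or above $\Gamma_{\mathrm{inn}}(f_P)$, the only new contribution of $\tilde f$ to any face function is a term at $(a+1,0)$, and your maximality argument then gives $\Gamma_{\mathrm{inn}}(f)=\Gamma_{\mathrm{inn}}(f_{P_1})$. (This is also why the $(1,1)$-face function of $f$ is automatically of the shape $A(u)v+B(u)\bar v+C(u)$ treated in Lemma~\ref{lemma:overdeformation}.) Make the $b=1$ step explicit and the argument is complete.
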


\begin{proposition}\label{lemma:lipeomorphismlocal}
Let $f$ be an IND mixed polynomial that is $\Gamma_{\mathrm{inn}}$-nice. Suppose that $ \mathcal{P}_{\mathrm{inn}}(f)=\{P_1, \dots, P_N\}$, with $N \ge 2$. Then, for any $1 \le i \le N$ there is $\beta_i \in \mathbb{Q}_{\ge 1}$, $\eta_i>0$ small enough, and a subanalytic bi-Lipschitz map $\Phi_i: (\C^2 \setminus \mathcal{H}_{\beta_i,\eta_i}(\Sigma_L(f_{P_i})),0) \to (\C^2 \setminus \mathcal{H}_{\beta_i,\eta_i}(\Sigma_L(f_{P_i})),0) $ such that  \(\Phi_i (V(L_{P_i};f_{P_i}),0)=(V(L_{P_i};f),0)\). Moreover, we can choose $\beta_1=1$ if $k_1>1$ and $f_{P_1}$ is not $u$-convenient, and $\beta_N=1$ if $k_N<1$ and $f_{P_N}$ is not $v$-convenient. 
\end{proposition}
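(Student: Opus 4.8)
The plan is to build $\Phi_i$ by working in the ``$i$-coordinates'' adapted to the weight vector $P_i$, where the radial homogeneity of $f_{P_i}$ linearises the problem, and then transplanting the resulting local bi-Lipschitz identification across the deformation $f_{P_i} \rightsquigarrow f$ using Theorem~\ref{thm:link-triviality}, Remark~\ref{geralzao}, and the extension tools of Subsection~\ref{subsec:2.5}. First I would fix $i$ and recall from Lemma~\ref{radial-obst-Newton} the location of $\Sigma_L(f_{P_i})$: it sits inside $\{v=0\}$, $\{uv=0\}$, or $\{u=0\}$ according as $i=1$, $2\le i\le N-1$, or $i=N$. This is exactly the locus around which we must carve out a horn neighbourhood. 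Next, using the nested parametrisations of Remark~\ref{geralzao}, the component $V(L_{P_i};f_{P_i})$ is parametrised (for $i\le \ell-1$) by $(r^{k_i}u_{i,J}(0,\tau), r\rme^{\rmi t_{i,J}(0,\tau)})$ and $V(L_{P_i};f)$ by $(r^{k_i}u_{i,J}(r,\tau), r\rme^{\rmi t_{i,J}(r,\tau)})$, with $u_{i,J},t_{i,J}$ subanalytic and $u_{i,J}(r,\tau)\to u_{i,J}(0,\tau)$ as $r\to 0^+$ (and symmetrically in $R$ for $i\ge\ell$); the difference between the two varieties is therefore a higher-order perturbation concentrated away from $\Sigma_L(f_{P_i})$.

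The construction of $\Phi_i$ then proceeds as follows. Choose $\beta_i\in\mathbb{Q}_{\ge1}$ large enough that the horn neighbourhood $\mathcal{H}_{\beta_i,\eta_i}(\Sigma_L(f_{P_i}))$ contains every ``bad'' arc — i.e. every arc along which the zero set of $f_{P_i}$ fails to have controlled contact with its regular part $V_{P_i}$ — so that on the complement $\C^2\setminus\mathcal{H}_{\beta_i,\eta_i}(\Sigma_L(f_{P_i}))$ the sets $V(L_{P_i};f_{P_i})$ and $V(L_{P_i};f)$ are LNE with uniformly controlled geometry (this uses the contact-order estimates recalled/established in Subsection~\ref{Lipcontactcriterion}, together with Theorem~\ref{Edson-Rodrigo} applied on the spheres $\mathbb{S}^4_\rho$). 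On each such sphere the two links $L_{P_i,f_{P_i}}$ and $L_{P_i,f}$ are isotopic (Theorem~\ref{thm:link-triviality1}), and by the above parametrisations the isotopy can be taken with bounded derivatives; extending it radially by Lemma~\ref{lemma-extension} (and filling in the solid-horn regions it delimits using the synchronised-triangle machinery behind Theorem~\ref{teo-gordurinha}) produces a subanalytic bi-Lipschitz self-map of the complement of the horn neighbourhood carrying $V(L_{P_i};f_{P_i})$ onto $V(L_{P_i};f)$. Keeping the map equal to the identity near the removed horn is automatic, since the two varieties agree to high order there.

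For the final clause — $\beta_1=1$ when $k_1>1$ and $f_{P_1}$ is not $u$-convenient, and $\beta_N=1$ in the mirror case — I would invoke Remark~\ref{linkofTypeIII}: in that situation $V(f_{P_1})$ is ambient homeomorphic to $V(f)$ by a germ $\phi$, and, more to the point, $\{v=0\}\subset V(f_{P_1})$ so that $\Sigma_L(f_{P_1})$ is already a linear subspace; the component $V(L_1;f_{P_1})$ is then an honest $1$-Hölder triangle-type piece whose contact with the rest is bounded, so no super-linear horn is needed and one may take $\eta_1$ small with $\beta_1=1$. The main obstacle I anticipate is not the isotopy of links on a single sphere — that is handed to us by Theorem~\ref{thm:link-triviality1} — but rather making the radial assembly genuinely \emph{bi-Lipschitz} rather than merely homeomorphic near $\Sigma_L(f_{P_i})$: one must verify that the chosen $\beta_i$ really does swallow all arcs of low contact order, i.e. that outside $\mathcal{H}_{\beta_i,\eta_i}(\Sigma_L(f_{P_i}))$ the two varieties are \emph{uniformly} LNE and \emph{uniformly} close, so that Lemma~\ref{teo-gordurinha} and the Gluing Lemma~\ref{Gluing Lipschitz Lemma} apply with a single constant. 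That uniformity is exactly what the contact-order computations of Subsection~\ref{Lipcontactcriterion} are there to supply, and checking that the relevant estimates survive on the deformed variety $V(f)$ (not just on $V(f_{P_i})$) is where the real work lies.
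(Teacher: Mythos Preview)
Your opening and closing paragraphs are on target: you correctly invoke Lemma~\ref{radial-obst-Newton} to locate $\Sigma_L(f_{P_i})$, and for the $\beta_1=1$ clause your contact-order reasoning is essentially the paper's argument (the paper phrases it as $\operatorname{Cont}(V(L_{P_1};f_{P_1}),\{v=0\})=1$, then restricts the already-constructed $\Phi_1$ to the complement of the larger $1$-horn $\mathcal{H}_{1,\eta_1'}$, which contains $\mathcal{H}_{\beta_1,\eta_1}$ as a germ).

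Where you diverge is in the middle. You try to build $\Phi_i$ by hand: choose $\beta_i$ so the complement is uniformly LNE, take a sphere-by-sphere isotopy with bounded derivatives, extend radially via Lemma~\ref{lemma-extension}, patch with Theorem~\ref{teo-gordurinha} and Lemma~\ref{Gluing Lipschitz Lemma}. The paper does none of this. Instead it observes that $f_{P_i}$ is radially weighted homogeneous of type $(P_i;d(P_i;f))$ and that $d(P_i;f-f_{P_i})>d(P_i;f_{P_i})$, so the family $\{f_{P_i}+\varepsilon(f-f_{P_i})\}_{\varepsilon\in\R}$ is exactly the setting of the Kerner--Mendes machinery \cite[Subsection~3.3]{Kerner-Mendes}. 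That reference already produces, for each weighted-homogeneous germ with possibly nontrivial $\Sigma_L$, a subanalytic bi-Lipschitz trivialisation outside a horn neighbourhood $\mathcal{H}_{\beta_i,\eta_i}(\Sigma_L(f_{P_i}))$, varying smoothly in $\varepsilon$; the proposition is then a direct consequence. In other words, the ``real work'' you flag at the end is precisely the content of the cited Kerner--Mendes result, and the paper simply invokes it rather than reproving it.

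Your route is not obviously wrong, but it is incomplete as written and has two specific soft spots. First, Lemma~\ref{lemma-extension} extends a \emph{single} sphere map radially to a cone map; it does not handle a radius-dependent isotopy, and $V(L_{P_i};f)$ is not a cone, so the radial extension step needs more than what that lemma provides. Second, Theorem~\ref{teo-gordurinha} lives in $\R^3$, and the paper only applies it after slicing $\C^2$ by hyperplanes orthogonal to a fixed curve (as in the proof of Lemma~\ref{Lemma:Union}); you have not set up that slicing. Both gaps are exactly what the Kerner--Mendes black box absorbs.
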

\begin{proof}
	The \(\Gamma_{\mathrm{inn}}\)-niceness and Lemma \ref{radial-obst-Newton} imply:
	\begin{equation}\label{eq:localzeros}
		\begin{aligned}
			\text{For } i = 1: \quad & V(L_{P_i};f_{P_i}) \subset \mathbb{C}^2 \setminus \{v=0\}, \, \Sigma_L(f_{P_i}) \subseteq \{v=0\};\\
			\text{For } 2 \leq i \leq N - 1: \quad & V(L_{P_i};f_{P_i}) \subset \mathbb{C}^2 \setminus \{uv=0\}, \, \Sigma_L(f_{P_i}) \subseteq \{uv=0\}; \\
			\text{For } i = N: \quad & V(L_{P_i};f_{P_i}) \subset \mathbb{C}^2 \setminus \{u=0\}, \, \Sigma_L(f_{P_i}) \subseteq \{u=0\}.
		\end{aligned}
	\end{equation}
		We know that Theorem~\ref{Lips-triviality} holds only when $\Sigma_L(f)=0$, which is equivalent to $N=1$. However, Kerner and Mendes method \cite[Subsection 3.3]{Kerner-Mendes} allows us to obtain ambient bi-Lipschitz triviality outside a horn neighbourhood $\mathcal{H}_{\beta_i,\eta_i}(\Sigma_L(f_{P_i}))$ of the obstruction locus, for some $\beta_i \in \mathbb{Q}_{\ge 1}$ and $\eta_i>0$ small enough. Thus, considering the family $\{f_{P_i}+ \varepsilon (f-f_{P_i})\}_{\varepsilon \in \R}$, since Equation \ref{eq:localzeros} satisfies $d(P_i;f-f_{P_i})>d(P_i;f_{P_i})$, we find the desired subanalytic bi-Lipschitz maps, and those maps also vary smoothly with $\varepsilon$. 
        
        Finally, if $k_1>1$ and $f_{P_1}$ is not $u$-convenient, then $\operatorname{Cont}(V(L_{P_1};f_{P_1}),\{v=0\})=1$. Therefore, there is $\eta_1'>0$ small enough such that $V(L_{P_1};f_{P_1}) \subset \C^2 \setminus \mathcal{H}_{1,\eta_1'}(\Sigma_L(f_{P_1}))$. Since $(\mathcal{H}_{\beta_1, \eta_1}(\Sigma_L(f_{P_1}),0) \subset (\mathcal{H}_{1, \eta_1'}(\Sigma_L(f_{P_1}),0)$, the restriction of $\Phi_1$ to $(\mathcal{H}_{1, \eta_1'}(\Sigma_L(f_{P_1}),0)$ is the desired map. A similar conclusion holds for $k_N<1$, and $f_{P_N}$ is not $v$-convenient.
\end{proof}

\begin{corolario}\label{biLipslocal} Let $f$ be an IND mixed polynomial that is $\Gamma_{\mathrm{inn}}$-nice. Then, there is a set of triples $\{(\Phi_i, \mathcal{H}_i, \tilde{\mathcal{H}}_i)\}_{i\in I_f }$, satisfying the following conditions:
	\begin{itemize}
            \item[(i)] for each $i\in I_f$, $\Phi_i$ is the bi-Lipschitz map as in Proposition~ \ref{lemma:lipeomorphismlocal};
            \item[(ii)] $\mathcal{H}_i$ is a horn neighbourhood of $V_{P_i}$ and $\tilde{\mathcal{H}}_i=\Phi_i(\mathcal{H}_i)$ is a horn neighbourhood of $\Phi_i(V_{P_i})$;   
		\item[(iii)] for all sufficiently small $\rho>0$ and \(i \neq j\) \[ \mathcal{H}_{i} \cap \mathcal{H}_{j} \cap B^4_{\rho}(0) ={\tilde{\mathcal{H}}}_{i} \cap \tilde{\mathcal{H}}_{j} \cap B^4_{\rho}(0)=\{0\}.\]
	\end{itemize}
\end{corolario}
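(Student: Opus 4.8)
The plan is to feed the maps produced by Proposition~\ref{lemma:lipeomorphismlocal} into the horn--separation mechanism of Lemma~\ref{generalhorn}, shrinking all the amplitude parameters (and, when needed, enlarging the exponents $\beta_i$) until the three requirements hold at once. First I would fix, for each $i\in I_f$, the data $(\Phi_i,\beta_i,\eta_i)$ given by Proposition~\ref{lemma:lipeomorphismlocal}; since the conclusion of that proposition is only strengthened when the removed set $\mathcal{H}_{\beta_i,\eta_i}(\Sigma_L(f_{P_i}))$ is made smaller, I may take $\beta_i$ as large and $\eta_i$ as small as convenient, keeping $\beta_i=1$ in the two exceptional cases singled out there. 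Then I would apply Lemma~\ref{generalhorn} to $f$ itself, obtaining $\rho>0$ and horn neighbourhoods $\mathcal{H}_i$ of $V_{P_i}$, $i\in I_f$, with $\mathcal{H}_i\cap\mathcal{H}_j\cap B^4_\rho(0)=\{0\}$ for $i\neq j$; here I record that each $\mathcal{H}_i$ is the region $C_i(\epsilon_i)$ (or $C_{\underline i}(\epsilon_{\underline i})$) built in that lemma, that the amplitude $\epsilon_i$ may be shrunk freely, and that every point of $\mathcal{H}_i\cap B^4_\rho(0)$ satisfies $|u|\asymp|v|^{k_i}$. With this, condition (i) and the separation of the $\mathcal{H}_i$'s in (iii) are in place.

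The next step is to make sense of $\tilde{\mathcal{H}}_i:=\Phi_i(\mathcal{H}_i)$ and to verify (ii). By Lemma~\ref{radial-obst-Newton}, $\Sigma_L(f_{P_i})$ sits in $\{v=0\}$, $\{uv=0\}$ or $\{u=0\}$ according as $i=1$, $1<i<N$ or $i=N$; on the other hand the relation $|u|\asymp|v|^{k_i}$ shows that, away from $0$, the distance from $\mathcal{H}_i$ to those axes is bounded below by $c\,\|z\|^{\max(k_i,\,1/k_i)}$, and by $c\,\|z\|$ in the two exceptional cases (using the transversality $\operatorname{Cont}(V(L_{P_i};f_{P_i}),\{v=0\})=1$, resp. $\operatorname{Cont}(V(L_{P_i};f_{P_i}),\{u=0\})=1$, recorded in the proof of Proposition~\ref{lemma:lipeomorphismlocal}). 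Choosing $\beta_i$ at least equal to that exponent and then $\eta_i,\epsilon_i$ small enough pushes $\mathcal{H}_i\cap B^4_\rho(0)$ into the domain $\C^2\setminus\mathcal{H}_{\beta_i,\eta_i}(\Sigma_L(f_{P_i}))$ of $\Phi_i$, so $\tilde{\mathcal{H}}_i$ is well defined. It contains $\Phi_i(V_{P_i})=V(L_{P_i};f)$ and, being the outer bi-Lipschitz image of a horn neighbourhood, is contained in---hence is---a horn neighbourhood of $V(L_{P_i};f)$ by Remark~\ref{Rem: NE HT condition}. This yields (ii).

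Finally, for the separation of the $\tilde{\mathcal{H}}_i$'s in (iii) I would use the explicit radial description of the components of $V(f)$: by Remark~\ref{geralzao}, $V(L_{P_i};f)$ is parametrized in exactly the same form as $V_{P_i}$, namely $(r^{k_i}u_{i,J}(r,\tau),r\rme^{\rmi t_{i,J}(r,\tau)})$---or with $u$ and $v$ interchanged when $k_i<1$---with $u_{i,J}(r,\tau)$ converging, as $r\to0^+$, to a point of $L_i$ that is nonzero for $i\neq1$; in particular $|u|\asymp|v|^{k_i}$ on $V(L_{P_i};f)$ as well, with the same pairwise distinct exponents $k_1>\dots>k_N$. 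Consequently the argument that closes the proof of Lemma~\ref{generalhorn}---that for $i\neq j$ the system $s_{i,2}\lambda=s_{j,2}\tilde\lambda$, $s_{i,1}\lambda^{k_i}=s_{j,1}\tilde\lambda^{k_j}$ has the origin as its only solution near $0$ when $k_i\neq k_j$---applies verbatim with $f$ in place of the face functions, producing pairwise disjoint (near $0$) horn neighbourhoods $\mathcal{H}'_i\supset V(L_{P_i};f)$. Shrinking each $\epsilon_i$ once more so that $\tilde{\mathcal{H}}_i=\Phi_i(\mathcal{H}_i)\subset\mathcal{H}'_i$ then completes (iii). I expect this last step to be the main obstacle: a bi-Lipschitz map preserves tangency orders of arcs but need not carry the ``fat'' neighbourhood $\mathcal{H}_i$ onto anything recognizable a priori, so the disjointness of the $\tilde{\mathcal{H}}_i$'s cannot be obtained formally and must be extracted from the parametrizations of the $V(L_{P_i};f)$ supplied by Remark~\ref{geralzao}.
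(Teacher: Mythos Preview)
Your proposal is correct and follows essentially the same strategy as the paper: combine the horn separation from Lemma~\ref{generalhorn} with the maps from Proposition~\ref{lemma:lipeomorphismlocal}, then shrink amplitudes until everything fits. The paper's argument for (iii) is, however, shorter than yours. You rebuild pairwise disjoint horns $\mathcal{H}'_i$ around the \emph{deformed} pieces $V(L_{P_i};f)$ by rerunning the endgame of Lemma~\ref{generalhorn} with the parametrizations of Remark~\ref{geralzao}, and only then shrink so that $\tilde{\mathcal{H}}_i\subset\mathcal{H}'_i$. The paper skips that reconstruction entirely: it keeps the original disjoint family $\{\mathcal{H}_i\}$ from Lemma~\ref{generalhorn} as the outer containers, shrinks each $\mathcal{H}_i$ to a smaller horn $\mathcal{H}_i'\subset\mathcal{H}_i$ of $V_{P_i}$ so that $\Phi_i(\mathcal{H}_i')\subset\mathcal{H}_i$, and then relabels $\mathcal{H}_i:=\mathcal{H}_i'$. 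Disjointness of the $\tilde{\mathcal{H}}_i$'s is then immediate from disjointness of the original $\mathcal{H}_i$'s, with no need to parametrize $V(L_{P_i};f)$ or to revisit the $k_i\neq k_j$ computation. Your extra care in checking that $\mathcal{H}_i$ lies in the domain of $\Phi_i$ (via the distance-to-axes estimate) is a point the paper leaves implicit.
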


\begin{proof}
    Conditions $(i)$ and $(ii)$ follow directly from Lemma \ref{generalhorn} and Proposition~\ref{lemma:lipeomorphismlocal}. Notice that, by shrinking $\epsilon_i, \epsilon_{\underline{i}}$ in the proof of Lemma \ref{generalhorn}, these conditions still hold if we change $\mathcal{H}_i$ for any horn neighbourhood $\mathcal{H}_i' \subset\mathcal{H}_i$ of each $V_{P_i}$. The horn neighbourhoods $\mathcal{H}'_i$ can be taken small enough such that its deformations $\Phi(\mathcal{H}'_i)$ are in $\mathcal{H}_i$. Since $(\mathcal{H}_i,0) \cap (\mathcal{H}_j,0)=\{0\}$ for $i\ne j$, if we change each $\mathcal{H}_i$ by its corresponding $\mathcal{H}_i'$, then condition $(iii)$ is also satisfied for some $\rho>0$ small enough. 
\end{proof}
Throughout this subsection, notice that our strategy for investigating the Lipschitz geometry of the zeros of mixed polynomials involves splitting such a zero set into several pieces. We then prove that each of these pieces can be well approximated, in the bi-Lipschitz sense, by a specific set of weighted homogeneous zero sets coming from the face functions $f_P$ with $P \in \mathcal{P}_{\mathrm{inn}}(f)$. When extending this approach to families of polynomials $\{f+\varepsilon\theta\}_{\varepsilon \in \R}$, we also need to investigate how deformations affect these approximations. Specifically, we want to ensure that small perturbations of $f_P$ do not alter their bi-Lipschitz class. This means that for a given weight vector $P$ within $\mathcal{P}_{\mathrm{inn}}(f)$, a perturbation of the form $f_P + \varepsilon \theta_P$ (where $\theta_P$ is not identically zero) should maintain the bi-Lipschitz equivalence for a sufficiently small $\varepsilon$, even though $\theta_P$ has the same radial degree as $f_P$. 

The following lemma addresses a particular case of the previous situation. It shows that we can deform certain radial mixed polynomials on the same radial degree and still preserve bi-Lipschitz equivalence, where the deformation parameter $\varepsilon$ is not necessarily required to be sufficiently small.
\begin{lemma}\label{lemma:overdeformation} 
	Let $M(u,v):=A(u)v+B(u)\bar{v}$ be a radial mixed polynomial of radial-type $(1,1;d)$ such that $V(M)\cap \C^{\{1,2\}}=\emptyset$. If $M^*(u,v)$ is a radial mixed polynomial of radial-type $(1,1;d)$, which does not depend on $v$ ($M^*=M^*(u)$), then
	\begin{itemize}
		\item[(i)] the mixed polynomial $M+\delta M^*, \ \delta \in \mathbb{R}$ is a radial mixed polynomial of radial-type $(1,1;d)$, and  \[\Sigma_{L}(M+\delta M^*)=Sing(V(M+\delta M^*))\subseteq \{u=0\},\]
		\item[(ii)] there is an ambient bi-Lipschitz map $\Phi_{M}$ satisfying  \[\Phi_{M} (V(L_v;M))=V(L_v;M+M^*).\]
	\end{itemize}
\end{lemma}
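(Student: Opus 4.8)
The plan is to construct $\Phi_M$ as an explicit shear of $\mathbb{C}^2$ that is fibrewise affine in $v$ over the $u$-coordinate. Write $M(u,v)=A(u)v+B(u)\bar v$, where, by radial homogeneity of radial-type $(1,1;d)$, all monomials of $A$ and $B$ (in $u,\bar u$) have degree $d-1$ and all monomials of $M^{*}=M^{*}(u)$ have degree $d$. Put $D(u):=|A(u)|^{2}-|B(u)|^{2}$, which is real-analytic and radially homogeneous of degree $2d-2$. Everything rests on the following reformulation of the hypothesis: $V(M)\cap(\mathbb{C}^{*})^{2}=\emptyset$ holds if and only if $D(u)\neq 0$ for every $u\neq 0$. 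Indeed, for fixed $u\neq 0$ the $\mathbb{R}$-linear endomorphism $v\mapsto A(u)v+B(u)\bar v$ of $\mathbb{C}$ has real determinant $D(u)$, so it is invertible exactly when $D(u)\neq 0$, that is, exactly when $v=0$ is its only zero; quantifying over $u\neq 0$ gives the equivalence. By connectedness of $\mathbb{C}^{*}$, $D$ then has constant sign there, and for each $u\neq 0$ the equation $A(u)v+B(u)\bar v=c$ has a unique solution $v$.

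For item (i): every monomial of $M+\delta M^{*}$ has radial degree $d$ with respect to $(1,1)$, so $M+\delta M^{*}$ is a radial mixed polynomial of radial-type $(1,1;d)$; as its weight vector is $(1,1)$, the third case of Equation~\eqref{obst-locus} applies and yields $\Sigma_{L}(M+\delta M^{*})=Sing(V(M+\delta M^{*}))$ at once. For the inclusion $Sing(V(M+\delta M^{*}))\subseteq\{u=0\}$ I would use the standard criterion for singular points of mixed hypersurfaces (compare \cite{Oka2010,AraujoSanchez2024}): $p\in Sing(V(N))$ if and only if $N(p)=0$ and $(N_{u},N_{v})(p)=\lambda\,(\overline{N_{\bar u}},\overline{N_{\bar v}})(p)$ for some $\lambda$ with $|\lambda|=1$. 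Adding $M^{*}=M^{*}(u)$ leaves the $v$- and $\bar v$-derivatives unchanged, so for $N:=M+\delta M^{*}$ we have $N_{v}=A(u)$ and $N_{\bar v}=B(u)$; a singular point $(u_{0},v_{0})$ then forces $A(u_{0})=\lambda\,\overline{B(u_{0})}$, hence $|A(u_{0})|=|B(u_{0})|$, i.e.\ $D(u_{0})=0$, which by the reformulation above is possible only for $u_{0}=0$.

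For item (ii): let $v_{1}(u):=D(u)^{-1}\bigl(-M^{*}(u)\overline{A(u)}+B(u)\overline{M^{*}(u)}\bigr)$ for $u\neq 0$ and $v_{1}(0):=0$; a short computation shows that $v_{1}(u)$ is the unique solution of $A(u)v+B(u)\bar v=-M^{*}(u)$, and this gives the exact identity $(M+M^{*})(u,v)=M\bigl(u,\,v-v_{1}(u)\bigr)$ on all of $\mathbb{C}^{2}$. Next I would check that $v_{1}$ is globally Lipschitz on $\mathbb{C}$ with $v_{1}(0)=0$: on $\{|u|=1\}$ it is real-analytic, since $D$ does not vanish there, so $v_{1}$ and its gradient are bounded on $\{|u|=1\}$; as $v_{1}$ is radially homogeneous of degree $1$ (numerator homogeneous of degree $2d-1$, denominator of degree $2d-2$), these bounds propagate to a uniform Lipschitz estimate on all of $\mathbb{C}$, including up to the origin. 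Now set $\Phi_{M}(u,v):=(u,\,v+v_{1}(u))$, with inverse $(u,v)\mapsto(u,\,v-v_{1}(u))$; both maps are Lipschitz, so $\Phi_{M}$ is an ambient bi-Lipschitz map of $(\mathbb{C}^{2},0)$ which, moreover, restricts to the identity on $\{u=0\}$. From $M\circ\Phi_{M}^{-1}=M+M^{*}$ we get $\Phi_{M}(V(M))=V(M+M^{*})$. Since $V(M)\subseteq\{uv=0\}$ and $\{v=0\}\subset V(M)$, the component $V(L_{v};M)$ is the plane $\{v=0\}$, and $\Phi_{M}(\{v=0\})=\{(u,v_{1}(u))\mid u\in\mathbb{C}\}$, the graph of $v_{1}$; this graph is a component of $V(M+M^{*})$ with link isotopic to $L_{v}$, hence equals $V(L_{v};M+M^{*})$ (equivalently, $\Phi_{\varepsilon}(u,v)=(u,v+\varepsilon v_{1}(u))$ is an explicit ambient isotopy carrying $V(M)$ onto $V(M+\varepsilon M^{*})$ and moving $\{v=0\}$ continuously to this graph). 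Therefore $\Phi_{M}(V(L_{v};M))=V(L_{v};M+M^{*})$, as required.

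The one substantial step — the crux — is the Lipschitz estimate for the positively-homogeneous-of-degree-one function $v_{1}$, uniform up to the origin: this is exactly what upgrades $\Phi_{M}$ from a homeomorphism to a bi-Lipschitz homeomorphism. Once it is in place, items (i) and (ii) follow formally from the non-vanishing of $D$ and from the cancellation identity $M+M^{*}=M\circ\Phi_{M}^{-1}$.
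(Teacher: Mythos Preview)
Your proof is correct. Part (i) is essentially the paper's argument: both derive $|A(u)|\neq|B(u)|$ for $u\neq 0$ from the hypothesis and then use the mixed Jacobian criterion (the paper quotes \cite[Corollary~4.11]{AraujoBodeSanchez}, you compute directly) to force $u=0$ at a singular point. Part (ii), however, is a genuinely different route. The paper argues softly: by (i) the family $\{M+\delta M^{*}\}_{\delta\in[0,1]}$ has $Sing(V)\subset\{u=0\}$, so the implicit function theorem together with isotopy extension yields a smooth ambient isotopy $\phi_\delta$ on $\mathbb{S}^3$ taking $L_v$ to the link of $V(L_v;M+\delta M^{*})$; compactness of $\mathbb{S}^3$ makes $\phi_\delta$ bi-Lipschitz, and the radial extension Lemma~\ref{lemma-extension} produces the desired $\Phi_M$. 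You instead construct $\Phi_M$ explicitly as the shear $(u,v)\mapsto(u,v+v_1(u))$, where $v_1$ is the unique solution of $A(u)v+B(u)\bar v=-M^{*}(u)$; the identity $(M+M^{*})=M\circ\Phi_M^{-1}$ then transports the zero sets exactly. Your approach is more elementary and more informative --- it gives a closed formula, a global (not just germ) bi-Lipschitz map, and avoids both the isotopy extension theorem and Lemma~\ref{lemma-extension}. The paper's approach, on the other hand, fits their general paradigm (perturb on the sphere, extend radially) and would adapt more readily to situations where the fibrewise equation is not $\mathbb{R}$-linear in $v$ and no explicit inverse is available.
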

\begin{proof}
	\noindent \textbf{(i):} Let $P=(1,1)$. We have $d=d(P;M)= d(P;M^*)$, and Proposition~\ref{lemma:lipeomorphismlocal} guarantees only a trivialisation of $\{M+ \varepsilon M^*\}$ for sufficiently  small $\varepsilon$. The condition $V(M)\cap \C^{\{1,2\}}=\emptyset$ implies $|A(u)|\neq |B(u)|, \text{ for all } u\in \mathbb{C}^*$ because, otherwise, if $(u,r\rme^{\rmi t})\in V(M)$, with $r \ne 0$, then
	\begin{equation*}\label{monomial}
		0=r\rme^{-\rmi t}(A(u)\rme^{2\rmi t}+B(u)) \Rightarrow A(u)=B(u)=0 \text{ or } \rme^{2\rmi t}=-\frac{B(u)}{A(u)}.
	\end{equation*}
    The first condition implies $u=0$, and the second condition is impossible since it must hold for all $t$. Therefore, $(u,0)$, $u\neq 0$, is a  regular point of $M$. 
	It is clear  that  $M+\delta M^*$ is radial of radial-type $(P;d)$ for any $\delta$. By  \cite[Corollary 4.11]{AraujoBodeSanchez}, the regularity of the points $(u,0)$, $u\ne 0$, and the independence of $M^*$ from $v$ implies that \(Sing(V(M+\delta M^*))\subset \{u=0\}.\) Therefore, the result follows from Equation~\eqref{obst-locus}, as $M+\delta M^*$  satisfies $k_{M+\delta M^*}=1$.
	
	\vspace{0.2cm}
	
	\noindent \textbf{(ii):} Consider the family $\{M+\delta M^*\}_{\delta \in [0,1]}$. The members of this family are radial of radial-type $(P;d)$. By (i), $Sing(V(M+\delta M^*)) \subset \{u=0\}$, and thus, applying the implicit function theorem and the extension isotopy theorem in $\mathbb{S}^3$, there is a smooth ambient isotopy 
    $\phi_\delta: \mathbb{S}^3 \to \mathbb{S}^3$ taking $L_v$ to the link $V(L_v;M+\delta M^*) \cap S^3$, for each $\delta \in [0,1]$. Since $\mathbb{S}^3$ is compact and $\phi_\delta$ is smooth, we conclude that $\phi_\delta$ is a bi-Lipschitz map. Now consider the map germ $\Phi_{\delta}: (\R^4,0)\to(\R^4,0)$ given as 
    $$\Phi_\delta(\bm{z})=\begin{cases} |z| \phi_\delta \left(\dfrac{z}{|z|}\right),\ z \neq 0, \\ 0 ,\ z=0.\end{cases} $$
    By Lemma \ref{lemma-extension}, the maps $\Phi_{\delta}$ are ambient bi-Lipschitz. Therefore, the map $\Phi_M:=\Phi_1$ is the desired ambient bi-Lipschitz map.
\end{proof}
\begin{obs}	Lemma~\ref{lemma:overdeformation} also works for radial mixed polynomials of the form  $N(u,v)=A(v)u+B(v)\bar{u}$ and deformation $N^*(u,v)$ satisfying the analogue properties. 
\end{obs}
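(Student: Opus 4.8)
The plan is to reduce the statement to Lemma~\ref{lemma:overdeformation} by exploiting the symmetry of the weight vector $(1,1)$ under the coordinate swap. Let $\sigma\colon\C^2\to\C^2$ be the involution $\sigma(u,v)=(v,u)$. Viewed on $\R^4$ it merely interchanges the two $\R^2$-blocks, hence it is an orthogonal (in particular $1$-bi-Lipschitz) automorphism with $\sigma=\sigma^{-1}$, and it satisfies $\sigma(\mathbb{S}^3)=\mathbb{S}^3$, $\sigma((\C^*)^2)=(\C^*)^2$, $\sigma(\{u=0\})=\{v=0\}$ and $\sigma(L_u)=L_v$, where $L_u=\mathbb{S}^3\cap\{u=0\}$ and $L_v=\mathbb{S}^3\cap\{v=0\}$.

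First I would record that precomposition with $\sigma$ turns the data $(N,N^*)$ into data of the type treated in Lemma~\ref{lemma:overdeformation}. Set $\tilde N:=N\circ\sigma$ and $\tilde{N^*}:=N^*\circ\sigma$. Because the weight vector $(1,1)$ is symmetric, $\tilde N$ and $\tilde{N^*}$ are again radial mixed polynomials of radial-type $(1,1;d)$; moreover $\tilde N(u,v)=A(u)v+B(u)\bar v$ is of the form $M$ in Lemma~\ref{lemma:overdeformation}, $\tilde{N^*}$ depends only on $u$, and $V(\tilde N)\cap(\C^*)^2=\sigma\bigl(V(N)\cap(\C^*)^2\bigr)=\emptyset$. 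Applying Lemma~\ref{lemma:overdeformation} to $(\tilde N,\tilde{N^*})$ gives: (i) $\tilde N+\delta\tilde{N^*}$ is radial of radial-type $(1,1;d)$ with $\Sigma_L(\tilde N+\delta\tilde{N^*})=\operatorname{Sing}(V(\tilde N+\delta\tilde{N^*}))\subseteq\{u=0\}$; and (ii) an ambient bi-Lipschitz map $\Phi_{\tilde N}$ with $\Phi_{\tilde N}(V(L_v;\tilde N))=V(L_v;\tilde N+\tilde{N^*})$.

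Then I would transport the conclusions back by $\sigma$. Since $\tilde N+\delta\tilde{N^*}=(N+\delta N^*)\circ\sigma$, we get $V(N+\delta N^*)=\sigma(V(\tilde N+\delta\tilde{N^*}))$, and $\sigma$ carries singular loci to singular loci; as both polynomials have $k=1$, the obstruction-locus formula~\eqref{obst-locus} reduces $\Sigma_L$ to $\operatorname{Sing}(V)$ on both sides, so $\Sigma_L(N+\delta N^*)=\sigma(\Sigma_L(\tilde N+\delta\tilde{N^*}))\subseteq\sigma(\{u=0\})=\{v=0\}$, which is conclusion (i) for $N$. For (ii), since $\sigma$ fixes $\mathbb{S}^3$ and swaps $L_u$ with $L_v$, it maps the component of $V(\tilde N)$ carrying the link $L_v$ onto the component of $V(N)$ carrying the link $L_u$, i.e.\ $\sigma(V(L_v;\tilde N))=V(L_u;N)$, and likewise $\sigma(V(L_v;\tilde N+\tilde{N^*}))=V(L_u;N+N^*)$. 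Therefore $\Phi_N:=\sigma\circ\Phi_{\tilde N}\circ\sigma$ is ambient bi-Lipschitz and, using $\sigma=\sigma^{-1}$, satisfies $\Phi_N(V(L_u;N))=V(L_u;N+N^*)$, which is conclusion (ii) for $N$.

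There is no serious obstacle in this argument; the only point deserving care is the bookkeeping of the distinguished components $V(L_u;\cdot)$ and $V(L_v;\cdot)$ under $\sigma$ — one must check that ``the component carrying the link $L_u$'' transforms correctly, with $L_u$ and $L_v$ interchanged — while the potential subtlety coming from formula~\eqref{obst-locus} disappears because in the regime $k=1$ the obstruction locus is just the ordinary singular locus.
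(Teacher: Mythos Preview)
Your proposal is correct. The paper states this observation without proof, implicitly relying on the evident $u\leftrightarrow v$ symmetry; your argument via the coordinate swap $\sigma(u,v)=(v,u)$ is a clean and rigorous way to formalize that symmetry, reducing directly to Lemma~\ref{lemma:overdeformation} rather than repeating its proof with the roles of the variables interchanged.
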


\subsection{A Lipschitz contact criterion}
\label{Lipcontactcriterion}
The zero set $V(f)$ of an IND mixed polynomial $f$ that is $\Gamma_{\mathrm{inn}}$-nice can be approximated by the weighted homogeneous zero sets $V_{P_i}, \ P_i\in \mathcal{P}_{\mathrm{inn}}(f)$. Thus, if we want to study the contact at the origin of two components $V_1$ and $V_2$ of $V(f)$ that are in the same horn neighbourhood $\mathcal{H}_{i}$, then we will consider the contact at the origin of their corresponding weighted homogeneous approximations (see Corollary~\ref{biLipslocal}). However, if these components belong to different horn neighbourhoods, then their contact behaviour can be more complicated, since such zeros and their respective homogeneous parts can have different contact orders at the origin. In the following, we provide a broad criterion for understanding the contact at the origin of components of zeros of mixed polynomials when they are in different horn neighbourhoods. Motivated to preserve the property that the contact order at the origin of such zeros and their respective homogeneous parts are the same, we consider the following two conditions:

\begin{equation}
\tag{A}
\label{Eq:propertyA}
\begin{aligned}
	&\text{For } i,j \in I_f^{(k > 1)}  \text{ with }  i\neq j, \text{ and components } I \subset L_{i,f}  \text{ and } J \subset L_{j,f}: \\
	& \mathrm{proj}_2(I) \cap \mathrm{proj}_2(J) \neq \emptyset \Rightarrow \ 
	(\operatorname{Int}( \mathrm{proj}_2(I)) \cap  \mathrm{proj}_2(J)\neq \emptyset \text{ or }  \mathrm{proj}_2(I) \cap \operatorname{Int} (\mathrm{proj}_2(J))\neq \emptyset).
\end{aligned}
\end{equation}

\begin{equation}
\tag{B}
\label{Eq:propertyB}
\begin{aligned}
	&\text{For } i,j \in I_f^{(k < 1)} \text{ with } i \neq j, \text{ and components } I \subset L_{\underline{i},f}  \text{ and } J \subset L_{\underline{j},f}: \\ 
	&\mathrm{proj}_1(I) \cap \mathrm{proj}_1(J) \neq \emptyset \Rightarrow 
	(\operatorname{Int}( \mathrm{proj}_1(I)) \cap \mathrm{proj}_1(J) \neq \emptyset \text{ or } \mathrm{proj}_1(I) \cap \operatorname{Int} (\mathrm{proj}_1(J)) \neq \emptyset ).
\end{aligned}
\end{equation}
Here, ${\rm proj}_1: \mathbb{S}^1\times \C \to \mathbb{S}^1$ and ${\rm proj}_2: \C\times \mathbb{S}^1 \to \mathbb{S}^1$ are projections. 
\begin{proposition}\label{nontangencycriteria}Let $f$ be an IND mixed polynomial that is $\Gamma_{\mathrm{inn}}$-nice. Then:
\begin{enumerate}
	\item[(i)] If $I,\ J$ and $K$ are components of $L_{i,f}$, $L_{\underline{j},f}$ and $L_{l,f}$, respectively, where  $\ i \in I_{f}^{(k>1)}$, $ j \in I_{f}^{(k<1)}$ and  $l \in I_{f}^{(k=1)}$, then  
	\begin{align*}
		\operatorname{Cont}(V(I;f), V(J;f))=\operatorname{Cont}(V(I;f), V(K;f))=\operatorname{Cont}(V(J;f),& V(K;f))=1.
	\end{align*}
	\item[(ii)]   If $I$ and $J$ are components of $L_{l,f},$ where $\ l \in I_{f}^{(k=1)}$, then $$\operatorname{Cont}(V(I;f), V(J;f))= 1.$$    
	\item[(iii)]  If $I$ and $J$ are components of $L_{i,f},$ and  $L_{j,f}$, respectively, where $i, j \in I_{f}^{(k>1)}$, \(i\leq j\), then 
	\[1\leq \operatorname{Cont}(V(I;f), V(J;f))\leq k_j. \] 
	Moreover,	
	\begin{equation*}
		\operatorname{Cont}(V(I;f), V(J;f))= \begin{cases} 
			1,  &\text{iff $\mathrm{proj}_2(I) \cap \mathrm{proj}_2 (J)= \emptyset$,} \\ 
			k_j,  &\text{if $i=j$ or $f$ satisfies Condition \eqref{Eq:propertyA}.} 
		\end{cases}
	\end{equation*} 
	\item[(iv)]    If $I$ and $J$ are components of $L_{\underline{i},f},$ and  $L_{\underline{j},f}$, respectively, where $i, j \in I_{f}^{(k<1)}$, \(i\leq j\), then 
	\[1\leq \operatorname{Cont}(V(I;f), V(J;f))\leq \frac{1}{k_i}. \] 
	Moreover,	
	\begin{equation*}
		\operatorname{Cont}(V(I;f), V(J;f))= \begin{cases} 
			1,  &\text{iff $\mathrm{proj}_1(I) \cap \mathrm{proj}_1 (J)= \emptyset$,} \\ 
			\frac{1}{k_i},  &\text{if $i=j$ or $f$ satisfies Condition \eqref{Eq:propertyB}.} 
		\end{cases}
	\end{equation*} 
\end{enumerate}
\end{proposition}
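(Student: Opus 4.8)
The plan is to work directly with the explicit parametrisations recorded in Remark~\ref{geralzao}, reducing everything to leading-order estimates of the coordinate functions. For an index $i$ with $k_i\ge 1$ and a component $I\subset L_{i,f}$, every point of $V(I;f)$ has the form $(r^{k_i}u_{i,I}(r,\tau),re^{\rmi t_{i,I}(r,\tau)})$, and symmetrically, for $k_i\le 1$, $V(I;f)$ is parametrised via $L_{\underline i,f}$. From this one reads off, for any subanalytic arc $\gamma$ in $V(I;f)$ parametrised by distance $t$: if $k_i>1$ then the $v$-coordinate of $\gamma(t)$ is $t\,w(t)$ with $|w(t)|\to 1$ and $\lim w(t)\in\mathrm{proj}_2(I)$, while the $u$-coordinate is $O(t^{k_i})$; if $k_i<1$ the roles of $u$ and $v$ (and of $\mathrm{proj}_2$, $\mathrm{proj}_1$) are exchanged; and if $k_i=1$ both coordinates have order $t$ and $\gamma(t)/t$ tends to a point of the cone over $L_{l,f}$. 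Hence the tangent cone $C(V(I;f),0)$ is $\{(0,\lambda w):\lambda\ge 0,\ w\in\mathrm{proj}_2(I)\}\subset\{u=0\}$ if $k_i>1$, lies in $\{v=0\}$ if $k_i<1$, and is the cone over $L_{l,f}$ if $k_i=1$. I will use repeatedly that $\tord(\gamma_1,\gamma_2)\ge 1$ for any two arcs based at $0$, and that $\tord(\gamma_1,\gamma_2)>1$ forces $\gamma_1,\gamma_2$ to have the same tangent direction.

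Parts (i) and (ii) then follow at once from the description of these tangent cones: in every pair of components occurring there, the two tangent cones meet only at the origin — distinct coordinate axes in case (i); cones over disjoint links $I,J\subset\C^\ast\times\mathbb{S}^1$ (resp.\ $\mathbb{S}^1\times\C^\ast$) whose generating rays never cross, in case (ii) — so no arc in one of them shares a tangent direction with an arc in the other, whence every $\tord$ equals $1$ and the corresponding contact orders equal $1$.

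For part (iii) (part (iv) is entirely parallel, with the roles of $u$ and $v$, and of $k$ and $1/k$, interchanged, so that the bound becomes $1/k_i$) I would argue in three steps. \emph{The ``$=1$'' dichotomy:} since $C(V(I;f),0)=\{(0,\lambda w):w\in\mathrm{proj}_2(I)\}$, the tangent cones meet off the origin exactly when $\mathrm{proj}_2(I)\cap\mathrm{proj}_2(J)\neq\emptyset$; if these sets are disjoint all tangency orders are $1$, while if they meet, radial arcs through a common $v$-direction already give $\tord>1$. \emph{The upper bound $\operatorname{Cont}\le k_j$ (for $i\le j$, so $k_j=\min(k_i,k_j)$):} for arcs $\gamma_1\subset V(I;f)$, $\gamma_2\subset V(J;f)$ the $u$-coordinate difference is $r^{k_i}u_{i,I}(r,\cdot)-r^{k_j}u_{j,J}(r,\cdot)$; when $i<j$ the term $r^{k_j}u_{j,J}(0,\cdot)$ dominates and is nonzero (for $j$ non-extreme), and when $i=j$ a nonzero limiting difference $u_{i,I}(0,\cdot)-u_{j,J}(0,\cdot)$ survives whenever the $v$-directions match, since these limiting $u$-values are coordinates of distinct points of the disjoint components $I\neq J$; either way $\|\gamma_1(t)-\gamma_2(t)\|\gtrsim t^{k_j}$ (comparing at common $r$ is legitimate because $2k_j-1>k_j$), so $\tord\le k_j$. \emph{The lower bound $\operatorname{Cont}\ge k_j$ under $i=j$ or \eqref{Eq:propertyA}:} choose a common $v$-direction $\theta_\ast$ lying in the interior of $\mathrm{proj}_2(I)$ or of $\mathrm{proj}_2(J)$ — this is exactly Condition~\eqref{Eq:propertyA} when $i\neq j$, while for $i=j$ I would pass to the weighted-homogeneous model $V_{P_i}$, to which $V(L_{P_i};f)$ is ambient bi-Lipschitz equivalent by Proposition~\ref{lemma:lipeomorphismlocal} and Corollary~\ref{biLipslocal} (the components avoiding $\Sigma_{L}(f_{P_i})$), where, everything being a cone, mere angular overlap lets one slide along rays. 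Then take $\gamma_2$ radial through a point of $V(J;f)$ with $v$-direction $\theta_\ast$, and $\gamma_1\subset V(I;f)$ whose $v$-coordinate \emph{equals} that of $\gamma_2$ at every level $r$ (possible since a full neighbourhood of $\theta_\ast$ is covered by the $v$-directions of $V(I;f)$ at each small level); now $\gamma_1-\gamma_2$ lies in the $u$-direction alone with modulus $\asymp t^{k_j}$, so $\tord(\gamma_1,\gamma_2)=k_j$ and $\operatorname{Cont}=k_j$.

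The main obstacle is this last step when $i\neq j$: turning ``$\theta_\ast$ interior to one projection'' into a genuine subanalytic choice $r\mapsto\tau_1(r)$ with $t_{i,I}(r,\tau_1(r))=t_{j,J}(r,\tau_2)$ for all small $r$, and controlling the gap between the $r$- and arclength parametrisations — here $k_j>1$, hence $2k_j-1>k_j$, is precisely what makes the reparametrisation error negligible below order $k_j$. A second delicate point is the extreme indices $i=1,N$, where $L_{1,f}$ (resp.\ $L_{N,f}$) may contain an axial component, so $u_{1,I}(0,\cdot)$ (resp.\ $v_{N,I}(0,\cdot)$) can vanish and the angular projection need not be all of $\mathbb{S}^1$ — exactly the configuration that Conditions~\eqref{Eq:propertyA} and~\eqref{Eq:propertyB} are meant to govern — so these cases must be isolated and treated by hand. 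Finally, one must justify, using inner non-degeneracy, $\Gamma_{\mathrm{inn}}$-niceness and subanalyticity, that an arbitrary arc in a component obeys the same leading-order estimates as the radial model arcs, and that $\mathrm{proj}_2(I)$ and $\mathrm{proj}_1(I)$ are finite unions of closed arcs in $\mathbb{S}^1$.
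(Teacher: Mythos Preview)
Your proposal is correct and follows the same core strategy as the paper --- test-arc estimates on the explicit parametrisations of Remark~\ref{geralzao} --- but packages two of the steps differently, and in both places your version is arguably cleaner.

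For parts (i), (ii) and the ``$=1$ iff'' clause of (iii), the paper does not phrase things via tangent cones. Instead it passes from an arbitrary arc $\tilde\gamma$ in $V(I;f)$ to a nearby arc $\gamma$ in the weighted-homogeneous model $V(I;f_{P_i})$ with $\tord(\gamma,\tilde\gamma)>1$ (citing \cite[Lemma~3.3]{Kerner-Mendes}), computes $\tord(\gamma_i,\gamma_j)=1$ by a direct coordinate limit, and then invokes the non-archimedean (isosceles) property to transfer this back to the $\tilde\gamma$'s. Your tangent-cone argument short-circuits this: once you know $C(V(I;f),0)\subset\{u=0\}$, $C(V(J;f),0)\subset\{v=0\}$, and $C(V(K;f),0)$ is the cone over $K\subset(\C^*)^2$, the pairwise disjointness (off $0$) is immediate and the conclusion $\tord=1$ follows. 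This is the content of Proposition~\ref{aaaa}, which the paper proves later but does not invoke here.

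For the lower bound $\operatorname{Cont}\ge k_j$ in (iii), the paper treats $i=j$ and Condition~\eqref{Eq:propertyA} in one stroke: it writes down a pair of arcs $\tilde\gamma_i,\tilde\gamma_j$ in $V(I;f)$, $V(J;f)$ with \emph{identical} $v$-coordinate $\lambda e^{\rmi\tilde t(\tau(\lambda))}$, so that the difference is purely in the $u$-direction and of exact order $k_j$. The existence of such a matched pair is asserted directly from Condition~\eqref{Eq:propertyA} (or, implicitly, from $i=j$); your identification of this as the delicate step --- turning ``interior intersection'' into a subanalytic section $r\mapsto\tau(r)$ --- is exactly the point. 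Your alternative route for $i=j$, reducing to the homogeneous model $V_{P_i}$ via the bi-Lipschitz map of Proposition~\ref{lemma:lipeomorphismlocal} and then computing in the cone, is valid (contact order is bi-Lipschitz invariant, and the relevant components avoid the horn around $\Sigma_L(f_{P_i})$) and sidesteps the selection issue entirely in that case. The paper does not take this detour.

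Your closing remarks on the extreme indices $i=1,N$ and on reparametrisation error ($2k_j-1>k_j$) are well taken; the paper absorbs these into the statement ``$\tilde u_J(\tau_j(\lambda))$ converges to a non-zero complex number'' and the implicit use of Remark~\ref{geralzao}, without isolating them.
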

\begin{proof} The main ingredient to prove this proposition is to consider real half-branches and compare their tangency orders. Such half-branches are known as test arcs, and they appeared for the first time in \cite{Fernandes2003} as a tool for discovering relations between Puiseux pairs and the Lipschitz geometry of complex plane curves.\\
\noindent\textbf{(i):} Consider two real half-branches, $\tilde{\gamma}_i \in V(I;f)$ and $\tilde{\gamma}_j \in V(J;f)$. By \cite[Lemma~3.3]{Kerner-Mendes}, these correspond to  $\gamma_i \in V(I;f_{P_i})$ and $\gamma_j \in V(J;f_{P_j})$, respectively. Furthermore, we have that 
\begin{equation}\label{eq:3contactlema}
	\operatorname{\operatorname{tord}} (\gamma_ i,\tilde{\gamma}_i)>1 \text{ and } \operatorname{tord}(\gamma_ j,\tilde{\gamma}_j)>1.
\end{equation}
By Remark~\ref{geralzao}, the real half-branches $\gamma_i$ and $\gamma_j$ can be parameterized as follows: 
\begin{equation}\label{Eq:1contactlema}
	\gamma_i(\lambda)= (\lambda^{k_i} u_I(\tau_i(\lambda)), \lambda\rme^{\rmi t_I(\tau_i(\lambda))}) 
\end{equation}
and
\begin{equation*}\label{Eq:2contactlema}
	\gamma_j(\lambda)= (\lambda \rme^{\varphi_J(\psi_j(\lambda))}, \lambda^{\frac{1}{k_j}}v_J(\psi_j(\lambda))),
\end{equation*}
where $\tau_i, \psi_j: \R \to \R$, and $(u_I, \rme^{\rmi t_I})$ and  $(\rme^{\rmi \varphi_{J}},v_J)$ are parametrizations of the knots $I$ and $J$ in $\C \times \mathbb{S}^1$ and $\mathbb{S}^1\times \C$, respectively. Given that $k_i-1,\ \frac{1}{k_j}-1>0$, and that $(u_I(\tau_i(0)),\rme^{\rmi t_I(\tau_i(0))})$ and $(\rme^{\varphi_J(\psi_j(0))},v_J(\psi_j(0)))$ are points in $I$ and $J$, respectively, the following limit exists and satisfies  
\[\lim_{\lambda \to 0}\frac{ ||\gamma_i(\lambda) - \gamma_j(\lambda)||}{\lambda}>0.\]
Consequently, this leads to \(\operatorname{tord} (\gamma_ i,\gamma_j)=1\), which, by applying the isosceles property (Remark~\ref{Rem: non-archimedean property}) with the inequalities of the tangent orders in Equation~\eqref{eq:3contactlema}, implies that \(\operatorname{tord} (\tilde{\gamma}_ i,\tilde{\gamma}_j)=1.\) Since $\tilde{\gamma}_i$ and $\tilde{\gamma}_j$ were chosen arbitrarily, we conclude that    
\[\operatorname{Cont}(V(I;f), V(J;f))=1.\]
Analogously, we can show that
\[\operatorname{Cont}(V(I;f), V(K;f))=\operatorname{Cont}(V(J;f),V(K;f))=1.\]

\noindent\textbf{(ii):} 
Follows analogously to (i) of this Lemma, doing $k_i=k_j=1$.

\noindent\textbf{(iii):} Consider two real half-branches, $$\gamma_i \in V(I;f_{P_i}) \text{ and } \gamma_j \in V(J;f_{P_j}),\  i,j \in I_f^{(k>1)},\ i\leq j.$$ These real half-branches can be parametrized as Equation~\eqref{Eq:1contactlema}, where they satisfy the condition that 
$(u_I(\tau_i(0)),\rme^{t_I(\tau_i(0))})$ and  $(u_J(\tau_j(0)),\rme^{t_J(\tau_j(0))})$ are points belonging to the knots $I$ and $J$ in $\C \times \mathbb{S}^1$, respectively. Here, $k_i\geq k_j$, and $u_J(\tau_j(0))\neq 0$ if $i<j$.
\vspace{0.2cm}

If $\mathrm{proj}_2(I) \cap \mathrm{proj}_2 (J)= \emptyset$, then the following limit exists and satisfies 
\begin{align*}
	\lim_{\lambda \to 0}\frac{ ||\gamma_i(\lambda)- \gamma_j(\lambda)||}{\lambda} &\geq || \rme^{\rmi t_I(\tau_i(0))}- \rme^{\rmi t_J(\tau_j(0))}||>0.
\end{align*}
This leads to \(\operatorname{tord} (\gamma_ i,\gamma_j)=1,\), which, by applying the isosceles property, implies that the corresponding real half-branches $\tilde{\gamma}_i$ and $\tilde{\gamma}_j$ satisfy  \(\operatorname{tord} (\tilde{\gamma}_ i,\tilde{\gamma}_j)=1.\) 
Therefore,  \(\operatorname{Cont}(V(I;f), V(J;f))=1.\)
\vspace{0.2cm}

If $\mathrm{proj}_2(I) \cap \mathrm{proj}_2 (J)\neq \emptyset$, we can find two real half-branches:
$$\gamma_i(\lambda)= (\lambda^{k_i} u_I(t_*), \lambda\rme^{\rmi t_*}) \text{ and }  \gamma_j(\lambda)= (\lambda^{k_j} u_J(t_*), \lambda\rme^{\rmi t_*})$$ 
in $V(I;f_{P_i})$ and $V(J;f_{P_j})$, respectively. Given that $k_i \geq k_j$, we can ensure that the following limit exists and satisfies 
\begin{align*}
	\lim_{\lambda \to 0}\frac{ ||\gamma_i(\lambda)- \gamma_j(\lambda)||}{\lambda^{k_j}} &= ||\lambda^{k_i-k_j}u_I(t_*)-u_J(t_*)||>0,
\end{align*}
This leads to \(\operatorname{tord} (\gamma_ i,\gamma_j)=k_j,\), which, by applying the isosceles property, implies 	
\(\operatorname{tord} (\tilde{\gamma}_ i,\tilde{\gamma}_j)>1.\) Thus, we conclude that \(\operatorname{Cont}(V(I;f), V(J;f))>1.\) Therefore, we find that   
\[\operatorname{Cont}(V(I;f),V(J;f))=1 \text{ if and only if } \mathrm{proj}_2(I) \cap \mathrm{proj}_2 (J)=\emptyset.
\]
To show that \(\operatorname{Cont}(V(I;f),V(J;f))\leq k_j\), consider  two real half-branches:
\[\tilde{\gamma}_i(\lambda)= (\lambda^{k_i} \tilde{u}_I(\tau_i(\lambda)) \lambda\rme^{\rmi \tilde{t}_I(\tau_i(\lambda))}) \text{ and } 	\tilde{\gamma}_j(\lambda)= (\lambda^{k_j} \tilde{u}_J(\tau_j(\lambda)) \lambda\rme^{\rmi \tilde{t}_J(\tau_j(\lambda))}), \]
in $V(I;f)$ and $V(J;f)$, respectively. Then, for a rational number $\kappa>0$ we obtain:
\begin{align*}
	\frac{||\tilde{\gamma}_i(\lambda)- \tilde{\gamma}_j(\lambda)||}{\lambda^{k_j+\kappa}} &\geq ||\lambda^{k_i-k_j-\kappa} \tilde{u}_I(\tau_i(\lambda))- \lambda^{-\kappa} \tilde{u}_J(\tau_j(\lambda))||.
\end{align*}
Since  $\tilde{u}_J(\tau_j(\lambda))$ converges to a non-zero complex number, we find 
\begin{align*}
	\lim_{\lambda \to 0}\frac{ ||\tilde{\gamma}_i(\lambda)- \tilde{\gamma}_j(\lambda)||}{\lambda^{k_j+\kappa}}= \infty,
\end{align*}
proving that \[1\leq \operatorname{Cont}(V(I;f), V(J;f))\leq k_j. \] 

Still, in the case where $\mathrm{proj}_2(I) \cap \mathrm{proj}_2 (J)\neq \emptyset$, if $f$ and $g$ satisfy Condition \eqref{Eq:propertyA}, then we can find two real half-branches:
\[\tilde{\gamma}_i(\lambda)= (\lambda^{k_i} \tilde{u}_I(\tau(\lambda)), \lambda\rme^{\rmi \tilde{t}(\tau(\lambda))}) \text{ and } 	\tilde{\gamma}_j(\lambda)= (\lambda^{k_j} \tilde{u}_J(\tau(\lambda)), \lambda\rme^{\rmi \tilde{t}(\tau(\lambda))}) \]
in $V(I;f)$ and $V(J;f)$, respectively.
Then, we have 
\begin{align}\label{Eq:4contaclemma}
	\frac{||\tilde{\gamma}_i(\lambda)- \tilde{\gamma}_j(\lambda)||}{\lambda^{k_j}} &=	
	||\lambda^{k_i-k_j}\tilde{u}_I(\tau(\lambda))- \tilde{u}_J(\tau(\lambda)||.
\end{align}
Since the right side of Equation \eqref{Eq:4contaclemma} converges to a non-zero real number, we obtain  
\(\operatorname{tord} (\tilde{\gamma}_ i,\tilde{\gamma}_j)=k_j.\) 
Therefore, \(\operatorname{Cont}(V(I;f), V(J;f))=k_j.\)

\noindent\textbf{(iv):} Follows analogously to (iii) of this proposition, using appropriate changes. 
\end{proof}

\subsection{A local bi-Lipschitz extension to the identity} \label{unionlema} 

When we have a bi-Lipschitz isotopy $\Phi$ from a set $X \subset \R^n$ to $\tilde{X} \subset \R^n$, it usually cannot be extended to the entire ambient space in a direct manner. For example, Kerner and Mendes' method in \cite[Subsection 3.3]{Kerner-Mendes} fails to extend bi-Lipschitz isotopy on a horn neighbourhood of the singular locus. However, when the set has some specific structure, such isotopy can be extended to the ambient by other means (one such structure is the carrousel decomposition of $(\C^2,0)$ with respect to a complex plane curve introduced in \cite{neumann-pichon}). To do a similar decomposition of $(\C^2,0)$ with respect to a link component of the zeros of a $\Gamma_{\rm inn}$-nice  IND mixed polynomial, we first locate where the tangent cone of such a component is (Proposition \ref{aaaa}). Then, we deform the bi-Lipschitz isotopy obtained from Kerner and Mendes' method to the identity, around a suitable horn  neighbourhood of such a tangent cone, culminating in Lemma \ref{Lemma:Union}.

\begin{proposition}\label{aaaa}
    Let $f$ be an IND mixed polynomial that is $\Gamma_{\mathrm{inn}}$-nice, with $\mathcal{P}_{\mathrm{inn}}(f)=\{P_1,\dots,P_N\}$, $N \geq 2$. For each $i \in \{1,\dots,N\}$, we have
    \begin{itemize}
        \item[(i)] $C(V(L_{P_i};f),0) \subset \{u=0\}$, if $k_i>1$;
        \item[(ii)] $C(V(L_{P_i};f),0) \subset \{v=0\}$, if $k_i<1$;
        \item[(iii)] $C(V(L_{P_i};f),0) =V(L_{P_i};f_{P_i})$, if $k_i=1$;
    \end{itemize}
\end{proposition}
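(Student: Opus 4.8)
The key structural fact is that $V(L_{P_i};f)$ is, by Remark~\ref{geralzao}, a subanalytic set parametrized near the origin by arcs that are small perturbations of the homogeneous arcs on $V(L_{P_i};f_{P_i})$. So the plan is to compute the tangent cone directly from these parametrizations by extracting the leading-order term of each arc.

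Case (i), $k_i > 1$: Here $i \le \ell - 1$, so $V(L_{P_i};f)$ is (a union of components $V(J;f)$ over sublinks $J \subset L_i$) parametrized as in Remark~\ref{geralzao} by
\[
(r^{k_i} u_{i,J}(r,\tau),\, r\mathrm{e}^{\rmi t_{i,J}(r,\tau)}),
\]
with $u_{i,J}, t_{i,J}$ subanalytic and bounded. An arc $\gamma$ in $V(L_{P_i};f)$ is obtained by letting $r \to 0^+$ along a subanalytic choice $\tau = \tau(r)$; since $\|\gamma(r)\| \sim |r|$ (the $v$-coordinate dominates because $k_i > 1$ forces $r^{k_i} = o(r)$), reparametrizing by arclength $t \approx r$, the first coordinate is $O(t^{k_i}) = o(t)$ while the second is $t\mathrm{e}^{\rmi t_{i,J}(0,\tau(0))} + o(t)$. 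Hence every tangent vector $v = \lim (\gamma(t)-0)/t$ has vanishing $u$-component, i.e. $C(V(L_{P_i};f),0) \subseteq \{u = 0\}$. The first step is to make the arclength reparametrization rigorous (standard: the Puiseux/subanalytic structure guarantees $\|\gamma(r)\| = c r + o(r)$ with $c>0$), and the second is to note that the limit of the normalized arc exists coordinatewise and kills the $u$-slot. Case (ii), $k_i < 1$, is symmetric: $i \ge \ell$, the parametrization is $(R\mathrm{e}^{\rmi\varphi_{i,J}(R,\tau)}, R^{1/k_i} v_{i,J}(R,\tau))$, now the $u$-coordinate dominates since $1/k_i > 1$, so the $v$-component of every tangent vector vanishes and $C(V(L_{P_i};f),0) \subseteq \{v = 0\}$.

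Case (iii), $k_i = 1$: Now $f_{P_i}$ is weighted homogeneous of radial-type $(P_i;d)$ with $P_i = (p_{i,1},p_{i,2})$ and $p_{i,1} = p_{i,2}$, so $V(L_{P_i};f_{P_i})$ is literally a cone (invariant under $\bm{z}\mapsto \lambda \bm{z}$, $\lambda > 0$), hence equal to its own tangent cone at $0$. The claim $C(V(L_{P_i};f),0) = V(L_{P_i};f_{P_i})$ then follows because $V(L_{P_i};f)$ and $V(L_{P_i};f_{P_i})$ have the same tangent cone: both inclusions come from the parametrization $(\lambda^{p_{i,1}} u_{i,J}(\cdot), \lambda^{p_{i,2}} \mathrm{e}^{\rmi t_{i,J}(\cdot)})$ — wait, with $k_i = 1$ the appropriate parametrization from Remark~\ref{geralzao} reads $(r u_{i,J}(r,\tau), r\mathrm{e}^{\rmi t_{i,J}(r,\tau)})$, so $\gamma(r)/\|\gamma(r)\| \to (u_{i,J}(0,\tau_0), \mathrm{e}^{\rmi t_{i,J}(0,\tau_0)})/\|(\cdot)\|$, and this limit point, after rescaling, is precisely a point of $V((f_{P_i})_i) = $ the $r\to 0$ face function, whose zero set corresponds to $V(L_{P_i};f_{P_i}) \cap \mathbb{S}^3$. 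Conversely every ray through a point of $V(L_{P_i};f_{P_i})$ is a tangent direction, realized by the homogeneous arc perturbed to an $f$-arc. So $C(V(L_{P_i};f),0) = \text{cone over } L_{P_i} = V(L_{P_i};f_{P_i})$.

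**Main obstacle.** The delicate point in all three cases is justifying that the tangent cone can be computed purely from the ``$r\to 0$ limit'' of the subanalytic parametrization — i.e. that \emph{every} arc $\gamma \subset V(L_{P_i};f)$ is, after arclength reparametrization, of the asserted form with a convergent leading coefficient, and that no extra tangent directions arise from arcs that are not captured by the parametrization of Remark~\ref{geralzao}. This requires knowing that the parametrization exhausts $V(L_{P_i};f)$ in a neighborhood of $0$ (which is exactly what \cite[Eqs.~(59),(60)]{AraujoBodeSanchez} and Theorem~\ref{thm:link-triviality1} provide, via the local conical structure) and invoking the curve selection lemma so that tangent vectors are all realized by subanalytic arcs. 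For (iii) the additional subtlety is identifying the limit set of normalized arcs with $V(L_{P_i};f_{P_i}) \cap \mathbb{S}^3$ rather than something larger; this uses that $(f_{P_i})_i = \lim_{r\to 0} f_i$ and that $\Gamma_{\mathrm{inn}}$-niceness plus IND make $L_{P_i}$ a smooth link, so the correspondence between the face-function zero set and the $r\to 0$ boundary of $V(L_{P_i};f)$ is a genuine equality of cones.
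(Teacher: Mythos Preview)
Your proposal is correct and follows essentially the same route as the paper: use the parametrizations from Remark~\ref{geralzao}, observe that $\|(r^{k_i}u_{i,J}(r,\tau),r\mathrm{e}^{\rmi t_{i,J}(r,\tau)})\|=r+o(r)$, and read off the tangent direction from the leading term---which kills the $u$-coordinate when $k_i>1$, kills the $v$-coordinate when $k_i<1$, and lands exactly on $L_i$ (hence on the cone $V(L_{P_i};f_{P_i})$) when $k_i=1$. The paper's proof is considerably terser than yours---it does not spell out the arclength reparametrization or the curve-selection issues you flag in your ``main obstacle'' paragraph---but the underlying computation is identical, and the exhaustiveness of the parametrization is exactly what Remark~\ref{geralzao} (via \cite[Eqs.~(59),(60)]{AraujoBodeSanchez}) supplies.
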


\begin{proof}
   For $k_i>1$, each connected component of $V(L_{P_i};f)\setminus\{0\}$ can be parametrized as $(r^{k_i}u_{i}(r,\tau),r\rme^{\rmi t_{i}(r,\tau)})$ (see Remark \ref{geralzao}). Since $\|(r^{k_i}u_{i}(r,\tau),r\rme^{\rmi t_{i}(r,\tau)})\|=r+o(r)$ and $\frac{r^{k_i}u_{i}(r,\tau)}{r} \to 0$ when $r \to 0^+$, the tangent cone of such a component is in $\{u=0\}$, and (i) is proved. The proof of (ii) is analogous.

   To prove (iii), just notice that
   $$\lim_{r \to 0^+}\frac{(r^{k_i}u_{i}(r,\tau),r\rme^{\rmi t_{i}(r,\tau)})}{r}=(u_{i}(0,\tau),\rme^{\rmi t_{i}(0,\tau)}) \in L_{i}$$
   Since $k_i=1$, the set $V(L_{P_i};f_{P_i})$ is a cone with basis $L_i$, and thus the result follows.
   
\end{proof}

\begin{remark}
   By the previous proposition, if $k_i \ne 1$, then $C(V(L_{P_i};f))$ is contained in a cone whose link is a curve diffeomorphic to $\mathbb{S}^1$ (namely $\{u=0\} \cap \mathbb{S}^3$ or $\{v=0\} \cap \mathbb{S}^3$). On the other hand, if $k_i=1$, then $C(V(L_{P_i};f))$ is a cone over several curves $L_{P_i} \subset \mathbb{S}^3$ that are diffeomorphic to $\mathbb{S}^1$
\end{remark}

\begin{lemma}\label{Lemma:Union}
    Let $\mathcal{F}=\{f+\varepsilon\theta\}_{\varepsilon \in [0,1]}$ be a family of mixed polynomials such that $f$ is IND and $\Gamma_{\mathrm{inn}}$-nice, $d(P_i; \theta)> d(P_i;f)$, and $\mathcal{P}_{\mathrm{inn}}(f)=\{P_1,\dots,P_N\}$, $N \geq 2$. Fix $i\in \{1,\dots,N\}$, and for any $\varepsilon \in [0,1]$, let $C_{\varepsilon}=C(V(L_{P_i};f+\varepsilon\theta),0)$. Denote $\mathcal{C}$ as $\{u=0\}$ if $k_i>1$, $\{v=0\}$ if $k_i<1$, and $\cup_{\varepsilon\in [0,1]}C_\varepsilon$ if $k_i=1$. Then, for each $\eta>0$ small enough, there is a ambient bi-Lipschitz isotopy $\Phi:(\C^2,0)\times[0,1]\to (\C^2,0)$ (see Definition \ref{amb-isotopy-equiv}) such that
    \begin{itemize}
        \item $\Phi_\varepsilon(p)=p$, for every $\varepsilon \in [0,1]$ and $p \in \C^2\setminus{\mathcal{H}_{1,\eta}(\mathcal{C})}$;
        \item $\Phi_\varepsilon(V(L_{P_i};f),0)=(V(L_{P_i};f+\varepsilon\theta),0)$, for every $\varepsilon \in [0,1]$.
    \end{itemize}
\end{lemma}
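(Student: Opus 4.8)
The plan is to build the ambient bi-Lipschitz isotopy $\Phi$ by combining two pieces: the bi-Lipschitz trivialization of the component $V(L_{P_i};f)$ coming from Kerner and Mendes' method outside a horn neighbourhood of $\Sigma_L(f_{P_i})$ (as in Proposition~\ref{lemma:lipeomorphismlocal}), and a deformation-to-the-identity near the tangent cone $\mathcal{C}$, then glue these along a family of synchronised horn-like regions. The first step is to record the geometric input: by Proposition~\ref{aaaa}, for every $\varepsilon\in[0,1]$ the tangent cone $C_\varepsilon=C(V(L_{P_i};f+\varepsilon\theta),0)$ lies in $\{u=0\}$ if $k_i>1$, in $\{v=0\}$ if $k_i<1$, and equals $V(L_{P_i};(f+\varepsilon\theta)_{P_i})$ if $k_i=1$. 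Since $d(P_i;\theta)>d(P_i;f)$, the face function $(f+\varepsilon\theta)_{P_i}$ is $f_{P_i}$ independently of $\varepsilon$, so in the $k_i=1$ case all $C_\varepsilon$ coincide and $\mathcal{C}=C_0=V(L_{P_i};f_{P_i})$ is a fixed cone over a disjoint union of smooth circles; in the $k_i\ne 1$ cases $\mathcal{C}$ is a fixed cone over one unknotted circle. In either case $\mathcal{C}$ is an LNE $1$-horn (or finite union of such), so $\mathcal{H}_{1,\eta}(\mathcal{C})$ is, for $\eta$ small, a finite disjoint union of LNE $1$-horn neighbourhoods.

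Next I would produce the deformation on a neighbourhood of $\mathcal{C}$. Applying Proposition~\ref{lemma:lipeomorphismlocal} (or rather its proof, via \cite[Subsection 3.3]{Kerner-Mendes}) to the family $\{f_{P_i}+\varepsilon(f+\varepsilon'\theta - f_{P_i})\}$ we obtain, for some $\beta_i\in\mathbb{Q}_{\ge1}$ and $\eta_i>0$, a family of subanalytic bi-Lipschitz maps $\Psi_\varepsilon$ on $\C^2\setminus\mathcal{H}_{\beta_i,\eta_i}(\Sigma_L(f_{P_i}))$ varying smoothly in $\varepsilon$ with $\Psi_0=\mathrm{id}$ and $\Psi_\varepsilon(V(L_{P_i};f),0)=(V(L_{P_i};f+\varepsilon\theta),0)$. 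The key observation is that $\operatorname{Cont}(V(L_{P_i};f+\varepsilon\theta),\mathcal{C})>1$ for $k_i\ne 1$ (because $\mathrm{proj}_1$ or $\mathrm{proj}_2$ of the relevant link has nonempty interior along the axis direction — this is exactly the contact estimate proven in Proposition~\ref{nontangencycriteria}), while for $k_i=1$ the component $V(L_{P_i};f+\varepsilon\theta)$ has tangent cone exactly $\mathcal{C}$, so its Hausdorff distance to $\mathcal{C}$ is $o(\|\cdot\|)$; in both situations the whole arc family $V(L_{P_i};f+\varepsilon\theta)$, together with its image $V(L_{P_i};f)$, stays in $\mathcal{H}_{1,\eta'}(\mathcal{C})$ for some $\eta'<\eta$, and $\Psi_\varepsilon$ moves points only within such a horn neighbourhood of $\mathcal{C}$. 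Now decompose $\mathcal{H}_{1,\eta}(\mathcal{C})\setminus\mathcal{H}_{1,\eta'}(\mathcal{C})$ into a continuously varying family of LNE $1$-horn ``shells'' between synchronised triangles (using \cite[Proposition 8.5, 4.7]{medeiros2023amb}), and apply Theorem~\ref{teo-gordurinha} (or \cite[Theorem 6.3]{medeiros2023amb}) on this annular region to interpolate between $\Psi_\varepsilon$ on the inner boundary and the identity on the outer boundary $\partial\mathcal{H}_{1,\eta}(\mathcal{C})$, while keeping norms fixed; set $\Phi_\varepsilon$ equal to $\Psi_\varepsilon$ on $\mathcal{H}_{1,\eta'}(\mathcal{C})$, equal to this interpolation on the shell, and equal to the identity outside $\mathcal{H}_{1,\eta}(\mathcal{C})$. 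The Lipschitz Gluing Lemma (Lemma~\ref{Gluing Lipschitz Lemma}), together with LNE-ness of all the pieces, guarantees that the resulting $\Phi_\varepsilon$ is globally bi-Lipschitz with a constant uniform in $\varepsilon$, $\Phi_0=\mathrm{id}$, and $\Phi$ depends continuously on $\varepsilon$; thus $\Phi$ is the desired ambient bi-Lipschitz isotopy.

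I expect the main obstacle to be the $k_i=1$ case, where $\mathcal{C}$ is genuinely the union of several knotted-looking circles $L_{P_i}\subset\mathbb{S}^3$ rather than a single unknot: one must check that the $1$-horn neighbourhoods of the distinct components of $\mathcal{C}$ are disjoint for small $\eta$ (which follows because $L_{P_i}$ is a smooth compact link, hence its components are a positive distance apart), that $V(L_{P_i};f+\varepsilon\theta)$ stays inside the union of these horns uniformly in $\varepsilon$, and that the synchronised-triangle decomposition of \cite{medeiros2023amb} can be carried out simultaneously and continuously in $\varepsilon$ around a cone over a \emph{disconnected} link — this requires invoking the machinery of \cite{medeiros2023amb} componentwise and patching. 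A secondary technical point is ensuring the interpolation on the shell matches $\Psi_\varepsilon$ exactly on the inner boundary and the identity exactly on the outer boundary with a Lipschitz constant independent of $\varepsilon$; this is handled by the boundedness-of-derivatives hypothesis in Theorem~\ref{teo-gordurinha}, using that $\Psi_\varepsilon$ and its parametrization vary smoothly with bounded derivatives in $\varepsilon$.
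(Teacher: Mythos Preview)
Your overall strategy matches the paper's: use the Kerner--Mendes bi-Lipschitz trivialization from Proposition~\ref{lemma:lipeomorphismlocal} inside a horn neighbourhood of $\mathcal{C}$, interpolate to the identity on an outer shell via the horn machinery of \cite{medeiros2023amb} (Theorem~\ref{teo-gordurinha}), and glue. Your identification of the $k_i=1$ case as requiring a component-by-component treatment is also exactly what the paper does.

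There is, however, a genuine gap. You assert that ``$\Psi_\varepsilon$ moves points only within such a horn neighbourhood of $\mathcal{C}$'', but this is not justified for all $\varepsilon\in[0,1]$ at once: the Kerner--Mendes map $\Psi_\varepsilon$ has some bi-Lipschitz constant $K_\varepsilon$ that need not be close to $1$ when $\varepsilon$ is far from $0$, so there is no reason $\Psi_\varepsilon(\mathcal{H}_{1,\eta'}(\mathcal{C}))\subset\mathcal{H}_{1,\eta}(\mathcal{C})$. Without this containment your interpolation on the shell $\mathcal{H}_{1,\eta}(\mathcal{C})\setminus\mathcal{H}_{1,\eta'}(\mathcal{C})$ cannot be set up, because the inner boundary data $\Psi_\varepsilon|_{\partial\mathcal{H}_{1,\eta'}(\mathcal{C})}$ may land outside the shell entirely. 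The paper repairs this by working \emph{locally} in $\varepsilon$: for each $\varepsilon_0\in[0,1]$ one uses the transition map $\tilde\psi_{\varepsilon_0\to\varepsilon}=\Phi_{i,\varepsilon}\circ\Phi_{i,\varepsilon_0}^{-1}$, whose Lipschitz constant tends to $1$ as $\varepsilon\to\varepsilon_0$; for $\varepsilon$ in a small interval around $\varepsilon_0$ one then has $\tilde\psi_{\varepsilon_0\to\varepsilon}(\mathcal{H}_{1,\eta}(\mathcal{C}))\subset\mathcal{H}_{1,2\eta}(\mathcal{C})$, which is what makes the shell interpolation well-posed. One then covers $[0,1]$ by finitely many such intervals (compactness) and composes the resulting local isotopies to obtain the global $\Phi_\varepsilon$. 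This local-to-global step is essential and is missing from your sketch.

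A secondary point: Theorem~\ref{teo-gordurinha} is stated for horns in $\R^3$, so it cannot be applied directly to the $4$-dimensional shell. The paper slices the tubular neighbourhood of $\mathcal{C}\cap\mathbb{S}^3$ by the $3$-planes $P_s(t)$ orthogonal to the core circle, applies Theorem~\ref{teo-gordurinha} in each slice to straighten the distorted horns $\tilde H_{r,s}=\tilde\psi_{\varepsilon_0\to\varepsilon}(\mathcal{H}_{1,r}(\mathcal C))\cap\mathbb{S}_t^3\cap P_s(t)$, and then assembles these slice maps (using their continuous dependence on $s$ with bounded derivatives) into a single $\varphi_\varepsilon$. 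Your phrase ``synchronised triangles'' suggests you have this in mind, but the slicing and the explicit radial interpolation parameter $x\in[2\eta',3\eta']$ with $\tau_\varepsilon(x)=\frac{(3\eta'-x)\varepsilon+(x-2\eta')\varepsilon_0}{\eta'}$ should be made precise.
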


\begin{proof}
    
     By Proposition \ref{lemma:lipeomorphismlocal} and Corollary 2.2 of \cite{Valette} (see also Theorem 4.4.8 in \cite{Valette2}), there are $\beta_i \in \mathbb{Q}_{\ge 1}$ and $\eta_i>0$ small enough such that, for each $\varepsilon \in [0,1]$, there is a subanalytic bi-Lipschitz map $\Phi_{i,\varepsilon}:(\C^2\setminus \mathcal{H}_{\beta_i,\eta_i}(\Sigma_L(f_{P_i})),0) \to (\C^2\setminus \mathcal{H}_{\beta_i,\eta_i}(\Sigma_L(f_{P_i})),0)$ such that $\Phi_{i,\varepsilon}(V(L_{P_i};f_{P_i}),0)=(V(L_{P_i};f+\varepsilon\theta),0)$ and $\|\Phi_{i,\varepsilon}(p)\|=\|p\|$ for all $p$. Moreover, such maps depend smoothly on $\varepsilon$. Hence, for each $\varepsilon_0 \in [0,1]$ and for 
     each  small enough $\delta>0$, there is a uniform constant $K_{\varepsilon_0,\delta}>0$ such that, for each $\varepsilon \in [0,1]\cap(\varepsilon_0-\delta,\varepsilon_0+\delta)$, the maps $\tilde{\psi}_{\varepsilon_0 \to \varepsilon}:= \Phi_{i,\varepsilon} \circ \Phi_{i,\varepsilon_0}^{-1}$ are subanalytic, $K_{\varepsilon_0,\delta}-$bi-Lipschitz and vary smoothly on $\varepsilon$. Note that $\tilde{\psi}_{\varepsilon_0 \to \varepsilon_0}$ is the identity map and thus $K_{\varepsilon_0,\delta} \to 1$ when $\delta \to 0^+$.
     
     For a fixed $\varepsilon_0 \in [0,1]$, define $C$ as either $\{u=0\}$ if $k_i>1$, $\{v=0\}$ if $k_i<1$, and the union of $\{0\}$ and an arbitrary connected component of $C_{\varepsilon_0}\setminus\{0\}$ if $k_i=1$. Notice that $C$ is a cone over $0$ such that the curve $C\cap\mathbb{S}^3$ is diffeomorphic to $\mathbb{S}^1$, and has length $\ell$. Fix $p\in C\cap\mathbb{S}^3$ and suppose that $\gamma:[0,1] \to C\cap\mathbb{S}^3$ is a smooth parametrisation such that $\gamma(0)=\gamma(1)=p$ and $\gamma'(s)=\ell$, for every $s \in [0,1]$. Therefore, for every $t>0$, the curve $C\cap\mathbb{S}_t^3$ has length $\ell t$ and is parametrised by the curve $\gamma_t: [0,1] \to C\cap\mathbb{S}_t^3$ given as $\gamma_t(s)=t\gamma(s)$, for all $s\in[0,1]$. Moreover, $C \subseteq \mathcal{C}$, and for every $\eta>0$, there is $\delta>0$ small enough such that $K_{\varepsilon_0,\delta}<2$ and thus $\tilde{\psi}_{\varepsilon_0\to\varepsilon}(\mathcal{H}_{1,\eta}(C)) \subset \mathcal{H}_{1,2\eta}(C)$ for every $\varepsilon \in [0,1]\cap(\varepsilon_0-\delta,\varepsilon_0+\delta)$.   
     
     Now take $\eta=3\eta'>0$ small enough such that $\mathcal{H}_{1,3\eta'}(C\cap\mathbb{S}^3)$ is a tubular neighbourhood of $C\cap\mathbb{S}^3$, $(\mathcal{H}_{1,3\eta'}(\mathcal{C})\cap\mathcal{H}_{\beta_i,\eta_i}(\Sigma_L(f_{P_i})),0)=\{0\}$ and take $\delta$ as above. For each $r \in [0,3\eta']$, $s\in[0,1]$ and $t>0$ small enough, define $P_{s}(t)$ as the 3-dimensional hyperplane containing $\gamma_t(s)$ and orthogonal to $\gamma_t'(s)$, define $H_{r,s}(t)$ as $\mathcal{H}_{1,r}(C)\cap \mathbb{S}_t^3\cap P_{s}(t)$, and let $H_{r,s}=\{0\}\cup(\cup_{t>0}H_{r,s}(t))$, whose boundary is a 1-horn. Since $C\cap \mathbb{S}^3$ is smooth, $\partial(\mathcal{H}_{1,r}(C))$ is LNE and the maps $\tilde{\psi}_{\varepsilon_0\to\varepsilon}$ are $K_{\varepsilon_0,\delta}$-bi-Lipschitz, there is a uniform constant $K_1$ such that $\tilde{\psi}_{\varepsilon_0\to\varepsilon}(\partial(\mathcal{H}_{1,r}(C)))$ is $K_1$-LNE. Hence, the boundary of $\tilde{H}_{r,s}(t)= \tilde{\psi}_{\varepsilon_0\to\varepsilon}(\mathcal{H}_{1,r}(C))\cap \mathbb{S}_t^3 \cap P_{s}(t)$ is $K_1$-LNE for each such $r,s,t$. 
     
     Defining $\tilde{H}_{r,s}=\{0\}\cup(\cup_{t>0}\tilde{H}_{r,s}(t))$, by Theorem \ref{Edson-Rodrigo}, there is a uniform constant $K_2$ such that the boundary of $\tilde{H}_{r,s}$ is a 1-horn which is $K_2$-LNE. Then, by Theorem \ref{teo-gordurinha}, there is an ambient bi-Lipschitz map $\varphi_{s,\varepsilon}: (\tilde{H}_{2\eta',s},0) \to (H_{2\eta',s},0)$ such that $\varphi_{s,\varepsilon}(p)=p$, for all $p \in \tilde{H}_{\eta',s}$, $\varphi_{s,\varepsilon}(p) \in \partial(H_{2\eta',s})$, for all $p \in \partial(\tilde{H}_{2\eta',s})$ and $\|\varphi_{s,\varepsilon}(p)\|=\|p\|$, for all $p \in \tilde{H}_{2\eta,s}$. Moreover, since $\tilde{H}_{2\eta',s}$ depends smoothly on $s$ and $\varepsilon$, the maps $\varphi_{s,\varepsilon}$ depend continuously on $s$ and $\varepsilon$, with bounded derivatives. Therefore, the map $\varphi_{\varepsilon}: H_{2\eta',1}(C) \to H_{2\eta',1}(C)$ given as $\varphi_{\varepsilon}(p)=\varphi_{s,\varepsilon}(p)$, if $p \in H_{2\eta',s}(t)$, is a bi-Lipschitz map such that $\|\varphi_{\varepsilon}(p)\|=\|p\|$ for all $p \in \mathcal{H}_{1,2\eta'}(C)$, $\varphi_\varepsilon(V(L_{P_i};f+\varepsilon_0\theta))=V(L_{P_i};f+\varepsilon\theta)$ and $\varphi_{\varepsilon}(\partial(\mathcal{H}_{1,2\eta'}(C))) = \partial(\mathcal{H}_{1,2\eta'}(C))$. Notice also that $\varphi_{\varepsilon_0}$ is the identity map and the maps $\varphi_{\varepsilon}$ depend continuously on $\varepsilon$, with bounded derivatives with respect to $\varepsilon$.

     For each $p\in \partial(\mathcal{H}_{1,2\eta'}(C))$, $p \ne 0$, there are unique $s\in [0,1]$ and $t>0$ such that $p \in \partial(H_{2\eta', s}(t))$. Then, for each $x \in [2\eta',3\eta']$, there is a unique point $q=p_x$ in $\mathbb{S}_t^3$ such that $q$ is the intersection of the geodesic ray in $\mathbb{S}_t^3$, starting from $P_s(t) \cap C \cap \mathbb{S}_t^3$ and passing through $p$, with $\partial(H_{x,s}(t))$. Finally, let $\tau_\varepsilon(x)=\frac{(3\eta'-x)\varepsilon+(x-2\eta')\varepsilon_0}{\eta'}$ and define $\psi_{\varepsilon_0 \to \varepsilon}: (\C^2,0) \to (\C^2,0)$ as
     $$\psi_{\varepsilon_0 \to \varepsilon}(q)=\begin{cases} 
			q,  & \text{if } q \notin \mathcal{H}_{1,3\eta'}(C); \\ 
			(\varphi_{\tau_\varepsilon(x)}\circ\tilde{\psi}_{\varepsilon_0 \to \tau_{\varepsilon}(x)}(p))_x ,  &\text{if } q=p_x\in \partial(\mathcal{H}_{1,x}(C)), \, 2\eta' \le x \le 3\eta'; \\
            \varphi_{\varepsilon}\circ\tilde{\psi}_{\varepsilon_0 \to \varepsilon}(q), & \text{if } q \in \mathcal{H}_{1,2\eta'}(C).
		\end{cases}$$
		
	\begin{figure}[H]
		\centering
		\includegraphics[width=\textwidth]{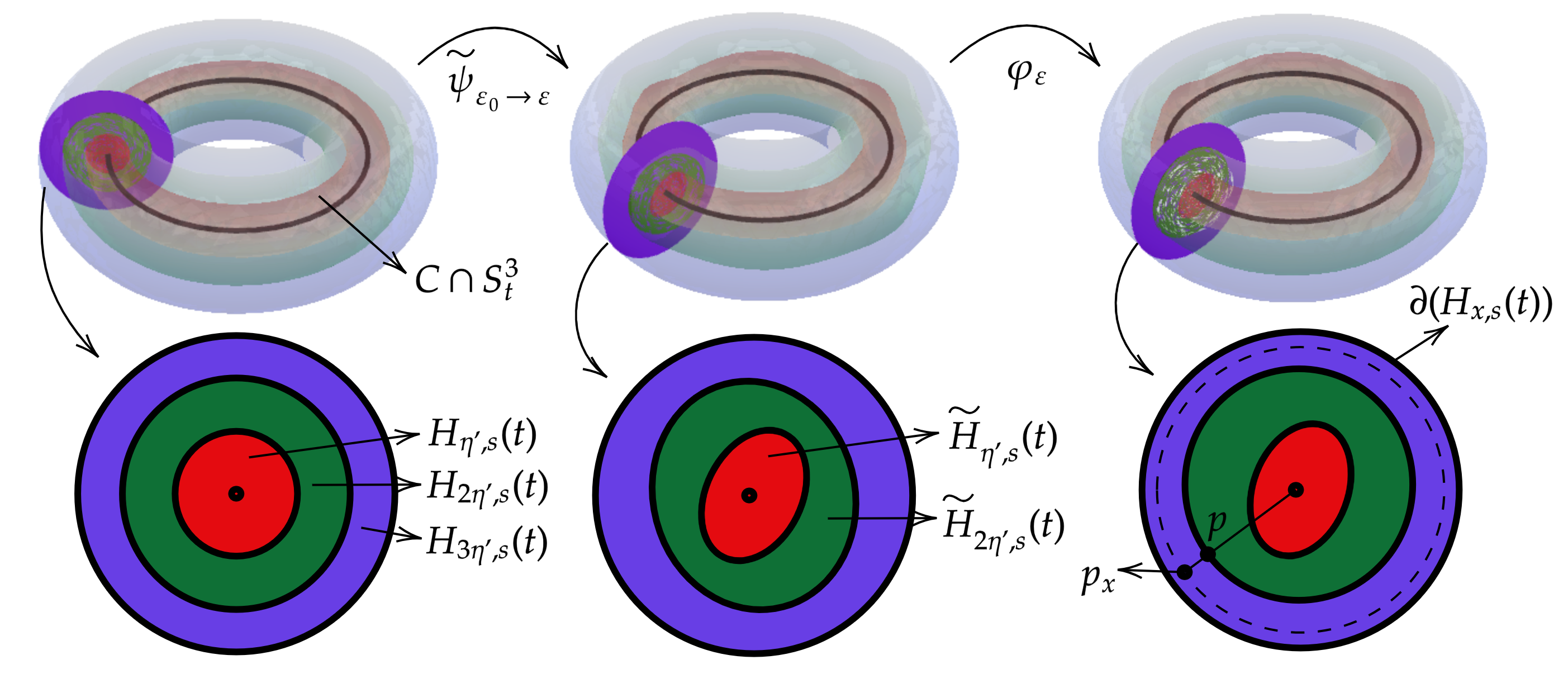}
		\caption{Proof of Lemma \ref{Lemma:Union}}
		\label{fig:gordurinha}
	\end{figure}
		
    The map $\psi_{\varepsilon_0 \to \varepsilon}$ is well defined, since $\tau_\varepsilon(3\eta')=\varepsilon_0$, $\tau_\varepsilon(2\eta')=\varepsilon$, and $(\varphi_{\varepsilon_0}\circ\tilde{\psi}_{\varepsilon_0\to\varepsilon_0}(p))_{3\eta'}=p_{3\eta'}=q$, $(\varphi_{\varepsilon}\circ \tilde{\psi}_{\varepsilon_0\to\varepsilon}(p))_{2\eta'}=\varphi_{\varepsilon}\circ \tilde{\psi}_{\varepsilon_0\to\varepsilon}(q)$. Notice also that, by construction, 
    $$\psi_{\varepsilon_0 \to \varepsilon}|_{\mathcal{H}_{1,2\eta'}(C)}: \mathcal{H}_{1,2\eta'}(C) \to \mathcal{H}_{1,2\eta'}(C),$$ $$\psi_{\varepsilon_0 \to \varepsilon}|_{\partial(\mathcal{H}_{1,x}(C))}: \partial(\mathcal{H}_{1,x}(C)) \to \partial(\mathcal{H}_{1,x}(C)) \, \, (2\eta'\le x \le 3\eta'), $$
    $$\psi_{\varepsilon_0 \to \varepsilon}|_{\C^2 \setminus\mathcal{H}_{1,3\eta'}(C)}: \C^2 \setminus\mathcal{H}_{1,3\eta'}(C) \to \C^2 \setminus\mathcal{H}_{1,3\eta'}(C)$$ are homeomorphisms, by the Homeomorphism Gluing Lemma. Thus $\psi_{\varepsilon_0 \to\varepsilon}$ is also a homeomorphism. As the map $\tau_\varepsilon(x)$ is linear in both $\varepsilon$ and $x$, the maps $\tilde{\psi}_{\varepsilon_0 \to \tau_{\varepsilon(x)}}$ and  $\varphi_{\tau_\varepsilon(x)}$ depend continuously on $\varepsilon$ and $x$, with bounded derivatives, and hence there is a uniform constant $K_3$ such that the map $\psi_{\varepsilon_0 \to \varepsilon}$ is $K_3$-bi-Lipschitz for each $\varepsilon \in [0,1]\cap(\varepsilon_0-\delta,\varepsilon_0+\delta)$.

    To finish the proof, notice that the union of the intervals $(\varepsilon_0-\delta,\varepsilon_0+\delta)$ is an open cover of the compact set $[0,1]$ and thus it can be covered by a finite number of them, say $I_n=(\varepsilon_n-\delta_n,\varepsilon_n+\delta_n)$, for $n=0,1,\dots,M$ and $0\le \varepsilon_0<\varepsilon_1<\dots<\varepsilon_M\le 1$. Suppose also that such intervals are chosen such that $I_n\cap I_{n+1} \ne \emptyset$ and let $s_n \in I_n\cap I_{n+1}$, for $n=1,\dots,M-1$. Finally, for each $\varepsilon\in [0,1]$, let $m$ be the least positive integer such that $\varepsilon \in I_{m}$ and let $$\Phi_\varepsilon:=\psi_{\varepsilon_{m}\to \varepsilon}\circ\psi_{\varepsilon_{m}\to s_{m}}^{-1}\circ\psi_{\varepsilon_{m-1}\to s_{m}}\circ\psi_{\varepsilon_{m-1}\to s_{m-1}}^{-1}\circ\dots\circ\psi_{\varepsilon_1\to s_2}\circ\psi_{\varepsilon_1\to s_1}^{-1}\circ\psi_{\varepsilon_0\to s_1}\circ\psi_{\varepsilon_0\to 0}^{-1}.$$
    By construction, such an ambient isotopy $\Phi$ satisfies all the conditions of Lemma when $k_i \ne 1$. When $k_i=1$, we construct ambient bi-Lipschitz mappings for each connected component of $\mathcal{C}\setminus\{0\}$ and then we glue those maps using the Lipschitz Gluing Lemma (Lemma \ref{Gluing Lipschitz Lemma}), since in the intersection such maps are equal to the identity.

\end{proof}

\section{Ambient Lipschitz geometry of semiradial mixed polynomials}\label{Sect:Bi-LipVtriviality}

\subsection{Bi-Lipschitz V-triviality}\label{sub:ambientbilipchitztriviality}

In this subsection, we aim to improve Theorem \ref{Lips-triviality} even in the case of $\Sigma_{L}(f)\ne\{0\}$. Since $\Sigma_{L}(f)$ is closely related to $k_f$ and the inner non-degeneracy condition, we first investigate the isolated case $\Sigma_{L}(f)=\{0\}$, and after this, the non-isolated case $\Sigma_{L}(f)\neq \{0\}$. To achieve such a task, we need the following definition.

 \begin{definition}\label{deftypes}
A semi-radial mixed polynomial $f$ of radial-type $(P;d)$ is categorised into three types based on the following properties:
\begin{itemize}
\item[] \textbf{Type-I}:  If \(k_f>1\) and \(f_P\) is \(u\)-convenient, or \(k_f<1\) and \(f_P\) is \(v\)-convenient.
\item[] \textbf{Type-II}: When the coefficient $k_f$ equals 1.
\item[] \textbf{Type-III}: If \(k_f>1\) and \(f_P\) is not \(u\)-convenient, or \(k_f<1\) and \(f_P\) is not \(v\)-convenient. 
\end{itemize}
\end{definition}
These three types collectively describe all semi-radial mixed polynomials.

Lemma~\ref{radial-obst} outlines the specific class of mixed polynomials for which Theorem~\ref{Lips-triviality} can be applied. Specifically, it identifies families of mixed polynomials $\{f+\varepsilon \theta\}_{\varepsilon \in I},$ for which Theorem~\ref{Lips-triviality} implies the ambient 
bi-Lipschitz $V$-triviality.
Our aim is to study a class of families of mixed polynomials that encompass those satisfying Theorem~\ref{Lips-triviality} and for which we could
 obtain bi-Lipschitz \( V \)-triviality. 

 \begin{proposition}\label{radial-trivial}
 	 Let \(\{f+\varepsilon \theta\}_{\varepsilon \in I}\) be a family such that $f$ is a Type-I or Type-II semi-radial mixed polynomial of radial-type $(P;d)$ and \(d(P;\theta) \geq  d\). Then, the family is ambient bi-Lipschitz $V$-trivial.

 \end{proposition}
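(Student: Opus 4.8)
The plan is to reduce the statement to Theorem~\ref{Lips-triviality} and Lemma~\ref{radial-obst}, by comparing every member $V(f+\varepsilon\theta)$ with the single model variety $V(f_P)$ and then concluding by transitivity of ambient bi-Lipschitz $V$-equivalence; as usual, $I=\R$ when $d(P;\theta)>d$ and $I$ is a sufficiently small neighbourhood of $0$ when $d(P;\theta)=d$.

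\emph{Set-up and reduction.} Write $f=f_P+\tilde f$ with $d(P;\tilde f)>d$ and $\mathrm{Sing}(V(f_P))=\{0\}$. By the Bode--Sanchez Quiceno criterion recalled in Subsection~\ref{obstructionlocus}, $f_P$ is a radial IND mixed polynomial, and since $f$ is Type-I or Type-II, $f_P$ satisfies one of conditions (i)--(iii) of Lemma~\ref{radial-obst}; hence $\Sigma_{L}(f_P)=\{0\}$. Decompose $\theta=\theta_0+\theta_+$, where $\theta_0:=\theta_{\Delta(P;\theta)}$ is the $P$-face function of $\theta$ if $d(P;\theta)=d$ and $\theta_0:=0$ otherwise; in either case $d(P;\theta_+)>d$. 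Put $g_\varepsilon:=f_P+\varepsilon\theta_0$, a radial mixed polynomial of radial-type $(P;d)$, and $\psi_\varepsilon:=\tilde f+\varepsilon\theta_+$, so that $d(P;\psi_\varepsilon)>d$ and $f+\varepsilon\theta=g_\varepsilon+\psi_\varepsilon$.

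\emph{Key step.} The crucial point is that $\Sigma_{L}(g_\varepsilon)=\{0\}$ for every $\varepsilon\in I$. If $\theta_0=0$ this is immediate, since $g_\varepsilon=f_P$. If $\theta_0\neq 0$ (so $d(P;\theta)=d$ and $I$ is a small neighbourhood of $0$), I would show that $g_\varepsilon$ is a radial IND mixed polynomial of the same type as $f_P$ for $\varepsilon$ near $0$. First, the singular locus of a radial mixed polynomial is invariant under the weighted action $\lambda\cdot(u,v)=(\lambda^{p_1}u,\lambda^{p_2}v)$, so $\mathrm{Sing}(V(g_\varepsilon))=\{0\}$ is equivalent to $\mathrm{Sing}(V(g_\varepsilon))\cap\mathbb{S}^3=\emptyset$; the set $\{(\varepsilon,q)\in I_0\times\mathbb{S}^3 : q\in\mathrm{Sing}(V(g_\varepsilon))\}$ is closed and has empty fibre over $\varepsilon=0$, so by compactness of $\mathbb{S}^3$ it is empty over a neighbourhood of $0$, whence $\mathrm{Sing}(V(g_\varepsilon))=\{0\}$ and $g_\varepsilon$ is IND. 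Second, $k_{g_\varepsilon}=p_1/p_2=k_f$; for Type-II this is condition (i) of Lemma~\ref{radial-obst}, while for Type-I a coefficient comparison at the $u$-extreme (resp.\ $v$-extreme) lattice point of $\Gamma(f_P)$ shows that $u$-convenience (resp.\ $v$-convenience) of $f_P$ persists for $\varepsilon$ small, i.e.\ condition (ii) (resp.\ (iii)) still holds. In all cases Lemma~\ref{radial-obst} then yields $\Sigma_{L}(g_\varepsilon)=\{0\}$. This openness/persistence argument is the main obstacle; the remaining steps are a routine assembly of earlier results.

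\emph{Conclusion.} Fix $\varepsilon\in I$. Applying Theorem~\ref{Lips-triviality} to the family $\{g_\varepsilon+s\psi_\varepsilon\}_{s\in\R}$ (admissible since $\Sigma_{L}(g_\varepsilon)=\{0\}$ and $d(P;\psi_\varepsilon)>d$, so the strict case applies and trivialises over $s\in[0,1]$) shows that $V(f+\varepsilon\theta)=V(g_\varepsilon+\psi_\varepsilon)$ is ambient bi-Lipschitz $V$-equivalent to $V(g_\varepsilon)$. Applying Theorem~\ref{Lips-triviality} to the family $\{f_P+\varepsilon\theta_0\}_{\varepsilon}$ (the equality case when $\theta_0\neq 0$, trivial when $\theta_0=0$) shows that $V(g_\varepsilon)$ is ambient bi-Lipschitz $V$-equivalent to $V(f_P)$. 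Hence $V(f+\varepsilon\theta)$ is ambient bi-Lipschitz $V$-equivalent to $V(f_P)$ for every $\varepsilon\in I$; in particular, taking $\varepsilon=0$, $V(f)$ is ambient bi-Lipschitz $V$-equivalent to $V(f_P)$. By symmetry and transitivity of ambient bi-Lipschitz $V$-equivalence, $V(f+\varepsilon\theta)$ is ambient bi-Lipschitz $V$-equivalent to $V(f)$ for every $\varepsilon\in I$, i.e.\ the family $\mathcal{F}$ is ambient bi-Lipschitz $V$-trivial.
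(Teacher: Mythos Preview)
Your proof is correct and follows essentially the same strategy as the paper: reduce to Theorem~\ref{Lips-triviality} via Lemma~\ref{radial-obst}, split $f+\varepsilon\theta$ into the radial principal part $g_\varepsilon=f_P+\varepsilon\theta_P$ plus higher-order terms, and trivialise in two stages by transitivity. Your explicit ``Key step'' verifying $\Sigma_L(g_\varepsilon)=\{0\}$ for small $\varepsilon$ (via compactness of $\mathbb{S}^3$ and persistence of convenience) in fact makes precise a point the paper's proof leaves implicit when it asserts that Theorem~\ref{Lips-triviality} applies to its family (c), whose base is $f_P+\varepsilon\theta_P$.
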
 
 
 \begin{proof}
We have that  $f=f_P+\tilde{f}$, where $f_P$ is an IND radial mixed polynomial of radial-type $(P;d)$ and $d(P;\tilde{f})>d$. By Lemma~\ref{radial-obst}, both Type-I and Type-II satisfy $\Sigma_{L}(f_P)= \{0\}$. 

Suppose first $d(P;\theta)=d$. 
We can see that Theorem \ref{Lips-triviality} applies to the following families:
\begin{itemize}
	\item[(a)] \(\{f_P + \varepsilon \tilde{f}\}_{\varepsilon \in I}\),
	\item[(b)] \(\{f_P + \varepsilon \theta_P\}_{\varepsilon \in I}\), where \(\theta_P\) is the face function of \(\theta\) with respect to \(P\),
	\item[(c)] \(\{f_P + \varepsilon \theta_P + \delta[\varepsilon (\theta - \theta_P) + \tilde{f}]\}_{\delta \in I}\), where \(\varepsilon\) is fixed and sufficiently small.
\end{itemize}
Thus, for any $\varepsilon \in [0, \varepsilon_0), \ \varepsilon_0>0$ small enough, there exist ambient bi-Lipschitz maps $\Psi$, $\Phi_\varepsilon$, and $\phi$ making the following diagram commutative:
\begin{equation*}\label{comutativecases}
	\begin{tikzcd}[row sep=large, column sep=large]
		(\C^2,V(f)) 
		\arrow[r, "\Psi^{-1} \circ \Phi_\varepsilon \circ \phi"] 
		& (\C^2,V(f+\varepsilon \theta)) \\
		(\C^2,V(f_P)) 
		\arrow[r, "\Phi_\varepsilon", swap] 
		\arrow[u, "\Psi", swap] 
		& (\C^2,V(f_P+\varepsilon \theta_P)). 
		\arrow[u, "\phi"]
	\end{tikzcd}
\end{equation*}
Therefore, the family of bi-Lipschitz maps \(\{(\Psi^{-1} \circ \Phi_\varepsilon \circ \phi)^{-1}\}_{\varepsilon \in [0. \varepsilon_0)} \) trivialises the zeros of the family $\{f+ \varepsilon \theta\}_{\varepsilon \in [0. \varepsilon_0)} $, that is,  $(\Psi^{-1} \circ \Phi_\varepsilon \circ \phi)^{-1} (V(f+\varepsilon \theta))= V(f),$ and the conclusion follows in this case.  

For the case $d(P;\theta)>d$, by Theorem \ref{Lips-triviality}, the ambient bi-Lipschitz map $\Phi_\varepsilon$ exists for any $\varepsilon\in I$, which implies the desired conclusion.
\end{proof}

\begin{example}\label{firstsemiradiaexamples1}
Consider the holomorphic polynomial $f(u,v)=uv+u^4+v^4$. It is semi-radial of radial-type $(P;d)=(1,1;1)$. By Proposition~\ref{radial-trivial}, the family $\{f+\varepsilon \theta\}_{\varepsilon \in I}$ is ambient bi-Lipschitz $V$-trivial if $d(P;\theta) > 1$.  Notice that the family $\{uv+ u^4+ v^4 +\varepsilon(u^3+v^3)\}_{\varepsilon \in I}$ is ambient bi-Lipschitz $V$-trivial. From this family, we deduce that the Newton boundary is not invariant under ambient bi-Lipschitz $V$-equivalence, as $uv+ u^4+ v^4$ and $uv+ u^3+ v^3$ are ambient bi-Lipschitz $V$-equivalent but have different Newton boundaries.
\end{example}


\begin{proposition}\label{Prop:biliptrivialitytypeIII}
	  Let \(\{f+\varepsilon \theta\}_{\varepsilon \in I}\) be a family such that $f$ is a Type-III semi-radial mixed polynomial of radial-type $(P;d)$, \(d(P;\theta) \geq  d\). Then, the family is ambient bi-Lipschitz $V$-trivial.
 
\end{proposition}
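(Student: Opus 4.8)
The plan is to imitate the proof of Proposition~\ref{radial-trivial}, but since a Type-III polynomial has $\Sigma_{L}(f_P)\neq\{0\}$ and Theorem~\ref{Lips-triviality} is therefore unavailable, I will replace the direct appeal to that theorem by the horn-splitting apparatus of Section~\ref{somepreparationresults}: cut $V(f)$ along the two coordinate axes, trivialise the family separately on each piece away from a neighbourhood of the obstruction locus, deform those trivialisations to the identity near the axes, and glue. By the symmetry $u\leftrightarrow v$ (using the $N(u,v)=A(v)u+B(v)\bar u$ version of Lemma~\ref{lemma:overdeformation}) it suffices to treat the case $k_f>1$ with $f_P$ not $u$-convenient. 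Then, by Remark~\ref{linkofTypeIII}, $f$ is IND and $\Gamma_{\mathrm{inn}}$-nice with $\mathcal{P}_{\mathrm{inn}}(f)=\{P_1,P_2\}$, $P_1=P$, $P_2=(1,1)$; near $0$ the zero set splits as $V(f)=V(L_{P_1};f)\cup V(L_v;f)$, with $C(V(L_{P_1};f),0)\subseteq\{u=0\}$ by Proposition~\ref{aaaa}, and by Lemma~\ref{radial-obst-Newton} one has $\Sigma_{L}(f_{P_1})\subseteq\{v=0\}$ and $\Sigma_{L}(f_{P_2})\subseteq\{u=0\}$. Recall also that throughout the paper $\theta$ is assumed to satisfy $d(P_1;\theta)\geq d(P_1;f)=d$ and $d(P_2;\theta)\geq d(P_2;f)$, so that $f+\varepsilon\theta$ admits the same splitting with the same weight vectors; I first treat the model situation in which both inequalities are strict, and then reduce the equality case to it.

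On the $V(L_{P_1};f)$-side: since $k_1>1$ and $f_{P_1}$ is not $u$-convenient, Proposition~\ref{lemma:lipeomorphismlocal} yields, for each $\varepsilon$, a subanalytic bi-Lipschitz map defined off a $1$-horn neighbourhood of $\Sigma_{L}(f_{P_1})\subseteq\{v=0\}$ sending $V(L_{P_1};f_{P_1})$ to $V(L_{P_1};f+\varepsilon\theta)$, and since $\operatorname{Cont}(V(L_{P_1};f_{P_1}),\{v=0\})=1$ both of these components lie in the domain of that map. As $d(P_1;\theta)>d(P_1;f)$, Lemma~\ref{Lemma:Union} upgrades this to an ambient bi-Lipschitz isotopy $\Psi^{(1)}_\varepsilon$ of $(\C^2,0)$ equal to the identity outside a small $1$-horn neighbourhood $\mathcal{H}_{1,\eta}(\{u=0\})$ of the tangent cone of $V(L_{P_1};f)$, with $\Psi^{(1)}_\varepsilon(V(L_{P_1};f))=V(L_{P_1};f+\varepsilon\theta)$. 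On the $V(L_v;f)$-side the relevant face function $f_{P_2}$ is radial of type $(1,1;d(P_2;f))$ with $\Sigma_{L}(f_{P_2})\subseteq\{u=0\}$, and the tangent cone of $V(L_v;f)$ is contained in $\{v=0\}$; the same scheme — Proposition~\ref{lemma:lipeomorphismlocal} applied to $P_2$, then Lemma~\ref{Lemma:Union} (and, in the equality case $d(P_2;\theta)=d(P_2;f)$, Lemma~\ref{lemma:overdeformation} to absorb the same-radial-degree perturbation of $f_{P_2}$ by $\theta_{P_2}$) — produces an ambient bi-Lipschitz isotopy $\Psi^{(2)}_\varepsilon$ equal to the identity outside a small $1$-horn neighbourhood $\mathcal{H}_{1,\eta'}(\{v=0\})$, with $\Psi^{(2)}_\varepsilon(V(L_v;f))=V(L_v;f+\varepsilon\theta)$.

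To finish the model case I glue. By Proposition~\ref{nontangencycriteria}(i), applied to the $k>1$ piece $V(L_{P_1};f)$ and the $k=1$ piece $V(L_v;f)$, one has $\operatorname{Cont}(V(L_{P_1};f),V(L_v;f))=1$; consequently, for $\eta,\eta'$ small the horn neighbourhoods $\mathcal{H}_{1,\eta}(\{u=0\})$ and $\mathcal{H}_{1,\eta'}(\{v=0\})$ meet only at $0$ (the tangent cones $\{u=0\}$ and $\{v=0\}$ being transverse complex lines), so $\Psi^{(1)}_\varepsilon$ and $\Psi^{(2)}_\varepsilon$ have disjoint supports away from the origin and both coincide with the identity on the overlap. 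By the Lipschitz Gluing Lemma (Lemma~\ref{Gluing Lipschitz Lemma}) the map $\Phi_\varepsilon$ equal to $\Psi^{(1)}_\varepsilon$ on $\mathcal{H}_{1,\eta}(\{u=0\})$, to $\Psi^{(2)}_\varepsilon$ on $\mathcal{H}_{1,\eta'}(\{v=0\})$, and to the identity elsewhere, is a well-defined ambient bi-Lipschitz map with $\Phi_\varepsilon(V(f))=V(L_{P_1};f+\varepsilon\theta)\cup V(L_v;f+\varepsilon\theta)=V(f+\varepsilon\theta)$; as all constituent maps vary continuously in $\varepsilon$ with uniform Lipschitz constants, this trivialises the family. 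For the general case $d(P;\theta)\geq d$ I factor the trivialisation through the chain $V(f)\leftrightarrow V(f_P)\leftrightarrow V(f_P+\varepsilon\theta_P)\leftrightarrow V(f+\varepsilon\theta)$ exactly as in the proof of Proposition~\ref{radial-trivial}: the first and last links are deformations of a Type-III semi-radial polynomial strictly above the relevant faces, hence covered by the model case, while the middle link, a same-radial-degree deformation of $f_P$, moves nothing on the $\{v=0\}$-side (there $V(L_v;\cdot)\equiv\{v=0\}$ by non-$u$-conveniency and smoothness) and is handled on the $V(L_{P_1};\cdot)$-side via Lemma~\ref{lemma:overdeformation} applied to the $(1,1)$-homogeneous data together with a radial extension (Lemma~\ref{lemma-extension}) of the smooth ambient isotopy of $\mathbb{S}^3$ realising the link isotopy $L_{f_P}\rightsquigarrow L_{f_P+\varepsilon\theta_P}$, glued as above.

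The main obstacle is the $\{v=0\}$-side: there $\Sigma_{L}(f_P)=\{v=0\}$ is non-isolated, so no bi-Lipschitz trivialisation exists on a full neighbourhood of that locus, and Kerner--Mendes' method only gives trivialisation outside a horn of it. The point to exploit is that the sole component of $V(f)$ entering that offending horn neighbourhood is $V(L_v;f)$, whose metric behaviour is governed entirely by the $(1,1)$-homogeneous face function $f_{P_2}$ and is therefore accessible to Lemma~\ref{lemma:overdeformation} (for same-degree perturbations) and Lemma~\ref{Lemma:Union} (for higher-degree perturbations); simultaneously one must prevent the map produced on the $k>1$-side from interfering with this locus, which is exactly why Proposition~\ref{lemma:lipeomorphismlocal} is invoked with the sharper exponent $\beta_1=1$ and why Lemma~\ref{Lemma:Union} is used to push both isotopies to the identity near the respective coordinate axes before gluing.
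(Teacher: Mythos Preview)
Your overall strategy—splitting $V(f)$ into the $P_1$-piece and the $L_v$-piece, trivialising each separately via Proposition~\ref{lemma:lipeomorphismlocal} and Lemma~\ref{Lemma:Union}, using Proposition~\ref{nontangencycriteria} to see the two pieces have contact order $1$, and then gluing via Lemma~\ref{Gluing Lipschitz Lemma}—is exactly the paper's argument, including the use of Lemma~\ref{lemma:overdeformation} to absorb a same-$(1,1)$-degree perturbation of $f_{P_2}$ on the $L_v$-side. The paper organises the two cases in the opposite order (equality first, then strict), but the content is the same.

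The one genuine gap is your handling of the ``middle link'' $V(f_P)\leftrightarrow V(f_P+\varepsilon\theta_P)$ on the $V(L_{P_1};\cdot)$-side when $d(P;\theta)=d$. Neither of the tools you invoke applies there. Lemma~\ref{lemma:overdeformation} is tailored to the $(1,1)$-face: it needs the polynomial to have the special form $A(u)v+B(u)\bar v$, which is exactly the shape of $f_{P_2}$ (the IND condition at the $(1,1)$-face forces the bottom vertex to sit at height $1$), but says nothing about the $P_1$-radial component. And Lemma~\ref{lemma-extension} produces a $(1,1)$-radial extension of a sphere map, which carries the \emph{straight} cone over $L_{f_P}$ to the straight cone over $L_{f_P+\varepsilon\theta_P}$; since $k_f>1$, the sets $V(L_{P_1};f_P)$ and $V(L_{P_1};f_P+\varepsilon\theta_P)$ are $P_1$-weighted cones, not straight cones, so this extension does not send one to the other. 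The paper fixes this piece the same way as the strict case: Kerner--Mendes' method (hence Proposition~\ref{lemma:lipeomorphismlocal}) already works for same-degree perturbations provided $\varepsilon$ is small, giving a trivialisation of $V(L_{P_1};\cdot)$ off a $1$-horn of $\Sigma_L(f_P)\subseteq\{v=0\}$, and the proof of Lemma~\ref{Lemma:Union} then converts this into an ambient isotopy supported in $\mathcal H_{1,\eta}(\{u=0\})$. With this replacement your argument goes through.
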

\begin{proof}

    We have that $f=f_P+\tilde f$, where $f_P$ is an IND radial mixed polynomial of radial-type $(P;d)$ and $d(P;\tilde f)>d$. Therefore, in the same way as the proof of Proposition \ref{radial-trivial}, we can suppose (changing $f$ by $f_P$) that $f$ is a Type-III and radial mixed polynomial of radial-type $(P;d)=(p_{1}, p_{2};d)$ with $k_f=\frac{p_{1}}{p_{2}}$.  We assume that \(k_f>1\) and that \(f\) is not \(u\)-convenient. The case where $k_f<1$ and \(f\) is not \(v\)-convenient follows similarly by swapping the roles of \(u\) and \(v\).
	
To find the required bi-Lipschitz map, we split $V(f)$ into pieces associated with \(\Gamma_{\mathrm{inn}}(f)\). We construct triples for each piece using Proposition~\ref{lemma:lipeomorphismlocal} and Lemma \ref{lemma:overdeformation}, then gluing the corresponding bi-Lipschitz map. Since $f$ is Type-III, the diagram \(\Gamma_{\mathrm{inn}}(f)\) satisfies \(\mathcal{P}_{\mathrm{inn}}(f)=\{P_1=P,P_2=(1,1)\}\). Hence the links $L_{f}$ and $L_{f+\varepsilon\theta}$, $\varepsilon \in I$,  are isotopic to the nested link $\mathbf{L}([L_1],L_v)$.
	
	Suppose first that $d(P;\theta)=d$. By Lemma \ref{Lemma:Union}, there are two ambient bi-Lipschitz maps \(\Phi_{L_1,\varepsilon}, \Phi_{L_v,\varepsilon}: (\C^2,0) \to (\C^2,0) \) such that $\Phi_{L_1,\varepsilon}(V(L_1;f))=V(L_1;f+\varepsilon\theta)$ and $\Phi_{L_v,\varepsilon}(V(L_v;f))=V(L_v;f+\varepsilon\theta)$. The map $\Phi_{L_1,\varepsilon}$ is obtained directly from this lemma, and the map $\Phi_{L_v,\varepsilon}$ is also obtained from this lemma, but not so directly, as we may consider the maps $\tilde{\psi}_{\varepsilon_0 \to \varepsilon}$ in the proof of Lemma \ref{Lemma:Union} as $\Phi_{f_{P_2},f+\varepsilon\theta}\circ\Phi_{f_{P_2},f+\varepsilon_0\theta }^{-1}$ instead of $\Phi_{i,\varepsilon} \circ \Phi_{i,\varepsilon_0}^{-1}$, where $\Phi_{f_{P_2},f+\varepsilon\theta}$ and $\Phi_{f_{P_2},f+\varepsilon_0\theta}$ are the bi-Lipschitz maps in Proposition~\ref{lemma:lipeomorphismlocal} such that $\Phi_{f_{P_2},f+\varepsilon_0\theta}(V(L_v;f_{P_2}))=V(L_v;f+\varepsilon_0\theta)$ and $\Phi_{f_{P_2},f+\varepsilon\theta}(V(L_v;f_{P_2}))=V(L_v;f+\varepsilon\theta)$ (this is because $L_{\underline{2}}=L_v$). By Proposition~\ref{lemma:lipeomorphismlocal}, such maps $\tilde{\psi}$ have the same properties as the previous $\tilde{\psi}$, and thus we can obtain the desired ambient bi-Lipschitz map $\Phi_{L_v,\varepsilon}$.
    
\begin{equation*}\label{deltafamilies}
	\begin{tikzcd}[row sep=2.5em, column sep=4em]
		& (V({L_v}; f_{P_2}),0) \arrow[ld, "\Phi_{f_{P_2}, f}"'] \arrow[rd, "\Phi_{f_{P_2},f+\varepsilon\theta}"] & \\
		(V({L_v}; f),0) \arrow[rr, "\Phi_{L_v,\varepsilon}"] & & (V({L_v}; f+\varepsilon\theta),0). 
	\end{tikzcd}
\end{equation*}    
     For every $\eta>0$ small enough, we have $$\Phi_{L_1,\varepsilon}|_{\C^2 \setminus \mathcal{H}_{1,\eta}(V(L_1;f))}=id_{\C^2 \setminus \mathcal{H}_{1,\eta}(V(L_1;f))} \text{ and } \Phi_{L_v,\varepsilon}|_{\C^2 \setminus \mathcal{H}_{1,\eta}(V(L_v;f))}=id_{\C^2 \setminus \mathcal{H}_{1,\eta}(V(L_v;f))}.$$
    In particular, $\Phi_{L_1,\varepsilon}(p)=p=\Phi_{L_v,\varepsilon}(p)$, for all $p \in \C^2\setminus(\mathcal{H}_{1,\eta}(V(L_1;f)\cup V(L_v;f)))$.  We also have by Proposition~\ref{nontangencycriteria} that, for any component $J$ of $L_1$,
	\[\operatorname{Cont}( V(J;f),V({L_v};f))=\operatorname{Cont}( V(J;f+\varepsilon\theta),V(L_v;f+\varepsilon\theta))=1.\]
	Therefore, we can choose $\eta$ small enough such that $\mathcal{H}_{1,\eta}(V(J;f)) \cap \mathcal{H}_{1,\eta}(V({L_v};f))=\{0\}$, for all such components $J$. This implies that the map $\Phi: (\C^2,0) \to (\C^2,0)$ given as $\Phi(p)=\Phi_{L_1,\varepsilon}(p)$, if $p \notin \mathcal{H}_{1,\eta}(V(L_v;f))$ and $\Phi(p)=\Phi_{L_v,\varepsilon}(p)$, if $p \notin \mathcal{H}_{1,\eta}(V(L_1;f))$, is a bi-Lipschitz map by the Lipschitz Gluing Lemma (Lemma \ref{Gluing Lipschitz Lemma}). The result follows in this case.
	
	Suppose now that $d(P;\theta)>d$. In this case, by Lemma~\ref{Lemma:Union}, the ambient bi-Lipschitz map $\Phi_{L_1,\varepsilon}$ can be defined for all $\varepsilon \in \R$. However, it is not direct that the same happens with $\Phi_{L_v,\varepsilon}$. The bi-Lipschitz map $\Phi_{f_{P_2},f}$ can be defined in the same way as the previous case, so it remains to prove that the bi-Lipschitz map $\Phi_{f_{P_2},f+\varepsilon \theta}$ can be defined for all $\varepsilon \in \R$, to assure that $\Phi_{L_v,\varepsilon}$ is defined for all $\varepsilon \in \R$.
	
 	The bi-Lipschitz map $\Phi_{f_{P_2},f+\varepsilon \theta}$ is defined as follows: if $(f+\varepsilon \theta)_{P_2}=f_{P_2}$, then Proposition~\ref{lemma:lipeomorphismlocal} provides it directly, as $d(P_2;f+\varepsilon \theta-f_{P_2})>d(P_2;f)$; if $(f+\varepsilon \theta)_{P_2}-f_{P_2}\not\equiv 0$, then by Lemma~ \ref{lemma:overdeformation} we construct \(\Phi_{f_{P_2},(f+\varepsilon \theta)_{P_2}}\) and by Proposition~\ref{lemma:lipeomorphismlocal} 
	we obtain \(\Phi_{(f+\varepsilon \theta)_{P_2},f+\varepsilon \theta}\), and these bi-Lipschitz maps exist for all $\varepsilon \in \R$.

\begin{equation*}\label{deltafamilies1}
		\begin{tikzcd}[row sep=2.5em, column sep=4em]
			& (V({L_v}; f_{P_2}),0) \arrow[ld, "\Phi_{f_{P_2}, (f+\varepsilon \theta)_{P_2}}"'] \arrow[rd, "\Phi_{f_{P_2},f+\varepsilon \theta}"] & \\
			(V({L_v}; (f+\varepsilon \theta)_{P_2}),0) \arrow[rr, "\Phi_{(f+{\varepsilon}\theta)_{P_2},f+\varepsilon \theta}"] & & (V({L_v}; f+\varepsilon \theta),0).
		\end{tikzcd}
	\end{equation*}
    
	Thus, we conclude as previously, using the family of bi-Lipschitz maps \(\{\Phi_{L_1,\varepsilon}, \Phi_{L_v,\varepsilon}\}_{\varepsilon \in I}\) and using the Lipschitz Gluing Lemma.
\end{proof}

As a direct consequence of Propositions \ref{radial-trivial} and \ref{Prop:biliptrivialitytypeIII}, we obtain the following theorem.

\begin{theorem}\label{MainThm:1}
		Let \(\{f + \varepsilon \theta\}_{\varepsilon \in I}\) be a family of mixed polynomials satisfying:
		\begin{enumerate}
			\item[(i)] \(f\) is semi-radial of radial-type \((P; d)\),
			\item[(ii)] \(d(P; \theta) \geq d\).
		\end{enumerate}
		Then, the family is ambient bi-Lipschitz $V$-trivial. 
	\end{theorem}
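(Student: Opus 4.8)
The plan is to obtain Theorem~\ref{MainThm:1} as a direct synthesis of the two preceding propositions, after observing that the three types of Definition~\ref{deftypes} exhaust all semi-radial mixed polynomials. First I would invoke the hypothesis: since $f$ is semi-radial of radial-type $(P;d)$, by Definition~\ref{definitionsemi-radial} we have $f=f_P+\tilde f$ with $\mathrm{Sing}(V(f_P))=\{0\}$; in particular $f_P$ is a radial mixed polynomial whose zero set has an isolated singularity, so by the result of Bode and Sanchez Quiceno quoted in Subsection~\ref{obstructionlocus}, $f_P$ is IND. Then $f_P$ falls into exactly one of the three mutually exclusive and exhaustive cases of Definition~\ref{deftypes}, according to whether $k_f>1$, $k_f=1$, or $k_f<1$ together with the appropriate convenience condition. (The only subtle point is that when $k_f\neq 1$ one must check that the convenience condition on $f_P$ is genuinely a dichotomy — $f_P$ is either $u$-convenient or not — so Type-I and Type-III together with Type-II really cover everything; this is immediate.)

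Next I would split into the two cases. If $f$ is Type-I or Type-II, Proposition~\ref{radial-trivial} applies verbatim to the family $\{f+\varepsilon\theta\}_{\varepsilon\in I}$ under hypothesis (ii), giving ambient bi-Lipschitz $V$-triviality. If $f$ is Type-III, Proposition~\ref{Prop:biliptrivialitytypeIII} applies verbatim under the same hypotheses, again giving ambient bi-Lipschitz $V$-triviality. Since these cases are exhaustive, the conclusion holds in all cases. This is the entire argument; the work has been front-loaded into the two propositions.

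There is essentially no obstacle at the level of this theorem — it is a packaging statement. The only thing worth being careful about is matching the parameter interval $I$: in both propositions, when $d(P;\theta)=d$ one only gets triviality on a small neighbourhood of $0$, and when $d(P;\theta)>d$ one gets it for $I=\R$; so the statement of Theorem~\ref{MainThm:1} should be (and is) read with $I$ having this meaning, consistent with the convention fixed in the remark following Theorem~\ref{Lips-triviality}. I would therefore write the proof in two sentences:

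\begin{proof}
Since $f$ is semi-radial of radial-type $(P;d)$, we may write $f=f_P+\tilde f$ with $\mathrm{Sing}(V(f_P))=\{0\}$ and $d(P;\tilde f)>d$; in particular $f_P$ is a radial mixed polynomial with isolated singularity, hence IND by \cite{BodeSanchez2023}. By Definition~\ref{deftypes}, $f$ is of Type-I, Type-II, or Type-III, and these three cases are exhaustive: indeed, if $k_f=1$ then $f$ is Type-II, while if $k_f\neq 1$ then $f_P$ is either $u$-convenient ($v$-convenient) or not, so $f$ is Type-I or Type-III accordingly. If $f$ is Type-I or Type-II, the family is ambient bi-Lipschitz $V$-trivial by Proposition~\ref{radial-trivial}. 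If $f$ is Type-III, the family is ambient bi-Lipschitz $V$-trivial by Proposition~\ref{Prop:biliptrivialitytypeIII}. In all cases the conclusion follows.
\end{proof}
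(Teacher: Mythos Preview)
Your proposal is correct and follows exactly the paper's approach: the paper states Theorem~\ref{MainThm:1} simply as ``a direct consequence of Propositions~\ref{radial-trivial} and~\ref{Prop:biliptrivialitytypeIII},'' without a separate proof. Your added remarks on the exhaustiveness of the three types and on the meaning of the parameter interval $I$ make the packaging explicit but do not differ from the paper's reasoning.
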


\begin{corolario}\label{cor:bilipVequivsemi-radialprincipalpart}
		Let $f$ be a semi-radial mixed polynomial of radial-type $(P;d)$. Then, $f$ is ambient bi-Lipschitz $V$-equivalent to $f_P$. 
\end{corolario}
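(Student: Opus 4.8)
The plan is to deduce Corollary~\ref{cor:bilipVequivsemi-radialprincipalpart} directly from Theorem~\ref{MainThm:1} by choosing the right deformation family. Since $f$ is semi-radial of radial-type $(P;d)$, by Definition~\ref{definitionsemi-radial} we may write $f = f_P + \tilde{f}$ with $d(P;\tilde{f}) > d$ and $\operatorname{Sing}(V(f_P)) = \{0\}$. The idea is to interpolate between $f_P$ and $f$ by setting $\theta := \tilde{f}$ and considering the family $\{f_P + \varepsilon\theta\}_{\varepsilon \in I}$, which is exactly the setup of Theorem~\ref{MainThm:1} applied with principal part $f_P$ (a semi-radial mixed polynomial of radial-type $(P;d)$, since its own principal part with respect to $P$ is itself) and perturbation $\theta = \tilde{f}$ satisfying $d(P;\theta) = d(P;\tilde{f}) > d$.

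First I would check that the hypotheses of Theorem~\ref{MainThm:1} are met: condition (i) holds because $f_P$ is semi-radial of radial-type $(P;d)$ — indeed $(f_P)_P = f_P$ and the remainder is zero, so trivially the remainder has radial degree strictly greater than $d$, and $\operatorname{Sing}(V(f_P)) = \{0\}$ is part of the semi-radiality hypothesis on $f$; condition (ii) holds since $d(P;\tilde{f}) > d$. Note that because the inequality in (ii) is strict, Theorem~\ref{MainThm:1} gives ambient bi-Lipschitz $V$-triviality along $I = \R$, so in particular for $\varepsilon = 1$. This means $f_P + 1\cdot\tilde{f} = f$ is ambient bi-Lipschitz $V$-equivalent to $f_P + 0\cdot\tilde{f} = f_P$, which is precisely the claim.

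The only subtlety to address is the requirement, implicit in the notation of the family, that the perturbation $\theta$ need not be any special type — Theorem~\ref{MainThm:1} allows $\theta$ to be an arbitrary mixed polynomial with $d(P;\theta) \geq d$, so taking $\theta = \tilde{f}$ is legitimate. One should also confirm that $f_P$ is IND; this follows from the remark after Definition~\ref{deftypes} together with the fact that $\operatorname{Sing}(V(f_P)) = \{0\}$ (by \cite{BodeSanchez2023}, a radial mixed polynomial with isolated singularity at the origin is IND), so $f_P$ is itself a (trivially) semi-radial mixed polynomial and the machinery applies. I do not expect any real obstacle here: the statement is genuinely a corollary, and the entire content has been absorbed into Theorem~\ref{MainThm:1}; the proof is a one-line application once the family $\{f_P + \varepsilon\tilde{f}\}_{\varepsilon\in\R}$ is written down and $\varepsilon = 1$ is plugged in.

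\begin{proof}
    Since $f$ is semi-radial of radial-type $(P;d)$, by Definition~\ref{definitionsemi-radial} we can write $f = f_P + \tilde{f}$, where $d(P;\tilde{f}) > d$ and $\operatorname{Sing}(V(f_P)) = \{0\}$. Set $\theta := \tilde{f}$ and consider the family $\mathcal{F} = \{f_P + \varepsilon\theta\}_{\varepsilon \in \R}$. Then $f_P$ is semi-radial of radial-type $(P;d)$ (its principal part with respect to $P$ being $f_P$ itself), so condition (i) of Theorem~\ref{MainThm:1} holds, and $d(P;\theta) = d(P;\tilde{f}) > d \geq d$, so condition (ii) holds with strict inequality. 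By Theorem~\ref{MainThm:1}, the family $\mathcal{F}$ is ambient bi-Lipschitz $V$-trivial along $\R$. In particular, $f_P + 1\cdot\theta = f$ is ambient bi-Lipschitz $V$-equivalent to $f_P + 0\cdot\theta = f_P$.
\end{proof}
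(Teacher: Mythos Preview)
Your proof is correct and follows essentially the same approach as the paper: both consider the family $\{f_P + \varepsilon\tilde{f}\}_{\varepsilon}$, invoke Theorem~\ref{MainThm:1} with the strict inequality $d(P;\tilde{f})>d$, and specialise to $\varepsilon=1$. Your version is even slightly more explicit in checking that $f_P$ itself is semi-radial so that Theorem~\ref{MainThm:1} applies to it.
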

\begin{proof}
Since \(f\) is semi-radial of radial type \((P; d)\), it can be included in the family \(\{f_P + \varepsilon \tilde{f}\}_{\varepsilon \in I}\) with \(\varepsilon = 1\). By definition, \(d(P; \tilde{f}) > d\). By Theorem \ref{MainThm:1}, this family is ambient bi-Lipschitz \(V\)-trivial, and the trivialisation holds for all \(\varepsilon \in [0,1]\). Consequently, \(f\) is ambient bi-Lipschitz \(V\)-equivalent to \(f_P\).
\end{proof}

\begin{corolario}\label{Lipschitz-homogeneo}
    Let $f$ and $g$ be semi-radial mixed polynomials of radial-type $(P;d(P;f))$ and $(Q;d(Q;g))$, respectively. If $f$ and $g$ are bi-Lipschitz $V$-equivalent via a bi-Lipschitz map $
\tilde{F}:(V(f),0)\to (V(g),0)$, then there is a commutative diagram of bi-Lipschitz map as follows:
\begin{equation*}\label{comutativediagram}
	\begin{tikzcd}
		(V(f),0) \arrow{r}{\tilde{F}} \arrow[swap]{d}{} & (V(g),0) \arrow{d}{} \\
		(V(f_{P}),0) \arrow{r}{F} & (V(g_{Q}),0).
	\end{tikzcd}
\end{equation*}
\end{corolario}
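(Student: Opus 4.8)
The plan is to build the vertical bi-Lipschitz maps in the diagram directly from Corollary~\ref{cor:bilipVequivsemi-radialprincipalpart} and then simply paste the three maps together. First I would invoke Corollary~\ref{cor:bilipVequivsemi-radialprincipalpart} twice: since $f$ is semi-radial of radial-type $(P;d(P;f))$, there is an ambient bi-Lipschitz map $\widetilde{\Psi}\colon(\C^2,0)\to(\C^2,0)$ with $\widetilde{\Psi}(V(f_P))=V(f)$; similarly, since $g$ is semi-radial of radial-type $(Q;d(Q;g))$, there is an ambient bi-Lipschitz map $\widetilde{\Theta}\colon(\C^2,0)\to(\C^2,0)$ with $\widetilde{\Theta}(V(g_Q))=V(g)$. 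Restricting to the zero sets gives bi-Lipschitz germ homeomorphisms $\Psi\colon(V(f_P),0)\to(V(f),0)$ and $\Theta\colon(V(g_Q),0)\to(V(g),0)$. These will serve as (the inverses of) the two vertical arrows in the commutative square.

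Next I would define $F\colon(V(f_P),0)\to(V(g_Q),0)$ as the composition $F:=\Theta^{-1}\circ\widetilde{F}\circ\Psi$. Each of the three maps in this composition is bi-Lipschitz for the outer metric, and the composition of bi-Lipschitz maps is bi-Lipschitz (with Lipschitz constant the product of the constants), so $F$ is a bi-Lipschitz germ homeomorphism. Moreover $F$ takes $V(f_P)$ onto $V(g_Q)$: we have $\Psi(V(f_P))=V(f)$, then $\widetilde{F}(V(f))=V(g)$ by hypothesis, then $\Theta^{-1}(V(g))=V(g_Q)$. By construction $\widetilde{F}\circ\Psi=\Theta\circ F$ on $V(f_P)$, which is exactly the commutativity of the square once we read the left vertical arrow as $\Psi^{-1}\colon(V(f),0)\to(V(f_P),0)$ and the right vertical arrow as $\Theta^{-1}\colon(V(g),0)\to(V(g_Q),0)$ (equivalently, reverse both vertical arrows to get $\Psi$ and $\Theta$ pointing upward, which is the orientation drawn in the statement). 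This completes the argument.

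There is essentially no obstacle here: the statement is a formal consequence of Corollary~\ref{cor:bilipVequivsemi-radialprincipalpart} together with the fact that bi-Lipschitz maps form a groupoid under composition and inversion. The only point requiring a word of care is making sure the diagram as drawn is read with the vertical arrows oriented consistently with the maps produced above — i.e. that the vertical maps $(V(f),0)\to(V(f_P),0)$ and $(V(g),0)\to(V(g_Q),0)$ in the displayed diagram are taken to be $\Psi^{-1}$ and $\Theta^{-1}$, which are themselves bi-Lipschitz because the inverse of a $K$-bi-Lipschitz map is again $K$-bi-Lipschitz. No estimates, tangent cone arguments, or Newton-polygon input are needed for this particular corollary; all the real work was already done in Theorem~\ref{MainThm:1} and Corollary~\ref{cor:bilipVequivsemi-radialprincipalpart}.
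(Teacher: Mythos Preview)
Your proposal is correct and matches the paper's approach: the paper states this corollary without proof, treating it as an immediate consequence of Corollary~\ref{cor:bilipVequivsemi-radialprincipalpart}, and your argument spells out exactly the composition $F=\Theta^{-1}\circ\widetilde{F}\circ\Psi$ that makes the square commute.
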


\subsection{Tangent cones of mixed polynomials}\label{Subsect:necessarybi_lipVequivsemi-radial}

We now relate the tangent cone of the semi-radial mixed polynomials to the zero set of their radial parts. Such cones are described up to ambient bi-Lipschitz equivalence using the previous theorem and Sampaio's Theorem of ambient equivalence between tangent cones in \cite{Sampaio:2016}. 

\begin{lemma}\label{tangconetypeII}
 Let $f$ be a Type-II semi-radial mixed polynomial of radial-type $(P;d)$. Then $C(V(f),0)$ is ambient bi-Lipschitz equivalent to $V(f_P)$. 
\end{lemma}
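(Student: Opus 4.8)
The strategy is to reduce the statement to the already-established machinery of Section~\ref{Sect:Bi-LipVtriviality}, specifically Corollary~\ref{cor:bilipVequivsemi-radialprincipalpart} and Theorem~\ref{cone-sampas} (Sampaio's ambient equivalence of tangent cones). First I would invoke Corollary~\ref{cor:bilipVequivsemi-radialprincipalpart}: since $f$ is semi-radial of radial-type $(P;d)$, the germ $(V(f),0)$ is ambient bi-Lipschitz $V$-equivalent to $(V(f_P),0)$. By Theorem~\ref{cone-sampas}, this yields that the tangent cones $C(V(f),0)$ and $C(V(f_P),0)$ are ambient bi-Lipschitz equivalent. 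So the whole problem collapses to identifying the tangent cone of $V(f_P)$ when $f_P$ is a \emph{radial} Type-II mixed polynomial, i.e.\ when $k_{f_P}=1$, so that $P=(p_1,p_2)$ with $p_1=p_2$; after rescaling we may assume $P=(1,1)$ and $f_P$ is homogeneous of degree $d$ with respect to the standard dilation $(u,v)\mapsto(\lambda u,\lambda v)$.

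The key step is then: \emph{for a radial mixed polynomial $f_P$ of radial-type $(1,1;d)$ we have $C(V(f_P),0)=V(f_P)$.} This is essentially immediate from homogeneity: because $f_P(\lambda u,\lambda v)=\lambda^d f_P(u,v)$ for all $\lambda\in\R_{>0}$, the zero set $V(f_P)$ is invariant under the real dilations $\lambda\cdot$, hence it is a real cone over its link $L_{f_P}=V(f_P)\cap\mathbb{S}^3$. Any real cone equals its own tangent cone at the origin: given $w\in V(f_P)$, $w\ne0$, the arc $\alpha(t)=\tfrac{t}{\|w\|}w$ lies in $V(f_P)$, satisfies $\alpha(0)=0$ and $\alpha(t)-0=t\cdot\tfrac{w}{\|w\|}+o(t)$, so $\tfrac{w}{\|w\|}\in C(V(f_P),0)$ and therefore $w\in C(V(f_P),0)$; conversely any tangent vector of a cone at the cone point lies in the cone. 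Chaining the two facts, $C(V(f),0)$ is ambient bi-Lipschitz equivalent to $C(V(f_P),0)=V(f_P)$, which is precisely the claim.

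I would add a remark on why the Type-II hypothesis (equivalently $k_f=1$) is exactly what is used: the argument above needs the dilation realizing the radial homogeneity to be the \emph{standard, isotropic} one, so that weighted-homogeneous invariance coincides with ordinary cone invariance; for $k_f\ne 1$ the set $V(f_P)$ is invariant only under the anisotropic action $(u,v)\mapsto(\lambda^{p_1}u,\lambda^{p_2}v)$, which is genuinely not linear, and indeed Proposition~\ref{aaaa} shows the tangent cone of $V(f_P)$ then degenerates into a coordinate hyperplane rather than being $V(f_P)$ itself. The only point requiring minor care in the write-up is that Corollary~\ref{cor:bilipVequivsemi-radialprincipalpart} gives an \emph{ambient} bi-Lipschitz equivalence of germs in $(\C^2,0)=(\R^4,0)$, so Theorem~\ref{cone-sampas} applies in its ambient form and delivers an ambient bi-Lipschitz equivalence between the tangent cones; I do not expect any real obstacle here, as all the heavy lifting has already been done in the cited results, and the homogeneity computation is routine.
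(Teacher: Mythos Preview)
Your proposal is correct and follows essentially the same approach as the paper: reduce to the radial principal part $f_P$ via Corollary~\ref{cor:bilipVequivsemi-radialprincipalpart} (the paper cites the equivalent Proposition~\ref{radial-trivial}), apply Theorem~\ref{cone-sampas} to pass to tangent cones, and then identify $C(V(f_P),0)=V(f_P)$ from the fact that $V(f_P)$ is a genuine cone under the isotropic dilation when $P=(1,1)$. The only cosmetic difference is that the paper verifies the equality $C(V(f_P),0)=V(f_P)$ by an explicit parametrization of the half-branches rather than by the abstract observation that a real cone coincides with its own tangent cone.
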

\begin{proof}
Since $f$ is a Type-II semi-radial mixed polynomial, then $f=f_{P_1}+\tilde{f}$ with $\mathcal{P}_{\mathrm{inn}}(f)=\{P_1=(1,1)\}$ and $f_{P_1}$ being a radial mixed polynomial of radial-type $(1,1;d).$
\vspace{0.1cm}

Let $(u_*,v_*) \in V({f_{P_1}})$. Therefore, 
$(\lambda u_*, \lambda v_*)\in V(f_{P_1})$ is a real half-branch with tangent vector at zero $(u_*,v_*)$, thus  $V(f_{P_1})\subset  C(V(f_{P_1}),0)$.  Reciprocally, every real half-branch $\gamma(\lambda)$ of $V(f_{P_1})$, centered at the origin, can be parametrizated by \[(\lambda u(t(\lambda)), \lambda \rme^{\rmi t(\lambda)}) \text{ or } (\lambda \rme^{\rmi t(\lambda)}, \lambda v(t(\lambda)).\] Suppose that it is parametrizated by $\gamma(\lambda)=(\lambda u(t(\lambda)), \lambda \rme^{\rmi t(\lambda)}).$ Taking $t_*=\lim_{\lambda \to 0} t(\lambda)$. We get \[ v = \lim_{\lambda\to 0} \frac{\gamma(\lambda)}{\lambda}=\frac{(\lambda u(t(\lambda)), \lambda \rme^{\rmi t(\lambda)})}{\lambda}= (u(t_*),\rme^{\rmi t_*}).\]
In this case, $v \in V((f_{P_i})_i)$ and $C(V(f_{P_1}),0) \subset V(f_{P_1})$. Therefore, $C(V(f_{P_1}),0)=V(f_{P_1})$. Moreover, $V(f_{P_1})$ is the cone over $L_{f_{P_1}}$. By Proposition~\ref{radial-trivial}, $f$ is bi-Lipschitz $V$-equivalent to $f_{P_1}$; thus, by Theorem \ref{cone-sampas}, $C(V(f),0)$ is bi-Lipschitz equivalent to $C(V(f_{P_1}),0)=V(f_{P_1})$.    
\end{proof}
Consider the maps $T_1$ and $T_2$ defined by: 
\[T_1: \mathbb{C} \times \mathbb{C} \to \mathbb{C} \times \{0\},\ (z_1, z_2) \mapsto (z_1, 0) \text{ and } T_2: \mathbb{C} \times \mathbb{C} \to \{0\} \times \mathbb{C}, \ (z_1, z_2) \mapsto (0, z_2).\]

\begin{lemma}\label{tangconetypeI}
 Let $f$ be a Type-I semi-radial mixed polynomial of radial-type $(P;d)
 $.
\begin{enumerate}
\item[(i)] If $k_f>1$ and $f_P$ is $u$-convenient, then $C(V(f),0)$ is ambient bi-Lipschitz equivalent to a subset of $\{u=0\}$. In particular, if $f_P$ is $u$- (or $\bar{u}$-) semiholomorphic or it is not $v$-convenient, then $C(V(f),0)$ is ambient bi-Lipschitz equivalent to $\{u=0\}$.    
\item[(ii)]  If $k_f<1$ and $f_P$ is $v$-convenient, then $C(V(f),0)$ is ambient bi-Lipschitz equivalent to a subset of $\{v=0\}$. In particular, if $f_P$ is $v$- (or $\bar{v}$-) semiholomorphic or it is not $u$-convenient, then $C(V(f),0)$ is ambient bi-Lipschitz equivalent to $\{v=0\}$. 
\end{enumerate}
\end{lemma}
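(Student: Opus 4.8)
The plan is to treat case (i) in detail; case (ii) follows by swapping the roles of $u$ and $v$. First I would identify the tangent cone $C(V(f),0)$ with $C(V(f_P),0)$: since $f$ is semi-radial of radial-type $(P;d)$, Corollary~\ref{cor:bilipVequivsemi-radialprincipalpart} gives that $V(f)$ and $V(f_P)$ are ambient bi-Lipschitz $V$-equivalent, and Theorem~\ref{cone-sampas} then yields that $C(V(f),0)$ and $C(V(f_P),0)$ are ambient bi-Lipschitz equivalent. So it suffices to analyse $C(V(f_P),0)$ for the radial mixed polynomial $f_P$, which has $\mathcal{P}_{\mathrm{inn}}(f_P)=\{P_1=P,\,P_2=(1,1)\}$ when $f_P$ is not $u$-convenient, or $\mathcal{P}_{\mathrm{inn}}(f_P)=\{P_1=P\}$ together with the $u$-axis vertex when $f_P$ is $u$-convenient. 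In either subcase, since $k_1=k_f>1$, Proposition~\ref{aaaa}(i) gives $C(V(L_{P_1};f_P),0)\subseteq\{u=0\}$; and the zero set $V(f_P)\cap\{v=0\}$ is, by the $u$-convenience and non-degeneracy argument used in Lemma~\ref{radial-obst} (Equation~\eqref{nondeguvertex}), either just $\{0\}$ or contained in $\{v=0\}$ but with link a point, so its tangent cone is a subset of $\{v=0\}$. When $f_P$ is $u$-convenient, $V(f_P)=V(L_{P_1};f_P)\cup(V(f_P)\cap\{v=0\})$, and taking the union of tangent cones (tangent cones commute with finite unions) shows $C(V(f),0)$ is ambient bi-Lipschitz equivalent to a subset of $\{u=0\}$. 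This establishes the first assertion of (i).

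For the "in particular" part, I must show the tangent cone is \emph{all} of $\{u=0\}$ in the three listed situations. If $f_P$ is not $v$-convenient, then $\{v=0\}\subseteq V(f_P)$, so $C(V(f_P),0)\supseteq C(\{v=0\},0)=\{v=0\}$; but we also have the component $V(L_{P_1};f_P)$ whose tangent cone, being a cone over a curve sitting over $\{u=0\}\cap\mathbb{S}^3$, I would argue actually sweeps out $\{u=0\}$. More carefully: by Remark~\ref{geralzao} the component $V(L_{P_1};f_P)\setminus\{0\}$ is parametrized by $(r^{k_1}u_{1}(0,\tau),r\rme^{\rmi t_{1}(0,\tau)})$, and since $k_1>1$ the tangent vector at $r\to 0^+$ is $(0,\rme^{\rmi t_{1}(0,\tau)})$; as $\tau$ and the choice of component range, the phases $\rme^{\rmi t_{1}(0,\tau)}$ cover all of $\mathbb{S}^1$ (this uses that $L_1$ projects onto $\mathbb{S}^1$ under $\mathrm{proj}_2$, which holds precisely when $f_P$ is $v$-convenient or semiholomorphic — the surjectivity of the braid closure onto the $v$-circle). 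Combining, the tangent cone contains $\{(0,v):|v|=1\}$, hence its cone $\{u=0\}$; together with the containment from the first part, $C(V(f),0)$ is ambient bi-Lipschitz equivalent to $\{u=0\}$. The semiholomorphic subcase is similar but uses that a $u$-semiholomorphic $f_P$ has link a braid closure whose projection to the $v$-axis is surjective.

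The main obstacle I anticipate is the surjectivity argument onto $\mathbb{S}^1$ in the phase variable: proving that the union of the tangent vectors $(0,\rme^{\rmi t_{1}(0,\tau)})$ over all components of $L_1$ and all $\tau$ is the entire unit circle in $\{u=0\}$, rather than a proper arc. One clean way is to observe that $V(f_P)$ is a germ of a connected surface of real dimension $2$ (an isolated singularity) whose link is connected, so its tangent cone is connected and, being a union of half-lines, is a genuine (positive-dimensional) cone; combined with the containment in $\{u=0\}$ and the fact that $V(f_P)\cap\{v=0\}=\{0\}$ when $f_P$ is not $v$-convenient forces all the "mass" of the cone to lie along $\{u=0\}$, and a dimension/homogeneity count ($C(V(f_P),0)$ must have dimension $2$, matching $\dim\{u=0\}$) pins it to be exactly $\{u=0\}$. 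I would also invoke Theorem~\ref{cone-sampas} once more to upgrade the set-theoretic identification to an ambient bi-Lipschitz equivalence, since $\{u=0\}$ is already LNE and all the pieces involved are subanalytic cones.
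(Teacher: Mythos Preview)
Your overall reduction via Corollary~\ref{cor:bilipVequivsemi-radialprincipalpart} and Theorem~\ref{cone-sampas} to the radial polynomial $f_P$ is exactly what the paper does, and the idea of computing $C(V(f_P),0)$ by looking at tangent vectors of radial arcs $(\lambda^{k_1}u_*,\lambda\rme^{\rmi t_*})$ is right. The first assertion of (i) is essentially fine, modulo the irrelevant aside about the ``$f_P$ not $u$-convenient'' subcase (you are in Type-I with $k_f>1$, so $f_P$ \emph{is} $u$-convenient by definition), and the fact that Proposition~\ref{aaaa} formally requires $N\ge 2$; the paper simply parametrizes the half-branches directly rather than invoking that proposition.

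The ``in particular'' part, however, contains a genuine $u\leftrightarrow v$ mix-up that derails the argument. You write that if $f_P$ is not $v$-convenient then $\{v=0\}\subseteq V(f_P)$, but the implication is the opposite: not $v$-convenient means every monomial of $f_P$ carries a factor of $u$ or $\bar u$, so $f_P(0,v)\equiv 0$, i.e.\ $\{u=0\}\subseteq V(f_P)$. This immediately gives $\{u=0\}\subseteq C(V(f_P),0)$, and together with the containment from the first part yields equality --- no further argument is needed. Similarly, your surjectivity claim $\mathrm{proj}_2(L_1)=\mathbb{S}^1$ should be attributed to the case ``$f_P$ is \emph{not} $v$-convenient'' (since then $(0,\rme^{\rmi t})\in L_1$ for all $t$) or to the $u$-semiholomorphic case (where the Fundamental Theorem of Algebra applied to $(f_{P_1})_1(\cdot,\rme^{\rmi t})$ produces a root for every $t$), not to ``$f_P$ is $v$-convenient''. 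This is precisely the paper's argument.

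Finally, the fallback dimension argument in your last paragraph does not work: a closed $2$-dimensional subanalytic cone inside $\{u=0\}\cong\mathbb{R}^2$ can be a proper sector, so matching dimensions does not force equality, and the link of $V(f_P)$ need not be connected. Drop that argument and keep the direct surjectivity reasoning above with the $u/v$ roles corrected.
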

\begin{proof}
\noindent \textbf{(i):} In this case, $P=P_1 \in \mathcal{P}_{\mathrm{inn}}(f)$ and $k_f=k_1$. Consider a real half-branch $\gamma(\lambda)=(\lambda^{k_1} u_*, \lambda \rme^{\rmi t_*}) \in V(f_{P_1})$, centred at the origin. Since $k_1>1$ we have that $(0,\rme^{\rmi t_*}) \in C(V(f_{P_1}),0)$. Therefore, the set $$T_2(V(f_{P_1}))=\{(0,\lambda \rme^{\rmi t}) \mid  (\lambda^{k_1}u(t),\lambda \rme^{\rmi t}) \in V(f_{P_1}), \ \lambda>0, \ t\in [0,2\pi],\ u(t) \in \C\},$$
is contained in $C(V(f_{P_1}),0)$. Moreover, by Equation~\eqref{nondeguvertex} we have $C(V(f_{P_1}),0)\cap \{v=0\}=\{0\}$ and thus the real half-branches of the form $(\lambda u_*, 0)$ are not contained in $V(f_{P_1})$. Reciprocally, 
every real half-branch of $V(f_{P_1})$ centred at the origin can be parametrized by $\gamma(\lambda)=(\lambda^{k_1}u(t(\lambda)), \lambda \rme^{\rmi t(\lambda)})$. Taking $t_*=\lim_{\lambda \to 0} t(\lambda)$. We get \[ v = \lim_{\lambda\to 0} \frac{\gamma(\lambda)}{\lambda}=\frac{(\lambda^{k_1}u(t(\lambda)), \lambda \rme^{\rmi t(\lambda)})}{\lambda}= (0,\rme^{\rmi t_*}).\]
	Thus, $C(V(f_{P_1}),0)=T_2(V(f_{P_1}))$. Therefore, by Corollary~\ref{cor:bilipVequivsemi-radialprincipalpart} and Theorem \ref{cone-sampas}, $C(V(f),0)$ is bi-Lipschitz equivalent to $C(V(f_{P_1}))$. Notice that $T_2(V(f_{P_1}))$ can be calculated from 
$V((f_{P_1})_1)$ as 
\[(\lambda^{k_1}u(t),\lambda \rme^{\rmi t}) \in V(f_{P_1}) \text{ if and only if } (u(t), \rme^{\rmi t}) \in V((f_{P_1})_1).\] In particular, if $f_{P_1}$ is \(u\)- (or \(\bar{u}\)-) semiholomorphic, then for any $t \in [0,2\pi]$ there exists $u(t)\in \C$ such that $(u(t),\rme^{\rmi t})\in V((f_{P_1})_1)$. Thus, $C(V(f_{P_1}))=\{u=0\}$. If $f_{P_1}$ is not $v$-convenient, then $(0,\rme^{\rmi t}) \in V(f_{P_1})$ for all $t\in [0,2\pi]$; in this case, we also have $C(V(f_{P_1}))=\{u=0\}$.
\vspace{0.3cm}

\noindent \textbf{(ii):} In this case, \(P=P_1 \in \mathcal{P}_{\mathrm{inn}}(f)\), $k_f=k_1$, and 
$$T_1(V(f_{P_1}))=\{(\lambda \rme^{\rmi t},0)\mid (\lambda \rme^{\rmi t}, \lambda^{\frac{1}{k_1}} v(t)) \in V(f_{P_1}), \lambda>0, \ t\in [0,2\pi], v(t) \in \C \}.$$
 The set $T_1(V(f_{P_1}))$ is calculated from $V((f_{P_1})_{\underline{1}})$. The result follows the same arguments as in the previous case.
\end{proof}
\begin{lemma}\label{tangconetypeIII}
 Let $f$ be a Type-III semi-radial mixed polynomial of radial-type $(P;d)$. Then,
\begin{enumerate}
\item[(i)] If $k_f>1$ and $f_P$ is not $u$-convenient, then $C(V(f),0)$ is ambient bi-Lipschitz equivalent to a union of $\{v=0\}$ and a subset of $\{u=0\}$. In particular, if $f_P$ is \(u\)- (or \(\bar{u}\)-) semiholomorphic or it is not $v$-convenient, then $C(V(f),0)$ is ambient bi-Lipschitz equivalent to $\{uv=0\}$.    
\item[(ii)]  If $k_f<1$ and $f_P$ is not $v$-convenient, then $C(V(f),0)$ is ambient bi-Lipschitz equivalent to a union of $\{u=0\}$ and a subset of $\{v=0\}$. In particular, if $f_P$ is \(v\)- (or \(\bar{v}\)-) semiholomorphic or if it is not $u$-convenient, then $C(V(f),0)$ is ambient bi-Lipschitz equivalent to $\{uv=0\}$. 
\end{enumerate}
\end{lemma}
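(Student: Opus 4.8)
The plan is to mimic the proofs of Lemmas~\ref{tangconetypeII} and~\ref{tangconetypeI}: via Corollary~\ref{cor:bilipVequivsemi-radialprincipalpart} the polynomial $f$ is ambient bi-Lipschitz $V$-equivalent to its principal part $f_P=f_{P_1}$, so by Theorem~\ref{cone-sampas} it suffices to determine $C(V(f_P),0)$ up to ambient bi-Lipschitz equivalence. I will carry out case~(i), where $k_f>1$ and $f_P$ is not $u$-convenient; case~(ii) follows verbatim after exchanging the roles of $u$ and $v$ (and of $k_f>1$ with $k_f<1$), using the symmetric parametrizations of Remark~\ref{geralzao}.

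First I would extract the $\{v=0\}$-part of the cone: since $f_P$ is not $u$-convenient, every monomial of $f_P$ carries a positive power of $v$ or $\bar v$, hence $f_P(u,0)\equiv 0$, so $\{v=0\}\subseteq V(f_P)$ and, being a linear subspace, $\{v=0\}\subseteq C(V(f_P),0)$. Next, write $V(f_P)=\{v=0\}\cup V_{P_1}$ with $V_{P_1}:=V(f_P)\cap(\C\times\C^*)$. By the radial homogeneity of $f_P$ (Remark~\ref{geralzao}) every point of $V_{P_1}$ has the form $(r^{k_1}u,r\rme^{\rmi t})$ with $(u,\rme^{\rmi t})\in L_1$ and $r>0$, and since $k_1=k_f>1$ one gets $(r^{k_1}u,r\rme^{\rmi t})/\|(r^{k_1}u,r\rme^{\rmi t})\|\to(0,\rme^{\rmi t})$ as $r\to0^+$; hence $C(V_{P_1},0)\subseteq\{u=0\}$. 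As $C(V(f_P),0)=C(\{v=0\},0)\cup C(V_{P_1},0)$ (the limit points of $V_{P_1}$ outside $V_{P_1}$ lying in $\{v=0\}$), this already shows $C(V(f_P),0)$ is the union of $\{v=0\}$ with the cone $C(V_{P_1},0)\subseteq\{u=0\}$, which is the main claim of~(i); Theorem~\ref{cone-sampas} then gives the same description for $C(V(f),0)$.

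For the ``in particular'' part it remains to prove $\{u=0\}\subseteq C(V(f_P),0)$, since the reverse inclusion $C(V(f_P),0)\subseteq\{uv=0\}$ follows from the previous paragraph. If $f_P$ is not $v$-convenient, then dually $f_P(0,v)\equiv0$, so $\{u=0\}\subseteq V(f_P)\subseteq C(V(f_P),0)$. If instead $f_P$ is $u$- (or $\bar u$-) semiholomorphic, I would first note that an IND radial $u$-semiholomorphic polynomial that is not $u$-convenient must have $\deg_u f_P\ge1$: otherwise $f_P$ depends only on $v,\bar v$, and then $V(f_P)$, as a real variety, is singular along $\{v=0\}$, contradicting $Sing(V(f_P))=\{0\}$. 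Thus, for $v$ on the unit circle away from the finitely many angles where the leading $u$-coefficient of $f_P$ vanishes, $f_P(\,\cdot\,,v)$ is a one-variable polynomial of positive degree and has a root $\rho(v)$, and by radial homogeneity $(\rho(v)\varepsilon^{k_1},\varepsilon v)\in V(f_P)$ for every $\varepsilon>0$. Fixing $t_0$ and taking $v=v(s)=\rme^{\rmi(t_0+s)}$, $s\to0^+$ through good angles, the roots $\rho(v(s))$ escape to infinity at worst like a fixed power $s^{-N}$ (by the Newton--Puiseux expansion of the family $f_P(u,v(s))=0$), so choosing $\varepsilon=\varepsilon(s)\to0$ faster than $s^{N/(k_1-1)}$ makes $\rho(v(s))\,\varepsilon(s)^{k_1}=o(\varepsilon(s))$; the arc $s\mapsto(\rho(v(s))\varepsilon(s)^{k_1},\varepsilon(s)v(s))$ then lies in $V(f_P)$ and is tangent to $(0,\rme^{\rmi t_0})$. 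Since $t_0$ is arbitrary and $C(V(f_P),0)$ is a cone, $\{u=0\}\subseteq C(V(f_P),0)$, whence $C(V(f_P),0)=\{uv=0\}$ and, by Theorem~\ref{cone-sampas}, the same holds for $C(V(f),0)$.

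The genuine obstacle I anticipate is this last point: in the $u$-semiholomorphic subcase one must control how fast the roots $\rho(v)$ blow up at the degenerate $v$-angles, ensuring a polynomial (not exponential) bound so that the exponent $k_1>1$ leaves enough room to steer the arc into the direction $(0,\rme^{\rmi t_0})$; everything else is a routine adaptation of the arguments in Lemmas~\ref{tangconetypeII} and~\ref{tangconetypeI}.
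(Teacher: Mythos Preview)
Your main argument mirrors the paper's exactly: reduce to $f_P$ via Corollary~\ref{cor:bilipVequivsemi-radialprincipalpart} and Theorem~\ref{cone-sampas}, pick up $\{v=0\}$ from non-$u$-convenience, and push the remaining tangent directions into $\{u=0\}$ using $k_f>1$. The not-$v$-convenient subcase is also handled identically.

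Where you diverge is the $u$-semiholomorphic subcase, and here you are working harder than you need to. Your ``genuine obstacle'' --- that the leading $u$-coefficient of $(f_P)_1(\,\cdot\,,\rme^{\rmi t})$ might vanish at some angles, forcing you to track root blow-up rates --- does not actually occur. Since $f_P$ is not $u$-convenient, the terms of highest $u$-degree in $f_P$ are exactly those with $a_j+b_j=1$, so the leading coefficient has the form $c_A\rme^{\rmi t}+c_B\rme^{-\rmi t}$. Regularity of $V(f_P)$ along $\{v=0\}\setminus\{0\}$ (coming from $Sing(V(f_P))=\{0\}$; compare the computation in Lemma~\ref{lemma:overdeformation}) forces $|c_A|\ne|c_B|$, so this coefficient never vanishes. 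Hence $(f_P)_1(\,\cdot\,,\rme^{\rmi t})$ has positive $u$-degree for every $t$, and a root $u(t)$ exists for all $t$ --- this is precisely the fact the paper imports from the proof of Lemma~\ref{tangconetypeI}. Alternatively, even granting your worry, a one-line shortcut avoids the Newton--Puiseux estimate: the tangent cone is closed, and you have already produced $(0,\rme^{\rmi t})$ for all but finitely many $t$, so closedness gives $\{u=0\}$ immediately. Your arc construction is correct, just unnecessary.
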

\begin{proof}
\noindent \textbf{(i):} The proof follows in the same way as Lemma~\ref{tangconetypeI}, the unique difference is that $f_P$ is not $u$-convenient, thus $\{v=0\} \subset V(f_P)$, which implies $\{v=0\} \subset  C(V(f_P),0)$. On the other hand, by Remark~\ref{linkofTypeIII}, $T_2(V(f_P)) \subset \{u=0\}$.

  Therefore, $C(V(f_P),0)$ is bi-Lipschitz equivalent to \[\{v=0\} \cup T_2(V(f_P)).\] By Corollary~\ref{cor:bilipVequivsemi-radialprincipalpart} and Theorem \ref{cone-sampas}, we get the result. 
\vspace{0.2cm}

\noindent \textbf{(ii):} Follows analogously to Lemma~\ref{tangconetypeIII}~(i)  by using Lemma~\ref{tangconetypeI} (ii).
\end{proof}
    
\section{
Ambient Lipschitz geometry of 1-braid closures and non-tangent Hopf-links}\label{Sect:1-braids}

In this section, we aim to obtain ambient bi-Lipschitz triviality beyond semi-radial mixed polynomials. We begin our investigation with $\Gamma_{\mathrm{inn}}$-nice and IND mixed polynomials, whose link is "quite simple" in both metric and topological senses. This motivates the definitions of metric 1-braid closure and non-tangent Hopf-link. Crucially, the characteristics of such links are sufficient for their classification and rigidity in the ambient Lipschitz category (see Theorem~\ref{prop:1-braid-Lipschitz-trivial}), and this classification extends to several mixed polynomials whose link is isotopic to either a trivial knot (Proposition~\ref{trivial-knot}) or a topological Hopf-link (Proposition~\ref{hopf-link}).  
  
\begin{definition}\label{1-braidclosure}
	Let $f$ be a mixed polynomial such that \( V(f) \subset \C^2 \) is the germ of an isolated surface singularity. The link \(L_f\) associated with \( V(f) \) is called a \textbf{metric 1-braid closure} if:
	\begin{enumerate}
		\item[(i)] \( L_f \) is the closure of a single strand braid with braid axis \( L_u \) or \( L_v \) (see Remark~\ref{braidclosure}).
		\item[(ii)] The tangent cone of \( V(f) \) at the origin satisfies 
			\begin{equation*}
			C(V(f),0)= \begin{cases} 
				\{u = 0\},  &\text{if $L_v$ is the braid axis of $L_f$,} \\ 
					\{v = 0\},  &\text{if $L_u$ is the braid axis of $L_f$.} 
			\end{cases}
		\end{equation*} 
	\end{enumerate}
\end{definition}
By definition, a metric 1-braid closure with braid axis $L_u$ is isotopic to those with braid axis $L_v$ and their surfaces have tangent cones at the origin that are ambient bi-Lipschitz equivalent.
\begin{definition}\label{Def:Lip-transverse-Hopf-link}
	Let $f$ be a mixed polynomial such that \( V(f) \subset \C^2 \) is the germ of an isolated surface singularity. The link \(L_f\) associated with \( V(f) \) is called a \textbf{non-tangent Hopf-link} if:
\begin{enumerate}
	\item[(i)]  $V(f)$ admits a decomposition $V_1 \cup V_2$ satisfying that their links \(L_i,\ i=1,2, \) are metric 1-braid closures; with $L_u$ and $L_v$ as their braid axis, respectively;
	\item[(ii)] \(\operatorname{Cont}(V_1,V_2)=1\).  
\end{enumerate}
\end{definition}
\begin{example}
The link of $f(u,v)=uv+u^3+v^3$ is a non-tangent Hopf-link.  The Newton boundary is formed by two compact 1-faces associated with the weight vectors $P_1=(2,1)$ and $P_2=(1,2)$. By \cite[Theorem 1.2]{AraujoBodeSanchez}, $L_f$ is isotopic to $\mathbf{L}([L_1], [L_{\underline{2}}]')$, where $L_1$ is the solution in $u$ of $(f_{P_1})_1$ and  $L_{\underline{2}}$ is the solution in $v$ of $(f_{P_2})_{\underline{2}}$. On the other hand, we find that \(\Gamma_{\mathrm{inn}}(f) = \{Q_1 = (1,1)\}\). Consequently, by Theorem~\ref{thm:link-triviality1}, \(L_f\) is isotopic to \(L_{f_{Q_1}}\), which corresponds to the link of the holomorphic polynomial \(uv\).
\end{example}

The next proposition relates the metric 1-braid closure with its tangent cone up to ambient bi-Lipschitz equivalence. This will be useful to obtain the main result of this section (Theorem~\ref{prop:1-braid-Lipschitz-trivial}).

\begin{proposition}\label{proposition:1-braid-LNE}
	Let $f$ be an IND mixed polynomial that is $\Gamma_{\mathrm{inn}}$-nice such that $V(f)$ has a component $V_i$ that is a metric 1-braid closure. Then, $(V_i,0)$ is LNE and is ambient bi-Lipschitz equivalent to its tangent cone $C(V_i,0)$.
\end{proposition}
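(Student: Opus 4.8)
The plan is to use the structural description of a metric 1-braid closure together with the results already established for semi-radial mixed polynomials and with Theorem~\ref{Edson-Rodrigo}. By Definition~\ref{1-braidclosure}, the component $V_i$ has a link $L_i$ which is the closure of a single-strand braid with axis $L_u$ or $L_v$; without loss of generality suppose the axis is $L_v$, so that $C(V_i,0)=\{u=0\}$. Since $f$ is IND and $\Gamma_{\mathrm{inn}}$-nice, by Remark~\ref{geralzao} (and the discussion in Subsection~\ref{nested}) the component $V_i$ lies inside some horn neighbourhood $\mathcal{H}_{j}$ associated with a weight vector $P_j\in\mathcal{P}_{\mathrm{inn}}(f)$ with $k_j>1$, and $V_i$ can be parametrized in the form $(r^{k_j}u_{j}(r,\tau),r\rme^{\rmi t_{j}(r,\tau)})$ where $L_i$ being a $1$-strand braid closure means the parametrizing functions are single-valued and smooth in $\tau\in[0,2\pi]$, i.e. $V_i\setminus\{0\}$ is a smooth (subanalytic) half-open surface. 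First I would observe that $V_i$ is a closed subanalytic set with $0\in V_i$ whose link $L_i$ is connected (a $1$-braid closure is a single knot), so Theorem~\ref{Edson-Rodrigo} applies: it suffices to show that $(V_i)_t$ is $K$-LNE for all small $t>0$ with a uniform $K$.

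For the uniform LNE estimate on the links $(V_i)_t$, the key point is that $(V_i)_t$ is a curve in $\mathbb{S}^3_t$ diffeomorphic to $\mathbb{S}^1$, parametrized (after rescaling) by $\tau\mapsto(r^{k_j}u_{j}(r,\tau),r\rme^{\rmi t_{j}(r,\tau)})$ with $r$ fixed; since $k_j>1$, as $r\to 0^+$ this curve, rescaled by $1/r$, converges in the $C^1$-sense to the curve $\tau\mapsto(0,\rme^{\rmi t_{j}(0,\tau)})$ inside $\{u=0\}\cap\mathbb{S}^3$, which is a round circle, hence LNE with a definite constant. A $C^1$-family of curves converging to an embedded circle stays uniformly bi-Lipschitz to a circle (its inner and outer metrics stay comparable with a uniform constant), which gives the uniform $K$; by Theorem~\ref{Edson-Rodrigo}, $(V_i,0)$ is LNE. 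The same argument applied to the tangent cone, which by Proposition~\ref{aaaa}(i) is contained in $\{u=0\}$ and is in fact the cone over the limit circle $\{(0,\rme^{\rmi t_{j}(0,\tau)})\}$, shows $C(V_i,0)$ is a round cone (a $2$-plane, or a subset of it covered once), hence LNE as well.

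Finally, for the ambient bi-Lipschitz equivalence with the tangent cone, I would argue as follows. By Corollary~\ref{cor:bilipVequivsemi-radialprincipalpart}, or more precisely by the analysis in Proposition~\ref{lemma:lipeomorphismlocal} and Lemma~\ref{Lemma:Union} applied to the family $\{f_{P_j}+\varepsilon(f-f_{P_j})\}$, the component $V_i=V(L_{P_j};f)$ is ambient bi-Lipschitz equivalent to $V(L_{P_j};f_{P_j})$ outside a horn neighbourhood of $\Sigma_L(f_{P_j})$, which — since $V_i$ is a $1$-braid closure and hence $\operatorname{Cont}(V_i,\{u=0\})=k_j>1$ gives us room — can be arranged so that the ambient map is defined on all of $\C^2$; thus $V_i$ is ambient bi-Lipschitz equivalent to $V(L_{P_j};f_{P_j})$. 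Now $V(L_{P_j};f_{P_j})$ is weighted homogeneous of radial-type $(P_j;d(P_j;f))$ with $k_j>1$, so it is the "graph-like" surface $\{(r^{k_j}u_j(t),r\rme^{\rmi t})\}$; since $k_j>1$, the radial rescaling $(u,v)\mapsto(|v|^{1-k_j}u\cdot\text{(normalization)},v)$ — more carefully, an ambient bi-Lipschitz straightening of the form used in Lemma~\ref{lemma-extension} and Theorem~\ref{teo-gordurinha} — carries it to the cone $\{(0,r\rme^{\rmi t})\}=\{u=0\}$-truncated-appropriately, which is exactly $C(V_i,0)$. The main obstacle I expect is this last straightening step: making precise that the weighted-homogeneous $1$-braid-closure surface is ambient bi-Lipschitz equivalent to its tangent cone requires checking that the obvious radial homotopy pushing the $u$-coordinate to $0$ is bi-Lipschitz on the ambient $\C^2$, not merely on the surface; this is where one must invoke the horn-decomposition machinery (Theorem~\ref{teo-gordurinha}, via Lemma~\ref{Lemma:Union} with $k_i\ne 1$) rather than argue by hand, using that both $V_i$ and $C(V_i,0)$ are LNE and have the same tangent cone, so an ambient bi-Lipschitz isotopy between them can be built fibrewise over the circle $\{u=0\}\cap\mathbb{S}^3$.
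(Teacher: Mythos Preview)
Your LNE argument via Theorem~\ref{Edson-Rodrigo} is a valid route, though the paper gets LNE for free as a corollary of the ambient equivalence rather than proving it separately. The genuine gap is in your final straightening step, which you rightly flag as the main obstacle but do not close.

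The rescaling $(u,v)\mapsto(|v|^{1-k_j}u,v)$ you first suggest is not Lipschitz when $k_j>1$. Your fallback to Lemma~\ref{Lemma:Union} and Theorem~\ref{teo-gordurinha} does not apply as stated: Lemma~\ref{Lemma:Union} is built on the Kerner--Mendes maps of Proposition~\ref{lemma:lipeomorphismlocal}, which trivialise \emph{mixed-polynomial} families $\{f_{P_j}+\varepsilon\theta\}$, and there is no such family deforming $V(L_{P_j};f_{P_j})$ to the plane $\{u=0\}$. As for the fibrewise idea, slicing $\C^2$ orthogonally to the circle $\{u=0\}\cap\mathbb{S}^3$ in the manner of that lemma makes $V_i$ appear in each three-dimensional slice-germ as an arc, not a $1$-horn, so Theorem~\ref{teo-gordurinha} does not feed in directly either.

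The paper bypasses all of this with one elementary observation: because the link is a single-strand braid closure with axis $L_v$, the surface $V_i$ is the \emph{graph} over $\{u=0\}$ of the map $\phi:(\C,0)\to(\C,0)$, $\phi(r\rme^{\rmi t})=r^{k_i}u_r(t)$, which is Lipschitz since $k_i\ge 1$ and $u_r$ is bounded. The explicit shear $\Phi_{i,\varepsilon}(z_1,z_2)=(z_1-\varepsilon\phi(z_2),z_2)$ is then shown by a short two-case estimate to be ambient bi-Lipschitz, and $\Phi_{i,1}(V_i)=\{u=0\}$. This single map delivers both the ambient equivalence and LNE at once, and the resulting family $\{\Phi_{i,\varepsilon}\}$ is precisely what is fed into Lemma~\ref{Lemma:Union} in the proof of Theorem~\ref{prop:1-braid-Lipschitz-trivial}(iii).
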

\begin{proof}
	Since $V_i$ is a 1-braid closure, by Remark \ref{geralzao} there is a smooth family $\{u_r\}_{r>0}$ of $2\pi$-periodic functions $u_r: \R \to \C$ and a smooth, $2\pi$-periodic function $u: \R\to \C$ satisfying $\lim_{r\to 0^+}u_r(t)=u(t)$, such that $V_i\setminus\{0\}$ is parametrized by $(r^{k_i}u_r(t),r\rme^{\rmi t})$, for some $k_i\ge 1$ and each $t \in \R$, when the tangent cone of $V_i$ at $0$ is $\{u=0\}$; or by $(R\rme^{\rmi \varphi},R^{1/k_i}v_r(\varphi))$, for some $k_i\le 1$ and each $\varphi \in \R$, when the tangent cone of $V_i$ at $0$ is $\{v=0\}$. In this proof, we will consider the first parametrization, as the proof for the second parametrization is analogous.
    
	The link of $V_i$ is homeomorphic to $\mathbb{S}^1$, and since its tangent cone at $0$ is $\{u=0\}$, it has real dimension $2$. Then, by Remark \ref{Rem: NE HT condition}, $V_i$ is inner bi-Lipschitz equivalent to the standard $1$-horn. Let $T>0$ be large enough and $r_0>0$ small enough such that $T>{\rm max}\{|u'(t)| \, : \, t\in[0,2\pi]\}$ and $|u_r(t)|<T$, for every $t \in [0,2\pi]$ and $r<r_0$. Notice that, since $\{|u_r(t)|\}_{r<r_0; \, t \in \R}$ is bounded and the link of $V_i$ is a metric 1-braid closure, the projection of $(V_i,0)$ in its tangent cone $\{u=0\}$ is a bi-Lipschitz map. More precisely, $V_i$ can be seen as the graph of a bi-Lipschitz map $\phi: (\C,0)\to (\C,0)$ given as $\phi(0)=0$, $\phi(re^{it})= r^{k_i}u_r(t)$, for $r>0$, $t\in \R$, and $(V_i,0)$ is the germ, at 0, of $\{(\phi(\bm{z}),z) \, ; \, x\in \C\}$. 
    
    Now, for each $\varepsilon \in [0,1]$, define $\Phi_{i,\varepsilon}: (\C^2,0) \to (\C^2,0)$ by $\Phi_{i,\varepsilon}(z_1,z_2)=(z_1-\varepsilon\phi(z_2),z_2)$, for all $(z_1,z_2) \in (\C^2,0)$. It is easy to see that each $\Phi_{i,\varepsilon}$ is a bijection. 

    {\bf Claim.} There is a uniform constant $K_0>1$ such that each $\Phi_{i,\varepsilon}$ is $K_0$-bi-Lipschitz. Moreover, such maps are subanalytic and vary smoothly with $\varepsilon$.
    
    Since $\phi$ is subanalytic (see Remark \ref{geralzao}) and $\varepsilon$ is a linear parameter on $\Phi_{i,\varepsilon}$, it remains to prove the bi-Lipschitz property. To prove this, notice that if $K>1$ is such that $\phi$ is $K$-bi-Lipschitz, then for each $p=(u_1,v_1),q=(u_2,v_2)\in (\C^2,0)$, if $u=u_1-u_2$, $v=v_1-v_2$, and $w=\phi(v_1)-\phi(v_2)$, then $\frac{1}{K}|v|\le |w|\le K|v|$. We also have $\|\Phi_{i,\varepsilon}(p)-\Phi_{i,\varepsilon}(q)\|=\sqrt{|u-\varepsilon w|^2+|v|^2}$. Therefore, since $|u-\varepsilon w|^2 \le (|u|+|v|)^2=|u|^2+|v|^2+2|u||v|$ and $2|u||v| \le |u|^2+|v|^2$, we obtain
    \begin{align*}
    |\Phi_{i,\varepsilon}(p)-\Phi_{i,\varepsilon}(q)\| \le \sqrt{2|u|^2+2|w|^2+|v|^2}\le\sqrt{(1+2K^2)(|u|^2+|v|^2)}=\sqrt{(1+2K^2)}\|p-q\|.
    \end{align*}

    If $|u-\varepsilon w|\ge\frac{1}{K}|u|$, then
    \begin{align*}
    \|\Phi_{i,\varepsilon}(p)-\Phi_{i,\varepsilon}(q)\|=\sqrt{|u-\varepsilon w|^2+|v|^2}\ge\sqrt{\frac{1}{K^2}(|u|^2+|v|^2)}=\frac{1}{K}\|p-q\|.
    \end{align*}
    
    Now, if $|u-\varepsilon w|<\frac{1}{K}|u|$, then $|u|-|\varepsilon w|\le |u- \varepsilon w|$ implies $(1-\frac{1}{K})|u|<\varepsilon|w|\le K\varepsilon|v| \le K|v|$ and thus $|v|>\frac{K-1}{K^2}|u|$. Let $K'=1+\frac{K^4}{(K-1)^2}$. Then, $\frac{1}{\sqrt{K'-1}}=\frac{K-1}{K^2}$ and since $|v|^2 \ge\frac{1}{K'}(|u|^2+|v|^2)$ if, and only if, $|v|\ge\frac{1}{\sqrt{K'-1}}|u|$, which is true, we obtain
    \begin{align*}
    \|\Phi_{i,\varepsilon}(p)-\Phi_{i,\varepsilon}(q)\|=\sqrt{|u-\varepsilon w|^2+|v|^2}\ge\sqrt{|v|^2}\ge\frac{1}{\sqrt{K'}}\sqrt{|u|^2+|v|^2} \ge\frac{1}{\sqrt{K'}}\|p-q\|.
    \end{align*}

    Therefore, $\Phi_{i,1}$ is an ambient bi-Lipschitz map such that $\Phi_{i,1}(V_i)=\{u=0\}$, and the result follows. In particular, $(V_i,0)$ is LNE because $\{u=0\}$ is obviously LNE.
\end{proof}

\begin{theorem}\label{prop:1-braid-Lipschitz-trivial}
Let \( f \) and \( g \) be IND mixed polynomials that are $\Gamma_{\mathrm{inn}}$-nice and such that their links \( L_f \) and \( L_g \) satisfy one of the following conditions:
\begin{enumerate}
		\item[(i)] Both \( L_f \) and \( L_g \) are empty.
	\item[(ii)] Both \( L_f \) and \( L_g \) are metric 1-braid closures.
	\item[(iii)] Both \( L_f \) and \( L_g \) are non-tangent Hopf-links.
    \end{enumerate}
Then, \( f \) and \( g \) are ambient bi-Lipschitz \( V \)-equivalent, and $V(f)$ and $V(g)$ are LNE.
\end{theorem}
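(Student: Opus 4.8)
The strategy is to handle the three cases separately, using the structural results already established: Proposition~\ref{proposition:1-braid-LNE} (metric 1-braid closures are LNE and ambient bi-Lipschitz equivalent to their tangent cones, which are coordinate planes), Proposition~\ref{nontangencycriteria} (contact orders between components), and Lemma~\ref{lemma-extension} together with the Lipschitz Gluing Lemma (Lemma~\ref{Gluing Lipschitz Lemma}) for assembling global maps. The guiding principle is that in all three cases $V(f)$ is ambient bi-Lipschitz equivalent to a \emph{fixed model germ} built out of coordinate planes, so bi-Lipschitz $V$-equivalence of $f$ and $g$ follows by transitivity; LNE-ness is inherited from the model, which is obviously LNE.

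\textbf{Case (i).} If $L_f$ is empty, then $V(f)=\{0\}$ (since $f$ is IND, $V(f)$ is the germ of an isolated surface singularity whose link is $L_f$), and likewise $V(g)=\{0\}$. Then the identity of $\C^2$ is an ambient bi-Lipschitz map carrying $V(f)$ onto $V(g)$, and $\{0\}$ is trivially LNE.

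\textbf{Case (ii).} Suppose $L_f$ and $L_g$ are metric 1-braid closures. By Proposition~\ref{proposition:1-braid-LNE}, there are ambient bi-Lipschitz maps $\Phi_f,\Phi_g:(\C^2,0)\to(\C^2,0)$ carrying $V(f)$ and $V(g)$, respectively, onto the tangent cones $C(V(f),0)$ and $C(V(g),0)$; moreover by Definition~\ref{1-braidclosure}(ii) each of these tangent cones is $\{u=0\}$ or $\{v=0\}$. If both are the same plane, say $\{u=0\}$, then $\Phi_g^{-1}\circ\Phi_f$ is the desired ambient bi-Lipschitz map. If one is $\{u=0\}$ and the other is $\{v=0\}$, precompose with the linear involution $\sigma(u,v)=(v,u)$, which is ambient bi-Lipschitz and swaps the two planes. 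In all subcases $\Phi_g^{-1}\circ\sigma^{\pm 1}\circ\Phi_f$ works, and $V(f),V(g)$ are LNE by Proposition~\ref{proposition:1-braid-LNE}.

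\textbf{Case (iii).} Suppose $L_f$ and $L_g$ are non-tangent Hopf-links. Write $V(f)=V_1\cup V_2$ and $V(g)=W_1\cup W_2$ as in Definition~\ref{Def:Lip-transverse-Hopf-link}, where $L_{V_1},L_{W_1}$ are metric 1-braid closures with braid axis $L_u$ (so, by Definition~\ref{1-braidclosure}(ii), tangent cones $\{v=0\}$) and $L_{V_2},L_{W_2}$ are metric 1-braid closures with braid axis $L_v$ (tangent cones $\{u=0\}$), and $\operatorname{Cont}(V_1,V_2)=\operatorname{Cont}(W_1,W_2)=1$. The plan is to show $V(f)$ is ambient bi-Lipschitz equivalent to the model $\{uv=0\}$, and likewise $V(g)$, and then compose. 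First, by Proposition~\ref{proposition:1-braid-LNE} applied to $V_1$, there is an ambient bi-Lipschitz $\Psi_1$ with $\Psi_1(V_1)=\{v=0\}$; moreover, examining the proof of Proposition~\ref{proposition:1-braid-LNE}, the map $\Psi_1$ is of the form $(z_1,z_2)\mapsto(z_1,z_2-\phi(z_1))$, i.e.\ it is the identity in a horn neighbourhood of $\{u=0\}$ up to the contribution of $\phi$ (which is small there because $V_1$ has tangent cone $\{v=0\}$, hence $\operatorname{Cont}(V_1,\{u=0\})=1$). Since $\operatorname{Cont}(V_1,V_2)=1$ and $\operatorname{Cont}(V_2,\{u=0\})$ can be taken $=1$ (the tangent cone of $V_2$ is $\{u=0\}$, but a careful choice via Proposition~\ref{proposition:1-braid-LNE} realises $V_2$ as a graph over $\{u=0\}$), the images $\Psi_1(V_1)=\{v=0\}$ and $\Psi_1(V_2)$ still have contact order $1$ with each other and $\Psi_1(V_2)$ is still a metric 1-braid closure with tangent cone $\{u=0\}$. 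Now apply Proposition~\ref{proposition:1-braid-LNE} again to straighten $\Psi_1(V_2)$ onto $\{u=0\}$ via a map $\Psi_2$ of the form $(z_1,z_2)\mapsto(z_1-\psi(z_2),z_2)$ with $\psi(0)=0$; because $\operatorname{Cont}(\Psi_1(V_2),\{v=0\})=1$ the function $\psi$ vanishes fast enough on $\{v=0\}$ that $\Psi_2$ fixes $\{v=0\}$ (more precisely, it moves points of $\{v=0\}$ by an amount $o(\|\cdot\|)$ along $\{v=0\}$; one reparametrises within $\{v=0\}$ by an inner — hence here outer, since $\{v=0\}$ is LNE — bi-Lipschitz self-map to correct this). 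Gluing via Lemma~\ref{Gluing Lipschitz Lemma} on the complementary horn neighbourhoods of $\{u=0\}$ and $\{v=0\}$ (disjoint at $0$ because their contact order is $1$), we get an ambient bi-Lipschitz $\Theta_f$ with $\Theta_f(V(f))=\{uv=0\}$; similarly $\Theta_g(V(g))=\{uv=0\}$, and $\Theta_g^{-1}\circ\Theta_f$ is the required map. Since $\{uv=0\}$ is LNE (each branch is LNE and they meet transversally), $V(f)$ and $V(g)$ are LNE.

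\textbf{Main obstacle.} The delicate point is Case (iii): the two straightening maps coming from Proposition~\ref{proposition:1-braid-LNE}, applied successively to $V_1$ and to the image of $V_2$, do not a priori commute, and the second one may move the already-straightened plane $\{v=0\}$. The resolution hinges on the hypothesis $\operatorname{Cont}(V_1,V_2)=1$: it guarantees that the relevant horn neighbourhoods of $\{u=0\}$ and $\{v=0\}$ intersect only at the origin, so the two straightenings can be performed on disjoint regions and glued by the Lipschitz Gluing Lemma, with each map being the identity outside its own horn neighbourhood. Verifying that the maps from Proposition~\ref{proposition:1-braid-LNE} can indeed be chosen to be the identity outside a thin horn neighbourhood of the corresponding coordinate plane — equivalently, that $\phi$ and $\psi$ extend by the identity and remain globally bi-Lipschitz — is the technical heart of the argument; it follows by inspecting the explicit graph-map construction in the proof of Proposition~\ref{proposition:1-braid-LNE} and cutting off $\phi,\psi$ outside the horn, using that they already vanish to first order on the complementary plane.
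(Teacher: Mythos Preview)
Cases (i) and (ii) are correct and match the paper. For Case (iii), your overall strategy --- straighten each component to its tangent coordinate plane and use $\operatorname{Cont}(V_1,V_2)=1$ to ensure the two operations do not interfere --- is exactly the paper's.

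However, there is a gap in your execution. First, your worry that $\Psi_2$ moves $\{v=0\}$ is misplaced: the graph map $\Psi_2(z_1,z_2)=(z_1-\psi(z_2),z_2)$ from Proposition~\ref{proposition:1-braid-LNE} fixes $\{v=0\}$ \emph{pointwise}, since $\psi(0)=0$. The real issue --- which you do not address --- is whether $\Psi_2$ can be constructed at all, i.e.\ whether $\Psi_1(V_2)$ is still a Lipschitz graph over $\{u=0\}$. That amounts to showing $z_2\mapsto z_2-\phi_1(\phi_2(z_2))$ is bi-Lipschitz near $0$, which does not follow just from $\phi_1,\phi_2$ being Lipschitz; one needs the Lipschitz constant of $\phi_1\circ\phi_2$ to be $<1$, and your text does not establish this. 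Second, your fallback argument, ``cutting off $\phi,\psi$ outside the horn,'' does not work as stated: the map $(z_1,z_2)\mapsto(z_1-\phi(z_2),z_2)$ has $\phi$ depending on $z_2$ alone, whereas membership in the horn $\mathcal{H}_{1,\eta}(\{u=0\})$ is a condition on both coordinates, so no modification of $\phi$ can make the map the identity outside that horn.

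The paper closes this gap differently. The Claim in the proof of Proposition~\ref{proposition:1-braid-LNE} already records that the straightening maps form a smooth one-parameter family $\{\Phi_{i,\varepsilon}\}_{\varepsilon\in[0,1]}$ of uniformly bi-Lipschitz maps with $\Phi_{i,0}=\mathrm{id}$. The paper then feeds this isotopy into the construction of Lemma~\ref{Lemma:Union} (in place of the maps $\tilde\psi_{\varepsilon_0\to\varepsilon}$ used there), which is precisely the device that deforms an ambient bi-Lipschitz isotopy to the identity outside a $1$-horn neighbourhood of the tangent cone. Doing this once around $\{u=0\}$ and once around $\{v=0\}$, and using $\operatorname{Cont}(V_1,V_2)=1$ to make the two horns disjoint near $0$, the two localized maps agree with the identity on overlaps and glue via Lemma~\ref{Gluing Lipschitz Lemma} to a single ambient bi-Lipschitz map sending $V(f)$ to $\{uv=0\}$. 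So your plan is right, but the technical engine you need is Lemma~\ref{Lemma:Union}, not an ad hoc cutoff of $\phi,\psi$.
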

\begin{proof}
    If condition $(i)$ holds, the result follows by vacuity. If condition $(ii)$ holds, the result follows immediately from Proposition \ref{proposition:1-braid-LNE}. Finally, if condition $(iii)$ holds, let $V(f)=V_1\cup V_2$ and $V(g)=\tilde V_1\cup \tilde V_2$, with $\operatorname{Cont}(V_1,V_2)=\operatorname{Cont}(\tilde{V_1},\tilde{V}_2)=1$. The contact condition implies that we can assume, without loss of generality, that $C(V_1,0)=C(\tilde{V}_1,0)=\{u=0\}$ and $C(V_2,0)=C(\tilde{V}_2,0)=\{v=0\}$. For $i=1,2$, Proposition \ref{proposition:1-braid-LNE} implies that $V_1$ and $\{u=0\}$ are ambient bi-Lipschitz equivalent (as well as $V_2$ and $\{v=0\}$), with the family $\{\Phi_{i,\varepsilon}\}_{\varepsilon\in [0,1]}$ in the Claim of such Proposition being a family of ambient bi-Lipschitz maps such that $\Phi_{i,0}$ is the identity and $\Phi_{1,1}(V_1)=\{u=0\}$, $\Phi_{2,1}(V_2)=\{v=0\}$. Replacing the maps $\Phi_{i,\varepsilon}$ by the maps $\Phi_{i,\varepsilon}$ in this proof, by Lemma \ref{Lemma:Union} we obtain an ambient bi-Lipschitz isotopy $\Phi_{\varepsilon}$ such that $\Phi_1(V_1)=\{u=0\}$ and $\Phi_1(V_2)=\{v=0\}$. Analogously, there is an ambient bi-Lipschitz isotopy $\tilde{\Phi}_{\varepsilon}$ such that $\tilde{\Phi}_1(\tilde{V}_1)=\{u=0\}$ and $\tilde{\Phi}_1(\tilde{V}_2)=\{v=0\}$. Therefore, $V_1\cup V_2$ and $\tilde{V}_1\cup \tilde{V}_2$ are ambient bi-Lipschitz equivalent to $\{u=0\} \cup \{v=0\}$, which is LNE, and the result follows.
\end{proof}

\begin{corollary}\label{MainThm:1a}
		Let \(\{f + \varepsilon \theta\}_{\varepsilon \in I}\) be a family of mixed polynomials such that \(f\) is IND and $\Gamma_{\mathrm{inn}}$-nice and \(d(P_i; \theta) \geq d(P_i;f)\), $P_i \in \mathcal{P}_{\mathrm{inn}}(f)$. If the link \( L_f \) satisfies one of the following: 
		\begin{enumerate}
		\item[(i)] \( L_f \) is empty.
	\item[(ii)] \( L_f \) is metric 1-braid closure.
	\item[(iii)] \( L_f \) is non-tangent Hopf-link.
    \end{enumerate}
    Then, the family is ambient bi-Lipschitz $V$-trivial.
	\end{corollary}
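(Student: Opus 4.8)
The plan is to deduce Corollary~\ref{MainThm:1a} directly from Theorem~\ref{prop:1-braid-Lipschitz-trivial} together with the link-constancy results for families, so that essentially no new geometric work is required. First I would recall that the hypotheses $d(P_i;\theta)\geq d(P_i;f)$ for all $P_i\in\mathcal{P}_{\mathrm{inn}}(f)$ guarantee, via Theorem~\ref{thm:link-triviality} and Remark~\ref{rem-topological}(b), that the family $\{f+\varepsilon\theta\}_{\varepsilon\in I}$ is link-constant along the relevant parameter interval $I$ (either $\R$ or a small neighbourhood of $0$, according to the convention fixed in the Remark following Theorem~\ref{Lips-triviality}); in particular $L_{f+\varepsilon\theta}$ is ambient isotopic to $L_f$ for every $\varepsilon\in I$. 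The remaining point is that the metric/analytic hypothesis in each of the three cases is preserved along the family.

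The key steps, in order, are as follows. \textbf{Step 1:} Show each member $f_\varepsilon:=f+\varepsilon\theta$ is again IND and $\Gamma_{\mathrm{inn}}$-nice. This follows because the conditions $d(P_i;\theta)\geq d(P_i;f)$ force $\Gamma_{\mathrm{inn}}(f_\varepsilon)=\Gamma_{\mathrm{inn}}(f)$ and $(f_\varepsilon)_{P_i}=f_{P_i}$ (when the inequality is strict) or $(f_\varepsilon)_{P_i}=f_{P_i}+\varepsilon\theta_{P_i}$ for finitely many $\varepsilon$ otherwise; in either situation the face functions governing inner non-degeneracy and $\Gamma_{\mathrm{inn}}$-niceness are unchanged, so $f_\varepsilon$ inherits both properties (this is exactly the content underlying Theorem~\ref{thm:link-triviality} and is used throughout Section~\ref{Sect:Bi-LipVtriviality}). \textbf{Step 2:} In case (i), $L_{f_\varepsilon}$ is ambient isotopic to $L_f=\emptyset$, hence empty, and Theorem~\ref{prop:1-braid-Lipschitz-trivial}(i) applies to each pair $(f_0,f_\varepsilon)$. \textbf{Step 3:} In case (ii), $L_{f_\varepsilon}$ is ambient isotopic to the metric $1$-braid closure $L_f$, so it is the closure of a single-strand braid with the same braid axis; and $V(f_\varepsilon)$ has the correct tangent cone, because Proposition~\ref{aaaa} (applied to the IND, $\Gamma_{\mathrm{inn}}$-nice polynomial $f_\varepsilon$ with $\mathcal{P}_{\mathrm{inn}}(f_\varepsilon)=\mathcal{P}_{\mathrm{inn}}(f)$) computes the tangent cone of the relevant component solely in terms of the $k_i$'s, which are unchanged. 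Hence $L_{f_\varepsilon}$ is a metric $1$-braid closure and Theorem~\ref{prop:1-braid-Lipschitz-trivial}(ii) applies. \textbf{Step 4:} In case (iii), by link-constancy $V(f_\varepsilon)$ decomposes as $\tilde V_1\cup\tilde V_2$ with each $\tilde V_j$ a component whose link is a metric $1$-braid closure with axes $L_u,L_v$ respectively (the decomposition and the braid structure being stable under the deformation by the same argument as in Step 3), and $\operatorname{Cont}(\tilde V_1,\tilde V_2)=1$ because, by Proposition~\ref{nontangencycriteria}, the contact order of two such components is determined by the combinatorial data $k_i$ and the projections $\mathrm{proj}_j$, all of which are preserved; so Theorem~\ref{prop:1-braid-Lipschitz-trivial}(iii) applies. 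In all cases, $f_\varepsilon$ is ambient bi-Lipschitz $V$-equivalent to $f_0=f$, i.e.\ the family is ambient bi-Lipschitz $V$-trivial.

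The main obstacle I anticipate is Step~1 together with the parts of Steps 3--4 that assert the \emph{metric} features (tangent cone, contact order, and the very decomposition $V(f_\varepsilon)=\tilde V_1\cup\tilde V_2$) are preserved along the family: topological link-constancy alone does not a priori transport the phrase ``metric $1$-braid closure'' or ``$\operatorname{Cont}=1$'' from $f$ to $f_\varepsilon$. The resolution is that all of these quantities are read off from $\Gamma_{\mathrm{inn}}(f)$ and the face functions $f_{P_i}$ — precisely the data left invariant by deformations satisfying $d(P_i;\theta)\geq d(P_i;f)$ — via Proposition~\ref{aaaa} and Proposition~\ref{nontangencycriteria}; once this bookkeeping is in place the corollary is immediate. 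One should also be slightly careful, when the inequality $d(P_i;\theta)=d(P_i;f)$ holds for some $i$, that the face function perturbation $f_{P_i}+\varepsilon\theta_{P_i}$ still has isolated singularity and the same $\Gamma_{\mathrm{inn}}$-data for all $\varepsilon$ in the (small) interval $I$; this is guaranteed by openness of the IND condition and is exactly why $I$ is taken to be a small neighbourhood of $0$ in that case, matching the convention adopted after Theorem~\ref{Lips-triviality}.
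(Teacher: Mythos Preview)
Your approach is correct and is exactly what the paper intends: the corollary is stated without proof immediately after Theorem~\ref{prop:1-braid-Lipschitz-trivial}, and your outline---show that each $f_\varepsilon$ again satisfies (i), (ii), or (iii), then invoke the theorem---is the natural justification.

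Two small points of cleanup. First, the phrase ``$(f_\varepsilon)_{P_i}=f_{P_i}+\varepsilon\theta_{P_i}$ for finitely many $\varepsilon$'' is garbled; you mean ``for all $\varepsilon$ in the small interval $I$'' when $d(P_i;\theta)=d(P_i;f)$. Relatedly, the literal equality $\Gamma_{\mathrm{inn}}(f_\varepsilon)=\Gamma_{\mathrm{inn}}(f)$ need not hold (the maximal diagram could shift if $\theta$ adds support points), but what you actually need and use is only that $f_\varepsilon$ is IND and $\Gamma_{\mathrm{inn}}$-nice with respect to $\Gamma_{\mathrm{inn}}(f)$, with the same face links $L_{i}$, $L_{\underline{i}}$; this is what Theorem~\ref{thm:link-triviality} and Remark~\ref{geralzao} provide. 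Second, Proposition~\ref{aaaa} is stated for $N\geq 2$ and gives only containment $C(V(L_{P_i};f),0)\subset\{u=0\}$; the \emph{equality} required by Definition~\ref{1-braidclosure} comes from the single-strand parametrisation (the $t$-variable sweeps all of $\mathbb{S}^1$), and when $N_f=1$ you should instead invoke Lemmas~\ref{tangconetypeII}--\ref{tangconetypeIII} (or simply Theorem~\ref{MainThm:1}, since $f$ is then semi-radial). With those adjustments the argument goes through.
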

\begin{example}\label{1braidclosures_type_I_II}
Let \( \omega \in \C \text{ with } |\im(\omega)|>1\).  Consider the mixed polynomials 
\begin{align*}
f(u,v)&=(u+v^2\bar{v})(u\bar{u}+ (v\bar{v})^2v^2+\omega (v\bar{v})^3), \\
 g(u,v)&=u(u\bar{u}+ v^2+\omega (v\bar{v})),\\
h(u,v)&=v(u\bar{u}+ (v\bar{v})^2v^2+\omega (v\bar{v})^3).
\end{align*}
The surfaces $V(f)$, $V(g)$ and $V(h)$ have an isolated singularities at the origin. Moreover, $f$, $g$ and  $h$ are radial mixed polynomials with $k_f=k_h=3$ and $k_g=1$. Thus, we can easily verify that $g$ is Type-II, $f$ is Type-I, and $h$ is Type-III. The links associated with these three mixed polynomials are metric 1-braid closures as $|\im (\omega)|>1$ implies that 
\[f_i(u,\rme^{\rmi t})= (u+\rme^{\rmi t})(u \bar{u}+\rme^{2\rmi t}+\omega )\]
has only one solution of the form $(-\rme^{\rmi t},\rme^{\rmi t})$
and $g$ and $h$ has only one solution of the form $(u=0)$ and $(v=0)$, respectively. Therefore, by Theorem~\ref{prop:1-braid-Lipschitz-trivial} the mixed polynomials $f$, $g$ and $h$ are ambient bi-Lipschitz $V$-equivalent. 
\end{example}

The next two propositions show that metric 1-braid closures and non-tangent Hopf-links are ambient bi-Lipschitz equivalent to the zero set of Type II semi-radial mixed polynomials whose link is isotopic to them. This shows that the topological type of such germs also determines their ambient Lipschitz geometry.

\begin{proposition}\label{trivial-knot}
    Let $f$ be an IND, $\Gamma_{\rm inn}$-nice mixed polynomial such that the link $L_f$ is a metric 1-braid closure, and let $g$ be a semi-radial, Type II mixed polynomial such that the link of $V(g)$ is isotopic to the trivial knot. Then, $f$ is ambient bi-Lipschitz $V$-equivalent to $g$.
\end{proposition}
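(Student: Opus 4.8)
The plan is to reduce everything to Theorem~\ref{prop:1-braid-Lipschitz-trivial} by showing that a semi-radial Type II mixed polynomial whose link is the trivial knot has a link that is, in fact, a metric 1-braid closure in the sense of Definition~\ref{1-braidclosure}; once this is established, both $f$ and $g$ are IND, $\Gamma_{\rm inn}$-nice mixed polynomials with links that are metric 1-braid closures, so condition (ii) of Theorem~\ref{prop:1-braid-Lipschitz-trivial} applies and yields the ambient bi-Lipschitz $V$-equivalence directly.

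First I would analyze the semi-radial Type II polynomial $g$. Being Type II means $k_g=1$, so $\mathcal{P}_{\rm inn}(g)=\{P_1=(1,1)\}$ and $g=g_{P_1}+\tilde g$ with $g_{P_1}$ radial of radial-type $(1,1;d)$ and $Sing(V(g_{P_1}))=\{0\}$. By Corollary~\ref{cor:bilipVequivsemi-radialprincipalpart}, $g$ is ambient bi-Lipschitz $V$-equivalent to $g_{P_1}$, so it suffices to treat $g_{P_1}$, i.e. I may assume $g$ is itself radial of radial-type $(1,1;d)$. By Lemma~\ref{tangconetypeII}, $C(V(g),0)$ is ambient bi-Lipschitz equivalent to $V(g_{P_1})=V(g)$, which is the cone over $L_g$; since $L_g$ is isotopic to the trivial knot, the link is connected and $V(g)$ is a cone over a single $\mathbb{S}^1$. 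Next I would check the two defining conditions of a metric 1-braid closure for $L_g$: condition (i) requires $L_g$ to be the closure of a single-strand braid with axis $L_u$ or $L_v$ — since the trivial knot is precisely the closure of the 1-strand braid, and by Theorem~\ref{thm:link-triviality1} combined with Remark~\ref{equivalenceoflinks}(iii) (the $k_i\le 1$, not $u$-convenient, or $k_i\ge1$, not $v$-convenient cases give $L_{P_1}\simeq L_u$ or $L_v$), $L_g$ is realized as such a 1-strand braid closure with the appropriate coordinate axis; condition (ii) requires the tangent cone to be $\{u=0\}$ or $\{v=0\}$ accordingly, which is exactly what the cone-over-trivial-knot structure together with the projection argument of Lemma~\ref{tangconetypeI} (or the fact that in the Type II radial case with trivial-knot link, $V(g)$ is the graph of a Lipschitz function over a coordinate line) provides. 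So $L_g$ is a metric 1-braid closure.

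Then $f$ and $g$ both satisfy hypothesis (ii) of Theorem~\ref{prop:1-braid-Lipschitz-trivial}, hence $f$ and $g$ are ambient bi-Lipschitz $V$-equivalent (and both $V(f)$ and $V(g)$ are LNE as a bonus).

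The main obstacle I anticipate is the verification that the link of the Type II radial $g$ with trivial-knot topology genuinely meets Definition~\ref{1-braidclosure}, in particular pinning down which axis applies and that the tangent cone condition holds: one must argue that a connected $L_g$ arising as $V(g)\cap\mathbb{S}^3$ for a radial IND polynomial of radial-type $(1,1;d)$ whose link is the unknot must have $V(g)$ equal (up to the ambient bi-Lipschitz homeomorphism of Lemma~\ref{tangconetypeI}) to a coordinate plane, which forces the single-strand-braid structure. This is where one leans on the parametrization of Remark~\ref{geralzao} for $f_{P_1}$-components together with the non-degeneracy that makes $V(g)$ a graph, exactly as in the proof of Proposition~\ref{proposition:1-braid-LNE}; the rest is bookkeeping and invoking the earlier theorems.
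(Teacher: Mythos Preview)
Your reduction strategy has a genuine gap: the claim that the link $L_g$ of a Type~II semi-radial mixed polynomial with trivial-knot link is itself a metric 1-braid closure is \emph{false} in general. Definition~\ref{1-braidclosure} is a geometric condition on $L_g$ as a subset of $\mathbb{S}^3$, not a topological one: it requires $L_g$ to sit in $\mathbb{S}^3$ as the closure of a \emph{single-strand} braid with axis $L_u$ or $L_v$, and the tangent cone of $V(g)$ to equal the corresponding coordinate plane. For a Type~II polynomial the tangent cone is, by Lemma~\ref{tangconetypeII}, equal to $V(g_P)$ itself---the cone over $L_g$---which need not be $\{u=0\}$ or $\{v=0\}$. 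The paper's own Example~\ref{ex:typeiinotmetrics}(i), $g(u,v)=u^3+v^2\bar v$, is precisely a Type~II semi-radial polynomial whose link is the trivial knot but is realized as the closure of a \emph{three}-strand braid with axis $L_v$; its tangent cone is the cone over that link, not a coordinate plane. So condition~(ii) of Theorem~\ref{prop:1-braid-Lipschitz-trivial} does not apply to $g$ and your reduction collapses. Your invocation of Lemma~\ref{tangconetypeI} is also misplaced: that lemma concerns Type~I, where $k_f\neq 1$, and the projection argument there does not transfer to $k_f=1$.

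The paper avoids this by never claiming $L_g$ is a metric 1-braid closure. Instead it reduces $g$ to its radial principal part $g_Q$ (Corollary~\ref{cor:bilipVequivsemi-radialprincipalpart}), observes that $V(g_Q)$ is the \emph{straight cone} over the smooth unknot $L_g\subset\mathbb{S}^3$, takes a smooth ambient isotopy of $\mathbb{S}^3$ carrying $L_g$ to the coordinate circle $\{0\}\times\mathbb{S}^1$, and extends it radially via Lemma~\ref{lemma-extension} to an ambient bi-Lipschitz map taking $V(g_Q)$ to $\{u=0\}$. Separately, Proposition~\ref{proposition:1-braid-LNE} gives $V(f)$ ambient bi-Lipschitz equivalent to its tangent cone $\{u=0\}$. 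Composing finishes the proof. The key mechanism you are missing is Lemma~\ref{lemma-extension}: radial extension of a sphere diffeomorphism is what lets one pass from ``$L_g$ is isotopic to the unknot'' to ``$V(g_Q)$ is ambient bi-Lipschitz equivalent to a coordinate plane'' without ever needing $L_g$ to be a metric 1-braid closure.
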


\begin{proof}
    Suppose that $C(V(f),0)=\{u=0\}$, as the other case is analogous. Let $Q \in \Gamma_{\rm inn}$ such that $g$ is ambient bi-Lipschitz $V$-equivalent to $g_Q$ (see Corollary \ref{cor:bilipVequivsemi-radialprincipalpart}). Notice that $V(g_Q)$ is a cone, at $0$, of a smooth curve $L \subset \mathbb{S}^3$, and since $L$ is isotopic to the trivial knot $\{0\}\times\mathbb{S}^1$, whose tangent cone is $\{u=0\}$. By Lemma \ref{lemma-extension}, $V(g_Q)$ and $\{u=0\}=C(V(f),0)$ are ambient bi-Lipschitz equivalent, and the result follows from Proposition \ref{proposition:1-braid-LNE}.  
\end{proof}

\begin{proposition}\label{hopf-link}
    Let $f$ be an IND mixed polynomial that is $\Gamma_{\rm inn}$-nice such that the link $L_f$ is a non-tangent Hopf-link, and let $g$ be a semi-radial, Type II mixed polynomial such that the link of $V(g)$ is isotopic to the Hopf-link. Then, $f$ is ambient bi-Lipschitz $V$-equivalent to $g$.
\end{proposition}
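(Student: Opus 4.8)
The plan is to mirror the argument of Proposition~\ref{trivial-knot}, replacing the trivial knot by the Hopf-link and using the decomposition of $V(f)$ into two metric $1$-braid closures. First I would invoke the hypothesis that $L_f$ is a non-tangent Hopf-link: by Definition~\ref{Def:Lip-transverse-Hopf-link} there is a decomposition $V(f)=V_1\cup V_2$ with $L_1,L_2$ metric $1$-braid closures having braid axes $L_u$ and $L_v$ respectively, and with $\operatorname{Cont}(V_1,V_2)=1$. By Definition~\ref{1-braidclosure}(ii) this forces $C(V_1,0)=\{u=0\}$ and $C(V_2,0)=\{v=0\}$. On the other side, by Corollary~\ref{cor:bilipVequivsemi-radialprincipalpart} there is $Q\in\Gamma_{\rm inn}(g)$ with $g$ ambient bi-Lipschitz $V$-equivalent to $g_Q$, and $g_Q$ is a radial mixed polynomial of radial-type $(Q;d(Q;g))$ with $Q=(1,1)$ (since $g$ is Type~II), so $V(g_Q)$ is a cone at $0$ over a smooth link $L\subset\mathbb{S}^3$ which is isotopic to the Hopf-link.

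The key step is to split $V(g_Q)$ into its two components $W_1\cup W_2$ corresponding to the two unknotted circles of the Hopf-link $L$. Since the Hopf-link consists of two fibres of the Hopf fibration, up to ambient isotopy of $\mathbb{S}^3$ we may assume $L=(\{0\}\times\mathbb{S}^1)\cup(\mathbb{S}^1\times\{0\})$, whose cone is $\{u=0\}\cup\{v=0\}$; thus $C(W_1,0)=\{u=0\}$, $C(W_2,0)=\{v=0\}$, and $\operatorname{Cont}(W_1,W_2)=1$ because the two coordinate planes meet only at $0$. Then, exactly as in the proof of Theorem~\ref{prop:1-braid-Lipschitz-trivial}(iii): Proposition~\ref{proposition:1-braid-LNE} gives ambient bi-Lipschitz maps (via the families $\{\Phi_{i,\varepsilon}\}$ of its Claim) carrying $V_1$ to $\{u=0\}$ and $V_2$ to $\{v=0\}$, and likewise (using Lemma~\ref{lemma-extension} on the cone $V(g_Q)$, whose link is smooth) carrying $W_1$ to $\{u=0\}$ and $W_2$ to $\{v=0\}$. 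The contact-order hypotheses let us choose horn neighbourhoods so that $\mathcal{H}_{1,\eta}(V_1)\cap\mathcal{H}_{1,\eta}(V_2)=\{0\}$ and similarly for $W_1,W_2$, so these ambient maps can be glued via Lemma~\ref{Lemma:Union} (or directly via the Lipschitz Gluing Lemma~\ref{Gluing Lipschitz Lemma}, since outside the horn neighbourhoods they are the identity and $\{u=0\}\cup\{v=0\}$ is LNE). Chaining the resulting ambient bi-Lipschitz equivalences $V(f)\sim\{uv=0\}\sim V(g_Q)$, and composing with the equivalence $V(g_Q)\sim V(g)$ from Corollary~\ref{cor:bilipVequivsemi-radialprincipalpart}, yields $f$ ambient bi-Lipschitz $V$-equivalent to $g$.

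The main obstacle I anticipate is the same subtlety that appears in Proposition~\ref{proposition:1-braid-LNE}: one must verify that each component $V_i$ (and $W_i$) really is, up to an ambient bi-Lipschitz map fixing the identity outside a small horn neighbourhood of its tangent cone, the graph of a Lipschitz function over that tangent cone — i.e.\ that the projection to the tangent plane is bi-Lipschitz with uniformly bounded constants. For $V_1,V_2$ this is precisely the content of Proposition~\ref{proposition:1-braid-LNE}, so it is available; for $W_1,W_2$ it follows from the cone structure together with Lemma~\ref{lemma-extension}, using that an isotopy of $\mathbb{S}^3$ carrying $L$ to the standard Hopf-link is smooth, hence bi-Lipschitz on the compact $\mathbb{S}^3$, and then radially extended. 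A minor technical point is checking that the horn neighbourhoods used to glue the two halves of $V(f)$ are compatible (after the ambient maps) with those used for $V(g_Q)$; since after straightening both surfaces become $\{u=0\}\cup\{v=0\}$, this reduces to the already-established fact that $\{u=0\}\cup\{v=0\}$ is LNE with $\operatorname{Cont}(\{u=0\},\{v=0\})=1$, so no new estimate is needed.
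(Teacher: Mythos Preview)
Your proposal is correct and follows essentially the same approach as the paper: reduce both $V(f)$ and $V(g_Q)$ to the standard model $\{u=0\}\cup\{v=0\}$, then chain with Corollary~\ref{cor:bilipVequivsemi-radialprincipalpart}. The only difference is packaging: the paper dispatches the $f$ side in one line by citing Theorem~\ref{prop:1-braid-Lipschitz-trivial}(iii) (whose proof already contains the Proposition~\ref{proposition:1-braid-LNE} plus Lemma~\ref{Lemma:Union} gluing you spell out), and on the $g$ side it applies Lemma~\ref{lemma-extension} once to the whole link $L_{g_Q}$ rather than component-by-component, so no separate gluing step for $W_1,W_2$ is needed.
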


\begin{proof}
By Theorem \ref{prop:1-braid-Lipschitz-trivial}, $V(f)$ is ambient bi-Lipschitz equivalent to $\{u=0\}\cup \{v=0\}$, whose link is the union of $\{0\}\times \mathbb{S}^1$ and $\mathbb{S}^1\times \{0\}$. Since $g$ is Type II of radial-type $(Q,d_{g_Q})$, and the link of $V(g)$ is isotopic to the Hopf-link, there is a smooth map $\phi: \mathbb{S}^3 \to \mathbb{S}^3$ such that $\phi(L_{g_Q})=(\{0\}\times \mathbb{S}^1)\cup (\mathbb{S}^1\times \{0\})$. Hence, by Lemma \ref{lemma-extension}, $V(g_Q)$ and $\{u=0\}\cup \{v=0\}$ are ambient bi-Lipschitz equivalent. The result then follows, since $V(g_Q)$ is ambient bi-Lipschitz equivalent to $V(g)$, by Corollary \ref{cor:bilipVequivsemi-radialprincipalpart}.
\end{proof}
\begin{example}\label{ex:typeiinotmetrics}
\begin{enumerate}
    \item[(i)] Consider the mixed polynomial $f(u,v)=u^3+v^2\bar{v}$. It is Type-II semiradial of radial-type $(P_1=(1,1);3)$, and $L_f$ is isotopic to $\mathbf{L}([L_1])$, where $L_1$ is the zeros of $f_1(u,\rme^{\rmi t})=u^3+\rme^{\rmi t}$. The link $L_1$ is the closure of the braid in 3-strands $B_1=\sigma_1\sigma_2$, where $\sigma_i$, $i=1,2$ denotes the Artin's generators of the braid group $\mathbb{B}_3$. Thus, the link $L_f$ is the trivial knot. Since $L_f$ is the closure of a 3-strand braid, it is not a metric 1-braid closure. Nevertheless,  Proposition~\ref{trivial-knot} guarantees that $f$ is ambient bi-Lipschitz $V$-equivalent to any $\Gamma_{\mathrm{inn}}$-nice and IND mixed polynomial whose link is a metric 1-braid closure.  
    \item[(ii)] Consider the mixed polynomial $f(u,v)=u^2-v^2$. It is Type-II semiradial of radial-type $(P_1=(1,1);2)$. The link $L_1$ is a torus link that is the closure of the 2-strand braid $B_1=\sigma_1^2$, where $\sigma_1$ is the Artin's generator of the braid group $\mathbb{B}^2$. The link $L_f$ is a Hopf-link, but it is not a non-tangent Hopf-link. Indeed, $L_f$ is isotopic to the links $L^1$ and $L^2$, which are parametrized by $(\rme^{\rmi t},\rme^{\rmi t})$ and $(-\rme^{\rmi t},\rme^{\rmi t})$, respectively. The tangent cone at the origin of these components are  $C(V(L^1;f),0)=V(L^1;f)$ and $C(V(L^2;f),0)=V(L^2;f)$, that are not equal neither $\{v=0\}$ nor $\{u=0\}$. Nevertheless, Proposition~\ref{hopf-link} guarantees that $f$ is ambient bi-Lipschitz $V$-equivalent to any IND and $\Gamma_{\mathrm{inn}}$-nice mixed polynomial whose link is a non-tangent Hopf-link.   
\end{enumerate}
\end{example}

\section{Necessary conditions for bi-Lipschitz equivalence}\label{Sect:necessarybi_lipVequiv}

In this section, we study IND mixed polynomials whose link is neither a 1-braid closure nor a non-tangent Hopf-link. We now focus on these cases because the associated  surfaces allow us to obtain a meaningful comparison of the tangent orders of real half-branches and their contact orders at the origin, using the so-called test arc method in \cite{Fernandes2003}. With this, we obtain several necessary conditions for outer bi-Lipschitz equivalence based on the Newton polygon associated with the mixed polynomials.

\subsection{The type of semi-radial mixed polynomial is a Lipschitz invariant}\label{subsec6.1}

\begin{proposition}\label{proptypeI}
Let $f$ and $g$ be semi-radial mixed polynomials. Assume that the links associated with both $f$ and  $g$ are neither empty nor metric 1-braid closures nor non-tangent Hopf links, and that $f$ is bi-Lipschitz $V$-equivalent to $g$. Then, 
\begin{enumerate}
\item[(i)] If $f$ is Type-II, then $g$ must also be Type-II.
\item[(ii)] If $f$ is Type-III, then $g$ must also be Type-III.  
\end{enumerate}
\end{proposition}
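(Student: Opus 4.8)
The plan is to reduce both $f$ and $g$ to their radial parts using Corollary~\ref{cor:bilipVequivsemi-radialprincipalpart}, and then to separate the three types by two bi-Lipschitz invariants of the germ: being Lipschitz normally embedded, which will give part (i), and the ambient bi-Lipschitz type of the tangent cone, which will give part (ii).

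\emph{Part (i).} Assume $f$ is Type-II. Since $k_f=1$, the set $V(f_P)$ is the straight cone over the smooth link $L_{f_P}$ (here $f_P$ is radial with $Sing(V(f_P))=\{0\}$), and such cones are LNE, so $V(f)$ is LNE by Corollary~\ref{cor:bilipVequivsemi-radialprincipalpart}. I would then assume, for contradiction, that $g$ is Type-I or Type-III and show that $V(g)$ — which is bi-Lipschitz equivalent to $V(g_Q)$ — is \emph{not} LNE. After possibly exchanging the roles of $u$ and $v$ we may take $k:=k_{g_Q}>1$; by Remark~\ref{geralzao} the components of $V(g_Q)$ attached to the weight vector of slope $k$ are parametrized as $(\lambda^{k}u_j(t),\lambda\rme^{\rmi t})$ and have tangent cone contained in $\{u=0\}$. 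If every such piece were a single-strand component and $V(g_Q)$ had no further piece tangent to $\{u=0\}$, then $L_g$ would be a metric $1$-braid closure (Type-I) or a non-tangent Hopf-link (Type-III, using that the contact with the component attached to $(1,1)$ equals $1$ by Proposition~\ref{nontangencycriteria}), both excluded by hypothesis. Hence $V(g_Q)$ contains two distinct smooth pieces $S_1,S_2$ that are tangent at the origin (two strands of one component, two components, or $\{u=0\}$ together with such a component): over some angle $t_0$ and level $\lambda$ they carry points $p_\lambda\in S_1$, $q_\lambda\in S_2$ with the same $v$-coordinate $\lambda\rme^{\rmi t_0}$ and $u$-coordinates of size $O(\lambda^{k})$, so $0<\|p_\lambda-q_\lambda\|=o(\lambda)$. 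On the other hand, every rectifiable path in $V(g_Q)$ from $p_\lambda$ to $q_\lambda$ has length $\gtrsim\lambda$: it must pass near the origin (if $S_1,S_2$ lie in different connected components) or, on one component, perform at least a full revolution in the $v$-angle to move between strands, by the braid presentation. Therefore $d_{\rm inn}(p_\lambda,q_\lambda)/\|p_\lambda-q_\lambda\|\to\infty$, so $V(g)$ is not LNE, contradicting that $V(f)$ is LNE and bi-Lipschitz equivalent to $V(g)$; hence $g$ is Type-II.

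\emph{Part (ii).} Assume $f$ is Type-III. Applying part (i) with $f$ and $g$ interchanged shows $g$ is not Type-II, so $g$ is Type-I or Type-III, and it remains to exclude Type-I. By Theorem~\ref{cone-sampas}, $C(V(f),0)$ and $C(V(g),0)$ are bi-Lipschitz equivalent. Since $L_f$ is neither empty nor a metric $1$-braid closure, the part of $V(f_Q)$ lying off $\{v=0\}$ is nonempty, so Lemma~\ref{tangconetypeIII} gives that $C(V(f),0)$ is bi-Lipschitz equivalent to $\{v=0\}\cup S_f$ with $\emptyset\neq S_f\subseteq\{u=0\}$ and $\operatorname{Cont}(\{v=0\},S_f)=1$; meanwhile Lemma~\ref{tangconetypeI} gives that $C(V(g),0)$ is bi-Lipschitz equivalent to a subset $W$ of a single $2$-plane $\Pi$. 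The auxiliary fact I would use here is that a conical subanalytic germ which is bi-Lipschitz equivalent to $\R^2$ cannot be a proper subset of $\R^2$: its link is a closed subanalytic subset of $\mathbb{S}^1$ homeomorphic, via the bi-Lipschitz map, to $\mathbb{S}^1$, hence all of $\mathbb{S}^1$. Applied to the image of $\{v=0\}$ inside $W$, this forces that image to be all of $\Pi$; then the nonempty image of $S_f$ lies inside it, so the contact order of the two images equals $\infty$, while bi-Lipschitz maps preserve contact order and $\operatorname{Cont}(\{v=0\},S_f)=1$ — a contradiction. Hence $g$ is Type-III.

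The step I expect to be the main obstacle is the LNE estimate in part (i): isolating the two tangent pieces uniformly over the (few) possible shapes of $V(g_Q)$, and, above all, proving the inner-distance lower bound $d_{\rm inn}(p_\lambda,q_\lambda)\gtrsim\lambda$ when $S_1$ and $S_2$ belong to the same connected component, where one must invoke the braid presentation of that component (Remark~\ref{geralzao}) to argue that moving between strands costs at least a full turn in the $v$-angle. By contrast, part (ii) should be comparatively routine once the lemma on conical subsets of $\R^2$ is in place.
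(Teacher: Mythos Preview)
Your Part~(ii) argument is essentially the paper's argument: both reduce to showing that the tangent cone of a Type-III germ cannot be bi-Lipschitz equivalent to that of a Type-I germ, because one piece of the former is a full $2$-plane while the latter lives inside a single $2$-plane and must accommodate a second disjoint conical piece. One small slip: you apply your ``auxiliary fact'' to the \emph{image} of $\{v=0\}$ under the bi-Lipschitz map, but that image need not be conical. The fix is immediate --- pass to its tangent cone at $0$, which \emph{is} a conical subset of $\Pi$, is still bi-Lipschitz equivalent to $\R^2$ by Theorem~\ref{cone-sampas}, and hence equals $\Pi$; then the tangent cone of the image of $S_f$ is nonempty, contained in $\Pi$, and disjoint from $\Pi\setminus\{0\}$ (contact $1$ is preserved), a contradiction. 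The paper phrases this via the components $V_1,V_2$ of $V(f_P)$ rather than via $C(V(f),0)$, but the content is the same.

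Your Part~(i) is a genuinely different route from the paper's, and the gap you flag yourself is a real one. The paper does \emph{not} use LNE; instead it first uses the tangent-cone comparison to reduce to the case where $L_f$ and $L_g$ are both knots and $C(V(g_Q),0)=\{u=0\}$, and then runs a four-arc test-arc argument entirely with \emph{outer} tangency orders: it produces arcs $\gamma_1,\gamma_3\subset V(g_Q)$ with $\operatorname{tord}(\gamma_1,\gamma_3)=k_g>1$ and an arc $\gamma_2$ ``between'' them, pulls them back to $V(f_P)$ (Type-II), and derives a contradiction from the fact that in a straight cone any two arcs converging to distinct link points have outer tangency order $1$. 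No inner distance enters.

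The difficulty with your LNE approach is exactly where you put it: the inner-distance lower bound $d_{\rm inn}(p_\lambda,q_\lambda)\gtrsim\lambda$ for two points on the \emph{same} connected component of $V(g_Q)\setminus\{0\}$ relies on the component being presented as the closure of a braid, so that moving between strands forces the $v$-projection to wind nontrivially around $0$. But Remark~\ref{braidclosure} only asserts a braid presentation for \emph{semiholomorphic} polynomials; for a general radial mixed polynomial $g_Q$ the map $\mathrm{proj}_2$ from a link component to $\mathbb{S}^1$ need not be a covering --- it may have folds --- and then two preimages of the same angle can be joined by a path on the surface whose $v$-projection is a null-homotopic loop of length $o(\lambda)$. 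In that situation your argument does not produce a violation of LNE. The paper's test-arc argument sidesteps this entirely, since it never needs to bound inner distances; the contradiction is reached purely from the rigidity of outer tangency orders in the Type-II target. If you want to salvage the LNE route, you would need an independent argument that a weighted-homogeneous surface with $k>1$ and non-injective $\mathrm{proj}_2$ is never LNE --- which is plausible but is a separate lemma, not something that follows from the braid picture in general.
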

\begin{proof}
Let \(P_i \in \mathcal{P}_{\mathrm{inn}}(f)\) and \(Q_j \in \mathcal{P}_{\mathrm{inn}}(g)\) be the weight vectors associated with the radial-type of the semi-radial mixed polynomials $f$ and $g$, respectively.
	 \vspace{0.2cm}

\noindent \textbf{(i):} Since $f$ is Type-II by hypothesis, Lemma~\ref{tangconetypeII} ensures that $C(V(f),0)$ is bi-Lipschitz equivalent to $V(f_{P_i})$. In this case, $C(V(f),0)$ consists of the cone over the components of $L_{f_{P_i}}$, meaning it comprises cones that share a common vertex at the origin. Given that 
$f$ is bi-Lipschitz $V$-equivalent to $g$, Theorem \ref{cone-sampas} implies that the tangent cone $C(V(f),0)$ is bi-Lipschitz equivalent to $C(V(g),0)$ via a bi-Lipschitz map $\phi$. We divide the proof into two claims:
\smallskip

\noindent \textbf{Claim I.} \(g\) is not Type-I.
\smallskip

\noindent We first assume for contradiction that $g$ is Type-I. Without loss of generality, we can suppose that the radial mixed polynomial  $g_{Q_j}$ is $u$-convenient and satisfies $k_g=\frac{q_{j,1}}{q_{j,2}}>1$; an analogous argument applies for the other case. By Lemma~\ref{tangconetypeI}, the tangent cone $C(V(g),0)$ is bi-Lipschitz equivalent to a subset of $\{u=0\}$. Since $\phi$ is a bi-Lipschitz map satisfying $\phi(0)=0$, we have that  $C(V(f),0) \setminus \{0\}$ is homeomorphic to  $C(V(g
),0) \setminus \{0\}$. Therefore, by the topology of $C(V(f),0)$ and the fact that $f$ is Type II, the link
 $L_f$ must be a knot and hence $C(V(g_{Q_j}))=\{u=0\}$. Moreover, as $f$ is bi-Lipschitz $V$-equivalent to $g$, the link $L_g$ also has to be a knot.
   
Now, we apply the method of real half-branches used  in \cite{Fernandes2003}: Since, by hypothesis, $L_{g_{Q_j}}$ is not a 
metric 1-braid closure and $C(V(g_{Q_j}))=\{u=0\}$, there exist four distinct roots  $(u_l, \rme^{\rmi t_l}),\ l=1,\dots,4, $ of $(g_{Q_j})_j$ such that $t_1<t_2<t_3<t_4$, $\frac{t_3-t_1}{2\pi} \in \mathbb{Z}$, and $\frac{t_2-t_1}{2\pi},\frac{t_4-t_1}{2\pi} \notin \mathbb{Z}$. Define the real arcs: 
\begin{equation}\label{fourarcs}
\gamma_l(\lambda)=(\lambda^{k_g} u_l, \lambda \rme^{\rmi t_l}), \ l=1,\dots,4,
\end{equation} 
which are contained in $V(g_{Q_j})$. Given that $t_3=t_1+2n\pi$, $\gamma_1$, and $\gamma_3$ are tangent arcs where $\operatorname{tord} (\gamma_1,\gamma_3)=k_g>1$. 
By Proposition~\ref{radial-trivial}, since $f$ is ambient bi-Lipschitz $V$-equivalent to $f_{P_1}$ and $g$ is ambient bi-Lipschitz $V$-equivalent to $g_{Q_j}$, we can consider a bi-Lipschitz map  $F: (V(f_{P_1}),0)\to (V(g_{Q_j}),0)$. Define: 
\[\chi_l(\lambda)= (\lambda u(\tau_l(\lambda)), \lambda\rme^{\rmi t(\tau_l(\lambda))}) \in  F^{-1}(\gamma_i),\ l=1,\dots,4.\]  
Here, $(u(\tau), \rme^{\rmi t(\tau)}),\ \tau \in [0,2\pi M]$, is a parametrization of the entire $V((f_{P_i})_i)$, arising from the concatenation of the parametrizations of the curves (not necessarily closed) in Equation~\eqref{linkparametrization}.  
Since $\operatorname{tord}(\gamma_1,\gamma_3)=k_g$ and $F$ is bi-Lipschitz, it follows that: 
\begin{equation}\label{contactk}
 \operatorname{tord}(\chi_1,\chi_3)=k_g. 
\end{equation}
In this case, we have that 
\begin{equation}\label{eq1typeIinv}
\lim_{\lambda\to 0} t(\tau_1(\lambda))- t(\tau_3(\lambda))=0,
\end{equation}
 otherwise, 
\[ ||\chi_1(\lambda)- \chi_3(\lambda)||\geq ||\lambda \rme^{\rmi t(\tau_1(\lambda))}-\lambda \rme^{\rmi t(\tau_3(\lambda))}||= \lambda || \rme^{\rmi( t(\tau_1(\lambda))- t(\tau_3(\lambda)))}-1||,\]
 which would imply that $\operatorname{tord}(\chi_1,\chi_3)=1$, contradicting Equation~\eqref{contactk}. 
Therefore, Equation~\eqref{eq1typeIinv} holds, and it occurs in the following two cases: 
\begin{enumerate}
\item[(a)] \( \lim_{\lambda\to 0} (\tau_1(\lambda)- \tau_3(\lambda))=2\pi k, \ k=0 \text{ or }M,\)
\item[(b)]  \(\lim_{\lambda\to 0} \tau_1(\lambda)=\theta_1\neq \theta_3=\lim_{\lambda\to 0} \tau_3(\lambda), \text{ with }  t(\theta_1)=t(\theta_3), \ \theta_1,\theta_3\not\in \{ 0, 2\pi M\}. \)
\end{enumerate}
In the second case, we have  $\lim_{\lambda \to 0} (u( \tau_1(\lambda))-u(\tau_3(\lambda))) \neq 0.$ Thus, 
\[ ||\chi_1(\lambda)- \chi_3(\lambda)||\geq \lambda||u( \tau_1(\lambda))-u( \tau_3(\lambda))||\]
and thus there is $L>0$ such that  \[\lim_{\lambda\to 0} \frac{ ||\chi_1(\lambda)- \chi_3(\lambda)||}{\lambda}\geq L,\] implying $\operatorname{tord}(\chi_1,\chi_3)=1$, contradicting \eqref{contactk}. 

Suppose $k=0$, that is, \(\lim_{\lambda\to 0} (\tau_1(\lambda)- \tau_3(\lambda))=0;\) the case $k=M$ follows analogously.  In this scenario, one of the following alternatives occurs: 

\[\lim_{\lambda\to 0} (\tau_1(\lambda)- \tau_2(\lambda))=0\text{ or } \lim_{\lambda\to 0} (\tau_1(\lambda)- \tau_4(\lambda))=0. \]

Suppose  \(\lim_{\lambda\to 0} (\tau_1(\lambda)- \tau_2(\lambda))=0.\)
In particular,  
\[ ||\chi_1(\lambda) - \chi_2(\lambda)||\ll \lambda \therefore tord(\chi_1,\chi_2)>1.\]
Now, since $\gamma_l=F(\chi_l)$ for $l=1,2$, we have
\[||\gamma_1(\lambda) - \gamma_2(\lambda)||\gtrsim \lambda|\rme^{\rmi t_1} - \rme^{\rmi t_2} | \approx \lambda \therefore tord(\gamma_1,\gamma_2)=1,\]
which leads to a contradiction. 

\noindent \textbf{Claim II.} \(g\) is not Type-III. 
\smallskip 

\noindent Suppose by contradiction that $g$ is Type-III. Without loss of generality, we can suppose that 
$g_{Q_j}$ is not $u$-convenient and $k_g>1$. By Lemma~\ref{tangconetypeIII}, the tangent cone $C(V(g),0)$ is homeomorphic to the union of $\{v=0\}$ with a subset of $\{u=0\}$. Therefore, since $f$ is bi-Lipschitz $V$-equivalent to $g$, we get that $C(V(f),0)$ is homeomorphic to $C(V(g),0)$, and consequently by the topology of $C(V(f),0)$ and the fact that $f$ is Type II, $V(g)$ has two connected components, $V_1$ and $V_2$, each one of then with link homeomorphic to $\mathbb{S}^1$ and satisfying  $$C(V_1)=\{v=0\} \text{ and } C(V_2)=\{u=0\}.$$ 
By assumption, $g$ has a link that is not a non-tangent Hopf-link, thus the link of $V_2$ is not a metric 1-braid closure and it does not intersect $\{v=0\}$. Therefore, applying to $V_2$ the same reasoning of the four real half-branches done in Claim I of this Proposition, we obtain a contradiction.   
\vspace{0.2cm}

Therefore, since $g$ is neither Type-I nor Type-III, then $g$ is Type-II, as these three types describe the class of semi-radial mixed polynomials.  
\vspace{0.3cm}

\noindent \textbf{(ii):} Without loss of generality, we can suppose that $f_{P_i}$ is not $u$-convenient and  $k_f=\frac{p_{i,1}}{p_{i,2}}>1$.  By (i) of this proposition, we know that $g$ is not Type-II. We suppose by contradiction that $g$ is Type-I. By Remark \ref{linkofTypeIII}, we know that the link $L_f$ is isotopic to a nested braid $\mathbf{L}([L_1],L_v)$. Notice that, since $L_f$ is not a metric 1-braid closure, $L_1$ is not empty. Thus, $L_{f}$ has more than one component. 

Let $V_1:=\{v=0\}$ and $V_2$ be the components of $V(f_{P_i})$. By Lemma~\ref{tangconetypeIII},  $C(V_1,0)=\{v=0\}$ and $C(V_2,0) \subset \{u=0\}$. Hence \(\operatorname{Cont} (V_1,V_2)=1\), which implies $\operatorname{Cont}(F(V_1), F(V_2))=1$ as $F$ is a bi-Lipschitz map.  Since $g$ is Type-I, Lemma~\ref{tangconetypeI} implies
\begin{align*}
&C(F(V_1),0)\subset \{u=0\}, \\ &C(F(V_2),0)  \subset \{u=0\}, \\ &C(F(V_1),0)\cap C(F(V_2),0) =\{0\},  
\end{align*}
if $k_f>1$ and $f_P$ is $u$-convenient, or
\begin{align*}
&C(F(V_1),0)\subset \{v=0\}, \\ &C(F(V_2),0)  \subset \{v=0\}, \\ &C(F(V_1),0)\cap C(F(V_2),0) =\{0\},  
\end{align*}
if $k_f<1$ and $f_P$ is $v$-convenient.
 Thus, $C(F(V_1),0) \subsetneq  \{u=0\}$ or  $C(F(V_1),0) \subsetneq  \{v=0\}$. Since  $F$ is a bi-Lipschitz map, by Theorem \ref{cone-sampas}, one of these cones has to be homeomorphic to $C(V_1,0)=\{v=0\}$, which is a contradiction.
\end{proof}
\begin{example}\label{1braidclosures_type_I_IV}
Consider the mixed polynomials  \[f(u,v)=uv(u\bar{u}+ (v\bar{v})^2v^2+\omega (v\bar{v})^3),\text{ and }\]  \[
 g(u,v)=uv(u\bar{u}+ v^2+\omega (v\bar{v})),\ \omega \in \C \text{ with } |\im(\omega)|>1.\]
The mixed polynomials $f$ and $g$ satisfy $Sing(V(f))=Sing(V(g))=\{0\}$. Moreover, $f$ and $g$  are  radial mixed polynomials with $k_f=3$ and $k_g=1$. A direct verification  shows that $g$ is Type-II, and $f$ is Type-III. Furthermore, the links $L_f$ and $L_g$ are non-tangent Hopf-links arising from $\{uv=0\}$.

Since $\mathrm{Id}(u,v)=(u,v)$ is a bi-Lipschitz map that satisfies $\mathrm{Id}(V(f))=V(g)$, it follows that  $f$ and $g$ are bi-Lipschitz $V$-equivalent. This confirms that the condition in Proposition~\ref{proptypeI}, which excludes the links \(L_f\) and \(L_g\) from being non-tangent Hopf-links, is indeed necessary.  
\end{example}

\begin{proposition}\label{kinvarianceI}
Let \( f \) and \( g \) be semi-radial mixed polynomials such that  their links are neither empty nor metric 1-braid closures. If \( f \) is bi-Lipschitz \( V \)-equivalent to \( g \),  and $f$ is Type-I, then $g$ must be Type-I and \( k_f = k_g \text{ or } k_f = k_g^{-1}\).
\end{proposition}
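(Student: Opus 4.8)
The plan is to first show that $g$ must be Type-I, and then extract the invariant $k_f$ (up to inversion) by comparing tangent cones. For the first part, by Proposition~\ref{proptypeI}, if $f$ were bi-Lipschitz $V$-equivalent to a Type-II or Type-III polynomial, then $f$ itself would be Type-II or Type-III — contrapositively, since $f$ is Type-I, the only remaining possibility for $g$ among the three types is Type-I, provided we can exclude that $g$ is Type-II or Type-III. Proposition~\ref{proptypeI}(i) says that if $f$ is Type-II then $g$ is Type-II; by symmetry (bi-Lipschitz $V$-equivalence is symmetric), if $g$ were Type-II then $f$ would be Type-II, contradicting $f$ being Type-I. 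Similarly Proposition~\ref{proptypeI}(ii) applied to $g$ (in place of $f$) shows $g$ cannot be Type-III without $f$ being Type-III. Hence $g$ is Type-I.

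For the second part, assume without loss of generality that $k_f > 1$ and $f_{P}$ is $u$-convenient, where $f$ is semi-radial of radial-type $(P;d)$ with $P = (p_1,p_2)$ and $k_f = p_1/p_2$; likewise $g$ is Type-I of radial-type $(Q;d_g)$ with $Q = (q_1,q_2)$, and by symmetry (swapping $u \leftrightarrow v$ if necessary, which replaces $k_g$ by $k_g^{-1}$) we may assume $k_g > 1$ and $g_Q$ is $u$-convenient. By Corollary~\ref{cor:bilipVequivsemi-radialprincipalpart}, $f$ is ambient bi-Lipschitz $V$-equivalent to $f_P$ and $g$ to $g_Q$, so it suffices to recover $k_f$ from the outer bi-Lipschitz class of $V(f_P)$. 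The idea (following the test-arc method of Fernandes~\cite{Fernandes2003} as used in the proof of Proposition~\ref{proptypeI}) is that $k_f$ is the maximal tangency order that can occur between two real half-branches of $V(f_P)$ lying in distinct components of the link $L_{f_P}$ but over the same point of the $v$-circle: such branches have the form $\gamma_l(\lambda) = (\lambda^{k_f} u_l, \lambda \rme^{\rmi t_*})$, and two of them are tangent with $\operatorname{tord} = k_f$, while $\operatorname{Cont}(V(f_P), V(f_P))$ (the contact of the whole set with itself, read off from pairs of components) equals $k_f$ precisely because $L_{f_P}$ is not a metric 1-braid closure — so there are at least two distinct roots of $(f_{P})_1$ with the same argument $t_*$, as used in Claim I of Proposition~\ref{proptypeI}. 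More carefully: the contact order $\operatorname{Cont}(V(f_P) \setminus Z, V(f_P) \setminus Z)$, where $Z$ is a component realizing the maximal tangency, is a bi-Lipschitz invariant by the remark following Definition~\ref{DEF: order of tangency} and Theorem~3.2 of~\cite{B.F.METRIC-THEORY}; one shows this invariant equals $k_f$ using the parametrization $(\lambda^{k_f} u_I(\tau_i(\lambda)), \lambda \rme^{\rmi t_I(\tau_i(\lambda))})$ from Remark~\ref{geralzao} (with $N=1$, all slopes equal to $k_f$) — the tangency of two such branches is at most $k_f$ always, with equality exactly when their limiting arguments coincide and the link is not a 1-braid closure (so such a coincidence is forced for some pair). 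The same computation for $g_Q$ gives the invariant $= k_g$, hence $k_f = k_g$; in the case where we had to swap $u \leftrightarrow v$ to arrange $k_g > 1$, we instead get $k_f = k_g^{-1}$.

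The main obstacle I anticipate is making rigorous that the "maximal tangency order among real half-branches in $V(f_P)$" is a genuine bi-Lipschitz invariant that equals $k_f$ — one must be careful that (a) the supremum over pairs of test arcs is attained and equals $k_f$ rather than something larger (this uses $u$-convenience, via Equation~\eqref{nondeguvertex}, to rule out branches of the form $(\lambda u_*, 0)$ that would otherwise live in $\{v=0\}$, and the boundedness of the $u_I(\tau)$ to bound tangency from above as in the estimate $\|\tilde\gamma_i - \tilde\gamma_j\|/\lambda^{k_j+\kappa} \to \infty$ of Proposition~\ref{nontangencycriteria}), and (b) the non-1-braid-closure hypothesis genuinely forces the existence of two distinct components over a common $v$-argument, which is exactly the combinatorial input ($t_1 < t_2 < t_3 < t_4$ with suitable integrality of the differences) isolated in Claim I of Proposition~\ref{proptypeI}. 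Once these two points are nailed down, the invariance of $k_f$ (up to inversion, accounting for the $u \leftrightarrow v$ ambiguity in deciding which of $k,k^{-1}$ exceeds $1$) follows formally from Theorem~\ref{cone-sampas} and the preservation of tangency orders under outer bi-Lipschitz maps.
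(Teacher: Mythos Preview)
Your reduction to Type-I via Proposition~\ref{proptypeI} skips a hypothesis: that proposition also requires the links not to be non-tangent Hopf-links, which is not assumed here. The paper fills this by observing that a Type-I polynomial has tangent cone contained in a single coordinate plane (Lemma~\ref{tangconetypeI}), so $L_f$ cannot be a non-tangent Hopf-link; a symmetric tangent-cone comparison (via Theorem~\ref{cone-sampas}) then rules out $L_g$ being one either. This is minor but should be said.

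The real gap is in your extraction of $k_f$. Your proposed invariant --- ``maximal tangency order between half-branches in distinct components'' or ``$\operatorname{Cont}(V(f_P)\setminus Z, V(f_P)\setminus Z)$'' --- is not well-defined: the latter is trivially $\infty$ (every arc has $\operatorname{tord}$ $\infty$ with itself), and the former is vacuous when $L_{f_P}$ is connected but not a 1-braid closure. Your claim that ``the tangency of two such branches is at most $k_f$ always'' is false for arbitrary arcs within a single component (nearby arcs have arbitrarily high $\operatorname{tord}$); it holds only for the special radial arcs $(\lambda^{k_f}u_*,\lambda e^{it_*})$, and that class is not preserved by a general bi-Lipschitz map. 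The paper handles this by a genuine case split you are missing. \textbf{Case~I:} some component $J$ of $L_{f_P}$ is itself not a metric 1-braid closure. If $C(V(J;f_P),0)$ is a single half-line then $V(J;f_P)$ is a $k_f$-horn, $F(V(J;f_P))$ is a $k_g$-horn (tangent cones are preserved, so $\tilde t$ is also constant), and the horn exponent is an inner bi-Lipschitz invariant. If not, one chooses four radial test arcs $\gamma_1,\gamma_2,\gamma_3,\gamma_4$ in $V(J;f_P)$ with $\operatorname{tord}(\gamma_2,\gamma_4)=k_f$ and all other pairwise $\operatorname{tord}$'s equal to $1$, then argues that $k_f>k_g$ would force $F(\gamma_2),F(\gamma_4)$ to bound a $k_f$-H\"older triangle in the image that must contain one of $F(\gamma_1),F(\gamma_3)$, contradicting $\operatorname{tord}=1$. \textbf{Case~II:} every component of both $L_{f_P}$ and $L_{g_Q}$ is a metric 1-braid closure; then there are at least two such components and Proposition~\ref{nontangencycriteria} gives $\operatorname{Cont}$ between any two of them equal to $k_f$ (resp.\ $k_g$), which is a bi-Lipschitz invariant. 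Your sketch gestures at pieces of both cases but does not separate them, and in particular never produces a quantity that is simultaneously (i) equal to $k_f$, (ii) finite, and (iii) manifestly bi-Lipschitz invariant.
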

\begin{proof}
	Let \(P_i \in \mathcal{P}_{\mathrm{inn}}(f)\) and \(Q_j \in \mathcal{P}_{\mathrm{inn}}(g)\) be the weight vectors associated with the radial-types of the semi-radial mixed polynomials $f$ and $g$, respectively. First, notice that it is sufficient to prove the result for $f_{P_i}$ being $u$-convenient and $k_f>1$, as those $v$-convenient with $k_f<1$ are bi-Lipschitz $V$-equivalent (via change of variables $u \Leftrightarrow v$) to a $u$-convenient satisfying $k_f>1$. 
    Notice that $L_f$ cannot be a non-tangent Hopf-Link because its tangent cone is a subset of $\{u=0\}$ by Lemma \ref{tangconetypeI}, and thus only $L_v$ can be the braid axis of the components of $L_f$. Therefore, by Proposition~\ref{proptypeI} and the fact that $f$ is Type-I, it follows that $g$ is neither Type-II nor Type-III.
  Applying the same change of variables ($u \Leftrightarrow v$) if necessary, we assume $k_g > 1$ and that $g_{Q_j}$ is $u$-convenient. Consequently, both $f$ and $g$ are Type-I semi-radial mixed polynomials, and we aim to show that $k_f = 
  k_g$. 
  
  Since $f$ and $g$ are bi-Lipschitz $V$-equivalent, by Corollary \ref{Lipschitz-homogeneo}, there is a bi-Lipschitz map $F: (V(f_{P_1}),0)\to (V(g_{Q_1}),0)$. We now divide the proof into two cases:
  \smallskip
  
	\noindent \textbf{Case I}: $L_{f_{P_1}}$ has a component $J$ that is not a metric 1-braid closure.
	 \smallskip
     
 \noindent If $C(V(J,f_{P_1}),0)$ is a line, then $C(F(V(J,f_{P_1})),0)$ is also a line, by Theorem \ref{cone-sampas}. By radial homogeneity, $V(J,f_{P_1})$ and $F(V(J,f_{P_1}))$ can be parametrized as $(r^{k_f}u(\tau),r\rme^{\rmi t(\tau)})$ and $(r^{k_g}\tilde{u}(\tau),r\rme^{\rmi \tilde{t}(\tau)})$, respectively, with $t(\tau)$, $\tilde{t}(\tau)$ being constants because (see the proof of Lemma \ref{tangconetypeI})
 $$C(V(J;f_{P_1}),0)=T_2(V(J;f_{P_1})) \text{ and } C(F(V(J;f_{P_1})),0)=T_2(F(V(J;f_{P_1}))).$$ 
 Hence, $V(J;f_{P_1})$ is a $k_f$-horn and $F(V(J;f_{P_1}))$ is a $k_g$-horn, and therefore $k_f=k_g$. 
 
 \noindent
 If $C(V(J,f_{P_1}))$ is not a line, then $C(F(V(J,f_{P_1})),0)$ is not a line, by Theorem \ref{cone-sampas}. By radial homogeneity, $V(J,f_{P_1})$ and $F(V(J,f_{P_1}))$ can be parametrized as $(r^{k_f}u(\tau),r\rme^{\rmi t(\tau)})$ and $(r^{k_g}\tilde{u}(\tau),r\rme^{\rmi \tilde{t}(\tau)})$, respectively. As $C(V(J;f_{P_1}),0)=T_2(V(J;f_{P_1}))$ and $V(J;f_{P_1})$ is not a 1-braid closure, there are $\tau_1<\tau_2<\tau_3<\tau_4$ such that $0<t(\tau_1)<t(\tau_2)=t(\tau_4)<t(\tau_3)<2\pi$ and $u(\tau_2)\ne u(\tau_4)$. Therefore, for $i=1,2,3,4$, the arcs $\gamma_i(r)=(r^{k_f}u(\tau_i),r\rme^{\rmi t(\tau_i)})$ satisfy $tord(\gamma_i,\gamma_j)=1$ for every $1\le i<j\le 4$, except for $i=2,j=4$, where $tord(\gamma_i,\gamma_j)=k_f$. Thus, if $k_f>k_g$, then there is a $k_f$-H\"older triangle $T$ in $F(V(J;f_{P_1}))$ whose boundary arcs are $F(\gamma_2)$ and $F(\gamma_4)$. Since either $F(\gamma_1)$ or $F(\gamma_3)$ are in $T$, we have $tord(F(\gamma_1),F(\gamma_2))>1$ or $tord(F(\gamma_3),F(\gamma_2))>1$, a contradiction. By reversing the roles of $f$ and $g$, we also obtain a contradiction from $k_g>k_f$. Therefore, $k_f=k_g$.
 \smallskip
 	
 \noindent \textbf{Case II}: $L_{f_{P_1}}$ and $L_{g_{Q_1}}$ are unions of metric 1-braid closures with braid axis $L_v$.
 	 	 \smallskip
         
 \noindent Let $V(I_1;f_{P_1})$ and $V(I_2;f_{P_1})$ be components of $V(f_{P_1})$, where $I_1$ and $I_2$ are components of $L_{f_{P_1}}$. By Proposition \ref{nontangencycriteria}, the contact order of these two components is given by
\begin{equation*}\label{contactf}
	 	\operatorname{Cont}(V(I_1;f_{P_i}), V(I_2;f_{P_i}))=k_f.  
	 \end{equation*}
	 Similarly, for  two components of $L_{g_{Q_1}}$, say, $J_1$ and $J_2$,
	 \begin{equation*}\label{contactg}
	 	\operatorname{Cont}(V(J_1;g_{Q_1}) ,V(J_2;g_{Q_1})) = 
	 	k_g.
	 \end{equation*}
	 	Therefore, since $f$ is bi-Lipschitz $V$-equivalent to $g$ and the contact order is an invariant of bi-Lipschitz maps, we obtain $k_f=k_g$. 
\end{proof}
Following the same idea as above, we obtain:
\begin{proposition}\label{kinvarianceIII}
Let \( f \) and \( g \) be semi-radial mixed polynomials such that their links are neither empty nor metric 1-braid closure nor non-tangent Hopf-links. If \( f \) is bi-Lipschitz \( V \)-equivalent to \( g \),  and $f$ is Type-III, then $g$ must be Type-III and \( k_f = k_g \text{ or } k_f = k_g^{-1}\).
\end{proposition}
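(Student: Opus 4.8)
The plan is to reduce this statement to the argument already carried out in the proof of Proposition~\ref{kinvarianceI}. First, Proposition~\ref{proptypeI}(ii) forces $g$ to be Type-III. The coordinate swap $(u,v)\mapsto(v,u)$ is a linear isometry of $\C^{2}$ that interchanges the two descriptions of a Type-III polynomial of radial-type $(P;d)$ (it sends ``$k>1$ with $f_{P}$ not $u$-convenient'' to ``$k<1$ with $f_{P}$ not $v$-convenient'' and replaces $k$ by $k^{-1}$), so after composing with this swap on $f$ and/or on $g$ I may assume that both are Type-III with $k_{f},k_{g}>1$ and with $f_{P},g_{Q}$ not $u$-convenient; performing the swap on exactly one of the two polynomials is precisely what produces the alternative $k_{f}=k_{g}^{-1}$ in the conclusion. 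By Remark~\ref{linkofTypeIII} one has $\mathcal{P}_{\mathrm{inn}}(f)=\{P_{1}=P,(1,1)\}$ and $\mathcal{P}_{\mathrm{inn}}(g)=\{Q_{1}=Q,(1,1)\}$, and by Corollary~\ref{cor:bilipVequivsemi-radialprincipalpart} together with Corollary~\ref{Lipschitz-homogeneo} a bi-Lipschitz $V$-equivalence between $f$ and $g$ yields a bi-Lipschitz map $F\colon(V(f_{P_{1}}),0)\to(V(g_{Q_{1}}),0)$.

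Next I would isolate the ``Type-I part''. Write $V(f_{P_{1}})=\{v=0\}\cup V_{2}$ and $V(g_{Q_{1}})=\{v=0\}\cup\tilde{V}_{2}$, where $V_{2}$ (resp. $\tilde{V}_{2}$) is the union of the components $V(I;f_{P_{1}})$ over the components $I$ of $L_{1,f}$ (resp. $L_{1,g}$); these pieces are non-empty, since otherwise $V(f_{P_{1}})=\{v=0\}$ would be a metric $1$-braid closure, contrary to hypothesis. By Lemma~\ref{tangconetypeIII} every component of $V_{2}$ has tangent cone contained in $\{u=0\}$ and is homogeneously parametrized exactly as in the Type-I situation of Lemma~\ref{tangconetypeI}, while $C(\{v=0\},0)=\{v=0\}$; in particular $\operatorname{Cont}(\{v=0\},W)=1$ for each component $W$ of $V_{2}$, and likewise for $g$. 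The key claim is that $F(\{v=0\})=\{v=0\}$. If not, then since $F$ permutes connected components we would have $F^{-1}(\{v=0\})=V(I_{a};f_{P_{1}})$ for some component $I_{a}$ of $L_{1,f}$, so $V(I_{a};f_{P_{1}})$ is bi-Lipschitz equivalent to the plane $\{v=0\}$. By Theorem~\ref{cone-sampas} its tangent cone $C(V(I_{a};f_{P_{1}}),0)$ would then be bi-Lipschitz equivalent to a plane and, being contained in $\{u=0\}$, would have to equal $\{u=0\}$, forcing $\mathrm{proj}_{2}(I_{a})=\mathbb{S}^{1}$. But $F(V(I_{a};f_{P_{1}}))=\{v=0\}$ has contact order $1$ with every other component of $V(g_{Q_{1}})$, hence $V(I_{a};f_{P_{1}})$ has contact order $1$ with every other component of $V(f_{P_{1}})$, whereas Proposition~\ref{nontangencycriteria} together with $\mathrm{proj}_{2}(I_{a})=\mathbb{S}^{1}$ gives contact order $k_{f}>1$ with any $V(I';f_{P_{1}})$, $I'\neq I_{a}$ a component of $L_{1,f}$; therefore $L_{1,f}=\{I_{a}\}$ and $V(f_{P_{1}})=\{v=0\}\cup V(I_{a};f_{P_{1}})$. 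This is impossible: either $V(I_{a};f_{P_{1}})$ fails to be LNE, contradicting that it is bi-Lipschitz equivalent to a plane, or it is a metric $1$-braid closure, in which case $L_{f}$ is a non-tangent Hopf-link; both alternatives are excluded. Hence $F(\{v=0\})=\{v=0\}$, and $F$ restricts to a bi-Lipschitz map $V_{2}\to\tilde{V}_{2}$.

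Finally, the pair $(V_{2},\tilde{V}_{2})$ together with $F|_{V_{2}}$ is exactly the configuration handled in the proof of Proposition~\ref{kinvarianceI}: two ``Type-I-like'' homogeneous surfaces with $k_{f},k_{g}>1$, whose links $L_{1,f}$ and $L_{1,g}$ are neither empty nor single metric $1$-braid closures (the latter again because $L_{f},L_{g}$ are not non-tangent Hopf-links). I would then repeat that argument verbatim: in the case where some component of $L_{1,f}$ is not a metric $1$-braid closure one compares horn exponents (when the tangent cone of that component is a line) or uses the four test arcs $\gamma_{1},\dots,\gamma_{4}$ in the spirit of Fernandes~\cite{Fernandes2003} (otherwise), while in the case where all components of $L_{1,f}$ are metric $1$-braid closures one uses the contact orders $\operatorname{Cont}(V(I_{1};f_{P_{1}}),V(I_{2};f_{P_{1}}))=k_{f}$ supplied by Proposition~\ref{nontangencycriteria}. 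In every case this yields $k_{f}=k_{g}$, and combined with the Type-III conclusion for $g$ this finishes the proof. The genuinely new point, compared to Proposition~\ref{kinvarianceI}, is the second paragraph — ruling out that $F$ mixes the planar component $\{v=0\}$ with the horn-like components of $V_{2}$; I expect this to be the only real obstacle, since once the decomposition is shown to be respected, everything else transcribes directly from the Type-I case.
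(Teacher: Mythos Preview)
Your proof is correct and follows essentially the same route as the paper. The paper's own argument is very brief: after invoking Proposition~\ref{proptypeI} and the coordinate swap it simply notes that the links of $V(L_{1,f};f)$ and $V(L_{1,g};g)$ are not metric $1$-braid closures and then writes ``applying the same ideas in the proof of Proposition~\ref{kinvarianceI} to these components of $V(f)$ and $V(g)$, we obtain $k_f=k_g$.'' Your second paragraph makes explicit a step the paper glosses over, namely that the bi-Lipschitz map cannot exchange the planar component $\{v=0\}$ with one of the horn-like components of $V_2$; your tangent-cone/contact argument for this is clean and is a genuine gain in rigor over the published version. One small remark: the dichotomy ``either $V(I_a;f_{P_1})$ fails to be LNE or it is a metric $1$-braid closure'' is correct but compressed --- it would be worth one sentence observing that if $\mathrm{proj}_2|_{I_a}$ is not injective then two arcs at the same $t$-value have outer distance $\sim r^{k_f}$ while any path joining them in the surface has length $\gtrsim r$, which kills LNE.
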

\begin{proof}
	By Proposition~\ref{proptypeI}, both $f$ and $g$ are Type-III. Without loss of generality, we can suppose that $\mathcal{P}_{\mathrm{inn}}(f)=\{P_1,P_2\}$ and $\mathcal{P}_{\mathrm{inn}}(g)=\{Q_1,Q_2\}$, where $f$ and $g$ are semi-radial of radial-type $(P_1;d(P_1;f))$ and $(Q_1;d(Q_1;f))$, respectively, and we can also suppose $k_f>1$ and $k_g>1$. Since the links of \(f\) and \(g\) are neither metric 1-braid closure nor non-tangent Hopf-links, then the links of $V(L_{1,f};f)$ and $V(L_{1,g};g)$ are not metric 1-braid closures. Applying the same ideas in the proof of Proposition~\ref{kinvarianceI} to these components of $V(f)$ and $V(g)$, we obtain $k_f=k_g$.  
\end{proof}
Next, we observe that the ideas underlying the invariance (up multiplicative inverse) of the number $k_f$ in the semi-radial case can be extended to IND mixed polynomials that are $\Gamma_{\mathrm{inn}}$-nice. 
For the purpose of classification under the bi-Lipschitz $V$-equivalence, we may assume that if $\mathcal{P}_{\mathrm{inn}}(f)=\{P_1\}$, then $L_{P_1,f}=V_{P_1,f} \cap \mathbb{S}^3 \subset \C \times \C^{*}$ is compact. This assumption is justified because $\Gamma_{\mathrm{inn}}(f)$ having only one 1-face forces $f$ to be semi-radial. Consequently, Theorem~\ref{MainThm:1} guarantees a deformation of $f$ into an ambient bi-Lipschitz $V$-equivalent mixed polynomial $f_{\varepsilon_0}$, which is also semi-radial and whose set $L_{P_1,f_{\varepsilon_0}}$ is compact. 

\begin{corolario}\label{kinvarianceGammaniceIND}
	Let $f$ and $g$ be bi-Lipschitz $V$-equivalent IND mixed polynomials that are $\Gamma_{\mathrm{inn}}$-nice. If there exists a non-empty connected component of $L_{P_i,f},\ i \in I_f^{(k\neq 1)}$ (see Equation \eqref{Def:Ifk}), that is not a metric 1-braid closure, then there exists ${\rm j}(i) \in I_g^{(k\neq 1)}$ such that $k_{i,f}=k_{{\rm j}(i),g}$ or $k_{i,f}=k_{{\rm j}(i),g}^{-1}$.  
\end{corolario}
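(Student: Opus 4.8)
The plan is to reduce the statement about general $\Gamma_{\mathrm{inn}}$-nice IND mixed polynomials to the semi-radial case already handled in Propositions~\ref{kinvarianceI} and~\ref{kinvarianceIII}, by passing to the face function $f_{P_i}$ and exploiting the parametrizations of $V(L_{P_i};f)$ recorded in Remark~\ref{geralzao}. First I would fix $i \in I_f^{(k\neq 1)}$ and a non-empty connected component $J$ of $L_{P_i,f}$ that is not a metric 1-braid closure, and let $V(J;f)$ be the corresponding component of $V(f)$. The key point is that $V(J;f)$ is ambient bi-Lipschitz equivalent to a piece of a semi-radial variety: indeed, by Remark~\ref{geralzao} the set $V(J;f)\setminus\{0\}$ is parametrized by $(r^{k_i}u_{i,J}(r,\tau),r\rme^{\rmi t_{i,J}(r,\tau)})$ (if $k_i>1$) or $(R\rme^{\rmi\varphi_{i,J}(R,\tau)},R^{1/k_i}v_{i,J}(R,\tau))$ (if $k_i<1$) with subanalytic $u_{i,J},t_{i,J}$ (resp.\ $v_{i,J},\varphi_{i,J}$) converging, as $r\to 0^+$ (resp.\ $R\to 0^+$), to a parametrization of $J\subset L_{i,f}$ (resp.\ $L_{\underline i,f}$). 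By Proposition~\ref{lemma:lipeomorphismlocal} (or Lemma~\ref{Lemma:Union}), $V(J;f)$ is ambient bi-Lipschitz equivalent to $V(J;f_{P_i})$, which is a component of the radial variety $V(f_{P_i})$ with $k_{f_{P_i}}=k_{i,f}$.

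Next I would argue that the hypothesis transfers: since $\phi:(V(f),0)\to(V(g),0)$ is a bi-Lipschitz map, it carries the component $V(J;f)$ onto a union of components of $V(g)$, and (after refining, using that the contact orders between distinct link components of $L_g$ are preserved and that by Remark~\ref{geralzao} each component of $V(g)$ is associated to some $Q_j\in\mathcal{P}_{\mathrm{inn}}(g)$) onto a single component $V(J';g)$ associated with some weight vector $Q_{{\rm j}(i)}$. This component has the analogous parametrization with exponent $k_{{\rm j}(i),g}$, and it cannot be a metric 1-braid closure: otherwise its tangent cone would be a line and, since $\phi$ induces a bi-Lipschitz map between tangent cones by Theorem~\ref{cone-sampas}, the tangent cone $C(V(J;f),0)$ would also be a line, forcing $J$ to be a metric 1-braid closure as in Proposition~\ref{kinvarianceI}, a contradiction. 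Hence ${\rm j}(i)\in I_g^{(k\neq 1)}$ (we must also rule out $k_{{\rm j}(i),g}=1$; if it were $1$ then $V(J';g)$ would be a genuine cone over a curve diffeomorphic to $\mathbb{S}^1$ by Proposition~\ref{aaaa}(iii), whose tangent cone is itself, while $C(V(J;f),0)$ is contained in $\{u=0\}$ or $\{v=0\}$ by Proposition~\ref{aaaa}(i)--(ii), and comparing the tangency structure of the four test arcs as in Claim~I of Proposition~\ref{proptypeI} yields a contradiction).

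Finally, to extract the actual equality $k_{i,f}=k_{{\rm j}(i),g}$ or $k_{i,f}=k_{{\rm j}(i),g}^{-1}$, I would run verbatim the two-case dichotomy from the proof of Proposition~\ref{kinvarianceI} applied to the bi-Lipschitz map $F:(V(J;f_{P_i}),0)\to(V(J';g_{Q_{{\rm j}(i)}}),0)$ obtained by composing $\phi|_{V(J;f)}$ with the ambient bi-Lipschitz equivalences to the face-function varieties. If $J$ (or some sub-piece) has tangent cone a line, then $V(J;f_{P_i})$ is a $k_{i,f}$-horn and its image a $k_{{\rm j}(i),g}$-horn, so $k_{i,f}=k_{{\rm j}(i),g}$ by invariance of the horn exponent (Remark~\ref{Rem: NE HT condition}); the swap to $k_{{\rm j}(i),g}^{-1}$ appears precisely when $k_{i,f}>1$ is matched against a weight vector $Q_{{\rm j}(i)}$ oriented in the $v$-direction, handled by the change of variables $u\leftrightarrow v$. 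If the tangent cone is not a line, the test-arc argument with $\tau_1<\tau_2<\tau_3<\tau_4$ and $tord(\gamma_2,\gamma_4)=k_{i,f}$ forces, via the H\"older triangle inclusion, $k_{i,f}=k_{{\rm j}(i),g}$. The main obstacle I anticipate is the bookkeeping in the middle step: showing that a single non-$1$-braid component of $V(f)$ is mapped by $\phi$ onto a \emph{single} component of $V(g)$ associated with a \emph{single} weight vector of $\mathcal{P}_{\mathrm{inn}}(g)$, rather than onto a union of pieces with several different slopes; this requires carefully using Proposition~\ref{nontangencycriteria} to see that distinct pieces of $V(g)$ coming from different weight vectors have controlled (often trivial) contact with each other, so that the connectedness of $V(J;f)$ and the bi-Lipschitz preservation of contact orders pin down the image.
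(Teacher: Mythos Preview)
Your approach is essentially the paper's: pass from $V(J;f)$ to $V(J;f_{P_i})$ via Proposition~\ref{lemma:lipeomorphismlocal}, do the same on the $g$ side, and then rerun the test-arc arguments of Propositions~\ref{kinvarianceI}/\ref{kinvarianceIII} on the resulting bi-Lipschitz map between pieces of radial varieties.

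Two simplifications relative to what you wrote. First, the ``bookkeeping'' obstacle you flag at the end dissolves by connectedness alone: since $J$ is a single knot, $V(J;f)\setminus\{0\}$ is a connected surface, so its image under the homeomorphism $\phi$ is a single connected component of $V(g)\setminus\{0\}$, hence equals $V(\tilde J;g)$ for a single knot $\tilde J$ sitting in one $L_{Q_{\mathrm{j}(i)},g}$; there is no need to invoke Proposition~\ref{nontangencycriteria} here. Second, the conclusion $\mathrm{j}(i)\in I_g^{(k\neq 1)}$ is immediate once you have $k_{i,f}=k_{\mathrm{j}(i),g}^{\pm 1}$ and $k_{i,f}\neq 1$, so your separate tangent-cone detour to exclude $k_{\mathrm{j}(i),g}=1$ (and to argue that $\tilde J$ is not a metric $1$-braid closure) is unnecessary. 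In fact that detour has a small slip: the implication ``tangent cone a line $\Rightarrow$ metric $1$-braid closure'' is not valid as stated, since Definition~\ref{1-braidclosure} also requires the link to be the closure of a \emph{single-strand} braid, which the cone condition alone does not force. This does not affect the overall correctness, since the step is superfluous.
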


\begin{proof}
Consider a bi-Lipschitz map $\tilde{F}: V(f) \to V(g)$. Let $J$ be a non-empty knot of the link $L_{P_i,f},\ i \in I_f^{(k\neq 1)}$. We choose ${\rm j}(i) \in I_{g}$ such that $\tilde{F}(V(J;f))=V(\tilde{J};g)$ and $\tilde{J}$ is a sublink of $L_{Q_{{\rm j}(i),g}}$.  By Lemma~\ref{lemma:lipeomorphismlocal}, there exists a bi-Lipschitz map 
$F: V(J;f_{P_i}) \to V(\tilde{J};g_{Q_{{\rm j}(i)}})$.  Applying the same arguments of the proof of Proposition~\ref{kinvarianceIII}  to the bi-Lipschitz map $F$ and link $J$ (that is not metric 1-braid closure), we obtain $k_{i,f}=k_{{\rm j}(i),g}$ or $k_{i,f}=k_{{\rm j}(i),g}^{-1}$. Moreover,  since $i \in I_f^{(k\neq 1)}$, ${\rm j}(i) \in I_g^{(k\neq 1)}$.
\end{proof}

\subsection{The contact data of a mixed polynomial is a Lipschitz invariant}\label{subsec6.2}

Let $f$  be a mixed polynomial, let $i_{\mathrm{m}}(f):=\min (I_f)$, $i_{\mathrm{M}}(f):=\max(I_f)$ (see Equation \ref{Def:If}) and $\mathcal{P}_{\mathrm{inn}}(f)=\{P_1,\dots,P_{N_f}\}$ (see Definition \ref{def:gammainn}). We define the contact data set $\mathcal{C}(f)$ as follows:

	\begin{align}\label{contactdata}
		\mathcal{C}(f) := \bigg \{ (\kappa, m(\kappa)) \in \mathbb{Q}_{>0} \times \{1, 2\} \ | \ \kappa \in \{k_i \mid i \in I_f^{(k \geq 1)} \} \cup \{ k_i^{-1} \mid i \in I_f^{(k < 1)} \} \bigg \},
	\end{align} 

where
$$m(\kappa) = \begin{cases} 
			2 & \text{if } \kappa, \kappa^{-1} \in \{k_i \mid i \in I_f^{(k \neq 1)}\}, \\
			1 & \text{otherwise}.
		\end{cases}.$$

We also define $\mathcal{NC}(f):=(N_f,\mathcal{C}(f))$ and define the following two conditions:
\begin{equation}
	\tag{C}
	\label{Eq:propertyC}
	\begin{aligned}
	 I_f^{(k>1)}\neq \emptyset \Rightarrow 
		\begin{cases}
				 \text{There is a sublink } I \subset L_{i},\ i\in I_f^{(k > 1)}, \quad \text{satisfying} \quad  \mathrm{proj}_2(I)=\mathbb{S}^1, \text{ and }  \\
				L_{P_{i_{\mathrm{m}}(f)}}\ \text{ is not a metric 1-braid closure.}
		\end{cases}	
	\end{aligned}
\end{equation}

\begin{equation}
	\tag{D}
	\label{Eq:propertyD}
	\begin{aligned}
		I_f^{(k<1)}\neq \emptyset \Rightarrow 
		\begin{cases}
			 \text{There is a sublink } I \subset L_{\underline{i}},\ i\in I_f^{(k < 1)},\quad  \text{satisfying} \quad  \mathrm{proj}_1(I)=\mathbb{S}^1, \text{ and }   \\
			L_{P_{i_{\mathrm{M}}(f)}}\ \text{ is not a metric 1-braid closure.}
		\end{cases}	
	\end{aligned}
\end{equation}
Finally, let \( f \) and \( g \) be IND mixed polynomials that are bi-Lipschitz $V$-equivalent via a bi-Lipschitz map \( F \). Suppose that $f$ and $g$ are $\Gamma_{\mathrm{inn}}$-nice and satisfy \eqref{Eq:propertyC} and \eqref{Eq:propertyD}. Here, $\mathcal{P}_{\mathrm{inn}}(g)=\{Q_1,\dots,Q_{N_g}\}$.
\begin{lemma}\label{lemma:1MainThm:1.3}
	Let $f$ and $g$ be mixed polynomials as above. Then,
	\begin{align}
	&F(\cup_{i \in I_f^{(k=1)}} V(L_{P_i,f};f))=\cup_{i \in I_g^{(k=1)}} V(L_{Q_i,g};g), \nonumber\\
	&F(\cup_{i \in I_f^{(k>1)}} V(L_{P_i,f};f))=\cup_{i \in I_g^{(k>1)}} V(L_{Q_i,g};g),\label{Equation1lemma43}\\
	&F(\cup_{i \in I_f^{(k<1)}} V(L_{P_i,f};f))=\cup_{i \in I_g^{(k<1)}} V(L_{Q_i,g};g), \nonumber \\
	&\ \ \ \ \ \ \ \ \ \ \ \ \ \ \ \ \ \ \text{ or }\nonumber  \\
		&F(\cup_{i \in I_f^{(k=1)}} V(L_{P_i,f};f))=\cup_{i \in I_g^{(k=1)}} V(L_{Q_i,g};g),\nonumber \\
 &	F(\cup_{i \in I_f^{(k>1)}} V(L_{P_i,f};f))=\cup_{i \in I_g^{(k<1)}} V(L_{Q_i,g};g),\label{Equation2lemma43}\\
 	 &F(\cup_{i \in I_f^{(k<1)}} V(L_{P_i,f};f))=\cup_{i \in I_g^{(k>1)}} V(L_{Q_i,g};g).\nonumber 
 	\end{align}
\end{lemma}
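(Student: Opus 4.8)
The plan is to exploit the invariance of contact orders under bi-Lipschitz maps together with the contact-order computations of Proposition~\ref{nontangencycriteria}, which sharply separate the three families of components according to whether $k_i>1$, $k_i<1$, or $k_i=1$. First I would record the basic dichotomy from Proposition~\ref{nontangencycriteria}: components coming from distinct "sides" are \emph{non-tangent}, i.e. $\operatorname{Cont}=1$, while two components with $k_i,k_j>1$ (resp. $<1$) whose $\mathrm{proj}_2$ (resp. $\mathrm{proj}_1$) images overlap have $\operatorname{Cont}>1$; moreover conditions \eqref{Eq:propertyC} and \eqref{Eq:propertyD}, together with the fact that $F$ maps arcs to arcs preserving tangency order (the remark after Theorem~\ref{B.F.METRIC-THEORY}-type statement, i.e. the $\mathrm{tord}$-invariance recorded in Subsection~\ref{Subsec:Lipschitz-geometry}), guarantee the existence of at least one component on the $k>1$ side (if $I_f^{(k>1)}\neq\emptyset$) whose projection is all of $\mathbb{S}^1$, and that the extreme index $i_{\mathrm m}(f)$ carries a link that is not a metric $1$-braid closure. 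This last point is what will let me pin down which way the $k>1$ side goes under $F$.

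Next I would consider the subset $\mathcal{A}_f := \bigcup_{i\in I_f^{(k>1)}} V(L_{P_i,f};f)$, $\mathcal{B}_f := \bigcup_{i\in I_f^{(k<1)}} V(L_{P_i,f};f)$, $\mathcal{E}_f := \bigcup_{i\in I_f^{(k=1)}} V(L_{P_i,f};f)$, and similarly $\mathcal{A}_g,\mathcal{B}_g,\mathcal{E}_g$. The key structural observation is: a component $V(J;f)$ with $J\subset L_{i,f}$ and $k_i>1$ has the property that there is \emph{some} other component (namely, by \eqref{Eq:propertyC}, the one with $\mathrm{proj}_2=\mathbb{S}^1$) having contact order $>1$ with it, via Proposition~\ref{nontangencycriteria}(iii); the same holds within the $k<1$ family using \eqref{Eq:propertyD} and Proposition~\ref{nontangencycriteria}(iv); but a component from $\mathcal{E}_f$ has contact order exactly $1$ with every other component of $V(f)$ by Proposition~\ref{nontangencycriteria}(i)--(ii). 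Since $F$ preserves contact orders, $F$ must send $\mathcal{E}_f$ into the analogous "universally non-tangent" locus of $V(g)$; I would argue this locus is exactly $\mathcal{E}_g$ (again by Proposition~\ref{nontangencycriteria} applied to $g$, noting that any component with $k\neq 1$ in $g$ has, by \eqref{Eq:propertyC}/\eqref{Eq:propertyD} for $g$, a partner of contact $>1$). This forces $F(\mathcal{E}_f)=\mathcal{E}_g$. It then remains to analyze the pair $\{\mathcal{A}_f,\mathcal{B}_f\}$, which $F$ must map bijectively to $\{\mathcal{A}_g,\mathcal{B}_g\}$ (their union is the complement of $\mathcal{E}_f$ inside $\bigcup_{i\in I_f}V(L_{P_i,f};f)$, and contact orders $>1$ only occur \emph{within} $\mathcal{A}$ or \emph{within} $\mathcal{B}$, never across, so $F$ cannot split either piece between $\mathcal{A}_g$ and $\mathcal{B}_g$). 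This gives the two alternatives \eqref{Equation1lemma43} and \eqref{Equation2lemma43}.

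For the "cannot split" claim I would argue as follows: if $V(J_1;f),V(J_2;f)\subset \mathcal{A}_f$ have $\operatorname{Cont}>1$ (which happens for a suitable choice by \eqref{Eq:propertyC}, taking $J_1$ the overlap-everything component), but $F(V(J_1;f))\subset\mathcal{A}_g$ while $F(V(J_2;f))\subset\mathcal{B}_g$, then Proposition~\ref{nontangencycriteria}(i) gives $\operatorname{Cont}(F(V(J_1;f)),F(V(J_2;f)))=1$, contradicting invariance of contact order. Running this connectivity argument (the "overlap component" connects, in the $\operatorname{Cont}>1$ sense, to everything in $\mathcal{A}_f$) shows $F(\mathcal{A}_f)$ lies entirely in one of $\mathcal{A}_g,\mathcal{B}_g$; bijectivity of $F$ and the symmetric argument for $\mathcal{B}_f$, plus the already-established $F(\mathcal{E}_f)=\mathcal{E}_g$, then close the proof. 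I expect the main obstacle to be the bookkeeping around components $J$ of $L_{i,f}$ with $i\notin I_f$ (i.e. the $\Gamma_{\mathrm{inn}}$-true vs. not distinction): one must be careful that $F$ of a component indexed in $I_f$ lands in a component indexed in $I_g$ and not in some "extra" component of $V(g)$ not recorded by $L_{Q_j,g}$; this requires using that $F$ restricted to the relevant union is still a bi-Lipschitz homeomorphism onto $\bigcup_{i\in I_g}V(L_{Q_i,g};g)$, which in turn should follow from the nested-link description in Remark~\ref{geralzao} and the identification of the $\Gamma_{\mathrm{inn}}$-true components as precisely those carrying non-empty links in Definition~\ref{Def:truepolynomials}. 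The secondary subtlety is verifying that within $\mathcal{E}_f$ the contact order with \emph{everything} is $1$ even when $N_f$ is large and the $k=1$ faces have many components — but this is exactly Proposition~\ref{nontangencycriteria}(i),(ii), so it is immediate once the pieces are correctly labeled.
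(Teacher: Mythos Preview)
Your overall strategy---using invariance of contact orders under $F$ together with Proposition~\ref{nontangencycriteria} to separate the three families $\mathcal{A}_f$, $\mathcal{B}_f$, $\mathcal{E}_f$---is exactly the one the paper uses, and your ``cannot split'' connectivity argument via the $\mathrm{proj}_2=\mathbb{S}^1$ component is correct and matches the paper. However, there is a genuine gap in your treatment of $\mathcal{E}_f$.

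Your claim that ``any component with $k\neq 1$ in $g$ has, by \eqref{Eq:propertyC}/\eqref{Eq:propertyD} for $g$, a partner of contact $>1$'' fails when $\mathcal{A}_g$ (or $\mathcal{B}_g$) consists of a \emph{single} knot component $V(J';g)$. In that situation $J'$ itself is the $\mathrm{proj}_2=\mathbb{S}^1$ sublink, there is no ``other'' component inside $\mathcal{A}_g$ to pair it with, and by Proposition~\ref{nontangencycriteria}(i) it has contact $1$ with everything in $\mathcal{B}_g\cup\mathcal{E}_g$. Thus $V(J';g)$ lies in your ``universally non-tangent locus'' of $V(g)$ without being in $\mathcal{E}_g$, and your characterisation of $\mathcal{E}_g$ breaks down. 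The same obstruction blocks your $\mathcal{A}_f\to\{\mathcal{A}_g,\mathcal{B}_g\}$ argument when $\mathcal{A}_f$ is a singleton: there is nothing to run the contact-connectivity through.

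The paper circumvents this by reversing your order of operations: it first treats $\mathcal{A}_f$ (then $\mathcal{B}_f$), and obtains $\mathcal{E}_f\to\mathcal{E}_g$ only at the end by complementarity. For the anchoring step it crucially invokes Corollary~\ref{kinvarianceGammaniceIND}: since \eqref{Eq:propertyC} guarantees $L_{P_{i_{\mathrm m}(f)}}$ is not a metric $1$-braid closure, either one of its components is itself not a $1$-braid closure (and then Corollary~\ref{kinvarianceGammaniceIND}, which uses \emph{internal} test arcs within that single component, forces its image to have $k\neq 1$), or there are at least two components (and then your contact argument applies). Your proposal never appeals to this internal-arc mechanism, which is precisely what covers the single-component case; adding it, and postponing the $\mathcal{E}$ step to the end, repairs the argument.
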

\begin{proof} 
	Assume $I_f^{(k>1)}\neq \emptyset$. By \eqref{Eq:propertyC}, the link $L_{P_{i_{\mathrm{m}}(f)},f}$ is not a metric 1-braid closure. Consequently, there exists a sublink $I$ of $L_{P_{i_{\mathrm{m}}(f)},f}$ such that either  
\begin{equation}\label{eq:1lemmazones}
	F(V(I;f))  \subset \cup_{i \in I_g^{(k>1)}} V(L_{Q_i,g};g) 
	\end{equation}
or 
\begin{equation}\label{eq:2lemmazones}
 	F(V(I;f))  \subset \cup_{i \in I_g^{(k<1)}} V(L_{Q_i,g};g).
 \end{equation} 
 To see this, notice that if $L_{P_{i_{\mathrm{m}}(f)},f}$ contains a component that is not a metric 1-braid closure, Corollary~\ref{kinvarianceGammaniceIND} immediately provides Equation~\eqref{eq:1lemmazones} or \eqref{eq:2lemmazones} for that component. Otherwise, if  $L_{P_{i_{\mathrm{m}}(f)},f}$ consists entirely of metric 1-braid closures (it must to have at least two), Proposition~\ref{nontangencycriteria} implies that their respective components in $V(L_{P_{i_{\mathrm{m}}(f)},f};f)$ have pairwise contact order greater than one. Applying Proposition~\ref{nontangencycriteria} again, we obtain Equation~\eqref{eq:1lemmazones} or \eqref{eq:2lemmazones} for $I$ being the whole $L_{P_{i_{\mathrm{m}}(f)},f}$. 
  
 Now, suppose that Equation \eqref{eq:2lemmazones} holds. By Condition \eqref{Eq:propertyC} and Proposition~\ref{nontangencycriteria}, there exists a knot $J \subset L_{P_i,f}$ for some  $i \in I_f^{(k>1)}$, such that $V(J;f)$ has contact order greater than one to any component of $\cup_{i \in I_f^{(k>1)}} V(L_{P_i,f};f)$. Thus, 
 \begin{equation}\label{eq:3lemmazones}
 	 \operatorname{Cont} (F(V(I;f)),F(V(J;f)))>1.
\end{equation}
 By Proposition~\ref{nontangencycriteria}, and using Equations \eqref{eq:2lemmazones} and \eqref{eq:3lemmazones}, we obtain
\begin{equation}\label{eq:4lemmazones}
		F(V(J;f))  \subset \cup_{i \in I_g^{(k<1)}} V(L_{Q_i,g};g). 
\end{equation}
 Therefore, applying similar arguments, using Equation \eqref{eq:4lemmazones} and that \(\operatorname{Cont}(V(J;f), V(K;f))>1 \) for  any  $K \subset \cup_{i \in I_f^{(k>1)}} L_{P_i,f}$, we conclude, by Proposition~\ref{nontangencycriteria}, that 
 \begin{equation}\label{eq:5lemmazones} 
  F(\cup_{i \in I_f^{(k>1)}} V(L_{P_i,f};f))  \subset \cup_{i \in I_g^{(k<1)}} V(L_{Q_i,g};g).
\end{equation}

Following similar reasoning with $F^{-1}$ and Condition \eqref{Eq:propertyD} of $g$, along with Equation~\eqref{eq:5lemmazones}, we
obtain 
\begin{equation}\label{eq:6lemmazones} 
	\begin{aligned}
		F(\cup_{i \in I_f^{(k>1)}} V(L_{P_i,f};f))  = \cup_{i \in I_g^{(k<1)}} V(L_{Q_i,g};g).
	\end{aligned}
\end{equation}
By applying the same approach, we obtain
\begin{equation}\label{eq:7lemmazones} 
	\begin{aligned}
		F(\cup_{i \in I_f^{(k<1)}} V(L_{P_i,f};f))  = \cup_{i \in I_g^{(k>1)}} V(L_{Q_i,g};g).
	\end{aligned}
\end{equation}
 Thus, Equation~\eqref{eq:6lemmazones} and \eqref{eq:7lemmazones} imply Equation~\eqref{Equation2lemma43}.
 
In the case where Equation \eqref{eq:1lemmazones} holds, we proceed in a similar manner and obtain Equation~\eqref{Equation1lemma43}.
  \end{proof}
\begin{lemma}\label{Lemma:2MainThm:1.3}
	Let $f$ and $g$ be as Lemma~\ref{lemma:1MainThm:1.3}. Then,  there is $\chi \in \{-1,1\}$ such that, for any $i \in I_f$, there exists a unique $\mathrm{j}(i) \in I_g$ such that
	\begin{equation}\label{eq:lemma2necessaaryconditions}
	 F(V(L_{P_i,f};f))=V(L_{Q_{\mathrm{j}(i)},g};g),  \text{ with }k_{i,f}= k_{{\rm j}(i),g}^{\chi}.
	 \end{equation} 
\end{lemma}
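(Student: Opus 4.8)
The plan is to build the bijection $i \mapsto \mathrm{j}(i)$ by tracking how the bi-Lipschitz map $F$ must respect the contact order stratification of $V(f)$ and $V(g)$ that was already established in Lemma~\ref{lemma:1MainThm:1.3}. First I would fix the sign $\chi$: by Lemma~\ref{lemma:1MainThm:1.3} we are in one of the two alternatives, either $F$ sends $(k>1)$-pieces to $(k>1)$-pieces and $(k<1)$-pieces to $(k<1)$-pieces (set $\chi=1$), or it swaps them (set $\chi=-1$). Since $F$ also sends $(k=1)$-pieces to $(k=1)$-pieces in both cases, this $\chi$ is globally well-defined, and it suffices to prove the statement for a single value of $\chi$; the other follows by composing with the coordinate swap $u\leftrightarrow v$, which replaces $k_{i}$ by $k_{i}^{-1}$ and interchanges the roles of $\Gamma_{\mathrm{inn}}$-true indices.

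Next I would define $\mathrm{j}(i)$ for a fixed $i\in I_f$. Take any non-empty sublink $J\subset L_{P_i,f}$ (non-emptiness is exactly the $\Gamma_{\mathrm{inn}}$-true condition, $i\in I_f$). Then $F(V(J;f))$ lies in some component of $V(g)$, which by Theorem~\ref{thm:link-triviality1} and Remark~\ref{geralzao} lies in $V(L_{Q_m,g};g)$ for some $m$; set $\mathrm{j}(i):=m$. The key points to check are: (a) $\mathrm{j}(i)$ does not depend on the choice of $J$ — this follows because any two sublinks of $L_{P_i,f}$ lie in $V(L_{P_i,f};f)$, which is a single ``contact zone'' in the sense that Proposition~\ref{nontangencycriteria}(iii)--(iv) together with conditions \eqref{Eq:propertyA}, \eqref{Eq:propertyB} give that the relevant components have contact order $>1$ among themselves, and Proposition~\ref{nontangencycriteria}(i)--(ii) forces contact order $1$ with pieces from other zones; since $F$ preserves contact orders, $F(V(L_{P_i,f};f))$ must be contained in a union of $V(L_{Q_m,g};g)$'s that are mutually in contact $>1$, and one shows this union reduces to a single one. (b) The resulting map $\mathrm{j}$ is injective — apply the same argument to $F^{-1}$ to get a map $\mathrm{j}':I_g\to I_f$, and verify $\mathrm{j}'\circ\mathrm{j}=\mathrm{id}$ by the zone-preservation. (c) $F(V(L_{P_i,f};f))=V(L_{Q_{\mathrm{j}(i)},g};g)$ exactly (not just $\subseteq$) — this is forced by surjectivity of $F$ onto $V(g)$ combined with the injectivity/bijectivity of $\mathrm{j}$. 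Finally, the equality $k_{i,f}=k_{\mathrm{j}(i),g}^{\chi}$ for $i\in I_f^{(k\neq 1)}$ comes directly from Corollary~\ref{kinvarianceGammaniceIND} (or a direct rerun of the argument in Proposition~\ref{kinvarianceIII}, using that $L_{P_i,f}$ has a component that is not a metric 1-braid closure, which is guaranteed by \eqref{Eq:propertyC}/\eqref{Eq:propertyD} at least for the extremal indices, and for intermediate indices by the contact-order computations), while for $i\in I_f^{(k=1)}$ we have $k_{i,f}=1=k_{\mathrm{j}(i),g}$ since $\mathrm{j}$ preserves the $(k=1)$-zone.

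The hard part will be step (a): carefully showing that the image $F(V(L_{P_i,f};f))$ cannot ``spread'' across several distinct $V(L_{Q_m,g};g)$ pieces. The subtlety is that conditions \eqref{Eq:propertyC} and \eqref{Eq:propertyD} only directly control the \emph{extremal} indices $i_{\mathrm{m}}(f)$ and $i_{\mathrm{M}}(f)$, whereas for a general intermediate $i\in I_f^{(k>1)}$ one must bootstrap: one uses that $V(L_{P_i,f};f)$ has contact order $>1$ with $V(L_{P_{i_{\mathrm{m}}(f)},f};f)$ (by Proposition~\ref{nontangencycriteria}(iii), since \eqref{Eq:propertyC} provides a sublink of $L_{i_{\mathrm{m}}(f)}$ whose second projection is all of $\mathbb{S}^1$, hence meets the projection of any component of $L_i$, forcing condition \eqref{Eq:propertyA}-type contact), so the image stays in the same $(k<1)$ or $(k>1)$ union as $F(V(L_{P_{i_{\mathrm{m}}(f)},f};f))$, and then one localizes within that union using the precise contact values $k_j$ versus $k_i$ to separate the individual $Q_m$-pieces. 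This is essentially the same bootstrapping already performed inside the proof of Lemma~\ref{lemma:1MainThm:1.3}, so I expect to reuse that machinery almost verbatim, adapting it from ``which union'' to ``which single piece.''
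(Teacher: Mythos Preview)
Your overall strategy matches the paper's: fix $\chi$ via Lemma~\ref{lemma:1MainThm:1.3}, then pin down the bijection using contact-order computations and Corollary~\ref{kinvarianceGammaniceIND}, treating the $(k=1)$, $(k>1)$ and $(k<1)$ blocks separately. Two points, however, need correction.

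First, you misread condition \eqref{Eq:propertyC}: the sublink with $\mathrm{proj}_2=\mathbb{S}^1$ lives in \emph{some} $L_i$ with $i\in I_f^{(k>1)}$, not necessarily in $L_{i_{\mathrm{m}}(f)}$; the only thing \eqref{Eq:propertyC} guarantees at the extremal index is that $L_{P_{i_{\mathrm{m}}(f)}}$ is not a metric 1-braid closure. (Your appeal to \eqref{Eq:propertyA}--\eqref{Eq:propertyB} in step~(a) is also superfluous: for two components of the \emph{same} $L_{P_i}$, Proposition~\ref{nontangencycriteria}(iii)--(iv) already gives the exact contact $k_i$ from the $i=j$ clause, with no extra hypothesis.)

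Second, and more substantively, the ``localisation'' you defer to the end is \emph{not} in Lemma~\ref{lemma:1MainThm:1.3}; it is the genuinely new content here, and it does not come from reusing that machinery verbatim. The paper first anchors the extremal index: from the second clause of \eqref{Eq:propertyC} one extracts a component $I\subset L_{P_{i_{\mathrm{m}}(f)}}$ with $F(V(I;f))\subset V(L_{Q_j,g};g)$ and $k_{i_{\mathrm{m}}(f),f}=k_{j,g}$ (via Corollary~\ref{kinvarianceGammaniceIND} if some component is not a 1-braid, or via the contact of two 1-braid components otherwise), and then runs a separate contradiction argument to force $j=i_{\mathrm{m}}(g)$: assuming $j>i_{\mathrm{m}}(g)$, one pulls back two components $K_1,K_2$ of $L_{Q_{i_{\mathrm{m}}(g)}}$ through $F^{-1}$, observes they must land as 1-braid closures inside $V(L_{P_{i_{\mathrm{m}}(f)},f};f)$, and computes their mutual contact on both sides to get $k_{i_{\mathrm{m}}(f),f}=k_{i_{\mathrm{m}}(g),g}$, contradicting $k_{j,g}<k_{i_{\mathrm{m}}(g),g}$. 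Only once this anchor is set does the paper treat a general $J\subset L_{i,f}$: if $J$ is not a 1-braid, apply Corollary~\ref{kinvarianceGammaniceIND}; if $J$ \emph{is} a 1-braid (hence $\mathrm{proj}_2(J)=\mathbb{S}^1$, which is what actually furnishes the ``\eqref{Eq:propertyA}-type'' exact contact you want), compute $\operatorname{Cont}(V(I;f),V(J;f))=k_{i,f}$ and equate it with the image-side contact. Your plan should include this extremal-to-extremal step explicitly.
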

\begin{proof}
	By Lemma~\ref{lemma:1MainThm:1.3}, if $I_f^{(k=1)}\neq \emptyset$, then  $I_g^{(k=1)}\neq \emptyset$ and 
	\begin{equation}\label{eq:1lemmabijection} 
		F(V(L_{P_i,f};f))=V(L_{Q_{\mathrm{j}(i)},g};g),\ i \in I_f^{(k=1)} \text{ and } \mathrm{j}(i)\in I_g^{(k=1)}.
	\end{equation}
We assume that Equation~\eqref{Equation1lemma43} holds ($\chi=1$), as the proof when Equation~\eqref{Equation2lemma43} holds ($\chi=-1$) is deduced analogously. By Condition \eqref{Eq:propertyC}, $L_{P_{i_{\mathrm{m}}(f)},f}$ is not a metric 1-braid closure, and thus there exists a component $I$ of $L_{P_{i_{\mathrm{m}}(f)},f}$ and $j \in I_{g}^{k>1}$ satisfying 
\begin{equation}\label{eq:3lemmabijection}
	F(V(I;f)) \subset   V(L_{Q_j,g};g), \text{ with } k_{i_{\mathrm{m}}(f),f}=k_{j,g}.
\end{equation}
To see this, notice that if $L_{P_{i_{\mathrm{m}}(f)},f}$ contains a component that is not a metric 1-braid closure, Corollary~\ref{kinvarianceGammaniceIND} provides Equation~\eqref{eq:3lemmabijection} for that component. Otherwise, if  $L_{P_{i_{\mathrm{m}}(f)},f}$ consists entirely of metric 1-braid closures, we select, by Condition \eqref{Eq:propertyC}, two different components $J_1$ and $J_2$. By Corollary~\ref{kinvarianceGammaniceIND} and Equation~\eqref{Equation1lemma43}, the links of $F(V(J_1;f))$ and $F(V(J_2;f))$ are also metric 1-braid closure. Therefore, by Proposition~\ref{nontangencycriteria},
\begin{equation*} 
k_{i_{\mathrm{m}}(f),f}=\operatorname{Cont} (V(J_1;f),V(J_2;f))=\operatorname{Cont} (F(V(J_1;f)),F(V(J_2;f)))=k_{j,g},
\end{equation*}
and either $F(V(J_1;f))$ or $F(V(J_1;f))$ is contained in $V(L_{Q_j,g};g)$.
This yields Equation~\eqref{eq:3lemmabijection} for one of these two components.

Now, suppose that the component $I$ in Equation~\eqref{eq:3lemmabijection} satisfies $j>i_{\mathrm{m}}(g)$. Then, we find that $k_{i_{\mathrm{m}}(f),f}=k_{j,g}<k_{i_{\mathrm{m}}(g),g}$. Let $K_1$ be a component of  $L_{Q_{i_{\mathrm{m}}(g)},g}$, so 
$F^{-1}(V(K_1;g)) \subset V(L_{P_i,f};f)$, for some $i \in I_{f}^{k>1}$ with $k_i \leq k_{i_{\mathrm{m}}(f),f}$.  Using Equation~\eqref{Equation1lemma43} and these inequalities, we find  $1<k_i < k_{i_{\mathrm{m}}(g),g}$, which, by Corollary~\ref{kinvarianceGammaniceIND}, implies that both $K_1$ and the link of $F^{-1}(V(K_1;g))$ are metric 1-braid closures. Applying Proposition~\ref{nontangencycriteria},
\begin{equation*}
k_i=\operatorname{Cont} (V(I;f),F^{-1}(V(K_1;g)))=\operatorname{Cont} (F(V(I;f)),V(K_1;g))=k_{i_{m}(f),f}.
\end{equation*}
Since $L_{Q_{i_{\mathrm{m}}(g)},g}$ is not a metric 1-braid closure, there exists a second component $K_2$ of $L_{Q_{i_{\mathrm{m}}(g)},g}$ with the same properties as $K_1$. Thus, both $K_2$ and the link of $F^{-1}(V(K_2;g)) \subset V(L_{i_{\mathrm{m}}(f),f};f)$ are metric 1-braid closures. Calculating the respective contacts, we obtain  
	\begin{equation*}
k_{i_{m}(f),f}=\operatorname{Cont} (F^{-1}(V(K_1;g)),F^{-1}(V(K_2;g)))=
\operatorname{Cont} (V(K_1;g),V(K_2;g))=k_{i_{m}(g),g},
\end{equation*}
which leads to a contradiction. Therefore, the component $I$ of $L_{P_{i_{\mathrm{m}}(f)},f}$ satisfies Equation~\eqref{eq:3lemmabijection} with $j=i_{\mathrm{m}}(g)$ and $k_{i_{\mathrm{m}}(f),f}=k_{i_{\mathrm{m}}(g),g}$.
 
Let $J$ be a component of $L_{i,f}$ with $i\in I_f^{(k>1)}$, thus   $F(V(J;f)) \subset V(L_{Q_{j},g};g)$ for some $j\in I_g^{(k>1)}$. If $J$ is a metric 1-braid closure, then by Corollary~\ref{kinvarianceGammaniceIND}, $F(V(J;f))$ is also a metric 1-braid closure. By Proposition~\ref{nontangencycriteria}, we obtain
\begin{equation*}
k_{i,f}=\operatorname{Cont} (V(I;f),V(J;f))=\operatorname{Cont} (F(V(I;f)),F(V(J;f)))= k_{j,g}.
\end{equation*}
If $J$ is not a metric 1-braid closure, then  by Corollary~\ref{kinvarianceGammaniceIND}, we also find $k_{i,f}=k_{j,g}$. Therefore, for any $i \in I_f^{(k>1)}$,
\[F(V(L_{P_i,f};f))\subseteq  V(L_{Q_{j},g};g) \text{ with }k_{i,f}= k_{j,g}. \]Using the same arguments above with $F^{-1}$, we establish Equation \eqref{eq:lemma2necessaaryconditions} for $i \in I_f^{(k>1)}$ and $j \in I_g^{(k>1)}$. Applying Condition \eqref{Eq:propertyD} of $f$ and Condition \eqref{Eq:propertyC} of $g$, we obtain Equation \eqref{eq:lemma2necessaaryconditions} for $i \in I_f^{(k<1)}$ and $j\in I_g^{(k<1)}$. Therefore, considering also Equation~\eqref{eq:1lemmabijection}, the result follows

\end{proof}

\begin{theorem}\label{MainThm:1.4}
	Let $f$ and $g$ be IND mixed polynomials that are $\Gamma_{\mathrm{inn}}$-nice and satisfy Conditions \eqref{Eq:propertyC} and \eqref{Eq:propertyD}. If $f$ is bi-Lipschitz $V$-equivalent to $g$, then $\mathcal{C}(f)=\mathcal{C}(g).$ Furthermore, if additionally $f$ and $g$ are $\Gamma_{\mathrm{inn}}$-true, then  $\mathcal{NC}(f)=\mathcal{NC}(g).$ 
\end{theorem}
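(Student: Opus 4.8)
The plan is to deduce Theorem~\ref{MainThm:1.4} almost immediately from the two lemmas (Lemma~\ref{lemma:1MainThm:1.3} and Lemma~\ref{Lemma:2MainThm:1.3}) that precede it, together with Proposition~\ref{nontangencycriteria} and Corollary~\ref{kinvarianceGammaniceIND}. First I would fix a bi-Lipschitz map $F\colon (V(f),0)\to (V(g),0)$ and invoke Lemma~\ref{Lemma:2MainThm:1.3}: there is $\chi\in\{-1,1\}$ and, for each $i\in I_f$, a unique index $\mathrm{j}(i)\in I_g$ with $F(V(L_{P_i,f};f))=V(L_{Q_{\mathrm{j}(i)},g};g)$ and $k_{i,f}=k_{\mathrm{j}(i),g}^{\chi}$. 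I would then observe that $i\mapsto \mathrm{j}(i)$ is a bijection from $I_f$ onto $I_g$: it is injective by the uniqueness clause of the lemma, and surjective by applying the same lemma to $F^{-1}$ (which gives an inverse assignment $I_g\to I_f$ compatible with the one for $F$). This bijection carries $I_f^{(k=1)}$ onto $I_g^{(k=1)}$ by Lemma~\ref{lemma:1MainThm:1.3}, and it carries $I_f^{(k>1)}\cup I_f^{(k<1)}$ onto $I_g^{(k>1)}\cup I_g^{(k<1)}$, respecting the two $k$-regions if $\chi=1$ and swapping them if $\chi=-1$.

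Next I would translate this into a statement about $\mathcal{C}(f)$. Recall $\mathcal{C}(f)\subseteq \mathbb{Q}_{>0}\times\{1,2\}$ is built from the multiset $\{k_{i,f}\mid i\in I_f^{(k\geq 1)}\}\cup\{k_{i,f}^{-1}\mid i\in I_f^{(k<1)}\}$, with the second coordinate $m(\kappa)$ equal to $2$ precisely when both $\kappa$ and $\kappa^{-1}$ occur as slopes $k_{i,f}$ with $i\in I_f^{(k\neq 1)}$. If $\chi=1$ then $\mathrm{j}$ maps $I_f^{(k\geq1)}$ bijectively onto $I_g^{(k\geq1)}$ with $k_{i,f}=k_{\mathrm{j}(i),g}$, and $I_f^{(k<1)}$ bijectively onto $I_g^{(k<1)}$ with $k_{i,f}^{-1}=k_{\mathrm{j}(i),g}^{-1}$; hence the defining multisets of $\mathcal{C}(f)$ and $\mathcal{C}(g)$ coincide, and since the condition "$\kappa$ and $\kappa^{-1}$ both appear with index in $I^{(k\neq1)}$" is preserved under this matching, $m_f(\kappa)=m_g(\kappa)$ for every $\kappa$, so $\mathcal{C}(f)=\mathcal{C}(g)$. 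If $\chi=-1$, then $\mathrm{j}$ interchanges $I_f^{(k>1)}$ with $I_g^{(k<1)}$ and $I_f^{(k<1)}$ with $I_g^{(k>1)}$, and $k_{i,f}=k_{\mathrm{j}(i),g}^{-1}$; the key point is that the way $\mathcal{C}$ is defined makes it symmetric under replacing each slope by its inverse while simultaneously swapping the two regions — a value $\kappa\geq 1$ contributed by $i\in I_f^{(k\geq1)}$ is matched to $k_{\mathrm{j}(i),g}^{-1}=\kappa$, now with $\mathrm{j}(i)\in I_g^{(k<1)}$ contributing $k_{\mathrm{j}(i),g}^{-1}=\kappa$, and similarly in reverse — so again the multisets agree and the multiplicity labels $m(\kappa)$ agree, giving $\mathcal{C}(f)=\mathcal{C}(g)$. (The case $\chi=-1$ with $k_{i,f}=1$ is harmless since $1^{-1}=1$.) I should double-check the boundary bookkeeping when some $\kappa=1$, but this is a finite case check, not a real obstacle.

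Finally, for the $\Gamma_{\mathrm{inn}}$-true part: when $f$ and $g$ are $\Gamma_{\mathrm{inn}}$-true we have $I_f=\{1,\dots,N_f\}$ and $I_g=\{1,\dots,N_g\}$, so the bijection $\mathrm{j}\colon I_f\to I_g$ forces $N_f=N_g$; combined with $\mathcal{C}(f)=\mathcal{C}(g)$ this gives $\mathcal{NC}(f)=(N_f,\mathcal{C}(f))=(N_g,\mathcal{C}(g))=\mathcal{NC}(g)$.

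The main obstacle I anticipate is not conceptual but bookkeeping: making sure the second-coordinate label $m(\kappa)\in\{1,2\}$ really is invariant, which requires care because $m(\kappa)=2$ is a condition on the multiset of slopes $\{k_{i}\mid i\in I^{(k\neq1)}\}$ only — i.e. it "sees" both the $k>1$ and the $k<1$ indices but ignores the $k=1$ ones — and under $\chi=-1$ this multiset of slopes gets replaced by the multiset of its reciprocals, so one must verify that "$\kappa$ and $\kappa^{-1}$ both appear" is preserved under $\kappa\mapsto\kappa^{-1}$, which it visibly is. The other point needing a word of justification is the surjectivity of $\mathrm{j}$, for which I would spell out that applying Lemma~\ref{Lemma:2MainThm:1.3} to $F^{-1}$ (legitimate since $f,g$ satisfy \eqref{Eq:propertyC}, \eqref{Eq:propertyD} symmetrically) produces an assignment $\mathrm{j}'\colon I_g\to I_f$ with $F^{-1}(V(L_{Q_{\ell},g};g))=V(L_{P_{\mathrm{j}'(\ell)},f};f)$, and uniqueness forces $\mathrm{j}'\circ\mathrm{j}=\mathrm{id}_{I_f}$ and $\mathrm{j}\circ\mathrm{j}'=\mathrm{id}_{I_g}$.
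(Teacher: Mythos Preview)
Your proposal is correct and follows the same approach as the paper: deduce the bijection $\mathrm{j}\colon I_f\to I_g$ with $k_{i,f}=k_{\mathrm{j}(i),g}^{\chi}$ from Lemma~\ref{Lemma:2MainThm:1.3}, read off $\mathcal{C}(f)=\mathcal{C}(g)$ from it, and in the $\Gamma_{\mathrm{inn}}$-true case use $|I_f|=N_f$, $|I_g|=N_g$ to get $N_f=N_g$. The paper's proof is in fact much terser than yours --- it simply states that the bijection exists and that the result follows --- so your careful treatment of the surjectivity of $\mathrm{j}$, the two cases $\chi=\pm 1$, and the invariance of the multiplicity label $m(\kappa)$ only makes explicit what the paper leaves implicit.
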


\begin{proof}
The result follows from Lemma~\ref{Lemma:2MainThm:1.3}, as there exists a bijection between $I_f$ and $I_g$, taking $i$ to ${\rm j}(i)$ and satisfying Equation \eqref{eq:lemma2necessaaryconditions}. Moreover,  $k_{i,f}= k_{{\rm j}(i),g}$ or $k_{i,f}=(k_{{\rm j}(i),g})^{-1}$. If both mixed polynomials $f$ and $g$ are $\Gamma_{\mathrm{inn}}$-true, then the numbers $|I_f|$ and  $|I_g|$ coincide with the numbers of 1-faces of $\Gamma_{\mathrm{inn}}(f)$ and $\Gamma_{\mathrm{inn}}(g)$, respectively. Thus, $\mathcal{NC}(f)=\mathcal{NC}(g)$. 
	\end{proof}
Next, we present sufficient conditions for a mixed polynomial to satisfy Conditions~\eqref{Eq:propertyC} and \eqref{Eq:propertyD}. This result demonstrates the significance of Theorem~\ref{MainThm:1.4}, since a wider class of mixed polynomials satisfies such hypotheses.
    \begin{proposition}\label{suficientcondforCandD}
	 Let $f$ be an IND mixed polynomial that is $\Gamma_{\mathrm{inn}}$-nice with $\mathcal{P}_{\mathrm{inn}}(f)=\{P_1,\dots, P_N\}$ and satisfies the following two conditions:  
	 \begin{itemize}
	 	\item[(i)] $I_f^{(k>1)}\neq \emptyset$ implies that the function $f_{P_1}$ is $u$- (or $\bar u$-) semiholomorphic and $\deg_{|u|} f_{P_1} \geq 2$; 
        \item[(ii)] $I_f^{(k<1)}\neq \emptyset$ implies that the function $f_{P_N}$ is $v$- (or $\bar v$-) semiholomorphic and $\deg_{|v|} f_{P_N}\geq 2$.
	 \end{itemize}
     Then, $f$ satisfies conditions \eqref{Eq:propertyC} and \eqref{Eq:propertyD}.
    \end{proposition}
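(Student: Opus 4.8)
The goal is to verify conditions \eqref{Eq:propertyC} and \eqref{Eq:propertyD} under the semiholomorphicity hypotheses (i) and (ii). By symmetry (swapping the roles of $u$ and $v$), it suffices to treat \eqref{Eq:propertyC}, so assume $I_f^{(k>1)}\neq\emptyset$. Then $i_{\rm m}(f)=\min(I_f)$ satisfies $k_{i_{\rm m}(f),f}>1$ because the slopes are ordered decreasingly and $I_f^{(k>1)}\neq\emptyset$ forces the smallest index in $I_f$ to have slope $>1$ (this small combinatorial point should be spelled out: $I_f^{(k>1)}\neq\emptyset$ means some $i\in I_f$ has $k_{i,f}>1$; since slopes are decreasing, every index $\le i$ in $I_f$ also has slope $>1$, and in particular $i_{\rm m}(f)$ does). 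I also need to observe that $1\in I_f$: since $f_{P_1}$ is $u$-semiholomorphic by (i), the link $L_{P_1,f}$ (or $L_{f_{P_1}}$ if $N=1$) is automatically non-empty — a semiholomorphic face function in two variables always has zeros with $v\neq 0$ — so $f$ is $\Gamma_{\rm inn}$-true for $P_1$, giving $1\in I_f$ and hence $i_{\rm m}(f)=1$.

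The first half of \eqref{Eq:propertyC} asks for a sublink $I\subset L_{i,f}$ with $i\in I_f^{(k>1)}$ and ${\rm proj}_2(I)=\mathbb{S}^1$. Take $i=1$. The link $L_1=L_{1,f}$ is the zero set of $(f_{P_1})_1(u,r,\rme^{\rmi t})$, which is a limit as $r\to 0^+$ of $r^{-d(P_1;f)/p_{1,2}}f(r^{k_1}u,r\rme^{\rmi t})$; since $f_{P_1}$ is $u$-semiholomorphic, $(f_{P_1})_1$ is a genuine (complex-coefficient) polynomial in $u$ of degree $\deg_{|u|}f_{P_1}\ge 1$ for each fixed $\rme^{\rmi t}$, with coefficients that are trigonometric in $t$. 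By the fundamental theorem of algebra, for every $t\in[0,2\pi]$ there is at least one root $u(t)\in\mathbb{C}$, so the projection to the $\mathbb{S}^1$-factor is surjective: picking one continuous (or piecewise-continuous, then concatenated) branch of roots yields a sublink $I\subset L_1$ with ${\rm proj}_2(I)=\mathbb{S}^1$. This is where hypothesis (i) (semiholomorphicity, so that $(f_{P_1})_1$ is honestly polynomial in $u$ rather than mixed) is essential.

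The second half of \eqref{Eq:propertyC} requires $L_{P_{i_{\rm m}(f)}}=L_{P_1,f}$ to \emph{not} be a metric 1-braid closure. Here I would argue by the degree condition $\deg_{|u|}f_{P_1}\ge 2$ from (i). A metric 1-braid closure is, by Definition~\ref{1-braidclosure}, the closure of a \emph{single}-strand braid; equivalently $L_{P_1,f}$, read as a braid over $L_v$, has exactly one strand. But $(f_{P_1})_1(u,\cdot,\rme^{\rmi t})$, being $u$-semiholomorphic of degree $\ge 2$ in $u$ with $Sing(V(f_{P_1}))=\{0\}$ (inner non-degeneracy), has at least two roots counted appropriately; more precisely, the braid associated with the closure of $L_1$ over $L_v$ has $\deg_{|u|}f_{P_1}\ge 2$ strands (this is the standard semiholomorphic braid picture recalled in Remark~\ref{braidclosure} and developed in \cite{AraujoBodeSanchez}). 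A two-or-more-strand braid closure cannot coincide with a one-strand braid closure — the number of strands of a braid whose closure equals a given link is not a link invariant in general, but \emph{being the closure of a one-strand braid with a prescribed axis} forces the link component in question to be unknotted and to intersect each meridian disk of the solid torus exactly once, which fails when the face function has $u$-degree $\ge 2$ since then some meridian disk meets $V(f_{P_1})$ in $\ge 2$ points. I expect this last step — rigorously ruling out that a genuinely $\ge 2$-strand face-function link could still be a metric 1-braid closure — to be the main obstacle, and I would handle it by invoking the explicit parametrization of $L_1$ from Subsection~\ref{nested}: the geometric intersection number of $L_1$ with a meridian disk $\{t=t_0\}$ equals the number of roots $u$ of $(f_{P_1})_1(\cdot,\rme^{\rmi t_0})$, which is $\deg_{|u|}f_{P_1}\ge 2$, whereas a 1-braid closure has this intersection number equal to $1$. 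Condition \eqref{Eq:propertyD} follows by the identical argument applied to $f_{P_N}$, $L_{\underline{N}}$, and the $v$-degree, using the $u\leftrightarrow v$ symmetry and hypothesis (ii).
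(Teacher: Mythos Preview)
Your proposal is correct and follows essentially the same approach as the paper: vacuity if $I_f^{(k>1)}=\emptyset$, otherwise use $u$-semiholomorphicity of $f_{P_1}$ together with the Fundamental Theorem of Algebra to get $\operatorname{proj}_2(L_1)=\mathbb{S}^1$, and then the degree bound $\deg_{|u|}f_{P_1}\ge 2$ to rule out a metric 1-braid closure via the strand/meridian count, with condition~\eqref{Eq:propertyD} handled symmetrically. In fact you supply more justification than the paper does---in particular your argument that $1\in I_f$ (hence $i_{\rm m}(f)=1$) and your explicit meridian-disk count are only implicit in the paper's proof. One cosmetic simplification: rather than picking a continuous branch of roots, you can simply take $I=L_1$ itself, since the Fundamental Theorem of Algebra already gives $\operatorname{proj}_2(L_1)=\mathbb{S}^1$.
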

    \begin{proof}
    If $I_f^{(k>1)}= \emptyset$, then  condition \eqref{Eq:propertyC} is satisfied by vacuity. If $I_f^{(k>1)}\neq  \emptyset$, then  the function $(f_{P_1})_1$ is holomorphic in variable $u$ (or $\bar{u}$ in the $\bar{u}$-semiholomorphic case), and $k_1>1$. Due to holomorphicity, the Fundamental Theorem of Algebra guarantees that, for any value of $t \in [0,2\pi]$, $(f_{P_1})_1$ possesses zeros parametrised by $(u_i(t),\rme^{\rmi t}),\ i=1,\dots M$, where $M$ is defined as the greatest degree of $(f_{P_1})_1$ in variable $u$ (or $\bar{u}$). From this, we get that  $i_m(f)=1$ and $\operatorname{proj}_2(L_1)=\mathbb{S}^1$. Furthermore, since $\deg_{|u|} f_{P_1} \geq 2$, it follows that  $M\geq 2$. Since $k_1>1$, we conclude that $L_{P_1}$ cannot be a metric 1-braid closure. Therefore, $f$ satisfies Condition~\eqref{Eq:propertyC}. Analogously, using vacuity or (ii), we obtain that $f$ satisfies Condition~\eqref{Eq:propertyD}. 
    \end{proof}
    
	\begin{example}\label{Ex:th4.1}
     Consider the IND mixed polynomials $$f(u,v)=u^{14}+u^{10} v^2+u^7(v^2\bar v^2)+u^5(v^3\bar v^3)+u^3(v^5\bar v^4)+(v^{9}\bar v^{9})$$ and 
    $$g(u,v)=u^{11}+u^{7}v^2+u^4(v^2\bar v^2)+u^2(v^3\bar v^3)+(v^6\bar v^6).$$
Since both mixed polynomials are $u$-semiholomorphic (see Remark \ref{braidclosure}), these are $\Gamma_{\mathrm{inn}}$-nice and $\Gamma_{\mathrm{inn}}$-true. Using  Proposition~\ref{suficientcondforCandD}, we conclude that $f$ and $g$ satisfy  conditions~\eqref{Eq:propertyC} and \eqref{Eq:propertyD}. The sets $\mathcal{NC}(f)$ and $\mathcal{NC}(g)$ are:
     $$\mathcal{NC}(f)=(5;(3,1),(2,1),(\tfrac{3}{2},2),(1,1)) \text{ and } \mathcal{NC}(g)=(4;(3,1),(2,1),(\tfrac{3}{2},1),(1,1)).$$
     Therefore, by Theorem~\ref{MainThm:1.4}, $f$ and $g$ are not bi-Lipschitz $V$-equivalent, although the set of their $k_i,k_i^{-1}$ (without multiplicity) is the same (namely $\{3,2,\frac{3}{2},1\}$).
		\end{example}

\section{Further examples and remarks about bi-Lipschitz V-triviality}\label{section7}

We finish the paper by providing two interesting examples. The first one shows that bi-Lipschitz triviality is false in general, even when we have topological triviality. The second one shows that it is possible to obtain ambient bi-Lipschitz triviality in families that are not in the form $\{f+\varepsilon\theta \}_{\varepsilon \in I}$. Such examples show that although the topology of mixed polynomials shares some topological rigidity properties with holomorphic polynomials, their Lipschitz geometry behaves differently. That is why we had to develop proper methods to study their Lipschitz geometry, instead of using the techniques developed in \cite{neumann-pichon} and \cite{Kerner-Mendes} directly. 

\begin{example}\label{contraexemploimportante}
     By Theorem~\ref{thm:link-triviality} and Remark~\ref{rem-topological}(b), any family \(\{f + \varepsilon \theta\}_{\varepsilon \in I}\) that is $\Gamma_{\rm{inn}}$-nice and \( d(P_i; \theta) \geq d(P_i; f) \) for \( P_i \in \mathcal{P}_{\mathrm{inn}}(f)\) is ambient topologically $V$-trivial. Here, we present a family that proves that such conditions alone are insufficient to ensure bi-Lipschitz \( V \)-triviality. This highlights the importance of the semi-radial condition within the framework of bi-Lipschitz \( V \)-triviality. Consider the family
		\[
		\{f + \varepsilon \theta\}_{\varepsilon \in I} = \{(u^3 - (1 + \varepsilon \rmi
		)v\bar{v})(u^5 - v\bar{v})\}_{\varepsilon \in I}.
		\]

       Notice that, for $\varepsilon=0$, the mixed polynomial
		\(f=f(u,v) = (u^3 - v\bar{v})(u^5 - v\bar{v})
		\) is in the family. A direct calculation shows that \( f \) is IND. Furthermore, since \( f \) is \( u \)-semiholomorphic, it is also \( \Gamma_{\rm{inn}} \)-nice. We also have \[\mathcal{P}_{\mathrm{inn}}(f) = \{P_1, P_2\} \, ; \, P_1 = (2,3), P_2 = (2,5).\]
		
		The links \( L_{\underline{1},f} \) and \( L_{\underline{2},f} \) have 3 and 5 components, respectively. Let \( L^1_{\underline{1},f} \) and \( L^1_{\underline{2},f} \) denote the components of \( L_{\underline{1},f} \) and \( L_{\underline{2},f} \), respectively, whose parametrizations are given by \((1, \rme^{\tau \rmi}), \tau \in [0,2\pi]\) for both. Notice that \(
\mathrm{proj}_1(L^1_{\underline{1},f}) \cap \mathrm{proj}_1(L^1_{\underline{2},f}) \neq \emptyset,
		\) and both projections have empty interiors. Thus, \( f \) does not satisfy Condition \eqref{Eq:propertyB}. The corresponding surfaces \( V(L^1_{\underline{1},f};f) \) and \( V(L^1_{\underline{2},f};f) \) can be parametrized by \((R, R^{\frac{3}{2}}\rme^{\tau \rmi})\) and \((R, R^{\frac{5}{3}}\rme^{\tau \rmi})\), respectively. By Proposition~\ref{nontangencycriteria}~(iv), we have
		\[
		1 \leq \operatorname{Cont}(V(L^1_{\underline{1},f};f), V(L^1_{\underline{2},f};f)) \leq \frac{3}{2}.
		\]
		Since the real half-branches \(\gamma_1(\lambda) = (\lambda, \lambda^{\frac{3}{2}})\) and \(\gamma_2(\lambda) = (\lambda, \lambda^{\frac{5}{2}})\) satisfy \(\operatorname{tord}(\gamma_1, \gamma_2) = \frac{3}{2}\), it follows that
		\[
		\operatorname{Cont}(V(L^1_{\underline{1},f};f), V(L^1_{\underline{2},f};f)) =\frac{3}{2}.
		\]
		Similarly, using Proposition~\ref{nontangencycriteria}~(iv), we can show that the contact at the origin between pairwise distinct components of \( V(f) \), except for the pair \((V(L^1_{\underline{1},f};f), V(L^1_{\underline{2},f};f))\), is equal to 1.
        
On the other hand, by Theorem~\ref{thm:link-triviality}, the links \( L_f \) and \( L_{f+\varepsilon \theta} \) are isotopic. Denote by \( L^1_{\underline{1},f+\varepsilon\theta}\) and \( L^1_{\underline{2},f+\varepsilon\theta}\) the respective images of \( L^1_{\underline{1},f} \) and \( L^1_{\underline{2},f} \) under this isotopy. These components can be parametrized as \[\left( \rme^{\frac{-\arctan(\varepsilon)}{2}\rmi},\dfrac{1}{|1+\varepsilon \rmi|^{\frac{1}{2}}}\rme^{\tau \rmi}\right) \text{ and } (1, \rme^{\tau \rmi}),\] respectively. For \( \varepsilon \neq 0 \), we have
		\(
		\mathrm{proj}_1(L^1_{\underline{1},f+\varepsilon\theta}) \cap \mathrm{proj}_1(L^1_{\underline{2},f+\varepsilon\theta}) = \emptyset.
		\) Thus, by Proposition~\ref{nontangencycriteria}~(iv), 
		\[
		\operatorname{Cont}(V(L^1_{\underline{1},f+\varepsilon\theta};f+\varepsilon\theta), V(L^1_{\underline{2},f+\varepsilon\theta};f+\varepsilon\theta)) = 1.
		\]
		The same contact at the origin holds for all other pairwise components of \( V(f+\varepsilon\theta) \). Since the contact at the origin is an invariant of bi-Lipschitz \( V \)-equivalence, and \( V(f) \) has pairwise components with contact at the origin equal to \( \frac{3}{2} \), \( V(f) \) and \( V(f+\varepsilon\theta) \) cannot be bi-Lipschitz \( V \)-equivalent for \( \varepsilon \neq 0 \). Consequently, the family is not bi-Lipschitz \( V \)-trivial.
\end{example}
The following example demonstrates that families, not necessarily of the form \( \{f(u,v) + \varepsilon \theta(u,v)\}_{\varepsilon \in \R} \), can still be ambient bi-Lipschitz \( V \)-trivial. Additionally, it shows that the parameter of ambient bi-Lipschitz \( V \)-trivial families can be a complex parameter and does not need to be sufficiently small, even when the deformation terms are on the Newton boundary of \( f \).

\begin{example}
We present a specific example: a family of radial mixed polynomials, $\{f_{\omega}\}_{\omega \in \mathbb{C}}$. This family is characterised by $\mathcal{P}_{\mathrm{inn}}(f_{\omega})=\{P_1\}$. We explicitly construct the family of ambient bi-Lipschitz homeomorphisms that trivialises this family. This construction is inspired by the works in \cite{AraujoSanchez2024} and \cite{Bode2019}. 

Consider $g(u,\rme^{\rmi t})=u^3+\rme^{2\rmi t}u$. The zero set of the function $g$ forms the closure of a braid  on $3$-strands. Consider the functions formed by the translations by $\omega \in \C$ of the zero set of $g$, i.e.,  
$$g_{\omega}(u,\rme^{\rmi t})=(u+\omega)^3+\rme^{2\rmi t}(u+\omega).$$
The isotopic class of the zero set of $g_\omega$ does not depend on $\omega$.  
 For a positive integer number $\kappa$, define the semiholomorphic polynomial 
\begin{equation*}\label{rescaling}
f_{\kappa,\omega}(u,v)=|v|^{6\kappa}g_\omega(u/|v|^{2\kappa},v/|v|).
\end{equation*}
A direct calculation shows that
\begin{equation*}
    f_{\kappa,\omega}(u,v)
    =u^3+3\omega (v\bar{v})^{\kappa} u^2+(3  \omega^2 (v\bar{v})^{2\kappa}+ (v\bar{v})^{2\kappa-1} v^2)u +\omega^3(v\bar{v})^{3\kappa} + \omega (v\bar{v})^{3\kappa-1} v^2.
\end{equation*}
 Then, $f_{\kappa,\omega}$ is a radial mixed polynomial of radial-type \((P;d)=(2\kappa,1;6\kappa)\),  and it is \(u\)-convenient. The mixed polynomials $f_{\kappa, \omega}$ can be seen as a family of $f_{\kappa,0}$ deformed by terms $\theta_{\kappa}(\omega, u,v, \bar{v})$ with  $d(P;\theta_{\kappa})= d(P;f_{\kappa,0})$. We prove that this family is ambient bi-Lipschitz $V$-trivial, and the trivialisation holds for any $\omega\in \C$. Indeed, the map $\phi_{\omega}: \C^2 \to \C^2$  defined by $\phi_{\omega}(u,v)=(u-|v|^{2\kappa}\omega,v)$ satisfies  $\phi_{\omega}(V(f_{\kappa,0}))=V(f_{\kappa,\omega})$, and  
\begin{equation*} 
||\phi_{\omega}(u_1,v_1)-\phi_{\omega}(u_2,v_2)||
 =||(u_1,v_1)-(u_2,v_2)+((|v_2|^{2\kappa}-|v_1|^{2\kappa})\omega,0)||.  
\end{equation*}
For \( v_1,v_2 \in \C\) satisfying \(|v_1|, |v_2|< K \), we obtain 
\[ ||v_1|^{2\kappa}- |v_2|^{2\kappa}|=||v_1|-|v_2||\cdot\left|\sum_{s=0}^{2\kappa-1}|v_1|^s|v_2|^{2\kappa-1-s}\right|\le2\kappa K^{2\kappa-1} |v_1-v_2|. \]
Then,
\begin{align*}
\|\phi_{\omega}(u_1,v_1)-\phi_{\omega}(u_2,v_2)\|\leq&  \|(u_1,v_1)-(u_2,v_2)\|+\|((|v_2|^{2\kappa}-|v_1|^{2\kappa})\omega,0)\|\\ 
\leq& ||(u_1,v_1)-(u_2,v_2)||+ 2\kappa |\omega| K^{2\kappa-1} |v_1-v_2| \\
\leq&  (1+ 2\kappa|\omega| K^{2\kappa-1})||(u_1,v_1)-(u_2,v_2)||. 
\end{align*}
Analogously, $\phi_{\omega}^{-1}: \C^2 \to \C^2$  defined by \(\phi_{\omega}^{-1}(u,v)=(u+|v|^{2\kappa}\omega,v)\) is Lipschitz. Therefore,
$\phi_{\omega}$ is a ambient bi-Lipschitz map and \(\phi_{\omega}(V(f_{\kappa,0}))=V(f_{\kappa,\omega})\) for any \(\omega\in \C\).
\end{example}

For any two complex numbers $\omega_1$ and $\omega_2$, the two Type-I semi-radial mixed polynomials, $f_{\kappa,\omega_1}$ and $f_{\kappa,\omega_2}$, are ambient bi-Lipschitz $V$-equivalent because they belong to an ambient bi-Lipschitz trivial family. However, they are not equivalent to $f_{\kappa',\omega_1}$ and $f_{\kappa',\omega_2}$ for $\kappa' \neq \kappa$ (Proposition~\ref{kinvarianceI}). Our ongoing research focusses on studying the (ambient) bi-Lipschitz $V$-equivalence for pairs of IND mixed polynomials $f$ and $g$ that satisfy $\mathcal{NC}(f)=\mathcal{NC}(g)$ and that are not necessarily part of a family of mixed polynomials. The goal is to obtain new, effective invariants derived from their $C$-diagrams that can fully classify these equivalence classes.


\begin{thebibliography}{99}

\bibitem{AraujoBodeSanchez}
R.N.~Araújo dos Santos, B.~Bode, E.L.~Sanchez Quiceno. \textit{Links of singularities of inner non-degenerate mixed functions.} Bulletin of the Brazilian Mathematical Society, New Series 55.3 (2024): 34.

\bibitem{AraujoSanchez2024}
R. N. Araújo dos Santos, E.L.~Sanchez Quiceno.
\textit{On real algebraic in the 3-sphere associated with mixed polynomials.} Res.Math. Sci. 11 (2024), no.2, Paper No. 22, 22.

\bibitem{birbrair1999local} L.~Birbrair. {\it Local bi-Lipschitz classification of 2-dimensional semialgebraic sets.} Houston J.~Math., 25 (1999), no.~3, 453--472

\bibitem{BBG} 
L.~Birbrair, M.~Brandenbursky, A.~Gabrielov. {\it Lipschitz geometry of surface germs in R4: metric knots.} Selecta Mathematica 29.3: 43. (2023)

\bibitem{microknot}
L.~Birbrair, M.~Denkowski, D.L.~Medeiros, J.E.~Sampaio. \textit{Universality theorem for LNE Hölder triangles}. International Mathematics Research Notices 2025.11 (2025): rnaf123.

\bibitem{B.F.METRIC-THEORY} L.~Birbrair, A.~Fernandes. {\it Metric theory of semialgebraic curves.} Rev. Mat. Complutense, 13 (2000), no. 2, 369-382

\bibitem{livroLip}
L.~Birbrair, A.~Gabrielov. \textit{Lipschitz geometry of real semialgebraic surfaces}. In: Cisneros-Molina, J.L., Dung Tráng, L., Seade, J. (eds) Handbook of Geometry and Topology of Singularities IV. Springer, Cham. https://doi.org/10.1007/978-3-031-31925-9-8 (2023)

\bibitem{medeiros2023amb}
L.~Birbrair, D.L.~Medeiros. \textit{Ambient Lipschitz Geometry of Normally Embedded Surface Germs.} arXiv preprint arXiv:2311.18570 (2023).

\bibitem{birbrair2018arc} L.~Birbrair, R.~Mendes. {\it Arc criterion of normal embedding.} Springer Proceedings in Mathematics \& Statistics. Vol. 222 (2018) no. 2015.

\bibitem{a}
L.~Birbrair, R.~Mendes, J. J.~Nuño-Ballesteros. \textit{Metrically Un-knotted Corank 1 Singularities of Surfaces in R4}. The Journal of Geometric Analysis 28.4 (2018): 3708-3717.

\bibitem{LBirbMosto2000NormalEmbedding} 
L.~Birbrair, T.~Mostowski. {\it Normal embeddings of semialgebraic sets.} Michigan Math.~J., 47 (2000), 125--132

\bibitem{Bode2019}
B.~Bode.
\textit{Constructing links of isolated singularities of polynomials $\R^4 \to \R^2$.} J Knot Theory Ramifications 28 (2019), no. 1, 1950009, 21.


\bibitem{Bode:part2}
B. Bode, Links of Inner Non-degenerate Mixed Functions, Part II, \textit{Bull. Braz. Math. Soc. (N.S.)} {\bf 56} (2025), no.~4, Paper No. 55. 

\bibitem{BodeSanchez2023}
B.~Bode, E.L.~Sanchez Quiceno.
\textit{Inner and Partial non-degeneracy of mixed functions.} Kodai Math. J. 48 (3) 341 - 387, 2025. 


\bibitem{EspinelSanchez2025}
J.~Espinel Leal, E.L.~Sanchez Quiceno.
\textit{Topological triviality and link constancy in deformations of Inner Khovanskii non-degenerate map-germs.} preprint arXiv 2025.

\bibitem{Fernandes2003}
    A.~Fernandes. \textit{Topological equivalence of complex curves and bi-Lipschitz maps.} Michigan Mathematical Journal 51.3 (2003): 593-606.

\bibitem{Fukui2025}
K. Fukui, Singularities of mixed polynomials with Newton polyhedrons, J. Math. Soc. Japan, to appear.

\bibitem{Kerner-Mendes}
D.~Kerner, R.~Mendes. \textit{Fast vanishing cycles on perturbations of (complex) weighted-homogeneous germs.} arXiv preprint arXiv:2311.13423 (2023).

\bibitem{KurdykaOrro97} 
K.~Kurdyka, P.~Orro. {\it Distance g\'eod\'esique sur un sous-analytique.} Revista Math.~Univ.~Comput.~ Madrid, 10 (1997)

\bibitem{LLNE-LNE}
R.~Mendes, J.E.~Sampaio. \textit{On the link of Lipschitz normally embedded sets.} International Mathematics Research Notices 2024.9 (2024): 7488-7501.

\bibitem{neumann-pichon}
    W.D.~Neumann, A.~Pichon. \textit{Lipschitz geometry of complex curves,} J.~Singul. vol. 10 (2014), 225--234.

\bibitem{Milnor1968}
J.~Milnor. \textit{Singular points of complex hypersurfaces}. Princeton University Press, 2016.

\bibitem{Oka2010}
M.~Oka. \textit{Non-degenerate mixed functions.} Kodai Mathematical Journal 33.1 (2010): 1-62.

\bibitem{Pham-Teissier}
    F.~Pham, B.~Teissier. \textit{Fractions Lipschitziennes d'une algebre analytique complexe et saturation de Zariski,} Centre de Math\'ematiques de l’Ecole Polytechnique (Paris), June 1969.

\bibitem{inacio2025}
    I.~Rabelo. \textit{Lipschitz Geometry of Mixed Pham-Brieskorn Singularities}. arXiv preprint arXiv:2501.08264 (2025).

\bibitem{Sampaio:2016}
J.E.~Sampaio. \textit{Bi-Lipschitz homeomorphic subanalytic sets have bi-Lipschitz homeomorphic tangent cones.} Selecta Mathematica 22 (2016): 553-559.

\bibitem{Valette}
G.~Valette. {\it The link of the germ of a semi-algebraic metric space}. Proceedings of the American Mathematical Society 135.10 (2007): 3083-3090.

\bibitem{Valette2}
G.~Valette. {\it On subanalytic geometry, survey}. http://www2.im.uj.edu.pl/gkw/sub.pdf.

\end{thebibliography}
\end{document}